\renewcommand*{\backrefalt}[4]{%
	\ifcase #1 \footnotesize{(Not cited.)}%
	\or	\footnotesize{(Cited on page~#2)}
	\else	\footnotesize{(Cited on pages~#2)}%
	\fi}
\newtheorem{theorem}{Theorem}[section]
\newtheorem{lemma}[theorem]{Lemma}
\newtheorem{proposition}[theorem]{Proposition}
\newtheorem{corollary}[theorem]{Corollary}
{\theoremstyle{definition} \newtheorem{definition}{Definition}[section]}
{\theoremstyle{definition} }
{\theoremstyle{definition} \newtheorem*{exercise*}{Exercise}}
{\theoremstyle{definition}\newtheorem{example}{Example}}
{\theoremstyle{definition} }
{\theoremstyle{remark}\newtheorem*{remark}{Remark}}
{\theoremstyle{remark}}
\theoremstyle{remark}\newtheorem{question}{Question}}
\theoremstyle{definition}}
\theoremstyle{definition}}
\newcommand{\F}{\mathbb{F}}
\newcommand{\R}{\mathbb{R}}
\newcommand{\Z}{\mathbb{Z}}
\newcommand{\id}{\mathrm{id}}
\newcommand{\CF}{\mathit{CF}}
\newcommand{\HF}{\mathit{HF}}
\newcommand{\CFA}{\mathit{CFA}}
\newcommand{\CFD}{\mathit{CFD}}
\newcommand{\CFDA}{\mathit{CFDA}}
\newcommand{\Mor}{\mathrm{Mor}}
\newcommand{\CKh}{{\mathcal{C}_{\mathit{Kh}}}}
\newcommand{\End}{\mathrm{End}}
\newcommand{\gr}{\mathrm{gr}}
\newcommand{\CH}{\mathit{CH}}
\newcommand{\HH}{\mathit{HH}}
\newcommand{\ACH}{\mathcal{CH}}
\newcommand{\AHH}{\mathcal{HH}}
\newcommand{\RW}{\mathcal{H}_{\mathit{RW}}}
\newcommand{\CKhred}{\widetilde{\mathcal{C}}_{\mathit{Kh}}}
\newcommand{\Kh}{\mathit{Kh}}
\newcommand{\Khred}{\widetilde{\mathit{Kh}}}
\newcommand{\reeb}[1]{\rho_{\scriptscriptstyle #1}}
\newcommand{\reebII}[2]{\rho_{\scriptscriptstyle #1}^{\scriptscriptstyle #2}}
\newcommand{\fn}[3]{#1_{\scriptscriptstyle #2}^{\scriptscriptstyle #3}}
\newcommand{\amatch}{\,\raisebox{-0.35cm}{\includegraphics[scale=0.5]{./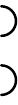}}\,}
\newcommand{\bmatch}{\,\raisebox{-0.35cm}{\includegraphics[scale=0.5]{./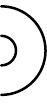}}\,}
\newcommand{\aaii}{\,\raisebox{-0.3cm}{\includegraphics[scale=0.5]{./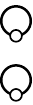}}\,}
\newcommand{\aaxi}{\,\raisebox{-0.3cm}{\includegraphics[scale=0.5]{./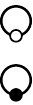}}\,}
\newcommand{\aaix}{\,\raisebox{-0.3cm}{\includegraphics[scale=0.5]{./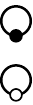}}\,}
\newcommand{\aaxx}{\,\raisebox{-0.3cm}{\includegraphics[scale=0.5]{./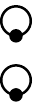}}\,}
\newcommand{\abi}{\,\raisebox{-0.3cm}{\includegraphics[scale=0.5]{./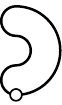}}\,}
\newcommand{\abx}{\,\raisebox{-0.3cm}{\includegraphics[scale=0.5]{./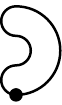}}\,}
\newcommand{\bai}{\,\raisebox{-0.3cm}{\includegraphics[scale=0.5]{./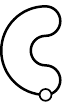}}\,}
\newcommand{\bax}{\,\raisebox{-0.3cm}{\includegraphics[scale=0.5]{./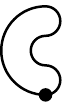}}\,}
\newcommand{\bbii}{\,\raisebox{-0.3cm}{\includegraphics[scale=0.5]{./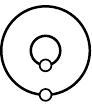}}\,}
\newcommand{\bbxi}{\,\raisebox{-0.3cm}{\includegraphics[scale=0.5]{./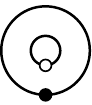}}\,}
\newcommand{\bbix}{\,\raisebox{-0.3cm}{\includegraphics[scale=0.5]{./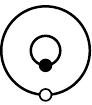}}\,}
\newcommand{\bbxx}{\,\raisebox{-0.3cm}{\includegraphics[scale=0.5]{./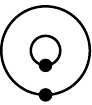}}\,}
\newcommand\scalemath[2]{\scalebox{#1}{\mbox{\ensuremath{\displaystyle #2}}}}
\title{An Ozsv\'{a}th--Szab\'{o}-type spectral sequence for links in $S^1\times S^2$}
\author{Jesse Cohen}
\address{Fachbereich Mathematik (AZ)\\Universit\"{a}t Hamburg\\
	Bundesstra\ss{}e 55\\
	20146 Hamburg, Germany}
\email{jesse.cohen@uni-hamburg.de}
\begin{document}
	\begin{abstract}
		We show that there is a spectral sequence with $E^2$-page given by the Khovanov homology of a link in $S^1\times S^2$, as defined by Rozansky in \cite{Rozansky}, which converges to the Hochschild homology of an $A_\infty$-bimodule defined in terms of bordered Floer invariants. We also show that the homology algebras $H_*\mathfrak{h}_n$ of the algebras $\mathfrak{h}_n$ over which these bimodules are defined give nontrivial $A_\infty$-deformations of Khovanov's arc algebras $H_n$ for $n>1$.
	\end{abstract}
	\maketitle
	\section{Introduction}
	Khovanov homology is a functorial invariant of links in $S^3$ introduced by Mikhail Khovanov in \cite{KhovanovCategorificationJones2000} and later extended in \cite{KhovanovFunctorTangle2002} to a functorial invariant of \emph{even tangles} --- i.e. tangles with an even number of left- and right-endpoints --- valued in a 2-category of complexes of bimodules over the \emph{arc algebras} $\{H_n\}_{n\geq 0}$. We also refer the reader to \cite{Bar-Natan2002} and \cite{Bar-Natan2005} for an alternate perspective on these invariants. It was shown by Lev Rozansky in \cite{Rozansky} that if $T$ is an $(n,n)$-tangle diagram, i.e. a tangle diagram with $2n$ left- and right-endpoints, then the Hochschild homology $\RW(L):=\HH(\CKh(T))$ of $H_n$ with coefficients in the complex $\CKh(T)$ of $(H_n,H_n)$-bimodules associated to $T$ is an isotopy invariant of the link $L\subset S^1\times S^2$ given by taking the closure of $T$, regarded as a tangle in $I\times D^2\subset I\times S^2$, in $S^1\times S^2$. This invariant was later recast in the language of Bar-Natan's diagrammatic category and extended to an invariant of links in $\#^rS^1\times S^2$ by Michael Willis in \cite{Willis}.

In \cite{OzsSzBranched}, Peter Ozsv\'{a}th and Zolt\'{a}n Szab\'{o} showed that there is a (singly-graded) spectral sequence with $E^2$-page isomorphic to the \emph{reduced} Khovanov homology $\Khred(mL;\F)$ of the mirror $mL$ of a link $L\subset S^3$, with $\F=\F_2$ coefficients, which converges to $\widehat{\HF}(\Sigma(L))$, the Heegaard Floer homology of the branched double cover $\Sigma(L)$ of $L$. In this paper, we extend the results of \cite{OzsSzBranched} to construct a spectral sequence relating the Rozansky--Willis homology of a link $L\subset S^1\times S^2$ and the Hochschild homology of an $A_\infty$-bimodule constructed using (bordered) Heegaard Floer homology.
\subsection{Organization and Results}
In Section 2, we give (minimal) background on $A_\infty$-algebras and modules, $A_\infty$-Hochschild homology, type-$D$ structures, and Khovanov bimodules. Along the way, we prove the following crucial algebraic lemma.
\begin{lemma}
	Let $\mathcal{A}$ be an $A_\infty$-algebra over a commutative ring $\Bbbk$ and suppose that $\mathcal{M}$ is an $A_\infty$-bimodule over $\mathcal{A}$ equipped with a finite filtration, then there is a spectral sequence with $E^1$-page given by the Hochschild complex $\CH(H_*\mathcal{A}[1],H_*(\gr_*\mathcal{M}))$, where we regard $H_*\mathcal{A}[1]$ as an associative algebra with trivial higher operations and $H_*(\gr_*\mathcal{M})$ as a complex of bimodules over $H_*\mathcal{A}[1]$, which converges to the $A_\infty$-Hochschild homology $\AHH(\mathcal{M}):=\AHH(\mathcal{A},\mathcal{M})$.
\end{lemma}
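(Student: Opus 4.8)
The plan is to build the spectral sequence from the filtration on the $A_\infty$-Hochschild complex $\CH(\mathcal{A},\mathcal{M})$ that is induced by the given finite filtration on $\mathcal{M}$, then identify the associated graded complex and compute the first page. Recall that the Hochschild complex has underlying module $\bigoplus_{k\geq 0}\mathcal{M}\otimes\mathcal{A}[1]^{\otimes k}$ (with the appropriate completion/conventions from Section 2), with differential $b$ assembled from the internal differential $\mu_1$ on $\mathcal{M}$ and $\mathcal{A}$, the bimodule actions $m_{i|1|j}$, the algebra operations $\mu_i$, and the cyclic terms that wrap the last tensor factors of $\mathcal{A}[1]$ around to act on $\mathcal{M}$ from the left. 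A filtration $0=F_0\mathcal{M}\subseteq\cdots\subseteq F_N\mathcal{M}=\mathcal{M}$ by $A_\infty$-sub-bimodules induces the filtration $F_p\CH=\bigoplus_k F_p\mathcal{M}\otimes\mathcal{A}[1]^{\otimes k}$, which is bounded (hence the spectral sequence converges, to $\AHH(\mathcal{M})$, by the standard convergence theorem for filtered complexes). So the first point is simply to check that $b$ respects this filtration — it does, because every structure map of an $A_\infty$-bimodule is filtered by definition, so each summand of $b$ sends $F_p$ into $F_p$.

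Next I would compute the $E^0$-page: it is $\bigoplus_k \gr_*\mathcal{M}\otimes\mathcal{A}[1]^{\otimes k}$, and the induced differential $d^0$ is the part of $b$ that preserves the filtration degree. The only terms of $b$ that do not strictly decrease the filtration degree are those that do not use the higher actions in a way that drops into a deeper filtration piece; concretely, modulo filtration $d^0$ is the differential on $\gr_*\mathcal{M}\otimes\mathcal{A}[1]^{\otimes k}$ built only from the internal differentials $\mu_1$ on $\gr_*\mathcal{M}$ and on $\mathcal{A}$. Taking homology in this differential, and using that homology commutes with the (completed) direct sum and with tensoring over the ground ring up to the relevant flatness/Künneth hypotheses packaged into the conventions of Section 2, gives $E^1=\bigoplus_k H_*(\gr_*\mathcal{M})\otimes H_*\mathcal{A}[1]^{\otimes k}$ together with the induced differential $d^1$. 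This $d^1$ is exactly the part of the filtration-preserving-after-one-shift piece of $b$ that survives to homology: it is assembled from the $\mu_2$ on $H_*\mathcal{A}[1]$, the induced $H_*\mathcal{A}[1]$-bimodule structure on $H_*(\gr_*\mathcal{M})$, and the cyclic wrap-around term — which is precisely the Hochschild differential of the associative algebra $H_*\mathcal{A}[1]$ with coefficients in the bimodule complex $H_*(\gr_*\mathcal{M})$. Hence $(E^1,d^1)=\CH(H_*\mathcal{A}[1],H_*(\gr_*\mathcal{M}))$ as claimed.

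The main obstacle, and the step requiring the most care, is verifying that the higher $A_\infty$-operations $\mu_{\geq 3}$ and the higher bimodule actions $m_{i|1|j}$ with $i+j\geq 2$ genuinely drop the filtration degree — so that they contribute nothing to $d^0$ and $d^1$ beyond what is claimed — and, dually, that the associative product and bimodule action surviving to $E^1$ are induced by $\mu_2$ and $m_{1|1|0}$, $m_{0|1|1}$ and really do make $H_*(\gr_*\mathcal{M})$ a complex of $H_*\mathcal{A}[1]$-bimodules (associativity on the nose follows because on $E^1$ the obstruction to associativity is $\mu_1$-exact, coming from $\mu_3$). This is where one must be careful about whether the filtration is by powers of an ideal (so that products land one step deeper) or merely an increasing filtration by sub-bimodules; in the latter generality one only gets that $\mu_{\geq 3}$ and higher actions preserve the filtration, which is still enough to kill their contribution to $d^1$ after passing to $E^1$ but requires tracking the bookkeeping of which composite of structure maps can land in which filtration level. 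I would organize this by writing $b=b_0+b_1+b_{\geq 2}$ according to how much each term shifts the filtration, checking $b_0^2=0$ and $b_0 b_1+b_1 b_0=0$ directly from the $A_\infty$-bimodule relations reduced modulo filtration, and then reading off $E^1$ and $d^1$; the remaining $b_{\geq 2}$ terms feed only into the higher differentials $d^{\geq 2}$.
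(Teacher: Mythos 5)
The filtration you propose is the wrong one, and this is a genuine gap, not a bookkeeping issue. You filter $\ACH(\mathcal{M})$ only by the $\mathcal{M}$-filtration, setting $F_p\ACH = \bigoplus_k \mathcal{M}^p \otimes \mathcal{A}[1]^{\otimes k}$. But then \emph{every} term of the Hochschild differential preserves the filtration degree exactly: the algebra operations $\mu_k$ (for all $k$, not just $k\geq 3$) never touch the $\mathcal{M}$ factor, so they certainly preserve $\mathcal{M}$-degree; and the bimodule structure maps $m_{i|1|j}$ send $\mathcal{A}[1]^{\otimes i}\otimes\mathcal{M}^p\otimes\mathcal{A}[1]^{\otimes j}$ into $\mathcal{M}^p$ precisely because the $\mathcal{M}^p$ are $A_\infty$-sub-bimodules. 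Consequently $d^0$ with your filtration is the \emph{entire} Hochschild differential, $E^1 = \AHH(\mathcal{M})$, and the spectral sequence degenerates immediately --- which is not what the lemma asserts. In your third paragraph you sense the problem but misdiagnose it: you claim that if the higher operations ``preserve the filtration'' this ``kills their contribution to $d^1$.'' In fact preserving filtration degree places them on $d^0$, which is exactly the wrong page. There is no way to fix this by careful bookkeeping --- the module filtration by itself simply cannot separate $\mu_1$ from $\mu_2$ from $\mu_{\geq 3}$, since they all preserve it.

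The paper resolves this by filtering by the tensor product $\mathcal{F}^\bullet = \mathcal{M}^\bullet\otimes\mathcal{L}^\bullet$, where $\mathcal{L}^\bullet$ is the word-length filtration on $\ACH(\mathcal{M})$. Under the total filtration, $\mu_1$ and the filtration-preserving part of $m_{0|1|0}$ preserve total degree (giving $d^0$), while $\mu_2$, $m_{1|1|0}$, $m_{0|1|1}$, and the degree-$(-1)$ part of $m_{0|1|0}$ drop word length or module degree by exactly one (giving $d^1$), and all $\mu_{\geq 3}$ and $m_{i|1|j}$ with $i+j\geq 2$ drop word length by at least two and so feed into $d^{\geq 2}$. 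Note also that this changes the convergence argument: your bounded filtration on $\mathcal{M}$ gives bounded $F_p\ACH$ and hence trivial convergence, but the word-length filtration is unbounded, so the paper instead verifies weak convergence of $\mathcal{L}^\bullet$ (citing Mescher) combined with finiteness of $\mathcal{M}^\bullet$ and then appeals to a standard theorem for weakly convergent exhaustive filtrations. You would need to supply that argument as well.
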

In Section 3, we detail a general construction of a differential algebra $\mathfrak{h}_{\bm{\nu}}$ associated to any collection $\bm{\nu}=\{(Y_i,\phi_i:F\stackrel{\cong}{\to}\partial Y_i)\}$ of bordered 3-manifolds --- i.e. 3-manifolds $Y_i$ with boundary equipped with orientation-preserving diffeomorphisms $\phi_i$ --- where $F$ is a fixed parametrized surface, and, for $\bm{\nu}'$ a second collection of 3-manifolds with boundary parameterized by $F'$, an $A_\infty$-bimodule $\CF_{\bm{\nu},\bm{\nu}'}(Y)$ over $(\mathfrak{h}_{\bm{\nu}},\mathfrak{h}_{\bm{\nu}'})$ associated to any cobordism $Y:F\to F'$.

In Section 4, we specialize the constructions of Section 3 to the case that $\bm{\nu}$ is the set of branched double covers of crossingless matchings on $2n+2$ points of the form $a_+$, where $a$ is a crossingless matching on $2n$ points, $n\geq 1$, and $a_+$ is the result of adding a single arc below $a$, regarded as tangles in $B^3$ with endpoints on the equator. This gives us a collection of differential algebras $\{\mathfrak{h}_n\}_{n\geq 0}$ (taking the convention that $\mathfrak{h}_0=\F$), which we call \emph{branched arc algebras}. The main result of this section is the following theorem.
\begin{theorem}
	There is an isomorphism $H_*\mathfrak{h}_n\cong H_n$ of associative algebras.
\end{theorem}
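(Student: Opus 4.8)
The plan is to compute $H_*\mathfrak{h}_n$ one idempotent-pair at a time and then compare the resulting ring with the standard local description of $H_n$. Write $e_a$ for the idempotent of $\mathfrak{h}_n$ indexed by a crossingless matching $a$ of $2n$ points, so that $\mathfrak{h}_n=\dsum_{a,b}e_b\mathfrak{h}_ne_a$; by the construction of Section 3, $e_b\mathfrak{h}_ne_a$ is the morphism complex $\Mor_{\mathcal{A}(F)}\bigl(\widehat{\CFD}(\Sigma(a_+)),\widehat{\CFD}(\Sigma(b_+))\bigr)$ with multiplication given by composition of morphisms of type-$D$ structures --- in particular $\mathfrak{h}_n$ is strictly associative on the chain level, so $H_*\mathfrak{h}_n$ is honestly an associative algebra. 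By the pairing theorem for the morphism complex, $H_*(e_b\mathfrak{h}_ne_a)\cong\widehat{\HF}\bigl(-\Sigma(a_+)\cup_F\Sigma(b_+)\bigr)$, and since $F$ is the branched double cover of $(S^2,2n{+}2\text{ marked points})$ and forming branched double covers commutes with gluing along it, this closed manifold is $\Sigma(\overline{a_+}\,b_+)$, the double cover of $S^3$ branched over the link obtained by closing $\overline{a_+}$ against $b_+$.

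Because $\overline{a_+}\,b_+$ is crossingless it is an unlink, and the extra arc attached below $a$ and $b$ contributes exactly one circle beyond those of the usual Khovanov closure $\overline{a}\,b$; so if $k=k(a,b)$ is the number of circles of $\overline{a}\,b$, then $\overline{a_+}\,b_+$ has $k+1$ components and $\Sigma(\overline{a_+}\,b_+)\cong\#^{k}(S^1\times S^2)$. Hence $H_*(e_b\mathfrak{h}_ne_a)\cong\widehat{\HF}\bigl(\#^{k}(S^1\times S^2)\bigr)\cong V^{\otimes k}$ with $V\cong\widehat{\HF}(S^1\times S^2)$ of rank $2$ --- the same underlying module as the summand ${}_b(H_n)_a\cong V^{\otimes k}$ of Khovanov's arc algebra, where now $V\cong H^*(S^2)$. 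The two copies of $V$ are canonically identified up to an overall rescaling of the grading, which is all that is needed since only an ungraded ring isomorphism is claimed.

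It remains to match the products. The multiplication $e_b\mathfrak{h}_ne_a\otimes e_c\mathfrak{h}_ne_b\to e_c\mathfrak{h}_ne_a$ is composition in the morphism category; under the pairing theorem, composition of morphisms of type-$D$ structures is intertwined with the map on $\widehat{\HF}$ induced by the cobordism $\Sigma(W_{abc})$, where $W_{abc}$ is the branched double cover of the standard planar cobordism in $S^3\times I$ from $(\overline{a_+}\,b_+)\sqcup(\overline{b_+}\,c_+)$ to $\overline{a_+}\,c_+$ that contracts the middle copy of $b_+$ through a sequence of saddles. On the other side, Khovanov's multiplication on $H_n$ is by definition the value on that same planar cobordism of the $(1+1)$-dimensional TQFT built from the Frobenius algebra $V\cong\Z[X]/(X^2)$. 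Thus the theorem reduces to the statement that, on unlinks and their saddle cobordisms, the functor that takes the branched double cover and then applies $\widehat{\HF}$ coincides with the Khovanov Frobenius TQFT: the branched double cover of a single saddle is a $2$-handle cobordism between connected sums of $S^1\times S^2$, and it induces the multiplication $V\otimes V\to V$ on a merge and the comultiplication $V\to V\otimes V$ on a split. This last statement is essentially the computation of $\widehat{\HF}$ of the surgery triangle underlying the branched double cover spectral sequence \cite{OzsSzBranched}, and can be verified directly on the genus-$0$ Heegaard triple-diagrams for these handle attachments.

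The main obstacle is to make the preceding paragraph precise inside the bordered framework of Section 3: one must (i) identify the chain-level composition in the $\Mor$-complex of type-$D$ structures with an honest $\widehat{\HF}$-cobordism map, matching orientation and parametrization conventions so that the glued cobordisms really are the branched double covers $\Sigma(W_{abc})$; (ii) verify that the diffeomorphisms of $\#^{k}(S^1\times S^2)$ entering these identifications act trivially on $\widehat{\HF}$, so that the maps are independent of the auxiliary choices; and (iii) reconcile the reduced normalization of \cite{OzsSzBranched} with the unreduced arc algebra --- which is exactly what the extra arc of $a_+$ arranges, the additional $V$ factor it produces acting as the unit while the merge/split maps operate on the remaining factors. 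A more computational alternative, which sidesteps the general functoriality, is to construct the isomorphism explicitly on a set of multiplicative generators of $H_n$ --- the idempotents $e_a$ together with elements supported in the summands ${}_b(H_n)_a$ with $k(a,b)\in\{1,2\}$ --- by evaluating the corresponding finite list of $\widehat{\HF}$-cobordism maps by hand, and then invoking the module isomorphism of the second paragraph to conclude that the resulting ring homomorphism is a bijection.
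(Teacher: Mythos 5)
Your outline is essentially the paper's own proof: identify $H_*(e_b\mathfrak{h}_n e_a)\cong\widehat{\HF}(\Sigma(a_+^!b_+))=\widehat{\HF}(\#^{k}(S^1\times S^2))$, match it with the summand of $H_n$, and then match multiplications via branched double covers of planar cobordisms using \cite[Propositions~6.1--6.2]{OzsSzBranched}. The place where you flag your ``main obstacle (i)'' is exactly where the paper does its work, and the route it takes is worth comparing against your sketch. Rather than appeal to functoriality of the bordered/$\widehat{\HF}$ dictionary in one step, the paper decomposes the pair-of-pants cobordism as $W=\Sigma(C_{abc}\sqcup\id_{\bigcirc})\circ W_{\#}$, where $W_{\#}$ is the connected-sum cobordism at the preimages of the basepoints, and further decomposes $\Sigma(C_{abc}\sqcup\id_{\bigcirc})$ into a movie of single-saddle pieces. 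The saddle pieces are matched to the Frobenius multiplication/comultiplication via \cite[Proposition~6.2]{OzsSzBranched} (which is recalled and reproved for exactly the cases needed). The connected-sum piece is handled separately: the map $\widehat{F}_{W_{\#}}$ is computed using Zemke's graph-cobordism description of the connected-sum isomorphism, and an explicit identification of $H_1(\Sigma(a_+^!b_+))\oplus H_1(\Sigma(b_+^!c_+))$ with $H_1(\Sigma(P_1))$ is constructed via arcs and their lifts so that $\widehat{F}_{W_{\#}}\circ(\psi_{ab}\otimes\psi_{bc})=\psi_{ac}\circ f_{abc}$. The fact that $\circ_2$ on $\Mor$-complexes induces $\widehat{F}_W$ on homology is taken as input. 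Your obstacle (iii) is resolved exactly as you anticipate: the paper writes $H_n=\bigoplus_{a,b}\CKhred(a_+^!b_+)$ with the basepoint on the circle created by the extra arc, so that the reduced Floer picture of \cite{OzsSzBranched} lines up with the unreduced arc algebra with no leftover tensor factor. Your obstacle (ii) is implicitly absorbed into the fact that the isomorphisms $\psi_{\mathcal{D}}$ are specified canonically in terms of lifts of arcs from the marked component, making the diagrams independent of auxiliary choices. So while your writeup is a correct plan, it stops short of the actual commutative-diagram chase that constitutes the proof; the alternative ``finite-generators'' strategy you mention at the end is not what the paper does.
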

We show in Section 6 that $\mathfrak{h}_n$ is not \emph{formal}, i.e. that $H_*\mathfrak{h}_n$ and $H_n$ are not quasi-isomorphic as $A_\infty$-algebras, for $n>1$. In order to do so, we show that there are homologically injective embeddings $\mathfrak{h}_n\hookrightarrow\mathfrak{h}_{n+1}$ for all $n$. In the appendix, we compute $\mathfrak{h}_2$ and $H_*\mathfrak{h}_2$ explicitly and show that $H_*\mathfrak{h}_2$ has a nontrivial $m_3$ operation (independent of choice of retract) --- in fact, we show that $H_*\mathfrak{h}_2$ is an unbounded $A_\infty$-algebra --- thus completing the proof that $\mathfrak{h}_n$ is non-formal.

In Section 5, we specialize the construction of our bimodules to the case that the cobordism $Y$ is the branched double cover of an $(n+1,n+1)$-tangle associated to an $(n,n)$-tangle $T\subset I\times S^2$, giving us an $A_\infty$-bimodule $\CF^\beta(T)$ over $\mathfrak{h}_n$ which we call the \emph{branched Floer complex} of $T$. Using a result of Lipshitz--Ozsv\'{a}th--Thurston from \cite{LOTSpectral2}, we then prove our main theorem.
\begin{theorem}
	$\CF^\beta(T)$ admits a filtration such that $H_*(\gr_*\CF^\beta(T))$ is isomorphic to $\CKh(mT)$ as a complex of bimodules over $H_n\cong H_*\mathfrak{h}_n$. As a consequence, there is a spectral sequence with $E^2$-page $\HH(\CKh(mT))$ which converges to $\AHH(\CF^\beta(T))$. In other words, if $mL\subset S^1\times S^2$ is the closure of $mT$ in $S^1\times S^2$, there is a spectral sequence $\RW(mL;\F)\Rightarrow\AHH(\CF^\beta(T))$.
\end{theorem}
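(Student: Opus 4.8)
The plan is to combine the construction of $\CF^\beta(T)$ from Section 5 with the bordered branched–double–cover spectral sequence of Lipshitz--Ozsv\'{a}th--Thurston \cite{LOTSpectral2} and the filtered $A_\infty$-bimodule Lemma from Section 2. First I would exhibit the filtration on $\CF^\beta(T)$. Writing $T$ as a diagram with $c$ crossings, resolving each crossing produces a cube $\{0,1\}^c$ of crossingless $(n,n)$-tangles $T_v$; passing to branched double covers of the associated $(n+1,n+1)$-tangles and pairing the corresponding bordered invariants realizes $\CF^\beta(T)$ as the totalization of a cube of $A_\infty$-bimodules $\CF^\beta(T_v)$ over $\mathfrak{h}_n$ with edge maps induced by the elementary saddle cobordisms. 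The filtration is the cube filtration, by the number of $1$-resolutions, exactly as in \cite{OzsSzBranched}; since $c$ is finite this is a finite filtration, so the Lemma applies to $\mathcal{M}=\CF^\beta(T)$ over $\mathcal{A}=\mathfrak{h}_n$.

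Next I would identify the associated graded. By construction $\gr_*\CF^\beta(T)=\bigoplus_v\CF^\beta(T_v)$ as a bimodule, with internal differential the sum of the cube edge maps. The key local input is the bordered incarnation of the unoriented skein exact triangle from \cite{LOTSpectral2}: the bordered invariant of the branched double cover of a single crossing sits in a mapping cone of the bordered invariants of the branched double covers of its two resolutions, and this cone is natural enough that the cube of such cones assembles to $\CF^\beta(T)$. Taking homology vertexwise, the computation underlying the Theorem $H_*\mathfrak{h}_n\cong H_n$ --- which on the level of a crossingless matching identifies $H_*$ of its branched Floer bimodule with the corresponding Khovanov arc-algebra bimodule --- shows $H_*\CF^\beta(T_v)$ is isomorphic to the Khovanov bimodule of the mirror crossingless tangle $mT_v$, compatibly with the identification $H_*\mathfrak{h}_n\cong H_n$ of algebras. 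One then checks that under these isomorphisms the homology-level edge maps are precisely Khovanov's merge/split maps, with the resolution conventions of $mT$ --- the mirror appearing because reversing a crossing dualizes the relevant surgery triangle. This yields $H_*(\gr_*\CF^\beta(T))\cong\CKh(mT)$ as a complex of bimodules over $H_n\cong H_*\mathfrak{h}_n$, which is the first assertion.

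With the associated graded identified, the Lemma supplies a spectral sequence whose $E^1$-page is the Hochschild complex $\CH(H_*\mathfrak{h}_n[1],H_*(\gr_*\CF^\beta(T)))=\CH(H_n[1],\CKh(mT))$ --- a bicomplex with the bar differential of the associative algebra $H_n$ (all higher products vanish) and the internal Khovanov differential --- and which converges to $\AHH(\CF^\beta(T))$. Its $E^2$-page is the homology of this bicomplex, i.e. the Hochschild homology of the complex of bimodules $\CKh(mT)$, which is $\HH(\CKh(mT))$; unwinding Rozansky's definition, $\HH(\CKh(mT))=\RW(mL;\F)$ where $mL\subset S^1\times S^2$ is the closure of $mT$. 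This produces the asserted spectral sequence $\RW(mL;\F)\Rightarrow\AHH(\CF^\beta(T))$.

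The main obstacle is the middle step: showing that the cube of bordered mapping cones assembles \emph{coherently} to $\CF^\beta(T)$ and, more delicately, that on homology the edge maps become Khovanov's TQFT maps rather than merely abstractly isomorphic maps. The first part requires naturality of the \cite{LOTSpectral2} skein cones in the bordered tangle setting --- that the pairing homotopies can be chosen so the cube relations hold strictly --- and the second requires tracking the explicit chain homotopy equivalences from the proof of $H_*\mathfrak{h}_n\cong H_n$ through each elementary saddle, which is where the bulk of the diagrammatic and Floer-theoretic work lies. Reconciling the mirror and all grading shifts between the Floer and Khovanov normalizations is the remaining bookkeeping hazard.
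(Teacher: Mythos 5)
Your proposal is correct and takes essentially the same approach as the paper: a $\{0,1\}^c$ filtration on $\CF^\beta(T)$ coming from the Lipshitz--Ozsv\'{a}th--Thurston skein cones, identification of the homology of the associated graded with $\CKh(mT)$ compatibly with the $H_n\cong H_*\mathfrak{h}_n$-bimodule structure, and then the general Hochschild spectral-sequence lemma. The two obstacles you flag are exactly where the paper invests its technical effort --- cube coherence is automatic in the filtered type-$\mathit{DA}$ framework of \cite{LOTSpectral1}, and the compatibility of the edge maps with the $\mathfrak{h}$-actions is Theorem \ref{thm:InducedActions}, proved via \cite[Lemma 2.3.13(2)]{LOTBimodules2015} together with the gluing results (Lemma \ref{PairingLemma} and Corollary \ref{cor:CircTensorGluing}) in the same spirit as the proof of $H_*\mathfrak{h}_n\cong H_n$.
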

\subsection*{Acknowledgements}
The author would like to thank Tobias Dyckerhoff, Gary Guth, Robert Lipshitz, Ikshu Neithalath, Paul Wedrich, and Michael Willis for several helpful conversation. The material in Section 4 and the appendices make up Sections 3.1 and 3.2 of the author's Ph.D thesis.

This research was supported in part by the National Science Foundation under Grant Nos. DMS-2204214 and DMS-1928930 and in part by the Deutsche Forschungsgemeinschaft (DFG, German Research Foundation) under Germany’s Excellence Strategy - EXC 2121 ``Quantum Universe'' - 390833306.
	\section{Background}
	\subsection{$A_\infty$-algebras and modules}
We collect here a few basic definitions and facts regarding $A_\infty$-algebras and their (bi)modules. Since it will suffice for our purposes, we will work exclusively over a semisimple commutative ring $\Bbbk$ of characteristic 2, i.e. $\Bbbk=\F e_1\oplus\cdots\oplus\F e_\ell$, where the $e_i$ are orthogonal idempotents and $\F=\F_2$. We refer the reader to \cite{Keller2001,Seidel2008} for more thorough treatments of the subject of $A_\infty$-algebras and modules in general and to \cite{LOTBimodules2015,mescher2016primer} for more details on Hochschild (co)homology for $A_\infty$-bimodules, which we will discuss briefly in Subsection \ref{subsec:HochschildHomology}.
\begin{definition}
	An $A_\infty$-algebra $\mathcal{A}$ over $\Bbbk$ is a graded $\Bbbk$-bimodule $\mathcal{A}$ equipped with a collection of degree 0 $\Bbbk$-linear maps $\mu_i:\mathcal{A}^{\otimes i}\to\mathcal{A}[2-i]$ for $i\geq 1$, where $\mathcal{A}[k]$ is $\mathcal{A}$ shifted in degree by $k$, satisfying the \emph{$A_\infty$-relations}:
	\begin{align}
		\sum_{i+j=n+1}\sum_{k=1}^{n-j+1}\mu_i\circ(\id^{\otimes k-1}\otimes \mu_j\otimes\id^{\otimes n-k-j+1})=0
	\end{align}
	for every $n\geq 1$. An $A_\infty$-algebra $\mathcal{A}$ is \emph{strictly unital} if there is an element $\mathbbm{1}\in\mathcal{A}$ such that $\mu_2\circ(\mathbbm{1}\otimes\id)=\mu_2\circ(\id\otimes\mathbbm{1})=\id$ and $\mu_j\circ(\id^{\otimes i-1}\otimes\mathbbm{1}\otimes\id^{\otimes j-i})=0$ for any $j\neq 2$ and $1\leq i\leq j$. We say $\mathcal{A}$ is \emph{operationally bounded} if there exists some $N\geq 1$ such that $\mu_n=0$ whenever $n\geq N$, and \emph{unbounded} otherwise.
\end{definition}
It is often convenient to assemble the maps $\mu_i$ into a single map $\mu:T(\mathcal{A}[1])\to\mathcal{A}[2]$, where $T(\mathcal{A}[1])=\bigoplus\limits_{k=0}^\infty\mathcal{A}^{\otimes_\Bbbk k}[k]$ is the tensor algebra of $\mathcal{A}[1]$ and we set $\mu_0=0$. If we further define a map $\overline{D}^{\mathcal{A}}:T(\mathcal{A}[1])\to T(\mathcal{A}[1])$ by
\begin{align}
	\overline{D}^{\mathcal{A}}|_{\mathcal{A}^{\otimes n}[n]}=\sum_{j=1}^n\sum_{k=1}^{n-j+1}\id^{\otimes k-1}\otimes\mu_j\otimes\id^{n-j-k+1},
\end{align}
then the $A_\infty$-relations are equivalent to either of the equations $\mu\circ\overline{D}^{\mathcal{A}}=0$ or $\overline{D}^{\mathcal{A}}\circ\overline{D}^{\mathcal{A}}$, which we may interpret in the graphical language of \cite{LOTBHFH} as
\begin{align}
	\begin{tikzcd}[ampersand replacement=\&,row sep=0.5cm]
		{}\arrow[d,Rightarrow]\\\overline{D}^{\mathcal{A}}\arrow[d,Rightarrow]\\\mu\arrow[d]\\{}
	\end{tikzcd}\!\!=0\hspace{0.5cm}\textup{or}\hspace{0.5cm}\begin{tikzcd}[ampersand replacement=\&,row sep=0.5cm]
	{}\arrow[d,Rightarrow]\\\overline{D}^{\mathcal{A}}\arrow[d,Rightarrow]\\\overline{D}^{\mathcal{A}}\arrow[d,Rightarrow]\\{}
\end{tikzcd}\!\!=0.
\end{align}
As a special case, any dg-algebra $\mathcal{A}$ is automatically an $A_\infty$-algebra with $\mu_1$ given by the differential, $\mu_2$ the multiplication on $\mathcal{A}$, and $\mu_i=0$ for $i>2$.
\begin{remark}
	In Section \ref{sec:formality}, we provide a brief exposition on the homological perturbation lemma which, given a choice of retract, will allow us to regard the homology $H_*\mathcal{A}$ of a dg-algebra as an $A_\infty$-algebra. If one forgets the higher structure maps, we may regard $H_*\mathcal{A}$ as a strictly associative algebra, with multiplication given by the map on homology induced by $\mu_2$.
\end{remark}
\begin{definition}
	Let $\mathcal{A}$ and $\mathcal{B}$ be strictly unital $A_\infty$-algebras over $\Bbbk$ and $\mathbbm{j}$, respectively. An $A_\infty$-bimodule over $(\mathcal{A},\mathcal{B})$ is a graded $(\Bbbk,\mathbbm{j})$-bimodule $\mathcal{M}$ equipped with degree 0 maps $m_{i|1|j}:\mathcal{A}[1]^{\otimes i}\otimes\mathcal{M}\otimes\mathcal{B}[1]^{\otimes j}\to\mathcal{M}[1]$ satisfying the compatibility relation
	\begin{align}
		\begin{tikzcd}[ampersand replacement=\&,row sep=0.5cm,column sep=0.35cm]
			{}\arrow[d,Rightarrow] \& {}\arrow[dd,dashed] \& {}\arrow[d,Rightarrow]\\
			\Delta\arrow[dr,Rightarrow]\arrow[ddr,Rightarrow,bend right=20] \& \& \Delta\arrow[dl,Rightarrow]\arrow[ddl,Rightarrow,bend left=20]\\
			\& m\arrow[d,dashed] \& \\
			\& m\arrow[d,dashed] \& \\
			\& {} \&
		\end{tikzcd}+\begin{tikzcd}[ampersand replacement=\&,row sep=0.5cm,column sep=0.35cm]
		{}\arrow[d,Rightarrow] \& {}\arrow[dd,dashed] \& {}\arrow[ddl,Rightarrow,bend left=20]\\
		\overline{D}^{\mathcal{A}}\arrow[dr,Rightarrow] \& \& \\
		\& m\arrow[dd,dashed] \& \\
		\& \& \\
		\& {} \&
	\end{tikzcd}+\begin{tikzcd}[ampersand replacement=\&,row sep=0.5cm,column sep=0.35cm]
	{}\arrow[ddr,bend right=20,Rightarrow] \& {}\arrow[dd,dashed] \& {}\arrow[d,Rightarrow]\\
	\& \& \overline{D}^{\mathcal{B}}\arrow[dl,Rightarrow]\\
	\& m\arrow[dd,dashed] \& \\
	\& \& \\
	\& {} \&
\end{tikzcd}=0,
	\end{align}
	where $m=\sum\limits_{i,j}m_{i|1|j}$ and $\Delta$ on the left and right of the first term denote the canonical comultiplication maps on $T(\mathcal{A}[1])$ and $T(\mathcal{B}[1])$, respectively. As is typical in the literature, we will use solid arrows to denote elements of an algebra, doubled arrows to denote elements of its tensor algebra, and dashed arrows to denote elements of modules.
\end{definition}
\subsection{Hochschild homology}\label{subsec:HochschildHomology}
We now give a brief introduction to Hochschild homology for $A_\infty$-bimodules. Fix an $A_\infty$-algebra $\mathcal{A}$ over $\Bbbk$ and an $A_\infty$-bimodule $\mathcal{M}$ over $\mathcal{A}$ with structure maps $m_{i|1|j}:\mathcal{A}[1]^{\otimes i}\otimes_\Bbbk\mathcal{M}\otimes_\Bbbk\mathcal{A}[1]^{\otimes j}\to\mathcal{M}[1]$. The \emph{Hochschild complex} $\ACH(\mathcal{M})$ of $\mathcal{M}$ is the quotient $\F$-vector space
\begin{align}
	\ACH(\mathcal{M})=\mathcal{M}\otimes_\Bbbk T(\mathcal{A}[1])/\sim,
\end{align}
where $T(\mathcal{A}[1])$ is the tensor algebra of $\mathcal{A}[1]$ in the category of $\Bbbk$-modules and $\sim$ is the equivalence relation generated by
\begin{align}
	(e\cdot x)\otimes a_1\otimes\cdots\otimes a_k\sim x\otimes a_1\otimes\cdots\otimes (a_k\cdot e),
\end{align}
where $x\in\mathcal{M}$ and e ranges over $\Bbbk$. We will use bar notation $x|a_1|\cdots|a_k$ for the class $[x\otimes a_1\otimes\cdots\otimes a_k]$. The differential $D$ on $\ACH(\mathcal{M})$ is given by
\begin{align}
	\begin{split}
		&D(x|a_1|\cdots|a_\ell)\\&=\sum_{i+j\leq\ell}m_{i|1|j}(a_{\ell-i+1},\dots,a_\ell,x,a_1,\dots,a_j)|a_{j+1}|\cdots|a_{\ell-i}\\
		&+\sum_{1\leq i<j\leq\ell}x|a_1|\cdots|\mu_{j-i}(a_i,\dots,a_{j-1})|a_j|\cdots|a_\ell
	\end{split}
\end{align}
which we may interpret graphically as
\begin{align}
	D=\begin{tikzcd}[ampersand replacement=\&,row sep=0.35cm,column sep=0.35cm]
		\& \arrow[ddd,dashed] \& \arrow[dd,Rightarrow] \& \\
		\& \& \& \\
		{}\arrow[dr,Rightarrow]\& \& \Delta\arrow[ddd,Rightarrow]\arrow[dl,Rightarrow]\arrow[dr,Rightarrow] \&\\
		\& m\arrow[dd,dashed] \& \& {} \\
		\& \& \& \\
		\& {} \& {} \&
	\end{tikzcd}+\begin{tikzcd}[ampersand replacement=\&,row sep=0.35cm,column sep=0.35cm]
		\arrow[ddddd,dashed] \& \arrow[dd,Rightarrow] \\
		\& \\
		\& \overline{D}^{\mathcal{A}}\arrow[ddd,Rightarrow]\\
		\& \\
		\& \\
		{} \& {}
	\end{tikzcd}
\end{align}
where, in the left-hand diagram, the arrow which runs off to the right out of $\Delta$ is identified with the arrow that comes into $m$ from the left. The homology $\AHH(\mathcal{M})$ of this complex is called the \emph{Hochschild homology} of $\mathcal{M}$. We will need to distinguish between $A_\infty$-algebra and associative algebra structures, and between $A_\infty$- and ordinary differential bimodule structures, on the same vector spaces $\mathcal{A}$ and $\mathcal{M}$, respectively. Consequently, we will denote the Hochschild complex and homology by $\CH(\mathcal{M})$ and $\HH(\mathcal{M})$ when $\mathcal{A}$ is an associative algebra and $\mathcal{M}$ is a differential bimodule over $\mathcal{A}$.
\subsubsection{Filtrations on $\ACH(\mathcal{M})$ and their associated spectral sequences}
Suppose that $\mathcal{M}$ is an $A_\infty$-bimodule over $\mathcal{A}$ equipped with a (possibly-trivial) finite ascending filtration $\cdots\supseteq\mathcal{M}^2\supseteq\mathcal{M}^1\supseteq\mathcal{M}^0$ by $A_\infty$-submodules. Let $\ACH_n(\mathcal{M})$ be the image of the summand $\mathcal{M}\otimes_\Bbbk\mathcal{A}[1]\otimes_\Bbbk\stackrel{n}{\cdots}\otimes_\Bbbk\mathcal{A}[1]$ of $\mathcal{M}\otimes_\Bbbk T(\mathcal{A}[1])$ in $\ACH(\mathcal{M})$. The \emph{word length} filtration on $\ACH(\mathcal{M})$ is then the ascending filtration $\cdots\supseteq\mathcal{L}^2\supseteq\mathcal{L}^1\supseteq\mathcal{L}^0$ (taking the convention that $\mathcal{A}[1]^{\otimes 0}=\Bbbk$) defined by
\begin{align}
	\mathcal{L}^n=\bigoplus_{m\leq n}\ACH_m(\mathcal{M}).
\end{align}
\begin{remark}
	Note that, since we are working in the setting of $A_\infty$-algebras and bimodules, word length is not in general a grading with respect to the differential $D$. However, when $\mathcal{M}$ and $\mathcal{A}$ both have integral gradings, there is a grading on $\ACH(\mathcal{M})$ given on basis elements by
	\begin{align}
		\gr(x|a_1|\cdots|a_n)=n+\gr(x)+\gr(a_1)+\cdots+\gr(a_n).
	\end{align}
\end{remark}
\begin{lemma}[{\cite[Lemma 5.5]{mescher2016primer}}]
	The length filtration is weakly convergent.
\end{lemma}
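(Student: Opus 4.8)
The plan is to recognize the statement as an instance of the classical convergence theorem for filtered chain complexes: an increasing filtration by subcomplexes that is bounded below and exhaustive has an associated spectral sequence converging weakly to the homology of the total complex, in the sense that $E^\infty_n\cong\mathcal{L}^n\AHH(\mathcal{M})\big/\mathcal{L}^{n-1}\AHH(\mathcal{M})$ for the induced filtration on homology. This is precisely what \cite[Lemma~5.5]{mescher2016primer} records for Hochschild complexes, so the work reduces to verifying three properties of the word length filtration $\{\mathcal{L}^n\}$ on $\ACH(\mathcal{M})$: that each $\mathcal{L}^n$ is preserved by the Hochschild differential $D$, that the filtration is bounded below, and that it is exhaustive.

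First I would check that $D$ preserves $\mathcal{L}^n$ by reading off the two sums in the formula for $D(x|a_1|\cdots|a_\ell)$ and confirming that neither raises the word length. A summand produced by a bimodule operation $m_{i|1|j}$ consumes $i$ letters from the right end of the bar word and $j$ from the left, together with $x$, and returns a single module element, so it has word length $\ell-i-j\le\ell$. A summand produced by $\mu_{j-i}$ replaces the $j-i\ge 1$ consecutive algebra letters $a_i,\dots,a_{j-1}$ by the single letter $\mu_{j-i}(a_i,\dots,a_{j-1})$, so it has word length $\ell-(j-i-1)\le\ell$. Hence $D\bigl(\ACH_n(\mathcal{M})\bigr)\subseteq\mathcal{L}^n$, so $D(\mathcal{L}^n)\subseteq\mathcal{L}^n$; together with the tautological inclusions $\mathcal{L}^{n-1}\subseteq\mathcal{L}^n$ this exhibits $\{\mathcal{L}^n\}$ as an increasing filtration of $\ACH(\mathcal{M})$ by subcomplexes. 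Boundedness below is immediate, since word length is a nonnegative integer: $\ACH_m(\mathcal{M})=0$ for $m<0$ and hence $\mathcal{L}^{-1}=0$. Exhaustiveness holds because $T(\mathcal{A}[1])=\bigoplus_{k\ge 0}\mathcal{A}[1]^{\otimes k}$ is a \emph{direct} sum: every class in $\ACH(\mathcal{M})$ is a finite $\F$-linear combination of bar elements $x|a_1|\cdots|a_k$, so it lies in $\mathcal{L}^N$ for $N$ the largest word length occurring, and thus $\bigcup_n\mathcal{L}^n=\ACH(\mathcal{M})$. Feeding these three facts into the cited theorem yields weak convergence to $\AHH(\mathcal{M})$.

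I do not anticipate a genuine obstacle; the content is bookkeeping once the appropriate general theorem is in hand. The one place that actually requires attention is the subcomplex check: one must use the explicit formula for $D$ — in particular the ``wrap-around'' terms in which $m_{i|1|j}$ draws its inputs from both ends of the bar word at once — rather than appealing to a black box, and use the fact that every $A_\infty$-operation has exactly one output to conclude that no term increases the letter count. It is worth flagging that $\{\mathcal{L}^n\}$ is in general \emph{not} bounded above, as there is no a priori bound on bar length; this is exactly why one obtains ``weak'' convergence rather than the stronger, automatic convergence of the bounded-filtration case in which the spectral sequence degenerates at a finite page in each bidegree.
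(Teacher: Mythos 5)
The paper records this lemma as a citation of Mescher's primer without reproducing a proof, so there is no in-paper argument to compare against; your verification --- that $D$ respects word length (each $A_\infty$-operation has a single output, so neither the wrap-around $m_{i|1|j}$ terms nor the internal $\mu_k$ terms raise the letter count), that $\mathcal{L}^{-1}=0$, and that the filtration is exhaustive --- is correct and is the standard argument for this fact.

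One piece of bookkeeping is worth tightening, since the paper applies McCleary's Theorem 3.2 in the very next result. In McCleary's usage, ``weakly convergent'' is a condition on the filtration itself: the towers of approximate cycles stabilize, $\bigcap_r Z_r^p = Z_\infty^p$ for each $p$. Boundedness below alone already forces this --- once $r$ is large enough that $\mathcal{L}^{p-r}=0$, one has $Z_r^p = \mathcal{L}^p\cap\ker D = Z_\infty^p$ --- so exhaustiveness is not actually used in proving this lemma. Exhaustiveness is instead the \emph{separate} hypothesis that the paper pairs with weak convergence to conclude, via McCleary's Theorem 3.2, that the spectral sequence converges to $\AHH(\mathcal{M})$; that is the content of the theorem that follows, not of this lemma. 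Your closing phrase ``yields weak convergence to $\AHH(\mathcal{M})$'' runs those two stages together. The three properties you establish are precisely what both steps require, so nothing essential is missing --- only the attribution of which hypothesis serves which conclusion.
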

The \emph{total filtration} on $\ACH(\mathcal{M})$ is the tensor product filtration $\mathcal{F}^\bullet=\mathcal{M}^\bullet\otimes\mathcal{L}^\bullet$ given by
\begin{align}
	\mathcal{F}^k=\sum_{i+j=k}\mathcal{M}^i\otimes\mathcal{L}^j.
\end{align}
\begin{corollary}
	The total filtration $\mathcal{F}^\bullet$ on $\ACH(\mathcal{M})$ is weakly convergent.
\end{corollary}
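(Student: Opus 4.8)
The plan is to deduce the corollary directly from the preceding lemma by comparing the total filtration $\mathcal{F}^\bullet$ with the length filtration $\mathcal{L}^\bullet$. Write the finite filtration on $\mathcal{M}$ as $0\subseteq\mathcal{M}^0\subseteq\cdots\subseteq\mathcal{M}^N=\mathcal{M}$. The first step is the sandwich
\[
\mathcal{L}^{k-N}\subseteq\mathcal{F}^k\subseteq\mathcal{L}^k\qquad(k\in\Z),
\]
with everything regarded as a subspace of $\ACH(\mathcal{M})$ and with the convention $\mathcal{L}^m=0$ for $m<0$. The right inclusion holds because any summand $\mathcal{M}^i\otimes\mathcal{L}^j$ of $\mathcal{F}^k$, with $i+j=k$, satisfies $\mathcal{M}^i\otimes\mathcal{L}^j\subseteq\mathcal{M}\otimes\mathcal{L}^j=\mathcal{L}^j\subseteq\mathcal{L}^k$; the left inclusion holds because $\mathcal{F}^k$ already contains the single summand $\mathcal{M}^N\otimes\mathcal{L}^{k-N}=\mathcal{M}\otimes\mathcal{L}^{k-N}=\mathcal{L}^{k-N}$.

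Granting the sandwich, $\mathcal{F}^\bullet$ and $\mathcal{L}^\bullet$ are cofinal in one another: substituting $k+N$ for $k$ in the left inclusion also yields $\mathcal{L}^k\subseteq\mathcal{F}^{k+N}$, so each term of either filtration is trapped between two terms of the other. It follows that $\bigcup_k\mathcal{F}^k=\bigcup_k\mathcal{L}^k=\ACH(\mathcal{M})$ and, more to the point, that the induced filtrations $\mathrm{im}\bigl(H_*\mathcal{F}^k\to\AHH(\mathcal{M})\bigr)$ and $\mathrm{im}\bigl(H_*\mathcal{L}^k\to\AHH(\mathcal{M})\bigr)$ on Hochschild homology are cofinal in one another and hence have the same colimit and the same intersection over $k$. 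Since weak convergence is a condition on the exhaustiveness of a filtration and the separatedness of the filtration it induces on the abutment, both of which are invariant under cofinal replacement, weak convergence passes from $\mathcal{L}^\bullet$ (the preceding lemma) to $\mathcal{F}^\bullet$. One should also record, though it is routine, that $\mathcal{F}^\bullet$ is a filtration by subcomplexes: the algebra part of the Hochschild differential only shortens words and leaves the module factor untouched, while the module part also shortens words and preserves each $\mathcal{M}^i$ because the $\mathcal{M}^\bullet$ are $A_\infty$-submodules.

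The only step requiring genuine care is pinning down which formulation of ``weakly convergent'' is being used --- exhaustiveness alone, exhaustiveness plus Hausdorffness of the induced filtration on the abutment, or Boardman's comparison of $E^\infty$ with the associated graded of the abutment filtration --- and confirming that in that sense it is preserved under passing to a cofinal filtration. This is standard, but it is the part I would write out in full; the rest is bookkeeping with the definition of $\mathcal{F}^\bullet$ and the properties of $\mathcal{L}^\bullet$ already in hand.
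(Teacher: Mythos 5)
Your proposal is correct and matches what the paper has in mind: the paper's one-line proof invokes precisely the two ingredients you use, namely finiteness of $\mathcal{M}^\bullet$ and weak convergence of $\mathcal{L}^\bullet$, and your sandwich $\mathcal{L}^{k-N}\subseteq\mathcal{F}^k\subseteq\mathcal{L}^k$ is the right way to make the dependence on finiteness explicit. On the step you flag as requiring care: you can bypass the question of whether weak convergence is literally a cofinality-invariant of filtrations by noting that your sandwich already shows $\mathcal{F}^\bullet$ has the same structural features that make $\mathcal{L}^\bullet$ weakly convergent --- it is an increasing filtration by subcomplexes (which you check), it is bounded below since $\mathcal{M}^i$ and $\mathcal{L}^j$ vanish for negative indices, and it is exhaustive since $\mathcal{L}^k\subseteq\mathcal{F}^{k+N}$ and $\mathcal{L}^\bullet$ is exhaustive. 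The argument underlying \cite[Lemma 5.5]{mescher2016primer} for $\mathcal{L}^\bullet$ then applies verbatim to $\mathcal{F}^\bullet$, which is a cleaner route than transporting the property across a cofinal comparison.
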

\begin{proof}
	This is an immediate consequence of the fact that $\mathcal{L}^\bullet$ is weakly convergent and $\mathcal{M}^\bullet$ is a finite filtration.
\end{proof}
\begin{remark}
	If $\mathcal{M}^\bullet$ is a descending filtration, we can prove analogous results by replacing $\mathcal{L}$ with the reflected word length filtration $\overline{\mathcal{L}}$ on $\ACH(\mathcal{M})$, defined by $\overline{\mathcal{L}}^n=\mathcal{L}^{-n}$.
\end{remark}
\begin{theorem}
	The spectral sequence of the total filtration converges to $\AHH(\mathcal{M})$.
\end{theorem}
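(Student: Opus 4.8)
The plan is to apply the standard machinery of spectral sequences of filtered complexes, with the only subtlety being the convergence statement, which is exactly what the preceding corollary has prepared us for. First I would observe that $\mathcal{F}^\bullet$ is a filtration of $\ACH(\mathcal{M})$ by subcomplexes: this requires checking that the Hochschild differential $D$ preserves $\mathcal{F}^k$, which follows because $D$ is a sum of two kinds of terms — those applying an $m_{i|1|j}$ and those applying a $\mu_{j-i}$ — and each of these is compatible with both tensor factors of the tensor-product filtration. The $\mathcal{M}^\bullet$-degree is preserved (or lowered) because the $\mathcal{M}^i$ are $A_\infty$-submodules, while the word-length component $\mathcal{L}^\bullet$ is not a grading but \emph{is} respected as a filtration by $D$ (a term $m_{i|1|j}$ consumes $i+j$ algebra inputs and produces none, strictly decreasing word length by $i+j-1\leq 0$ unless $i=j=0$, in which case it preserves it; similarly $\mu_{j-i}$ replaces $j-i\geq 1$ inputs by one, so it decreases word length). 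Hence $D(\mathcal{F}^k)\subseteq\mathcal{F}^k$.

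Next I would invoke the general construction of the spectral sequence associated to a filtered complex, giving pages $E^r$ with differentials $d^r$ of the usual bidegree, with $E^0$-page the associated graded complex $\gr_*\ACH(\mathcal{M})$ and $E^1$-page its homology. The content of the theorem is then purely a convergence claim. For this I would appeal to the classical criterion (see e.g.\ \cite{mescher2016primer} or Weibel): a spectral sequence of a filtered complex which is \emph{weakly convergent} (equivalently, exhaustive and Hausdorff in the appropriate sense, so that the filtration on homology is exhaustive and separated) converges to the homology of the total complex, here $\AHH(\mathcal{M})$. The Corollary preceding this theorem established exactly that $\mathcal{F}^\bullet$ is weakly convergent — deduced from weak convergence of the word-length filtration $\mathcal{L}^\bullet$ (the cited Lemma 5.5 of \cite{mescher2016primer}) together with finiteness of $\mathcal{M}^\bullet$. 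One then notes that the filtration $\mathcal{M}^\bullet$ being finite means there are no lim$^1$ obstructions coming from that factor, and the word-length direction is handled by the weak convergence lemma; combining, the induced filtration on $\AHH(\mathcal{M})$ is exhaustive and separated, which is precisely what weak convergence of the spectral sequence to $\AHH(\mathcal{M})$ amounts to.

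I would therefore structure the proof as: (1) verify $D$ preserves $\mathcal{F}^\bullet$ (a short check, separating the two families of terms in $D$ and using the $A_\infty$-submodule property of each $\mathcal{M}^i$ together with the word-length bookkeeping above); (2) form the associated spectral sequence; (3) cite the Corollary for weak convergence of the filtration and invoke the standard convergence theorem for weakly convergent filtered complexes to conclude convergence to $\AHH(\mathcal{M})$. The main — really the only — obstacle is the convergence bookkeeping, since the word-length "filtration" fails to be a grading with respect to $D$ in the $A_\infty$ setting (as the remark before the theorem emphasizes); but this has been entirely absorbed into the cited Lemma 5.5 and the Corollary, so in the body of the proof it reduces to a one-line citation. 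Everything else is formal.
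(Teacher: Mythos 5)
Your proposal matches the paper's argument, which is a one-line appeal to weak convergence and exhaustiveness of the total filtration together with the standard convergence theorem for filtered complexes \cite[Theorem 3.2]{McCleary2000}; your extra step of verifying that $D$ respects $\mathcal{F}^\bullet$ is a prerequisite the paper leaves implicit, and it is indeed needed before one can form the spectral sequence. One small slip in your word-length bookkeeping: a term $m_{i|1|j}$ drops word length by $i+j$ (not $i+j-1$), which is $\geq 0$ with equality exactly when $i=j=0$, and $\mu_{j-i}$ drops it by $j-i-1\geq 0$; either way the ascending filtration $\mathcal{L}^\bullet$ is preserved and your conclusion stands.
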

\begin{proof}
	The total filtration is weakly convergent and exhaustive so this follows from \cite[Theorem 3.2]{McCleary2000}.
\end{proof}
We denote the $E^r$-page of this spectral sequence by $\ACH^r(\mathcal{M})$. We will now describe $\ACH^0(\mathcal{M})$ and $\ACH^1(\mathcal{M})$ in detail. As a vector space, we have
\begin{align}
	\ACH^0(\mathcal{M})=\ACH(\gr_*\mathcal{M})=\gr_*\mathcal{M}\otimes_\Bbbk T(\mathcal{A}[1])/\sim,
\end{align}
where $\gr_*\mathcal{M}$ is the associated graded object for the filtration $\mathcal{M}^\bullet$. Note that the filtration non-decreasing part $D_0$ of the differential $D$ with respect to the filtration $\mathcal{F}^\bullet$ is given on basis elements by
\begin{align}
	\begin{split}
		D_0(x|a_1|\cdots|a_k)=&m_{0|1|0}^0(x)|a_1|\cdots|a_k\\&+\sum_{1\leq i\leq k}x|a_1|\cdots|\mu_1(a_i)|\cdots|a_k,
	\end{split}
\end{align}
where $m_{0|1|0}^0$ is the filtration non-decreasing part of $m_{0|1|0}$ with respect to the filtration $\mathcal{M}^\bullet$. As a consequence, since $\Bbbk$ is a finite direct product of copies of $\F$, the $E^1$-page of the spectral sequence is given as a vector space by
\begin{align}
	\ACH^1(\mathcal{M})=\CH(H_*(\gr_*\mathcal{M}))=H_*(\gr_*\mathcal{M})\otimes_\Bbbk T(H_*\mathcal{A}[1])/\sim.
\end{align}
Note that the differential $D_1$ on $\ACH^1(\mathcal{M})$ is given by the ordinary Hochschild differential, forgetting all higher operations, regarding $H_*\mathcal{A}[1]$ as an associative algebra with multiplication given by $(\mu_2)_*$, and $H_*(\gr_*\mathcal{M})$ as a complex of bimodules over $H_*\mathcal{A}[1]$ with differential induced by the filtration degree -1 part $m_{0|1|0}^{-1}$ of $m_{0|1|0}$, i.e.
\begin{align}
	\begin{split}
		&D_1([x]|[a_1]|\cdots|[a_k])\\&=(m_{0|1|0}^{-1})_*[x]|[a_1]|\cdots|[a_k]\\&+(m_{1|1|0})_*([a_k],[x])|[a_1]|\cdots|[a_{k-1}]+(m_{0|1|1})_*([x],[a_1])|[a_2]|\cdots|[a_k]\\&+\sum_{1\leq i\leq k-1}[x]|[a_1]|\cdots|(\mu_2)_*([a_i],[a_{i+1}])|\cdots|[a_k].
	\end{split}
\end{align}
As a consequence, the $E^2$-page of the spectral sequence of the total filtration is $\ACH^2(\mathcal{M})=\HH(H_*(\gr_*\mathcal{M}))$. Putting the above results and remarks together, we have the following.
\begin{theorem}\label{thm:GeneralSpectralSequence}
	There is a spectral sequence $\HH(H_*(\gr_*\mathcal{M}))\Rightarrow\AHH(\mathcal{M})$.\hfill\qedsymbol
\end{theorem}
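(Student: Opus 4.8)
The plan is to obtain the statement by assembling the page-by-page analysis of the total filtration $\mathcal{F}^\bullet$ on $\ACH(\mathcal{M})$ carried out above. First I would record convergence: the Corollary shows that $\mathcal{F}^\bullet$ is weakly convergent, and it is manifestly exhaustive, so by \cite[Theorem 3.2]{McCleary2000} the associated spectral sequence $\{\ACH^r(\mathcal{M})\}$ converges to $\AHH(\mathcal{M})$. It therefore suffices to identify the $E^2$-page with $\HH(H_*(\gr_*\mathcal{M}))$.

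Next I would run the identification of the first two pages exactly as indicated in the discussion preceding the statement. Decompose the Hochschild differential as $D = D_0 + D_1 + D_2 + \cdots$ according to how far each summand lowers the total filtration degree. The filtration-preserving part $D_0$ is the sum of the $\mathcal{M}^\bullet$-preserving piece $m_{0|1|0}^0$ of $m_{0|1|0}$ on the module slot and of $\mu_1$ acting in each tensor factor; since $\Bbbk$ is a finite product of copies of $\F$, taking homology with respect to $D_0$ commutes with the tensor product and with the quotient defining $\ACH$, so $\ACH^1(\mathcal{M}) \cong \CH(H_*(\gr_*\mathcal{M})) = H_*(\gr_*\mathcal{M}) \otimes_\Bbbk T(H_*\mathcal{A}[1])/\sim$. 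The part $D_1$ lowering the total filtration degree by exactly one consists of the remaining piece $m_{0|1|0}^{-1}$, the one-sided actions $m_{1|1|0}$ and $m_{0|1|1}$, and the product $\mu_2$ inserted between consecutive tensor factors; I would check that on $\ACH^1(\mathcal{M})$ this induces precisely the classical Hochschild differential of the associative algebra $H_*\mathcal{A}[1]$ (with product $(\mu_2)_*$) with coefficients in the differential bimodule $H_*(\gr_*\mathcal{M})$ (with internal differential $(m_{0|1|0}^{-1})_*$ and actions $(m_{1|1|0})_*$ and $(m_{0|1|1})_*$), while every other term of $D$ --- the maps $m_{i|1|j}$ with $i+j\geq 2$, the higher products $\mu_k$ with $k\geq 3$, and the subleading pieces of $m_{0|1|0}$, $m_{1|1|0}$, $m_{0|1|1}$ --- lowers the total filtration degree by at least two and hence contributes nothing to $D_1$. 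Passing to homology in $D_1$ then gives $\ACH^2(\mathcal{M}) = \HH(H_*(\gr_*\mathcal{M}))$, and combining this with the convergence statement yields the asserted spectral sequence.

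Since the relevant filtration and its pages have already been set up, there is no serious obstacle; the one step that requires genuine attention, rather than a direct appeal to the earlier results, is the verification that $D_1$ really is the ordinary Hochschild differential --- in particular the bookkeeping that separates $m_{0|1|0}$ into its $\mathcal{M}^\bullet$-preserving part, which dies on passage to $\ACH^1(\mathcal{M})$, and its filtration-degree-$(-1)$ part, which becomes the bimodule differential on $H_*(\gr_*\mathcal{M})$. This is a routine unwinding of the definition $\mathcal{F}^k = \sum_{i+j=k}\mathcal{M}^i\otimes\mathcal{L}^j$, using that $\mathcal{L}^\bullet$ is the word-length filtration so that $m_{i|1|j}$ drops word length by $i+j$ and $\mu_k$ drops it by $k-1$, together with the hypothesis that the $A_\infty$ operations respect the module filtration $\mathcal{M}^\bullet$; as we work in characteristic $2$ there are no signs to track, and the theorem then follows formally.
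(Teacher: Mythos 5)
Your proof is correct and takes the same route as the paper, which states the theorem with a terminal $\qedsymbol$ and leaves the proof to be the assembly of the immediately preceding material: the corollary on weak convergence, the convergence theorem for the total filtration, and the explicit identifications of $\ACH^0$, $D_0$, $\ACH^1$, $D_1$, and $\ACH^2$. Your additional bookkeeping — separating $m_{0|1|0}$ into its $\mathcal{M}^\bullet$-preserving piece (absorbed into $D_0$) and its degree $-1$ piece (becoming the bimodule differential on $H_*(\gr_*\mathcal{M})$), and checking that $m_{i|1|j}$ with $i+j\ge 2$, $\mu_k$ with $k\ge 3$, and subleading parts of the unary and binary actions all drop total filtration degree by at least two — is exactly the verification the paper's discussion of $D_1$ relies on, so this is a faithful unpacking rather than a different argument.
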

\subsection{Type-$D$ structures}
\begin{definition}
	Let $\mathcal{A}$ be a differential algebra over a ring $\Bbbk$ with differential $\mu_1$ and multiplication map $\mu_2$. A (left) type $D$ structure over $\mathcal{A}$ consists of a graded $\Bbbk$-module $N$ equipped with a $\Bbbk$-linear morphism $\delta^1:N\to(\mathcal{A}\otimes_\Bbbk N)[1]$ satisfying the compatibility condition
	\begin{align}
		(\mu_2\otimes\id_N)\circ(\id_{\mathcal{A}}\otimes\delta^1)\circ\delta^1+(\mu_1\otimes\id_N)\circ\delta^1=0,
	\end{align}
	which can be represented graphically as
	\begin{align}
		\begin{tikzcd}[ampersand replacement=\&,column sep=0.35cm]
			\& {}\arrow[d,dashed]\\
			\& \delta^1\arrow[dd,dashed]\arrow[dl,bend right]\\
			\mu_1\arrow[d]\& \\
			{}\& {}\\
		\end{tikzcd}+\begin{tikzcd}[ampersand replacement=\&,column sep=0.35cm]
			\& \arrow[d,dashed]\\
			\& \delta^1\arrow[d,dashed]\arrow[ddl,bend right=23]\\
			\& \delta^1\arrow[dl]\arrow[dd,dashed]\\
			\mu_2\arrow[d]\& \\
			{}\& {}\\
		\end{tikzcd}=0.
	\end{align}
\end{definition}
A type-$D$ structure homomorphism is a $\Bbbk$-module map $f:N_1\to\mathcal{A}\otimes N_2$ satisfying the equation
\begin{align}
	(\mu_2\otimes\id_{N_2})\circ(\id_\mathcal{A}\otimes f)\circ\delta_{N_1}^1+(\mu_2\otimes\id_{N_2})\circ(\id_\mathcal{A}\otimes\delta_{N_2}^1)\circ f+(\mu_1\otimes\id_{N_2})\circ f=0
\end{align}
and a homotopy between type-$D$ structure homomorphisms $f,g:N_1\to\mathcal{A}\otimes N_2$ is a $\Bbbk$-module homomorphism $h:N_1\to(\mathcal{A}\otimes N_2)[-1]$ satisfying
\begin{align}
	\begin{split}
		f-g=&(\mu_2\otimes\id_{N_2})\circ(\id_{\mathcal{A}}\otimes h)\circ\delta_{N_1}^1+(\mu_2\otimes\id_{N_2})\circ(\id_{\mathcal{A}}\otimes\delta_{N_2}^1)\circ h\\&+(\mu_1\otimes\id_{N_2})\circ h.
	\end{split}
\end{align}
\begin{example}
	Suppose that $X$ is a differential $\mathcal{A}$-module which is free with basis $\{x_i\}$ and has differential determined by
	\begin{align}
		\partial x_i=\sum_ja_{ij}x_j.
	\end{align}
	Let $N=\mathrm{span}_\Bbbk\{x_i\}$.	Then the map $\delta^1:N\to(\mathcal{A}\otimes_\Bbbk N)[1]$ defined on basis elements by
	\begin{align}
		\delta^1(x_i)=\sum_ja_{ij}\otimes x_j
	\end{align}
	makes the pair $(N,\delta^1)$ into a type $D$ structure. Any dg-module homomorphism $f:X_1\to X_2$ induces a corresponding map of type-$D$ structures $f^1:N_1\to\mathcal{A}\otimes N_n$ and the converse is true for type-$D$ structures obtained in the above manner. Similarly, homotopies of such maps are equivalent to homotopies of type-$D$ structures.
\end{example}
On the other hand, if $(N,\delta^1)$ is a left type-$D$ structure over $\mathcal{A}$, then $\mathcal{A}\otimes_\Bbbk N$ is a left differential $\mathcal{A}$-module with differential
\begin{align}
	m_1=(\mu_2\otimes\id_N)\circ(\id_{\mathcal{A}}\otimes\delta^1)+\mu_1\otimes\id_N
\end{align}
and module structure map $m_2=\mu_2\otimes\id_N$. As in the above example, type-$D$ homomorphisms and homotopies induce chain homomorphisms and chain homotopies, respectively.
\begin{definition}
	Given a left type $D$ structure $(N,\delta^1)$ over a dg-algebra $\mathcal{A}$, there are higher structure maps $\delta^k:N\to(\mathcal{A}^{\otimes k}\otimes_\Bbbk N)[k]$ defined recursively by
	\begin{align}
		\delta^k=(\id_{\mathcal{A}^{\otimes(k-1)}}\otimes\delta^1)\circ\delta^{k-1}.
	\end{align}
	A type-$D$ structure $(N,\delta^1)$ is \emph{operationally bounded} --- or just \emph{bounded} --- if $\delta^k=0$ for all $k$ sufficiently large and \emph{unbounded} otherwise.
\end{definition}
\begin{remark}
	It is frequently useful, in the case that a type $D$ structure $(N,\delta^1)$ is constructed from a differential $\mathcal{A}$-module with a finite $\mathcal{A}$-basis $\{x_i\}$, to represent it as a directed graph $\Gamma:=\Gamma_{(N,\delta^1)}$ with vertices $x_i$ and one edge $x_i\to x_j$ labeled by $a_{ij}\in\mathcal{A}$ for each $i$ and $j$ --- where, by convention, an unlabeled arrow corresponds to $a_{ij}=1$. Framed in this way, it is easy to see that $(N,\delta^1)$ is operationally bounded if and only if the corresponding graph $\Gamma$ contains no directed cycles, except possibly those in which there are successive edges $x_i\stackrel{a_{ij}}{\to}x_j$ and $x_j\stackrel{a_{jk}}{\to}x_k$ such that $a_{ij}\otimes a_{jk}=0\in \mathcal{A}\otimes_\Bbbk \mathcal{A}$.
\end{remark}
\begin{example}
	Let $\mathcal{A}$ be the associative $\F$-algebra $\F[a]/(a^2)$ with zero differential and consider the free $\mathcal{A}$-module $X=\mathcal{A}\langle x\rangle$ with differential given by $\partial x=ax$. Then the corresponding type $D$ structure $(N,\delta^1)$ with $N=\F\langle x\rangle$ is the one whose associated directed graph is
	\begin{align*}
		\Gamma=\begin{tikzcd}
			x\arrow[loop right,out=35,in=-35,looseness=10,"a"]
		\end{tikzcd}
	\end{align*}
	i.e. $\delta^k(x)=a\otimes\stackrel{k}{\cdots}\otimes a\otimes x$. In particular, this an example of an unbounded type $D$ structure.
\end{example}
This graphical interpretation of type-$D$ structures is especially useful for efficiently computing complexes of module homomorphisms between them: if $(N_1,\partial_1)$ and $(N_2,\partial_2)$ are dg-modules over a differential algebra $\mathcal{A}$, then the chain complex $\Mor^{\mathcal{A}}(N_1,N_2)$ of $\mathcal{A}$-module homomorphisms $N_1\to N_2$ is naturally a complex of modules over the ground ring $\Bbbk$ when equipped with the differential $\partial f=\partial_2\circ f+f\circ\partial_1$.

If we instead regard $N_1$ and $N_2$ as type-$D$ structures by specifying a preferred $\mathcal{A}$-basis, we may equivalently treat $\Mor^{\mathcal{A}}(N_1,N_2)$ as the space of $\Bbbk$-linear maps $N_1\to\mathcal{A}\otimes_\Bbbk N_2$ with differential as given in (\ref{line:morphismdifferential}).
\begin{definition}\label{MorphismGraph}
	If $\Gamma_i$, $i=1,2$, is the graph for the type-$D$ structure associated to $(N_i,\partial_i)$ and $f:N_1\to N_2$ is a module homomorphism with
	\begin{align}
		f(x_i)=\sum_{j}f_{ij}y_j,
	\end{align}
	we may form a new graph $\Gamma_f$ from $\Gamma_1\sqcup\Gamma_2$ by adding a new edge $x_i\stackrel{f_{ij}}{\to}y_j$ for each nonzero term in $f(x_i)$ for all $i$.
\end{definition}
This new graph is the graph for the mapping cone of $f$ and represents a type-$D$ structure if and only if $f$ is a chain map. The morphism $\partial f$ can then be computed by summing over all length 2 paths in $\Gamma_f$ which contain one of these new edges, in the sense that a path of the form $x_{h}\stackrel{a}{\to}x_{i}\stackrel{f_{ij}}{\to}y_j$ contributes a summand of $(\partial f)(x_h)$ of the form $af_{ij}y_j$ and a path of the form $x_i\stackrel{f_{ij}}{\to}y_j\stackrel{b}{\to}y_k$ contributes a summand of the form $f_{ij}y_k$.
\begin{example}
	Consider the algebra $\mathcal{A}(\mathbb{T}^2)$ associated to the torus by bordered Floer homology and let $(N,\delta^1)$ be the left type-$D$ structure over $\mathcal{A}(\mathbb{T}^2)$ with $N=\F\langle x,y,z\rangle$, idempotents $x=\iota_1x$, $y=\iota_0y$, $z=\iota_0z$, and associated graph
	\begin{align}
		\begin{tikzcd}[ampersand replacement=\&,column sep=0.3cm]
			y\arrow[rr]\arrow[dr,"\rho_3"'] \& \& z\\
			\& x\arrow[ur,"\rho_2"'] \&
		\end{tikzcd}.
	\end{align}
	Consider the endomorphism $f:N\to N$ given by $f(z)=\rho_{12}y$ and $f(x)=f(y)=0$. Then
	\begin{align}
		\Gamma_f=\,\,\begin{tikzcd}[ampersand replacement=\&,column sep=0.3cm]
			y\arrow[rr]\arrow[dr,"\rho_3"'] \& \& z\arrow[rr,"\rho_{12}"] \& \& y\arrow[rr]\arrow[dr,"\rho_3"'] \& \& z\\
			\& x\arrow[ur,"\rho_2"'] \& \& \& \& x\arrow[ur,"\rho_2"'] \&
		\end{tikzcd}
	\end{align}
	and we can read off $\partial f$ from this graph as
	\begin{align}
		\partial f=[y\mapsto\rho_{12}y]+[z\mapsto\rho_{123}x]+[z\mapsto\rho_{12}z].
	\end{align}
	Note that the path $x\stackrel{\rho_2}{\to}z\stackrel{\rho_{12}}{\to}y$ does not contribute to $\partial f$ since $\rho_2\rho_{12}=0$.
\end{example}
\subsubsection{Pairing $A_\infty$-modules and type-$D$ structures}
Let $\mathcal{A}$ be a differential algebra, and suppose that $M$ is a right $A_\infty$-module and $N$ is a left type-$D$ structure, both over $\mathcal{A}$. Then the box tensor product $M\boxtimes N:=M\otimes_{\Bbbk}N$ becomes a chain complex when endowed with the box tensor differential
\begin{align}
	\partial^\boxtimes=\begin{tikzcd}[column sep=0.35cm,ampersand replacement=\&]
		{}\arrow[dd,densely dashed] \& {}\arrow[d,densely dashed]\\
		{} \& \delta_N\arrow[dl,Rightarrow]\arrow[dd,densely dashed]\\
		m_M\arrow[d,densely dashed] \& {}\\
		{} \& {}
	\end{tikzcd}
\end{align}
We recall here for later use that there is an isomorphism of chain complexes $\Mor^{\mathcal{A}}(N_1,N_2)\cong\overline{N}_1\boxtimes\mathcal{A}\boxtimes N_2$, where $\overline{N}_1$ is the linear dual of $N_1$ and we regard $\mathcal{A}$ as an $A_\infty$-bimodule.
\subsection{Filtered type-$\mathit{DA}$ bimodules}
Let $\mathcal{A}$ and $\mathcal{B}$ be differential algebras over ground rings $\Bbbk$ and $\mathbbm{j}$. A type-$\mathit{DA}$ bimodule over $(\mathcal{A},\mathcal{B})$ is a graded $(\Bbbk,\mathbbm{j})$-bimodule $M$ equipped with degree 0 $(\Bbbk,\Bbbk')$-linear maps $\delta^1_{1+j}:M\otimes\mathcal{B}[1]^{\otimes j}\to\mathcal{A}[1]\otimes M$ which satisfy the following structure relation. Assemble the $\delta_{1+j}^1$ into a single map $\delta^1_M=\sum_j\delta_j^1$ and inductively define $\delta_M^i:M\otimes T(\mathcal{B}[1])\to\mathcal{A}[1]^{\otimes i}\otimes M$ by $\delta_M^0=\id_M$ and $\delta_M^{i+1}=(\id\otimes\delta^1)\circ(\delta^i\otimes\id)\circ(\id_M\otimes\Delta)$, where $\Delta:T(\mathcal{B}[1])\to T(\mathcal{B}[1])\otimes T(\mathcal{B}[1])$ is the canonical comultiplication map. Now let $\delta_M=\sum\limits_{i=0}^\infty\delta_M^i$ --- or, graphically,
\begin{align}
	\delta_M=\begin{tikzcd}[column sep=0.35cm,ampersand replacement=\&]
		{} \& {}\arrow[d,densely dashed] \& {}\arrow[dl,Rightarrow]\\
		\& \delta_M\arrow[d,densely dashed]\arrow[dl,Rightarrow] \&\\
		{} \& {} \& {}
	\end{tikzcd}=\begin{tikzcd}[column sep=0.35cm,ampersand replacement=\&]
	{}\arrow[dd,densely dashed]\\ \\ {}
\end{tikzcd}\oplus\begin{tikzcd}[column sep=0.35cm,ampersand replacement=\&]
{} \& {}\arrow[d,densely dashed] \& {}\arrow[dl,Rightarrow]\\
\& \delta_M\arrow[d,densely dashed]\arrow[dl] \&\\
{} \& {} \& {}
\end{tikzcd}\oplus\begin{tikzcd}[column sep=0.35cm,ampersand replacement=\&]
{} \& {} \& {}\arrow[dd,densely dashed] \& {}\arrow[d,Rightarrow]\\
{} \& {} \& {} \& \Delta\arrow[dl,Rightarrow]\arrow[ddl,Rightarrow]\\
\& \& \delta_M^1\arrow[d,densely dashed]\arrow[ddll] \&\\
\& \& \delta_M^1\arrow[d,densely dashed]\arrow[dl] \& \\
{} \& {} \& {} \& {}
\end{tikzcd}\oplus\cdots
\end{align}
--- then the structure relation for $\delta_M$ is given graphically by
\begin{align}
	\begin{tikzcd}[column sep=0.35cm,ampersand replacement=\&]
		{} \& {}\arrow[dd,densely dashed] \& {}\arrow[d,Rightarrow]\\
		{} \& {} \& \overline{D}^{\mathcal{B}}\arrow[dl,Rightarrow]\\
		{} \& \delta_M\arrow[d,densely dashed]\arrow[dl,Rightarrow] \& {}\\
		{} \& {} \& {}
	\end{tikzcd}+\begin{tikzcd}[column sep=0.35cm,ampersand replacement=\&]
	{} \& {}\arrow[d,densely dashed] \& {}\arrow[dl,Rightarrow]\\
	{} \& \delta_M\arrow[dl,Rightarrow]\arrow[dd,densely dashed] \& {}\\
	\overline{D}^{\mathcal{A}}\arrow[d,Rightarrow] \& {} \& {}\\
	{} \& {} \& {}
\end{tikzcd}=0.
\end{align}
If $M$ is a type-$\mathit{DA}$ bimodule over $(\mathcal{A},\mathcal{B})$ and $N$ is a left type-$D$ (resp. type-$\mathit{DA}$ bimodule) structure over $\mathcal{B}$ (resp. $(\mathcal{B},\mathcal{C})$), then $M\boxtimes N$ is naturally a left type-$D$ structure over $\mathcal{A}$ with structure map
\begin{align}
	\delta^1_{M\boxtimes N}=\begin{tikzcd}[column sep=0.35cm,ampersand replacement=\&]
		{} \& {}\arrow[dd,densely dashed] \& {}\arrow[d,densely dashed]\\
		{} \& {} \& \delta_N\arrow[dl,Rightarrow]\arrow[dd,densely dashed]\\
		{} \& \delta_M^1\arrow[d,densely dashed]\arrow[dl] \& {}\\
		{} \& {} \& {}
	\end{tikzcd}
\end{align}
(resp. the modification of this diagram in which $\delta_N$ has an input arrow $\Leftarrow$ from $T(\mathcal{C}[1])$).
\begin{definition}
	Let $S$ be a finite partially ordered set. An $S$-filtered type-$\mathit{DA}$ bimodule over $(\mathcal{A},\mathcal{B})$ is a collection $\{M^s\}_{s\in S}$ of type-$\mathit{DA}$ bimodules together with a collection of morphisms $\{F^{s<t}:M^s\to M^t[1]\}_{s,t\in S|s<t}$ satisfying
	\begin{align}
		dF^{s<u}=\sum_{s<t<u}F^{t<u}\circ F^{s<t}
	\end{align}
	whenever $s<u$, where $d$ is the morphism spaces differential. Given an $S$-filtered type-$\mathit{DA}$ bimodule $\{M^s,F^{s<t}\}$, its \emph{total bimodule} is the type-$\mathit{DA}$ bimodule $(M,\delta^1_M)$ with $M=\bigoplus_{s\in S}M^s$ and $\delta^1_M$ given by
	\begin{align}
		\delta_M^1|_{M^s\otimes T(\mathcal{A}[1])}=\delta^1_s+\sum_{s<t}F^{s<t}.
	\end{align}
\end{definition}
For example, if $f:M^0\to M^1$ is a bimodule morphism, then the mapping cone
\begin{align}
	\mathrm{Cone}(f)=\left(M^0[1]\oplus M^1,\begin{pmatrix}
		\delta_0^1 & 0\\
		f & \delta_1^1
	\end{pmatrix}\right)
\end{align}
is the total bimodule of the $\{0,1\}$-filtered type-$\mathit{DA}$ bimodule $\{M^0[1],M^1,f^{0<1}=f\}$. We refer the reader to \cite{LOTSpectral1} for further details on filtered $A_\infty$-, type-$\mathit{D}$ and type-$\mathit{DA}$ structures. 
\subsection{Khovanov's arc algebras and bimodules}
In \cite{KhovanovCategorificationJones2000}, Khovanov associates to any oriented link diagram $\mathbb{L}$ a bigraded chain complex $\CKh(\mathbb{L})$ whose homology $\Kh(L)$ is an isotopy invariant of the link $L$ in $S^3$ represented by $\mathbb{L}$ and which is functorial with respect to link cobordisms in $S^3\times I$, and which recovers the Jones polynomial as its graded Euler characteristic
\begin{align}
	J(L)=\chi_q(\Kh(L))=\sum_{i,j\in\Z}(-1)^iq^j\dim\Kh^{i,j}(L),
\end{align}
where $J$ is normalized so that $J(\bigcirc)=q+q^{-1}$. This chain complex may be computed by first numbering the each crossing
\begin{center}
	\includegraphics[scale=0.66666]{./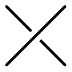}
\end{center}
in the diagram and then replacing it with the formal bigraded complex
\begin{align}
	h^{-n_-}q^{n_+-2n_-}\left(\raisebox{-0.33333cm}{\includegraphics[scale=0.66666]{./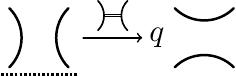}}\,\right)
\end{align}
where the ``differential'' is a saddle cobordism, the underlined term sits in homological grading 0, and a power of $h$ denotes a shift in the homological grading by that power and a power of $q$ represents a shift in the \emph{quantum} (or internal) grading. Here, $n_\pm$ is either 1 or 0 depending on whether the crossing is positive or negative. The first term in this complex is called the 0-resolution of the crossing and the second term is called the 1-resolution. One thus obtains a cubical diagram modeled on $\bm{2}^c$, where $\bm{2}$ is the poset $0\to 1$ and $c$ is the number of crossings in the diagram $\mathbb{L}$, whose vertices are formally $q$-graded planar configurations of circles labeled with a vertex of $\bm{2}^c$ and whose edges are cobordisms consisting of either a single merger of two circles or a split of one circle into two. One then applies to this cube the topological quantum field theory associated to the commutative Frobenius algebra $V=\F[x]/(x^2)$ with comultiplication defined by $1\mapsto 1\otimes x+x\otimes 1$ and $x\mapsto x\otimes x$, with quantum gradings $\gr_q(1)=1$ and $\gr_q(x)=-1$, and then takes direct sums along like homological gradings to obtain $\CKh(\mathbb{L})$.
\begin{remark}
	We follow Khovanov's original bigrading convention, rather than that used in \cite{KhovanovFunctorTangle2002}, in order to remain consistent with \cite{Rozansky}, \cite{Willis}, and \cite{MMSW}.
\end{remark}
\begin{definition}
	Given $k\geq 1$, let $[k]=\{1,\dots,k\}$. A \emph{matching} on $2n$ points is a 2-to-1 function $a:[2n]\to[n]$. A matching $a$ is said to be \emph{crossingless} if $i<j<k<\ell$ and $a(i)=a(k)$ implies that $a(j)\neq a(\ell)$.
\end{definition}
The condition of being crossingless is best understood graphically: we may represent a matching $a$ on $2n$ points diagrammatically by the unique collection of semicircular arcs in $[0,\infty)\times\R$ with ends on $\{0\}\times[2n]$ with the property that two points $(0,i),(0,j)\in\{0\}\times[2n]$ are the endpoints of a single arc if and only if $a(i)=a(j)$. A crossing $a$ is then crossingless if and only if the intersection of any two arcs in the corresponding diagram is empty (see Figure \ref{fig:CrossinglessMatchingsOn4Points} for the case $n=2$).
\begin{figure}
	\begin{center}
		\includegraphics[scale=0.5]{./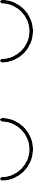}\hspace{1.5cm}\includegraphics[scale=0.5]{./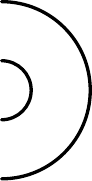}
	\end{center}
	\caption{The two crossingless matchings on 4 points are uniquely determined by the above diagrams.}
	\label{fig:CrossinglessMatchingsOn4Points}
\end{figure}

Let $\mathfrak{C}_n$ be the set of crossingless matchings on $2n$ points. Given $a,b\in\mathfrak{C}_n$, let $a^!b$ be the configuration of circles in the plane obtained by reflecting the semicircular arc diagram for $a$ across $\{0\}\times\R$ to obtain a diagram in $(-\infty,0]\times\R$ and then gluing this to the diagram for $b$ along $\{0\}\times\R$. The \emph{arc algebra on $2n$ points} is the associative algebra $H_n$ given as a vector space by
\begin{align}
	H_n=q^{-n}\bigoplus_{a,b\in\mathfrak{C}_n}\CKh(a^!b),
\end{align}
i.e. $H_n$ is spanned by planar configurations of circles of the form $a^!b$ in which each circle has been equipped with a label by either $1$ or $x$. Multiplication on $H_n$ is determined by the maps $\CKh(a^!b)\otimes_\F\CKh(b^!c)\to\CKh(a^!c)$ given by the cobordism $a^!b\sqcup b^!c\to a^!c$ given by $a^!\times I$ and $c\times I$ on $a$ and $c$, respectively, and on $b\sqcup b^!$ by the \emph{minimal saddle cobordisms} $b\sqcup b^!\to\id_{2n}$, where $\id_{2n}$ is the $2n$-stranded identity braid, consisting of a single 2-dimensional 1-handle addition merging each arc in $b$ with the corresponding arc in $b^!$ (see Figure \ref{fig:saddle} for an example).

\begin{figure}
	\begin{center}
		$b=\,\,\raisebox{-0.675cm}{\includegraphics[scale=0.5]{./figs/C2/C2_2.pdf}}$\hspace{1.5cm}$\raisebox{-0.85cm}{\includegraphics[scale=0.5]{./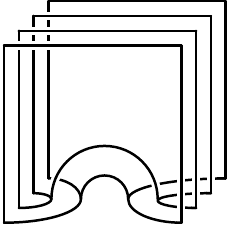}}\,\,:b\sqcup b^!\to\id_4$
	\end{center}
	\caption{A crossingless matching $b$ and the corresponding minimal saddle cobordism.}
	\label{fig:saddle}
\end{figure}

Now, given an $(m,n)$-tangle diagram $\mathbb{T}$ --- a tangle diagram with $2m$ left-endpoints and $2n$ right-endpoints --- Khovanov defines a complex of $(H_m,H_n)$-bimodules $\CKh(\mathbb{T})$ by
\begin{align}
	\CKh(\mathbb{T})=q^{-n}\bigoplus_{a\in\mathfrak{C}_m,b\in\mathfrak{C}_n}\CKh(a^!\mathbb{T}b),
\end{align}
where $a^!\mathbb{T}b$ is the link diagram obtained from $\mathbb{T}$ by gluing the mirrored diagram for $a$ on the left and the diagram for $b$ on the right. The left- and right-actions are given, as before, by maps induced by minimal saddle cobordisms. The homotopy type of this complex is an isotopy invariant of the tangle $T$ in $I\times D^2$ represented by $\mathbb{T}$ --- we refer the reader to \cite{KhovanovFunctorTangle2002} for further details.

In \cite{Rozansky}, Rozansky shows that, if $\mathbb{T}$ is a $(2n,2n)$-tangle diagram representing a tangle $T\subset I\times D^2$, then the (negatively graded) Hochschild homology $\HH_*(\CKh(\mathbb{T}))$ of $H_n$ with coefficients in the bimodule $\CKh(\mathbb{T})$ is an invariant of the link $L\subset S^1\times S^2$ given by taking the closure of $T$ to obtain a link in $S^1\times D^2$ and then regarding $D^2$ as a disk in $S^2$ to allow us to view this closure in $S^1\times S^2$. Further, he shows that if $\bm{\Phi}_n$ is the full-twist braid on $2n$ strands, then the complexes $\CKh(\bm{\Phi}_n^k)$ stabilize to give a projective resolution of $H_n$ as an $H_n\otimes_\F H_n^{\mathrm{op}}$-module as $k\to\infty$, so this invariant can be computed as the homology of a stable colimit of Khovanov complexes. In particular, if $\mathbb{L}_k$ is the link diagram obtained by taking the planar closure of $\bm{\Phi}_n^k\mathbb{T}$, then, for any homological lower bound $a$, there is some $k_a$ such that $\HH_*(\CKh(\mathbb{T}))\cong\Kh^*(\mathbb{L}_k)$ in any homological grading $*\geq a$ --- we refer the reader to \cite{Willis} for more details.
	\section{Endomorphism Algebras of Type-$D$ structures}
	Let $\bm{\nu}=\{(Y_i,\phi_i)\}_{i=1,\dots,n}$ be a finite collection of bordered 3-manifolds, all of which have boundary parametrized by the same pointed matched circle $\mathcal{Z}$. Define a differential algebra $\mathfrak{h}_{\bm{\nu}}$ over $\F$ by
\begin{align}
	\mathfrak{h}_{\bm{\nu}}=\End^{\mathcal{A}(-\mathcal{Z})}\left(\,\bigoplus_{i=1}^n\widehat{\CFD}(Y_i)\right)
\end{align}
with the usual morphism spaces differential and multiplication given by $\circ_2$.
\begin{remark}
	Only the subalgebra $\mathcal{A}(-\mathcal{Z},0)$ of $\mathcal{A}(-\mathcal{Z})$ acts nontrivially on the type-$D$ structures $\widehat{\CFD}(Y_i)$ (see \cite[Section 6.1]{LOTBHFH}) so one may freely replace $\mathcal{A}(-\mathcal{Z})$ with $\mathcal{A}(-\mathcal{Z},0)$ when computing $\mathfrak{h}_{\bm{\nu}}$.
\end{remark}

It will be convenient for us to regard $\mathcal{A}(-\mathcal{Z})$ as an $A_\infty$-algebra and the endomorphism complex $\End^{\mathcal{A}}(N)$ of a type-$D$ structure $N$ over an $A_\infty$-algebra $\mathcal{A}$ as the space of as the space of $\Bbbk$-linear maps $f:N\to\mathcal{A}\otimes_\Bbbk N$. This will allow us to make full use of the graphical notation of \cite{LOTBHFH,LOTBimodules2015,LOTMorphism}. In this notation, the differential of $f\in\mathfrak{h}_{\bm{\nu}}$ is given by
\begin{align}\label{line:morphismdifferential}
	\partial f:=\circ_1(f)=\begin{tikzcd}[ampersand replacement=\&,column sep=0.35cm,row sep=0.35cm]
		\& \arrow[d,dashed] \\
		\& \delta\arrow[d,dashed]\arrow[ddl,Rightarrow,bend right=24]\\
		\& f\arrow[d,dashed]\arrow[dl] \\
		\mu\arrow[d] \& \delta\arrow[d,dashed]\arrow[l,Rightarrow]\\
		{} \& {}
	\end{tikzcd}
\end{align}
and multiplication is the opposite composition map $\circ_2$, given graphically by 
\begin{align}
	fg=\circ_2(f,g)=\begin{tikzcd}[ampersand replacement=\&,column sep=0.35cm,row sep=0.35cm]
		\& \arrow[d,dashed]\\
		\& \delta\arrow[d,dashed]\arrow[dl,Rightarrow,bend right=42]\\
		\otimes\arrow[d,Rightarrow] \& f\arrow[d,dashed]\arrow[l] \\
		\otimes\arrow[d,Rightarrow] \& \delta\arrow[d,dashed]\arrow[l,Rightarrow]
		\\
		\otimes\arrow[d,Rightarrow] \& g\arrow[d,dashed]\arrow[l]\\
		\mu\arrow[d] \& \delta\arrow[d,dashed]\arrow[l,Rightarrow]\\
		{} \& {}
	\end{tikzcd}.
\end{align}
Since $\mathcal{A}(-\mathcal{Z})$ is a differential algebra, all of the higher composition maps
\begin{align}
	\circ_k(f_1,\dots,f_k)=\begin{tikzcd}[ampersand replacement=\&,column sep=0.35cm,row sep=0.35cm]
		\& \arrow[d,dashed]\\
		\& \delta\arrow[d,dashed]\arrow[dl,Rightarrow,bend right=42]\\
		\otimes\arrow[d,Rightarrow] \& f_1\arrow[d,dashed]\arrow[l]\\
		\otimes\arrow[d,Rightarrow] \& \delta\arrow[d,dashed]\arrow[l,Rightarrow]\\
		\otimes\arrow[d,Rightarrow] \& f_2\arrow[d,dashed]\arrow[l]\\
		\raisebox{0.2cm}{\vdots}\arrow[d,Rightarrow] \& \raisebox{0.2cm}{\vdots}\arrow[d,dashed]\\
		\otimes\arrow[d,Rightarrow] \& f_k\arrow[d,dashed]\arrow[l]\\
		\mu\arrow[d] \& \delta\arrow[d,dashed]\arrow[l,Rightarrow]\\
		{} \& {}
	\end{tikzcd}
\end{align}
necessarily vanish so, since the maps $\circ_k$ satisfy the $A_\infty$-relation, it follows that $\mathfrak{h}_{\bm{\nu}}$ is an associative algebra and it is straightforward to check that $\partial(fg)=(\partial f)g+f\partial g$. In other words $\circ_2$ and $\circ_1$ make $\mathfrak{h}_{\bm{\nu}}$ into a differential algebra. Note, additionally, that, in this context, $\circ_2$ reduces to the ordinary opposite composition map $f\otimes g\mapsto g\circ f$ on taking modulifications of our type-$D$ structures.

Note that, a priori, $\mathfrak{h}_{\bm{\nu}}$ depends on a choice of bordered Heegaard diagram for each $(Y_i,\phi_i)$. However, the following result tells us that we may safely ignore this dependence.
\begin{lemma}\label{QIalgLemma}
	Let $N$ and $N'$ be homotopy equivalent left type-$D$ structures over an $A_\infty$-algebra $\mathcal{A}$. Then there is a quasi-isomorphism of $A_\infty$-algebras
	\begin{align}
		\End^{\mathcal{A}}(N)\simeq\End^{\mathcal{A}}(N').
	\end{align}
\end{lemma}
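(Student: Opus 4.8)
The plan is to exhibit the quasi-isomorphism concretely using the homotopy equivalence data for $N$ and $N'$ together with the identification $\End^{\mathcal{A}}(N_1,N_2)\cong\overline{N}_1\boxtimes\mathcal{A}\boxtimes N_2$ recalled at the end of Subsection 2.3. First I would fix type-$D$ homotopy equivalences $\phi:N\to\mathcal{A}\otimes N'$ and $\psi:N'\to\mathcal{A}\otimes N$, together with homotopies $H:N\to(\mathcal{A}\otimes N)[-1]$ and $H':N'\to(\mathcal{A}\otimes N')[-1]$ witnessing $\psi\circ\phi\simeq\id_N$ and $\phi\circ\psi\simeq\id_{N'}$ in the dg-category of type-$D$ structures. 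From these I would build the obvious candidate map
\begin{align}
	\Phi\colon\End^{\mathcal{A}}(N)\to\End^{\mathcal{A}}(N'),\qquad f\mapsto\phi\circ f\circ\psi,
\end{align}
where $\circ$ denotes the composition of type-$D$ morphisms (i.e. $\circ_2$ in the notation of the section, read in the appropriate order). Because $\phi$ and $\psi$ are closed morphisms (chain maps) in the morphism complexes, a direct diagram chase with the graphical calculus for $\partial=\circ_1$ and composition $\circ_2$ shows $\partial(\Phi f)=\Phi(\partial f)$, so $\Phi$ is a chain map.

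Next I would check that $\Phi$ is multiplicative up to the higher structure, i.e. that it is a strict algebra homomorphism in this setting: since $\mathcal{A}=\mathcal{A}(-\mathcal{Z})$ (or more generally any dg-algebra, which is all we need here) all the higher composition maps $\circ_k$ on endomorphism complexes of type-$D$ structures vanish, as was observed just before the lemma, so both $\End^{\mathcal{A}}(N)$ and $\End^{\mathcal{A}}(N')$ are honest differential algebras and it suffices to verify $\Phi(fg)=(\Phi f)(\Phi g)$ up to a chain homotopy built from $H'$, together with the (a priori possibly nonzero) correction terms $\Phi_k$ making $\Phi$ into an $A_\infty$-morphism. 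Concretely, associativity of composition of type-$D$ morphisms gives $\Phi(f)\Phi(g)=\phi f\psi\phi g\psi$, and inserting the homotopy $\psi\phi\simeq\id_N$ yields $\Phi(f)\Phi(g)-\Phi(fg)=\partial(\text{something involving }H)+(\text{something involving }H)\partial$, which is exactly the defining relation for the quadratic term $\Phi_2$ of an $A_\infty$-homomorphism; the higher $\Phi_k$ are assembled from iterated insertions of $H$ in the same way. This is the standard ``transfer of endomorphism algebra along a homotopy equivalence'' argument, and the bookkeeping is routine given the graphical calculus.

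Finally, to see that $\Phi$ (with its higher terms) is a quasi-isomorphism, I would produce a homotopy inverse $\Psi\colon g\mapsto\psi\circ g\circ\phi$ by the symmetric construction and check, again using $H$ and $H'$, that $\Psi\circ\Phi\simeq\id$ and $\Phi\circ\Psi\simeq\id$ as $A_\infty$-morphisms; alternatively, and perhaps more cleanly, I would note that $\Phi$ is filtered/compatible with the word-length filtration and that on associated graded — equivalently, on the level of the linear term $\Phi_1=\Phi$ — it is already a quasi-isomorphism of chain complexes, because under the isomorphism $\End^{\mathcal{A}}(N)\cong\overline{N}\boxtimes\mathcal{A}\boxtimes N$ the map $\Phi_1$ is identified with $\overline{\psi}\boxtimes\id_{\mathcal{A}}\boxtimes\phi$, a tensor product (over the box construction) of homotopy equivalences, hence a homotopy equivalence. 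Since a quasi-isomorphism on the linear level of an $A_\infty$-morphism is a quasi-isomorphism of $A_\infty$-algebras, this completes the proof. The main obstacle I anticipate is purely organizational: correctly defining the higher terms $\Phi_k$ and verifying the $A_\infty$-morphism relations with the sign-free but combinatorially fiddly graphical calculus for type-$D$ morphism spaces — once one commits to the identification with $\overline{N}\boxtimes\mathcal{A}\boxtimes N$ this reduces to manipulating box tensor products of maps that are already known to be homotopy equivalences, so no genuinely new difficulty arises.
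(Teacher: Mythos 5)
Your proposal is correct in spirit but takes a genuinely different route from the paper. The paper's proof is considerably shorter and more abstract: it invokes the fact (from \cite[Proposition 2.3.18]{LOTBimodules2015}) that $\mathcal{A}\boxtimes-$ gives a quasi-equivalence from the $A_\infty$-category of type-$D$ structures to the dg-category of $A_\infty$-modules over $\mathcal{A}$, thereby transporting $\End^{\mathcal{A}}(N)$ and $\End^{\mathcal{A}}(N')$ to $\End_{\mathcal{A}}(\mathcal{A}\boxtimes N)$ and $\End_{\mathcal{A}}(\mathcal{A}\boxtimes N')$, and then applies the general dg-categorical fact that homotopy equivalent objects have quasi-isomorphic endomorphism algebras. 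That detour is not cosmetic: when $\mathcal{A}$ is a general $A_\infty$-algebra rather than a dg-algebra, the composition maps $\circ_k$ do \emph{not} vanish for $k\geq 3$ — the observation to that effect in Section 3 is specific to the case of a dg-algebra — so $\End^{\mathcal{A}}(N)$ is itself a genuine $A_\infty$-algebra rather than a dg-algebra, and type-$D$ structures over $\mathcal{A}$ form only an $A_\infty$-category. Passing to $A_\infty$-modules converts this to an honest dg-category where the standard transport argument applies uniformly. Your concrete construction $\Phi(f)=\phi\circ f\circ\psi$, with higher terms built from iterated insertions of the homotopy $H$, is exactly the explicit version of that transport; it is sound, but as written it implicitly specializes to the dg-algebra case (you say as much), which is enough for the paper's applications but does not quite prove the lemma as stated for arbitrary $\mathcal{A}$. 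Two further small remarks: the homotopy you insert between $\Phi(fg)$ and $\Phi(f)\Phi(g)$ comes from $H$ (witnessing $\psi\phi\simeq\id_N$), not $H'$ as first written — you do correct this — and the claimed identification of $\Phi_1$ with $\overline{\psi}\boxtimes\id_{\mathcal{A}}\boxtimes\phi$ under $\End^{\mathcal{A}}(N)\cong\overline{N}\boxtimes\mathcal{A}\boxtimes N$ deserves a check (the dg-category composition involves $\delta$ terms that don't disappear for free), though the alternative argument that $\Psi_1\Phi_1\simeq\id$ and $\Phi_1\Psi_1\simeq\id$ directly via $H$, $H'$ makes the quasi-isomorphism claim cleanly without it. The real gap is the unverified construction of all the higher $\Phi_k$ and the $A_\infty$-morphism relations; this is standard transfer machinery, so I'd call it a routine but nontrivial omission rather than an error. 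On the whole your approach trades the paper's one-line appeal to a known quasi-equivalence for an explicit formula, which is more illuminating but less economical and slightly less general.
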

\begin{proof}
	The category of left type-$D$ structures over $\mathcal{A}$ is canonically quasi-equivalent to the category of left $A_\infty$-modules over $\mathcal{A}$ via the $A_\infty$-functor $\mathcal{A}\boxtimes-$ by \cite[Proposition 2.3.18]{LOTBimodules2015} so we have quasi-isomorphisms $\End^{\mathcal{A}}(N)\simeq\End_{\mathcal{A}}(\mathcal{A}\boxtimes N)$ and $\End^{\mathcal{A}}(N')\simeq\End_{\mathcal{A}}(\mathcal{A}\boxtimes N')$. Since $N$ and $N'$ are homotopy equivalent, the same is true of $\mathcal{A}\boxtimes N$ and $\mathcal{A}\boxtimes N'$ and the category of left $A_\infty$-modules over $\mathcal{A}$ is a dg-category so it follows that $\End_{\mathcal{A}}(\mathcal{A}\boxtimes N)\simeq\End_{\mathcal{A}}(\mathcal{A}\boxtimes N')$ and this proves the desired result.
\end{proof}
\begin{corollary}\label{QICorollary}
	The algebra $\mathfrak{h}_{\bm{\nu}}$ depends only up to quasi-isomorphism on a choice of bordered Heegaard diagrams for the bordered 3-manifolds $(Y_i,\phi_i)$.
\end{corollary}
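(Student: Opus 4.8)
The plan is to reduce the statement to Lemma~\ref{QIalgLemma} together with the fundamental invariance property of bordered Floer homology. First I would recall that, by \cite{LOTBHFH}, the homotopy equivalence class of the type-$D$ structure $\widehat{\CFD}(Y_i,\phi_i)$ over $\mathcal{A}(-\mathcal{Z})$ is independent of the choice of (provincially admissible) bordered Heegaard diagram representing $(Y_i,\phi_i)$: any two such diagrams are related by a finite sequence of Heegaard moves, and each move induces a homotopy equivalence of the associated type-$D$ structures, so composing these gives a homotopy equivalence between the two resulting type-$D$ structures.

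Next I would observe that a finite direct sum of homotopy equivalences of type-$D$ structures is again a homotopy equivalence: if $f_i:N_i\to N_i'$ are homotopy equivalences with homotopy inverses $g_i$ and the relevant null-homotopies $H_i,H_i'$, then $\bigoplus_i f_i$, $\bigoplus_i g_i$, $\bigoplus_i H_i$, $\bigoplus_i H_i'$ exhibit $\bigoplus_i N_i\simeq\bigoplus_i N_i'$. Hence, given two systems of bordered Heegaard diagrams for the collection $\bm{\nu}$, the corresponding type-$D$ structures $N=\bigoplus_i\widehat{\CFD}(Y_i)$ and $N'=\bigoplus_i\widehat{\CFD}(Y_i)'$ are homotopy equivalent over $\mathcal{A}(-\mathcal{Z})$ (equivalently over $\mathcal{A}(-\mathcal{Z},0)$, in view of the remark following the definition of $\mathfrak{h}_{\bm{\nu}}$).

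Finally I would apply Lemma~\ref{QIalgLemma}, regarding $\mathcal{A}(-\mathcal{Z})$ as an $A_\infty$-algebra, to conclude that $\mathfrak{h}_{\bm{\nu}}=\End^{\mathcal{A}(-\mathcal{Z})}(N)$ and $\End^{\mathcal{A}(-\mathcal{Z})}(N')$ are quasi-isomorphic as $A_\infty$-algebras; since, by the discussion preceding Lemma~\ref{QIalgLemma}, both are honest differential algebras, this is in fact a quasi-isomorphism of dg-algebras. There is essentially no obstacle here beyond bookkeeping; the only point meriting a moment's care is to confirm that the bordered invariance theorem really does furnish honest homotopy equivalences of type-$D$ structures (as opposed to some weaker comparison), so that the hypothesis of Lemma~\ref{QIalgLemma} is literally met, after which the corollary is immediate.
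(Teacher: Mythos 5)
Your proposal is correct and follows exactly the route the paper intends: the paper leaves the corollary without an explicit proof precisely because it is the immediate consequence of Lemma~\ref{QIalgLemma} together with the bordered-Floer invariance theorem for $\widehat{\CFD}$, and your write-up supplies the two small bookkeeping steps (Heegaard-move invariance gives homotopy equivalences of each $\widehat{\CFD}(Y_i)$, and finite direct sums of homotopy equivalences are homotopy equivalences) needed to make that deduction explicit.
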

By \cite[Lemma 2.3.13]{LOTBimodules2015}, if $M$ is a type-$\mathit{DA}$ bimodule over $(\mathcal{A},\mathcal{B})$, then $M\boxtimes-$ is an $A_\infty$-functor. More explicitly, if, for $i=0,\dots,j$, $(N_j,\delta_j^1)$ are left type-$D$ structures over $\mathcal{B}$ and $f_i:N_{i-1}\to N_i$ are morphisms, then
\begin{align}
	(M\boxtimes-)_{1|j}(\id_M,f_1,\dots,f_j):=\begin{tikzcd}[ampersand replacement=\&,column sep=0.3cm,row sep=0.35cm]
		\& {}\arrow[d,densely dashed] \& {}\arrow[d,densely dashed] \\
		\& \otimes\arrow[d,Rightarrow] \& \delta_0\arrow[d,densely dashed]\arrow[l,Rightarrow]\\
		\& \otimes\arrow[d,Rightarrow] \& f_1\arrow[d,densely dashed]\arrow[l]\\
		\& \otimes\arrow[d,Rightarrow] \& \delta_1\arrow[d,densely dashed]\arrow[l,Rightarrow]\\
		\& \vdots\arrow[d,Rightarrow] \& \vdots\arrow[d,densely dashed]\\
		\& \otimes\arrow[d,Rightarrow] \& \delta_{j-1}\arrow[d,densely dashed]\arrow[l,Rightarrow]\\
		\& \otimes\arrow[d,Rightarrow] \& f_j\arrow[d,densely dashed]\arrow[l]\\
		\& \delta_M^1\arrow[d,densely dashed]\arrow[dl,bend right=33] \& \delta_j\arrow[d,densely dashed]\arrow[l,Rightarrow]\\
		{} \& {} \& {}
	\end{tikzcd}
\end{align}
makes $M\boxtimes-$ into an $A_\infty$-functor.
\begin{definition}
	Let $Y$ be a (strongly) bordered 3-manifold with left- and right-boundary parameterized by $F(\mathcal{Z})$ and $F(\mathcal{Z}')$, respectively. Let $\bm{\nu}=\{(Y_i,\phi_i)\}_{i=1,\dots,m}$ be a collection of bordered 3-manifolds with boundary parameterized by $F(\mathcal{Z})$ and $\bm{\nu}'=\{(Y_j',\phi_j')\}_{j=1,\dots,n}$ a collection with boundaries parameterized by $F(\mathcal{Z}')$, and let $\mathfrak{h}=\mathfrak{h}_{\bm{\nu}}$ and $\mathfrak{h}'=\mathfrak{h}_{\bm{\nu}'}$. Define an $A_\infty$-bimodule $\CF_{\bm{\nu},\bm{\nu}'}(Y)$ over $(\mathfrak{h},\mathfrak{h}')$ by
	\begin{align}
		\CF_{\bm{\nu},\bm{\nu}'}(Y)=\bigoplus_{i,j}\Mor^{\mathcal{A}(-\mathcal{Z})}(Y_i,Y\boxtimes Y_j'),
	\end{align}
	where, by abuse of notation, we define
	\begin{align}
		\Mor^{\mathcal{A}(-\mathcal{Z})}(Y_i,Y\boxtimes Y_j'):=\Mor^{\mathcal{A}(-\mathcal{Z})}(\widehat{\CFD}(Y_i),\widehat{\CFDA}(Y)\boxtimes\widehat{\CFD}(Y_j')).
	\end{align}
	The structure maps $m_{i|1|j}:\mathfrak{h}[1]^{\otimes i}\otimes_\F\CF_{\bm{\nu},\bm{\nu}'}(Y)\otimes_\F\mathfrak{h}'[1]^{\otimes j}\to\CF_{\bm{\nu},\bm{\nu}'}(Y)[1]$ are given by
	\begin{align}
		m_{i|1|j}(\phi_1,\dots,\phi_i,f,\psi_1,\dots,\psi_j)=\circ_{2+i-\delta(j,0)}(\psi_1,\dots,\psi_i,f,H_j(\psi_1,\dots,\psi_j))
	\end{align}
	where $H_j=(\widehat{\CFDA}(Y)\boxtimes-)_{1|j}(\id,-,\stackrel{j}{\dots},-)$ and
	\begin{align*}
		\delta(m,n)=\begin{cases}
			1 & \textup{if $m=n$}\\
			0 & \textup{else.}
		\end{cases}
	\end{align*}
	More concretely, we have that $m_{1|1|0}(\psi,f)=\circ_2(\psi,f)$, the left-action is strict, and strictly commutes with the right-$A_\infty$-action, since $\circ_{2+i}$ vanishes identically for $i>0$, and $m_{0|1|1}(f,\psi)=\circ_2(f,\id\boxtimes\psi)$. The right action is non-strict in general, but $m_{0|1|2}(f,\psi_1,\psi_2)=\circ_2(f,H_2(\psi_1,\psi_2))$, where $H_2(\psi_1,\psi_2)$ is a homotopy between $\circ_2(\id\boxtimes\psi_1,\id\boxtimes\psi_2)$ and $\id\boxtimes\circ_2(\psi_1,\psi_2)$, and the $m_{0|1|j}$ for $j>2$ form a coherent system of higher homotopies encoding the fact that the right-action is associative up to homotopy.
\end{definition}
\begin{proposition}
	The maps $m_{i|1|j}$ satisfy the $A_\infty$-bimodule structure relations.
\end{proposition}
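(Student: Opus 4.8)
The plan is to verify the $A_\infty$-bimodule structure relation by exploiting the fact that $\CF_{\bm{\nu},\bm{\nu}'}(Y)$ is, essentially by definition, a box tensor product of the $A_\infty$-bimodule $\Mor^{\mathcal{A}(-\mathcal{Z})}$-complex with the type-$\mathit{DA}$ bimodule $\widehat{\CFDA}(Y)$, together with the fact that $\mathfrak{h}$ and $\mathfrak{h}'$ are honest differential (in particular associative) algebras. First I would record the identification $\Mor^{\mathcal{A}(-\mathcal{Z})}(Y_i,Y\boxtimes Y_j')\cong\overline{\widehat{\CFD}(Y_i)}\boxtimes\mathcal{A}(-\mathcal{Z})\boxtimes\widehat{\CFDA}(Y)\boxtimes\widehat{\CFD}(Y_j')$ from the subsection on pairing $A_\infty$-modules and type-$D$ structures, so that the left-$\mathfrak{h}$- and right-$\mathfrak{h}'$-actions become manifestly induced by precomposition with $\delta_{N_i}$ and postcomposition with the $\delta$'s of $Y\boxtimes Y_j'$, interleaved through the operation $\circ_k$ of the endomorphism dg-algebra. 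The structure maps $m_{i|1|j}$ as written are then exactly the maps one obtains by pulling the $A_\infty$-functor $\widehat{\CFDA}(Y)\boxtimes-$ (which is an $A_\infty$-functor by \cite[Lemma 2.3.13]{LOTBimodules2015}) through the morphism complex; the homotopies $H_j$ are its components $(\widehat{\CFDA}(Y)\boxtimes-)_{1|j}$.

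The key steps, in order, are: (i) reduce the claim to a statement about the $\circ_k$ operations on the endomorphism dg-algebra of type-$D$ structures over $\mathcal{A}(-\mathcal{Z})$, using that all $\circ_k$ with $k\geq 3$ vanish because $\mathcal{A}(-\mathcal{Z})$ is a differential (not merely $A_\infty$-) algebra, as observed just before Lemma \ref{QIalgLemma}; (ii) observe that the left action is strict and strictly commutes with everything, so the only terms of the $A_\infty$-bimodule relation that can appear are those involving at most one $\overline{D}^{\mathfrak{h}}$ on the left, the $\mathfrak{h}$- and $\mathfrak{h}'$-multiplications, the internal differential $m_{0|1|0}$, the right-action terms $m_{0|1|j}$, and the $\overline{D}^{\mathfrak{h}'}$ on the right; (iii) group the right-action terms and identify the alternating sum of $\circ_2(f,H_i(\cdots))\circ H_j$-type composites with the $A_\infty$-functor relations satisfied by the components $H_j=(\widehat{\CFDA}(Y)\boxtimes-)_{1|j}$, so that these contributions cancel against the $\overline{D}^{\mathfrak{h}'}$-term and the internal-differential term by the structure relation for $\widehat{\CFDA}(Y)$ (equivalently the type-$D$ structure relation for $Y\boxtimes Y_j'$); (iv) handle the mixed term, where a left input meets a right input across $f$, using strictness of the left action together with the fact that $\circ_2$ is opposite composition and associative on the nose. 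Finally, assemble these cancellations into the single graphical identity expressing the $A_\infty$-bimodule relation.

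The main obstacle I expect is step (iii): carefully matching the hierarchy of homotopies $\{m_{0|1|j}\}_{j\geq 1}$ against the $A_\infty$-functoriality relations for $\widehat{\CFDA}(Y)\boxtimes-$, keeping track of where the type-$\mathit{DA}$ structure relation for $\widehat{\CFDA}(Y)$ (with its $\overline{D}^{\mathcal{A}(-\mathcal{Z})}$ and $\overline{D}^{\mathfrak{h}'}$ inputs) enters and how the resulting $\overline{D}^{\mathcal{A}(-\mathcal{Z})}$-terms get absorbed into the $\circ_1$ differential on the endomorphism complex. Once the bookkeeping is set up, each individual cancellation is a formal consequence of relations already in hand — the $A_\infty$-functor axioms, the type-$\mathit{DA}$ relation, and associativity of $\circ_2$ — so the proof is ultimately a (somewhat lengthy) diagram chase rather than anything requiring a new idea. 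I would present it by drawing the relevant composite diagrams and indicating which previously established relation kills which group of terms, rather than writing out all summands explicitly.
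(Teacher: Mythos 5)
Your proposal takes essentially the same approach as the paper. The paper's proof is a single sentence citing exactly the two facts you identify as central: that the left action is strict and strictly commutes with the right action (which, as you note, follows from $\circ_k=0$ for $k\geq 3$ since $\mathcal{A}(-\mathcal{Z})$ is a genuine dg-algebra), and that $\widehat{\CFDA}(Y)\boxtimes-$ is an $A_\infty$-functor (\cite[Lemma 2.3.13]{LOTBimodules2015}), which supplies the coherence relations among the homotopies $H_j$ that make the right $A_\infty$-action close up. Your steps (i)--(iv) are a faithful unpacking of that one-line argument, and your anticipated bookkeeping in step (iii) is precisely what the $A_\infty$-functor axioms are invoked to avoid writing out.
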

\begin{proof}
	This follows immediately from the fact that the left action is strict and strictly commutes with the right action, and that $\widehat{\CFDA}(Y)\boxtimes-$ is an $A_\infty$-functor.
\end{proof}
\begin{remark}
	Note that $m_{0|1|j}=0$ for $j$ sufficiently large provided that $\widehat{\CFDA}(Y)$ is bounded.
\end{remark}
\begin{proposition}
	The quasi-isomorphism type of $\CF_{\bm{\nu},\bm{\nu}'}(Y)$ is independent of the choice of strongly bordered Heegaard diagram for $Y$.
\end{proposition}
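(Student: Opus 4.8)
The plan is to show that, up to quasi-isomorphism, $\CF_{\bm{\nu},\bm{\nu}'}(Y)$ depends on $Y$ only through the homotopy-equivalence class of the type-$\mathit{DA}$ bimodule $\widehat{\CFDA}(Y)$, and then to invoke the invariance theorem for $\widehat{\CFDA}$ from bordered Floer homology \cite{LOTBimodules2015}, which guarantees that this homotopy type is independent of the choice of strongly bordered Heegaard diagram for $Y$. So fix two such diagrams, with associated type-$\mathit{DA}$ bimodules $M$ and $M'$ over $(\mathcal{A}(-\mathcal{Z}),\mathcal{A}(-\mathcal{Z}'))$, and let $\Psi\colon M\to M'$ be a homotopy equivalence of type-$\mathit{DA}$ bimodules with homotopy inverse $\Psi'$.

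First I would promote $\Psi$ to a morphism $\Psi_*$ between the $A_\infty$-bimodules over $(\mathfrak{h}_{\bm{\nu}},\mathfrak{h}_{\bm{\nu}'})$ obtained from $M$ and from $M'$. For each $j$, box-tensoring $\Psi$ with $\widehat{\CFD}(Y_j')$ produces a morphism of type-$D$ structures $\Psi\boxtimes\id\colon M\boxtimes\widehat{\CFD}(Y_j')\to M'\boxtimes\widehat{\CFD}(Y_j')$ over $\mathcal{A}(-\mathcal{Z})$, so post-composition by $\Psi\boxtimes\id$ defines a chain map on each summand $\Mor^{\mathcal{A}(-\mathcal{Z})}(\widehat{\CFD}(Y_i),M\boxtimes\widehat{\CFD}(Y_j'))$. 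Since $M\boxtimes-$ and $M'\boxtimes-$ are $A_\infty$-functors and $\Psi$ induces a natural transformation between them --- this is part of the $A_\infty$-functoriality of $\boxtimes$, \cite[Lemma 2.3.13]{LOTBimodules2015} --- these post-composition maps assemble, together with higher terms $(\Psi_*)_{0|1|j}$ built from the components of that natural transformation, into a morphism of $A_\infty$-bimodules: the left $\mathfrak{h}_{\bm{\nu}}$-action is strict (it is precomposition, realized by $\circ_2$) and so is respected on the nose, while compatibility with the higher right $\mathfrak{h}_{\bm{\nu}'}$-operations $m_{0|1|j}$ reduces to the coherence relations among the components of the natural transformation.

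It then remains to check that $\Psi_*$ is a quasi-isomorphism; since a morphism of $A_\infty$-bimodules is a quasi-isomorphism as soon as its underlying chain map is, it suffices to see that post-composition by $\Psi\boxtimes\id$ is a quasi-isomorphism of morphism complexes for all $i,j$. But $\Psi\boxtimes\id$ is itself a homotopy equivalence of type-$D$ structures --- box-tensoring the homotopies witnessing $\Psi\Psi'\simeq\id$ and $\Psi'\Psi\simeq\id$ with $\widehat{\CFD}(Y_j')$ shows that $\Psi'\boxtimes\id$ is a homotopy inverse --- so, using the identification $\Mor^{\mathcal{A}}(N_1,N_2)\cong\overline{N}_1\boxtimes\mathcal{A}\boxtimes N_2$ and the fact that $\boxtimes$ preserves homotopy equivalences, post-composition by $\Psi\boxtimes\id$ is a homotopy equivalence of chain complexes, in particular a quasi-isomorphism.

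I expect the main obstacle to be the bookkeeping in the second paragraph: writing down the correction terms $(\Psi_*)_{0|1|j}$ explicitly and verifying the $A_\infty$-bimodule morphism relations via the graphical calculus. This is routine but somewhat lengthy, and can be avoided by recognizing $\CF_{\bm{\nu},\bm{\nu}'}(Y)$ as the $A_\infty$-bimodule associated to the $A_\infty$-functor $M\boxtimes-$ restricted to the finite collections of objects $\{\widehat{\CFD}(Y_i)\}$ and $\{\widehat{\CFD}(Y_j')\}$ --- exactly as in the proof of Lemma \ref{QIalgLemma} --- and invoking the general principle that this construction carries a natural transformation which is a pointwise homotopy equivalence to a quasi-isomorphism of bimodules.
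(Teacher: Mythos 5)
Your proposal takes essentially the same approach as the paper: both invoke bordered Floer invariance to obtain a homotopy equivalence $F\colon\widehat{\CFDA}(\mathcal{H}_1)\to\widehat{\CFDA}(\mathcal{H}_2)$, define the comparison map as post-composition $\circ_2(-,F\boxtimes\id)$, observe that the strict left action commutes on the nose while the right action commutes up to coherent homotopy via the $A_\infty$-functoriality of $\boxtimes$ (the paper defers this to the argument in the proof of Theorem~\ref{thm:InducedActions}), and conclude by noting that the underlying chain map is a homotopy equivalence. You spell out the quasi-isomorphism check and the shape of the higher correction terms $(\Psi_*)_{0|1|j}$ a bit more explicitly than the paper does, but the logical structure is identical.
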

\begin{proof}
	If $\mathcal{H}_1$ and $\mathcal{H}_2$ are strongly bordered Heegaard diagrams for $Y$, then they differ by a finite sequence of Heegaard moves which induces a homotopy equivalence $F:\widehat{\CFDA}(\mathcal{H}_1)\to\widehat{\CFDA}(\mathcal{H}_2)$. The map
	\begin{align}
		\circ_2(-,F\boxtimes\id):=\sum_j\circ_2(-,F\boxtimes\id_{Y'_j}):\CF_{\bm{\nu},\bm{\nu}'}(\mathcal{H}_1)\to\CF_{\bm{\nu},\bm{\nu}'}(\mathcal{H}_1)
	\end{align}
	is then automatically a quasi-isomorphism of complexes of vector spaces. Note that the left action of $\mathfrak{h}_{\bm{\nu}}$ commutes with $\circ_2(-,F\boxtimes\id)$ on the nose. We refer the reader to the proof of Theorem \ref{thm:InducedActions} for details regarding the homotopy commutation of the right-action of $\mathfrak{h}_{\bm{\nu}'}$ with $\circ_2(-,F\boxtimes\id)$, as the argument given there can be applied here.
\end{proof}
	\section{Branched Arc Algebras}
	\subsection{Branched double covers}
Given a link $L\subset S^3$, one may construct a 3-manifold $\Sigma(L)$, called the \emph{branched double cover of} $L$ as follows: choose a Seifert surface $F$ for $L$ and let $Y_L^0$ be the complement of a tubular open neighborhood of $F\cap(S^3\smallsetminus\mathrm{nbd}(L))$ in $S^3\smallsetminus\mathrm{nbd}(L)$, where $\mathrm{nbd}(L)$ is a tubular open neighborhood of $L$. The (cornered) 3-manifold $Y_L^0$ contains two copies of $F$, call them $F_-$ and $F_+$. Let $Y_L^1$ be the manifold with boundary obtained by taking the quotient of $Y_L^0\sqcup Y_L^0$ obtained by identifying $F_{\pm}$ in the first copy of $Y_L^0$ with $F_{\mp}$ in the second. Note that $Y_L^1$ has one toroidal boundary component for every component of $L$. The closed 3-manifold $\Sigma(L)$ is then obtained by Dehn filling each of these boundary components with respect to the Seifert framing induced by the copies of $F_\pm$ sitting inside of $Y_L^1$.
\begin{example}
	The branched double cover of an unlink with $k$ components is $\#^{k-1}(S^2\times S^1)$. More generally, given two links $L_0$ and $L_1$, there is a diffeomorphism $\Sigma(L_0\sqcup L_1)\cong\Sigma(L_0)\#\Sigma(L_1)\#(S^2\times S^1)$.
\end{example}
\begin{remark}
	A link cobordism $C:L_0\to L_1$ induces a cobordism of 3-manifolds $\Sigma(C):\Sigma(L_0)\to\Sigma(L_1)$, which we call the branched double cover of $C$.
\end{remark}
Note that one may extend this definition to obtain branched double covers $\Sigma(T)$ of tangles $T$ in the 3-ball, or in $S^2\times[0,1]$, which are 3-manifolds with boundary. For simplicity, we will restrict ourselves to the case of tangles with an even number of endpoints on the equator(s) of the boundary of their ambient 3-manifold. A \emph{cornered Seifert surface} for such a tangle $T$ is an orientable surface $F\subset Y$ with corners, where $Y$ is either of $B^3$ or $S^2\times[0,1]$, such that $\partial F$ decomposes as the union of $T$ and a collection of arcs in the equator(s) of $Y$. Such a surface always exists: $T$ has an even number of endpoints on each boundary component of $Y$ so the plat closure $p(T)$ of $T$ embeds in $Y$, smoothly away from the endpoints of $T$. We may then apply Seifert's algorithm to any oriented diagram for $p(T)$ obtained by taking the plat closure of a diagram for $T$, using arcs in the projections of the equators for the closure, and regarding the resulting cornered surface as an embedded surface $F$ in $Y$ (see Figure \ref{fig:Seifert} for an example).
\begin{figure}
	\begin{center}
		\includegraphics[scale=1]{./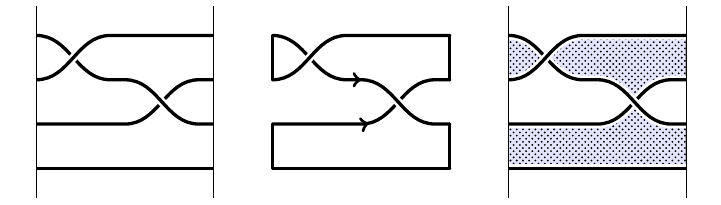}
	\end{center}
	\caption[Construction of a cornered Seifert surface for a tangle in $S^2\times{[}0,1{]}$.]{A diagram for a tangle $T\subset S^2\times[0,1]$ (left), its plat closure $p(T)$ by equatorial arcs (middle), and the cornered Seifert surface obtained from applying Seifert's algorithm to $p(T)$ (right). Here, the vertical lines in the left- and right-hand figures represent the projections of the equators of $S^2\times[0,1]$.}
	\label{fig:Seifert}
\end{figure}
To construct $\Sigma(T)$, we take $Y_T^0$ to be the complement of a tubular open neighborhood of $F\cap(Y\smallsetminus\mathrm{nbd}(T))$ and glue two copies of this space, as we did with $Y_L^0$ above, to obtain a cornered 3-manifold $Y_T^1$ whose codimension 1 stratum decomposes as $\partial_1Y_T^1=\overline{\Sigma}\cup_{\partial}\partial\mathrm{nbd}(T)$, where $\overline{\Sigma}$ is a (possibly disconnected) surface with $\#\partial T$ boundary components. We then fill $Y_T^1$ with $\mathrm{nbd}(T)$ to obtain $\Sigma(T)$. If $T\subset B^3$ has $2n$ endpoints, then $\partial\Sigma(T)$ is an oriented surface of genus $n-1$. Similarly, if $T\subset S^2\times[0,1]$ has $\# T\cap(S^2\times\{0\})=2m$ and $\# T\cap(S^2\cap\{1\})=2n$, then the boundary components of $\Sigma(T)$ have genus $m-1$ and $n-1$. One can see this by considering the branched double cover of the $2n$-stranded identity braid $\id_{2n}$ in $S^2\times[0,1]$, which we may think of as a collar neighborhood of $\partial\Sigma(T)$. This 3-manifold is the product of an interval and the double cover $\Sigma_g$ of $S^2$ branched along $2n$ points. Since the ramification index of each branch point is $2$, the Riemann--Hurwitz formula tells us that $\chi(\Sigma_g)=2\chi(S^2)-2n=2-2(n-1)$ so $g=n-1$ and $\Sigma(\id_{2n})\cong\Sigma_{n-1}\times[0,1]$.
\subsection{The algebras}\label{Algebras}
In \cite{OzsSzBranched}, Ozsv\'{a}th--Szab\'{o} showed that, for any (based) link $L\subset S^3$, there is a spectral sequence $\Khred(mL;\F)\Rightarrow\widehat{\HF}(\Sigma(L))$. They prove this result by constructing a filtration on $\widehat{\CF}(\Sigma(L))$, associated to a diagram for $L$, such that the $E^1$-page of the induced spectral sequence is
\begin{align}
	\bigoplus_{\bm{v}\in\bm{2}^c}\widehat{\HF}(\Sigma(L_{\bm{v}})),
\end{align}
where $c$ is the number of crossings in the diagram, $\bm{2}=\{0,1\}$, and $L_{\bm{v}}$ is the complete resolution of the diagram determined by $\bm{v}$ and an ordering of the crossings. Since each $L_{\bm{v}}$ is a planar unlink, each summand is of the form $\widehat{\HF}(\#^{k-1}(S^2\times S^1))$, where $k$ is the number of components of $L_{\bm{v}}$, which they show is isomorphic to $\CKhred(L_{\bm{v}})$. They then identify the $d^1$-differential, which is given by the maps on Heegaard Floer homology induced by the branched double covers of the saddle cobordisms making up the edges of the cube of resolutions, with the Khovanov differential. In the case that $L$ is a planar unlink, the spectral sequence degenerates on the $E^1$-page, so one should expect there to be a Heegaard Floer analogue of the arc algebra $H_n$. Na\"{i}vely, this algebra might take the form
\begin{align}
	\bigoplus_{a,b\in\mathfrak{C}_n}\widehat{\HF}(\Sigma(a^!b))
\end{align}
with multiplication given by the maps induced by branched double covers of minimal saddle cobordisms. However there are some issues with this construction. First, the arc algebra $H_n$ and its reduced version $\widetilde{H}_n$ have somewhat different properties as algebras --- for example, $\HH_*(H_1)$ is infinite-dimensional while $\widetilde{H}_1\cong\F$ so $\HH_*(H_1)\cong\F$ --- though this difference is only up to a tensor factor of the algebra $V$ (see \cite{CohenSplitting}). Second, and more seriously, it is not immediately clear that this construction yields an algebra, or even a generalized algebra, in a sensible way. We will instead define a chain-level version of this algebra and show that its homology is, in general, a nontrivial $A_\infty$-deformation of $H_n$.
\begin{definition}
	The \emph{genus} $k$ \emph{linear pointed matched circle} $\mathcal{Z}_k$ is the pointed matched circle whose matching $M$ matches the pairs $\{a_1,a_3\}$ and $\{a_{4k-2},a_{4k}\}$ and, for each $n=1,\dots,2k-2$, the pairs $\{a_{2n},a_{2n+3}\}$ (see Figure \ref{fig:LinearPMC2}). Note that $\mathcal{Z}_1$ is the usual pointed matched circle for the torus.
\end{definition}
\begin{figure}
	\begin{center}
		\includegraphics[scale=1]{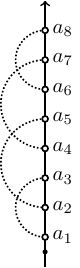}
	\end{center}
	\caption{The genus 2 linear pointed matched circle.}
	\label{fig:LinearPMC2}
\end{figure}
One may naturally view the branched double cover $\Sigma(T)$ of a tangle $T$ in $B^3$ with $2n$ equatorial endpoints as having boundary parametrized by $\mathcal{Z}_{n-1}$ by using the algorithm given in \cite[Section 6.1]{LOTSpectral2} to construct an explicit bordered Heegaard diagram for $\Sigma(T)$. We recall this construction here for crossingless matchings, starting with a diagram $\mathcal{H}$ for the branched double cover of the \emph{plat closure} on $2n$ points, i.e. the matching consisting of $n$ caps stacked vertically. We illustrate the $n=3$ case in Figure \ref{fig:Plat}. First, draw a vertical line segment with a distinguished basepoint near its bottom end and, temporarily denoting the plat closure by $a$, identify $\partial a$ with $[2n]$ by enumerating the endpoints from bottom to top. Step 1: to the right of this line draw $4n-4$ horizontal line segments which each meet it at a single point, two corresponding to each of the endpoints $2$ through $2n-2$ in $\partial a$ and one each corresponding to $1$ and $2n-1$ --- note that there is no line corresponding to the endpoint labeled $2n$ --- and enumerate these from bottom to top. Step 2: draw pairs of labeled circles representing handles at the other ends of the pairs of segments labeled $4k+2$ and $4k+5$ for $k=1,2,\dots,n-2$ and one more pair for the segments labeled $4n-6$ and $4n-4$. Step 3: draw half-circular arcs to the right of the circles added in Step 2 which connect the endpoints of the segments labeled 1 and 3 and the pairs of segments labeled $4k$ and $4k+3$ for $k=1,2,\dots,n-2$. Steps 2 and 3 completely specify the $\alpha$-curves in $\mathcal{H}$. Step 4: draw a $\beta$-circle enclosing all of the circles contained in each region of the diagram bounded by an $\alpha$-arc constructed in Step 3. The result is then a bordered Heegaard diagram for $\Sigma(a)$.
\begin{figure}
	\begin{center}
		\includegraphics[scale=1]{./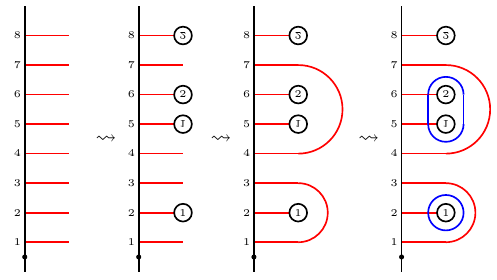}
	\end{center}
	\caption[Construction of a bordered Heegaard diagram for the 6-ended plat closure.]{Construction of a bordered Heegaard diagram for the 6-ended plat closure. Here, steps 1 through 4 are illustrated from left to right.}
	\label{fig:Plat}
\end{figure}

If $b\in\mathfrak{C}_n$ is any other crossingless matching, we may isotope the diagram for $b$ so that it becomes the plat closure (on the right) by $a$ of a product of cap-cup tangles (see Figure \ref{fig:Plat2} for an example) which is minimal in the sense that there is no such presentation of $b$ with fewer caps and cups.
\begin{figure}
	\begin{center}
		\includegraphics[scale=0.8]{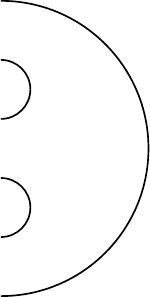}\hspace{2cm}\includegraphics[scale=0.8]{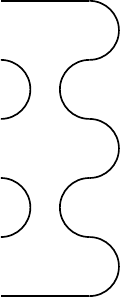}
	\end{center}
	\caption[A crossingless matching and its minimal plat closure-form.]{A crossingless matching on 6 points (left) and its minimal plat closure-form (right).}
	\label{fig:Plat2}
\end{figure}
Note that, by minimality, no cap-cup pair will involve the bottom-most or top-most strands of this diagram for $b$. For each cap-cup pair, we insert a new handle and $\beta$-circle of the form
\begin{center}
	\includegraphics{./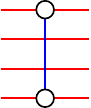}
\end{center}
into the bordered Heegaard diagram for the plat closure, where the four $\alpha$-curves are the arcs corresponding to the strands in which the cap-cup pair occurs, provided these strands are not the ones at heights $2n-2$ and $2n-1$. In the latter case, we instead insert
\begin{center}
	\includegraphics{./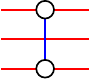}
\end{center}
to modify the plat closure diagram. Inserting these handles and $\beta$-circles will always result in a diagram with some number of configurations of handles, $\alpha$-curves, and $\beta$-circles of the form
\begin{center}
	\includegraphics{./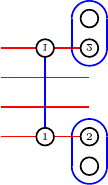}
\end{center}
where the two $\beta$-circles at right come from the original bordered Heegaard diagram for the plat closure. We may then perform a sequence of isotopies and handleslides
\begin{center}\label{CapCupHandle}
	\includegraphics{./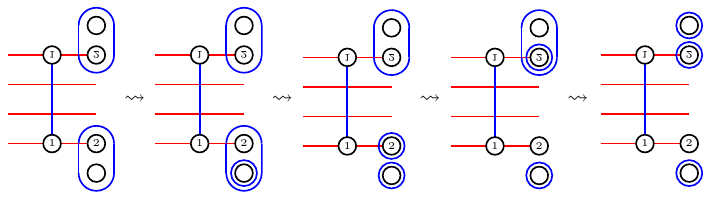}
\end{center}
of the $\beta$-circles coming from the plat closure diagram, starting by isotoping the bottom-most circle --- the one in the region adjacent to the boundary Reeb chord $[1,3]$ --- over the handle it encircles, until any such configuration in the diagram has been changed to be as at right. In the above schematic, the first step is an isotopy of a $\beta$-circle (which is not pictured in the first diagram) over a handle. After this sequence of Heegaard moves, each such resulting configuration contains a connected sum with a standard diagram for $S^3$ --- here given by the handle corresponding to the two circles labeled 2, the $\beta$-circle enclosing the topmost of these circles, and the $\alpha$-circle given by the two red line segments between the circles labeled 1 and the circles labeled 2. We then destabilize the diagram until all of these standard diagrams are removed to obtain the bordered Heegaard diagram $\mathcal{H}_b$ for $\Sigma(b)$ (see Figure \ref{fig:Plat2Heegaard} for an example).
\begin{figure}
	\begin{center}
		\includegraphics[scale=0.9]{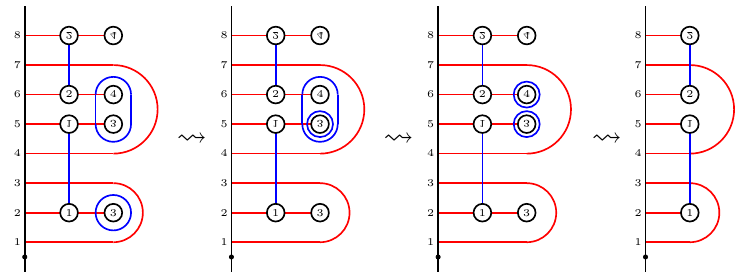}
	\end{center}
	\caption[The bordered Heegaard diagram for the matching in Figure \ref{fig:Plat2}]{The bordered Heegaard diagram for the matching in Figure \ref{fig:Plat2}. Its destabilized form, at right, is obtained by performing an isotopy and a handleslide, shown in the two intermediate steps, followed by two destabilizations, which comprise the last step shown.}
	\label{fig:Plat2Heegaard}
\end{figure}
\begin{definition}
	Given $c\in\mathfrak{C}_n$, let $c_+\in\mathfrak{C}_{n+1}$ be the crossingless matching obtained by adding a single extra arc below $c$. Let $\mathfrak{B}_n=\{c_+|c\in\mathfrak{C}_n\}$. Regarding $c_+$ as a tangle in $B^3$ with endpoints on the equator, define $\widehat{\CFD}(c_+)=\widehat{\CFD}(\mathcal{H}_{c})$, where $\mathcal{H}_c$ is the bordered Heegaard diagram for $\Sigma(c_+)$ constructed as above. The \emph{branched arc algebra} $\mathfrak{h}_n$ on $2n$ points is then the differential algebra
	\begin{align}
		\mathfrak{h}_n=\End^{\mathcal{A}_n}\left(\,\bigoplus_{c\in\mathfrak{C}_n}\widehat{\CFD}(c_+)\right)=\End^{\mathcal{A}_n}\left(\,\bigoplus_{a\in\mathfrak{B}_n}\widehat{\CFD}(a)\right),
	\end{align}
	where $\mathcal{A}_n=\mathcal{A}(-\mathcal{Z}_n,0)$, with algebra operation given by the composition map $\circ_{2}:f\otimes g\mapsto g\circ f$ and the usual morphism space differential. In other words, $\mathfrak{h}_n=\mathfrak{h}_{\bm{\nu}_n}$, where $\bm{\nu}_n=\{\Sigma(a)|a\in\mathfrak{B}_n\}$.
\end{definition}
\begin{remark}
	We will typically avoid writing $c_+$ for $c\in\mathfrak{C}_n$ by summing over the set $\mathfrak{B}_n$, instead of $\mathfrak{C}_n$, in forthcoming sections, but it will be convenient to use that notation here.
\end{remark}
We will show that the algebras $H_*\mathfrak{h}_n$ and $H_n$ agree. We first recall the following propositions from \cite{OzsSzBranched}.
\begin{proposition}[{\cite[Proposition 6.1]{OzsSzBranched}}]
	If $Y\cong\#^k(S^2\times S^1)$, then $\widehat{\HF}(Y)$ is a rank 1 free module over $\Lambda^*H_1(Y)$, generated by the class $\Theta^{\mathrm{top}}\in\widehat{\HF}(Y)$. Moreover, if $K\subset Y$ is a curve representing an $S^1$ fiber in one of the $S^2\times S^1$ summands, then the 3-manifold $Y'=Y_0(K)$ is diffeomorphic to $\#^{k-1}(S^2\times S^1)$, with a natural identification $\pi:H_1(Y)/[K]\to H_1(Y')$. Under the $2$-handle cobordism $W_1:Y\to Y'$, the map $\widehat{F}_{W_1}:\widehat{\HF}(Y)\to\widehat{\HF}(Y')$ is determined by
	\begin{align}
		\widehat{F}_{W_1}(\xi\cdot\Theta^{\mathrm{top}})=\pi(\xi)\cdot\Theta^{\mathrm{top}'},
	\end{align}
	where $\Theta^{\mathrm{top}'}\in\widehat{\HF}(Y')$ is the generator of $\widehat{\HF}(Y')$ as a free $\Lambda^*H_1(Y')$-module and $\xi\in\Lambda^*H_1(Y)$. Dually, if $K\subset Y$ is a local unknot, then the manifold $Y''(K)=Y_0(K)$ is diffeomorphic to $\#^{k+1}(S^2\times S^1)$, and there is a natural inclusion $i:H_1(Y)\to H_1(Y'')$. The map $\widehat{F}_{W_2}:\widehat{\HF}(Y)\to\widehat{\HF}(Y'')$ induced by the 2-handle cobordism $W_2:Y\to Y''$ is then determined by
	\begin{align}
		\widehat{F}_{W_2}(\xi\cdot\Theta^{\mathrm{top}})=i(\xi)\wedge[K'']\cdot\Theta^{\mathrm{top}''},
	\end{align}
	where $[K'']\in H_1(Y'')$ is a generator of $\ker(H_1(Y'')\to H_1(W_2))$.
\end{proposition}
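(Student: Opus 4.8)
Since the statement is quoted verbatim from \cite{OzsSzBranched}, I will only outline the strategy one would follow. The plan is to reduce everything to the case $k=1$ by a connected-sum argument, and then to carry out two genus-one computations: one determining $\widehat{\HF}(S^2\times S^1)$ as a module over $\Lambda^\ast H_1$, and one determining the cobordism maps of the elementary $2$-handle cobordisms relating $S^2\times S^1$ and $S^3$.

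First I would compute $\widehat{\HF}(S^2\times S^1)$ from a standard admissible genus-one Heegaard diagram, in which the $\alpha$- and $\beta$-curves meet in two points and the (rank-one space of) periodic domains has coefficients of both signs, so that the differential vanishes and $\widehat{\HF}(S^2\times S^1)\cong\F^2$, supported in the two adjacent Maslov gradings $\pm\tfrac12$, with $\Theta^{\mathrm{top}}$ the generator in grading $+\tfrac12$. Computing the action of the generator of $H_1(S^2\times S^1)\cong\Z$ on $\widehat{\HF}$ --- either directly from the definition of the $H_1$-action or via a small holomorphic triangle count --- shows it carries $\Theta^{\mathrm{top}}$ to $\Theta^{\mathrm{bot}}$ and annihilates $\Theta^{\mathrm{bot}}$, so $\widehat{\HF}(S^2\times S^1)$ is free of rank one over $\Lambda^\ast H_1(S^2\times S^1)$ with generator $\Theta^{\mathrm{top}}$. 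The K\"unneth formula for connected sums, together with the compatibility of the $H_1$-action with the connected-sum isomorphism, then promotes this to $\widehat{\HF}(\#^k(S^2\times S^1))\cong\Lambda^\ast H_1$, free of rank one, with generator $\Theta^{\mathrm{top}}=\Theta^{\mathrm{top}}\otimes\cdots\otimes\Theta^{\mathrm{top}}$.

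Next I would treat the two elementary cobordisms. Topologically, $Y_0(K)$ for $K$ an $S^2\times S^1$-fibre deletes one summand, giving $Y'\cong\#^{k-1}(S^2\times S^1)$ and the identification $\pi\colon H_1(Y)/[K]\xrightarrow{\cong}H_1(Y')$, while $0$-surgery on a local unknot adds a summand, giving $Y''\cong\#^{k+1}(S^2\times S^1)$, the inclusion $i\colon H_1(Y)\hookrightarrow H_1(Y'')$, and $[K'']$ spanning the new factor. The heart of the matter is the $k=1$ base case: the $2$-handle cobordisms $W_1\colon S^2\times S^1\to S^3$ and its dual $W_2\colon S^3\to S^2\times S^1$. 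I would compute $\widehat{F}_{W_1}$ and $\widehat{F}_{W_2}$ by counting holomorphic triangles in a genus-one triple Heegaard diagram adapted to the surgery triad $\{S^2\times S^1,\ S^3,\ S^3\}$; alternatively, the Maslov grading-shift formula for cobordism maps already forces $\widehat{F}_{W_1}$ to vanish on $\Theta^{\mathrm{bot}}$ and $\widehat{F}_{W_2}$ to land in the bottom grading of $\widehat{\HF}(S^2\times S^1)$, and the surgery exact triangle --- in which $\widehat{F}_{W_1}$ and $\widehat{F}_{W_2}$ appear as consecutive maps with vanishing composite --- then pins down both maps once one verifies a single nonvanishing, e.g.\ $\widehat{F}_{W_1}(\Theta^{\mathrm{top}})\neq 0$. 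This yields $\widehat{F}_{W_1}(\Theta^{\mathrm{top}})=\Theta_{S^3}$, $\widehat{F}_{W_1}(\Theta^{\mathrm{bot}})=0$, and $\widehat{F}_{W_2}(\Theta_{S^3})=[K'']\cdot\Theta^{\mathrm{top}}=\Theta^{\mathrm{bot}}$. Feeding these into the connected-sum formula for cobordism maps --- using that $W_1$ (resp.\ $W_2$) is a boundary connected sum of a product cobordism on $\#^{k-1}(S^2\times S^1)$ (resp.\ $\#^k(S^2\times S^1)$) with the elementary cobordism, and that cobordism maps respect connected sums --- identifies $\widehat{F}_{W_1}$ with $\xi\mapsto\pi(\xi)$ (the contraction killing the $[K]$-direction) and $\widehat{F}_{W_2}$ with $\xi\mapsto i(\xi)\wedge[K'']$, as asserted.

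The hard part will be the $k=1$ computation of $\widehat{F}_{W_1}$ and $\widehat{F}_{W_2}$: one must produce an admissible genus-one triple diagram for the triad and carry out the triangle count with care, or else marshal the grading-shift formula and the surgery exact triangle precisely enough to isolate the surviving nonvanishing. The other point requiring care is obtaining the full $\Lambda^\ast H_1$-\emph{module} structure --- not merely the $\F$-vector-space dimensions --- and checking that it is compatible with the connected-sum isomorphism and with the cobordism maps, so that the general statements follow cleanly from the genus-one input.
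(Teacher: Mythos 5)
The paper does not prove this proposition: it is quoted verbatim from Ozsv\'ath--Szab\'o's branched double covers paper and used as a black-box input in the proof of Proposition~\ref{OzsSzProp6-2}, so there is no in-paper argument to compare against. Your outline is a plausible reconstruction of the standard argument in the Heegaard Floer literature, and the ingredients you invoke --- the genus-one computation of $\widehat{\HF}(S^2\times S^1)$ together with its $H_1$-action, the K\"unneth formula for connected sums, the absolute-grading shift of cobordism maps, the surgery exact triangle, and a connected-sum formula for cobordism maps --- are exactly the tools one would expect. One small imprecision worth fixing before writing it up: the map you describe as ``the contraction killing the $[K]$-direction'' is in fact the quotient algebra homomorphism $\Lambda^\ast H_1(Y)\to\Lambda^\ast\bigl(H_1(Y)/[K]\bigr)\cong\Lambda^\ast H_1(Y')$, which preserves exterior degree on basis monomials not involving $[K]$ and annihilates those that do; a genuine contraction operator would lower degree by one, which would be inconsistent with the absolute-grading bookkeeping you rely on to pin down the base case. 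Also note that the genus-one diagram for $S^2\times S^1$ with exactly two intersection points is not weakly admissible as drawn (the nontrivial periodic domain is a positively-embedded annulus), so one must perturb before counting disks and then use grading and Euler-characteristic arguments to see that $\widehat{\HF}(S^2\times S^1)\cong\F^2$. Neither point undermines the strategy.
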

In the case that $Y$ is given as the branched double cover $\Sigma(\mathcal{D})=\#^k(S^2\times S^1)$ of a planar unlink $\mathcal{D}=S_0\cup\cdots\cup S_k$, where $S_0$ is a distinguished component with a basepoint, this proposition furnishes us with the following variation of \cite[Proposition 6.2]{OzsSzBranched}.
\begin{proposition}[{\cite[Proposition 6.2]{OzsSzBranched}}]\label{OzsSzProp6-2}
	If $\mathcal{D}$ is a planar unlink with one based component, then there is an isomorphism $\psi_{\mathcal{D}}:\CKhred(\mathcal{D})\stackrel{\cong}{\longrightarrow}\widehat{\HF}(\Sigma(\mathcal{D}))$ which is natural under cobordisms in the sense that if $s:\mathcal{D}\to\mathcal{D}'$ is either a single merge or split cobordism, then the diagram
	\begin{align}
		\begin{tikzcd}[ampersand replacement=\&]
			\CKhred(\mathcal{D})\arrow[r,"\CKhred(s)"]\arrow[d,"\psi_{\mathcal{D}}"'] \& \CKhred(\mathcal{D}')\arrow[d,"\psi_{\mathcal{D}'}"]\\
			\widehat{\HF}(\Sigma(\mathcal{D}))\arrow[r,"\widehat{F}_{\Sigma(s)}"] \& \widehat{\HF}(\Sigma(\mathcal{D}'))
		\end{tikzcd}
	\end{align}
	commutes.
\end{proposition}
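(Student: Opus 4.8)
The plan is to follow the proof of \cite[Proposition 6.2]{OzsSzBranched}, with \cite[Proposition 6.1]{OzsSzBranched} (stated above) as the main input. First I would make both sides completely explicit. Writing $\mathcal{D}=S_0\sqcup S_1\sqcup\cdots\sqcup S_k$ with $S_0$ the based component, the complex $\CKhred(\mathcal{D})$ has vanishing differential and is canonically $\bigotimes_{i=1}^{k}V_i$ as a vector space, where $V_i=\F\cdot 1_i\oplus\F\cdot x_i$ is the copy of $V=\F[x]/(x^2)$ attached to $S_i$ and the based circle carries the label $x$. On the Floer side $\Sigma(\mathcal{D})\cong\#^{k}(S^2\times S^1)$, and by \cite[Proposition 6.1]{OzsSzBranched} the group $\widehat{\HF}(\Sigma(\mathcal{D}))$ is the free rank-one module $\Lambda^{*}H_1(\Sigma(\mathcal{D}))\cdot\Theta^{\mathrm{top}}$. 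Choose pairwise disjoint embedded arcs $\gamma_1,\dots,\gamma_k$ in the plane, $\gamma_i$ joining $S_0$ to $S_i$ and meeting $\mathcal{D}$ only at its endpoints; the preimages $\widetilde{\gamma}_i\subset\Sigma(\mathcal{D})$ under the branched covering map are embedded circles whose classes form a basis of $H_1(\Sigma(\mathcal{D}))$. I would then define $\psi_{\mathcal{D}}$ by sending the basis vector of $\CKhred(\mathcal{D})$ carrying the label $x$ on exactly the circles $\{S_i\}_{i\in I}$, $I\subseteq\{1,\dots,k\}$, to $\bigl(\bigwedge_{i\in I}[\widetilde{\gamma}_i]\bigr)\cdot\Theta^{\mathrm{top}}$. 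A dimension count shows this is an isomorphism, and, since $V\cong\Lambda^{*}\F$, it carries the $V^{\otimes k}$-module structure on the source to the $\Lambda^{*}H_1$-module structure on the target (and, with the grading conventions of \cite{OzsSzBranched}, it is homogeneous). To make this well defined and functorial under planar isotopies, I would fix a standard planar model and a standard arc system for each $k$, define $\psi$ there, and transport along planar isotopies --- which act canonically on $\CKhred$ and, through branched double covers of product cobordisms, on $\widehat{\HF}\circ\Sigma$ --- checking that the relevant self-equivalences of the standard model (permutations of non-based circles, slides of arcs) act compatibly on both sides.

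It then remains to verify the naturality square for a single elementary merge and a single elementary split; by the previous paragraph it suffices to do this for the standard saddle between standard models. The geometric crux is the identification of the branched double cover $\Sigma(s)$ of a band attachment with a $2$-handle cobordism of the type in \cite[Proposition 6.1]{OzsSzBranched}: $\Sigma(s)$ is a $2$-handle cobordism whose attaching circle is isotopic to the preimage $\widetilde{\delta}$ of the core arc $\delta$ of the band, attached with the framing induced by the band (the Seifert framing relative to the branch locus). For a merge of $S_i$ and $S_j$, $\widetilde{\delta}$ is an $S^1$-fiber in the relevant $S^2\times S^1$ summand and $[\widetilde{\delta}]=[\widetilde{\gamma}_i]+[\widetilde{\gamma}_j]$ (with the convention $[\widetilde{\gamma}_0]=0$), so \cite[Proposition 6.1]{OzsSzBranched} identifies $\widehat{F}_{\Sigma(s)}$ with the projection $\Lambda^{*}H_1(\Sigma(\mathcal{D}))\to\Lambda^{*}\bigl(H_1(\Sigma(\mathcal{D}))/\langle[\widetilde{\gamma}_i]+[\widetilde{\gamma}_j]\rangle\bigr)$, which under $\psi$ is precisely the Frobenius multiplication $V_i\otimes V_j\to V$ used in the reduced Khovanov complex ($1\otimes1\mapsto1$, $1\otimes x,\,x\otimes1\mapsto x$, $x\otimes x\mapsto0$). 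Dually, for a split $\widetilde{\delta}$ is a local unknot, and \cite[Proposition 6.1]{OzsSzBranched} identifies $\widehat{F}_{\Sigma(s)}(\xi\cdot\Theta^{\mathrm{top}})$ with $i(\xi)\wedge[\widetilde{\gamma}']\cdot\Theta^{\mathrm{top}}$, where $[\widetilde{\gamma}']$ is the class of the lift of an arc to the newly created circle; this matches the comultiplication $\Delta\colon V\to V\otimes V$ ($1\mapsto 1\otimes x+x\otimes1$, $x\mapsto x\otimes x$). Since $V=\F[x]/(x^2)$ is literally the exterior algebra on one generator with its standard Frobenius structure, once these geometric identifications are in hand the algebraic comparison is immediate, and the squares commute. (That the based circle always retains the label $x$ under both the multiplication and the comultiplication is precisely the statement that the computation takes place in $\CKhred$ rather than $\CKh$.)

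The main obstacle is the topological step: exhibiting $\Sigma(\text{saddle})$ explicitly as the cobordism $W_1$ or $W_2$ of \cite[Proposition 6.1]{OzsSzBranched}, pinning down the homology classes $[\widetilde{\delta}]$ and $[\widetilde{\gamma}']$ in terms of the arc basis $\{[\widetilde{\gamma}_i]\}$, and confirming that the band-induced framing is the Seifert/$0$-framing for which the formulas of that proposition apply verbatim. Everything else --- the module-theoretic reformulations of $\CKhred$ and $\widehat{\HF}(\Sigma(\mathcal{D}))$, and the comparison of induced maps --- is then forced by the fact that $V$ is the exterior algebra on one generator, together with the coherence bookkeeping needed to make $\psi_{\mathcal{D}}$ strictly natural.
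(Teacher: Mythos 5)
Your proposal follows the same strategy as the paper's proof: identify both sides with exterior algebras via the arc system $\{\gamma_i\}$ joining $S_0$ to $S_i$ and their lifts $\{\widetilde{\gamma}_i\}$, define $\psi_{\mathcal{D}}$ by $[S_i]\mapsto[\widetilde{\gamma}_i]$, and then check naturality by matching the $2$-handle cobordism formulas of Proposition 6.1 of Ozsv\'ath--Szab\'o against the Frobenius (co)multiplication, using that $[\widetilde\delta]=[\widetilde\gamma_i]+[\widetilde\gamma_j]$ dies in a merge and that the dual class is created in a split. The paper's in-text proof is terser (it only treats saddles disjoint from the marked component, says only that ``$\tilde\gamma_1$ and $\tilde\gamma_2$ become homologous to the lift of the curve from $S_0$ to $T$'', and dismisses well-definedness with the observation that any two choices of $\gamma_i$ have homologous lifts), but the underlying argument is the same; your extra scaffolding (the explicit $V^{\otimes k}\cong\Lambda^*\F^k$ dictionary, the $[\widetilde\gamma_0]=0$ convention to include the marked circle, and the planar-isotopy coherence check) is a harmless and reasonable elaboration rather than a different route.
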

We recall the proof of this statement in the case that $s$ does not involve the marked component since we will only require this case in our proof that the algebras agree.
\begin{proof}
	For $i>0$, let $\gamma_i$ be an arc in $S^3$ from $S_0$ to $S_i$ which is disjoint from $\mathcal{D}$ away from its endpoints and let $\tilde{\gamma}_i$ be the preimage of $\gamma_i$ in $\Sigma(\mathcal{D})$. Note that the preimages of any two choices of $\gamma_i$ are homologous in $\Sigma(\mathcal{D})$. Then, by construction, $\{[\tilde{\gamma}_i]\}_{i=1}^k$ is a basis for $H_1(\Sigma(\mathcal{D}))$. Using \cite[Proposition 6.1]{OzsSzBranched} and the identification, given in \cite[Section 5]{OzsSzBranched}, of $\CKhred(\mathcal{D})$ with the exterior algebra $\Lambda^*\widetilde{Z}(\mathcal{D})$, where $\widetilde{Z}(\mathcal{D})$ is the vector space formally spanned by the unmarked components $[S_1],\dots,[S_k]$ of $\mathcal{D}$, the map $\psi_{\mathcal{D}}$ is then given by the isomorphism $\Lambda^*\widetilde{Z}(\mathcal{D})\stackrel{\cong}{\longrightarrow}\Lambda^*H_1(\Sigma(\mathcal{D}))$ determined by $[S_i]\mapsto[\tilde{\gamma}_i]$.
	
	If $s$ merges two circles $S_1$ and $S_2$ into a single circle $T$, then, in the cobordism $\Sigma(s)$, the curves $\tilde{\gamma}_1$ and $\tilde{\gamma}_2$ become homologous to the lift of the curve from $S_0$ to $T$ in $\Sigma(\mathcal{D}')$. Commutativity of the above square then follows from \cite[Proposition 6.1]{OzsSzBranched} and the definition of $\CKhred(s)$. Dually, if $s$ splits a circle $T$ into a disjoint union $S_1\sqcup S_2$ of two circles, then the curve $\tilde{\delta}=\tilde{\gamma}_2-\tilde{\gamma}_1$ is nullhomologous in $\Sigma(s)$ and commutativity of the square follows similarly.
	\begin{figure}
		\begin{center}
			\includegraphics[scale=1]{./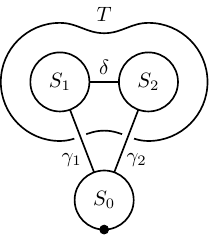}
		\end{center}
		\caption{After merging $S_1$ and $S_2$, the curves $\tilde{\gamma}_1$ and $\tilde{\gamma}_2$ become homologous. Dually, if $T$ is split into $S_1\sqcup S_2$, the curve $\tilde{\delta}=\tilde{\gamma}_2-\tilde{\gamma}_1$ becomes nullhomologous.}
	\end{figure}
\end{proof}
Note that if $\mathcal{D}_0$ and $\mathcal{D}'_0$ are two planar unlinks, $\mathcal{D}$ and $\mathcal{D}'$ are the based unlink diagrams obtained by placing a based circle below each diagram, and $\mathcal{D}''$ is the diagram obtained from $\mathcal{D}_0\sqcup\mathcal{D}_0'$ in the same manner, then there is automatically an isomorphism $\widetilde{Z}(\mathcal{D})\oplus\widetilde{Z}(\mathcal{D}')\stackrel{\cong}{\longrightarrow}\widetilde{Z}(\mathcal{D}'')$ because there is a canonical bijection between the set of unmarked components of $\mathcal{D}\sqcup\mathcal{D}'$, regarded as a single diagram with two marked components, and the unmarked components of $\mathcal{D}''$ which sends an unmarked component to itself (see Figure \ref{Identify} for an example). This then induces an isomorphism $\Lambda^*\widetilde{Z}(\mathcal{D})\otimes\Lambda^*\widetilde{Z}(\mathcal{D}')\stackrel{\cong}{\longrightarrow}\Lambda^*\widetilde{Z}(\mathcal{D}'')$.
\begin{figure}
	\begin{center}
		\includegraphics[scale=1]{./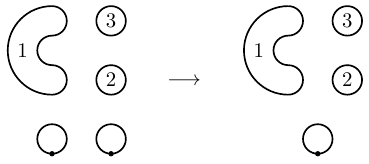}
	\end{center}
	\caption[The canonical identification between the unmarked components of $\mathcal{D}\sqcup\mathcal{D}'$ and $\mathcal{D}''$.]{The canonical identification between the unmarked components of a diagram of the form $\mathcal{D}\sqcup\mathcal{D}'$ (left) and the corresponding diagram $\mathcal{D}''$ (right).}
	\label{Identify}
\end{figure}
We are now ready to prove that $H_*\mathfrak{h}_n$ and $H_n$ are isomorphic.
\begin{theorem}\label{thm:AlgebrasAgree}
	Let $\overline{\circ}_2:H_*\mathfrak{h}_n\otimes_{\F}H_*\mathfrak{h}_n\to H_*\mathfrak{h}_n$ denote the operation induced by $\circ_{2}$ on homology. Then $(H_*\mathfrak{h}_n,\overline{\circ}_2)\cong H_n$ as associative algebras.
\end{theorem}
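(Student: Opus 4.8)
The plan is to identify both the underlying $\Bbbk$-module and the product of $(H_*\mathfrak{h}_n,\overline{\circ}_2)$ with those of $H_n$ by routing everything through Heegaard Floer homology of branched double covers of planar unlinks and appealing to Proposition~\ref{OzsSzProp6-2}. First I would split the computation over the idempotents, writing $\mathfrak{h}_n=\bigoplus_{a,b\in\mathfrak{C}_n}\Mor^{\mathcal{A}_n}(\widehat{\CFD}(a_+),\widehat{\CFD}(b_+))$, and apply the isomorphism $\Mor^{\mathcal{A}}(N_1,N_2)\cong\overline{N}_1\boxtimes\mathcal{A}\boxtimes N_2$ together with the duality and pairing theorems of bordered Floer homology to conclude that $H_*\Mor^{\mathcal{A}_n}(\widehat{\CFD}(a_+),\widehat{\CFD}(b_+))\cong\widehat{\HF}\bigl(\Sigma(a_+^!\,b_+)\bigr)$, where $\Sigma(a_+^!\,b_+)$ denotes the closed $3$-manifold $-\Sigma(a_+)\cup_{\Sigma_n}\Sigma(b_+)$. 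The key combinatorial point is that, since $a_+$ (resp. $b_+$) is $a$ (resp. $b$) with an arc adjoined below, the link $a_+^!\,b_+$ is precisely the disjoint union $(a^!b)\sqcup\bigcirc$ in which the extra unknotted component $\bigcirc$ is the one nearest the basepoint; hence $\Sigma(a_+^!\,b_+)\cong\Sigma(a^!b)\#(S^2\times S^1)$.

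Next I would invoke Proposition~\ref{OzsSzProp6-2}, applied to the based planar unlink $(a^!b)\sqcup\bigcirc$ with the adjoined component as the marked one, to get a canonical isomorphism $\widehat{\HF}(\Sigma(a_+^!\,b_+))\cong\CKhred((a^!b)\sqcup\bigcirc)$, and then identify $\CKhred((a^!b)\sqcup\bigcirc)\cong\CKh(a^!b)$ by discarding the marked component (equivalently, the tautological identification of $\Lambda^*$ of the span of the unmarked circles with $V^{\otimes k}$). Assembling these over all $a,b\in\mathfrak{C}_n$ and matching idempotents produces a $\Bbbk$-module isomorphism $\Phi\colon H_*\mathfrak{h}_n\xrightarrow{\ \cong\ }H_n$; here one also checks that $\Phi$ respects the quantum grading, which is bookkeeping of the various shifts.

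It remains to show $\Phi$ intertwines $\overline{\circ}_2$ with the arc algebra multiplication. Since the multiplication on $H_n$ is induced by the minimal saddle cobordism $a^!b\sqcup b^!c\to a^!c$, I would argue that under the pairing isomorphism of Step~1 the composition product on $\bigoplus_{a,b}H_*\Mor^{\mathcal{A}_n}(\widehat{\CFD}(a_+),\widehat{\CFD}(b_+))$ is realized by the Heegaard Floer cobordism map $\widehat{F}_{\Sigma(s)}$, where $s\colon a_+^!\,b_+\sqcup b_+^!\,c_+\to a_+^!\,c_+$ is the minimal saddle cobordism merging $b_+$ with $b_+^!$ down to the identity tangle. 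Decomposing $s$ into elementary merges and splits, none of which touches the marked components (the saddle on the bottom arc pair merely straightens the extra circle and contributes the identity on the $V$-factor), Proposition~\ref{OzsSzProp6-2} identifies $\widehat{F}_{\Sigma(s)}$ with $\CKhred(s)$, which on the $\CKh(a^!b)$-factors is exactly the arc algebra product. Matching conventions so that $\overline{\circ}_2(f,g)$ corresponds to multiplying in the order ``$a^!b$ then $b^!c$'', this yields $\Phi(\overline{\circ}_2(f,g))=\Phi(f)\cdot\Phi(g)$, completing the proof.

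The main obstacle is this last step: proving the gluing formula that identifies composition of type-$D$ structure morphisms with the Heegaard Floer cobordism map of the branched double cover of the minimal saddle, and then checking that the local isomorphisms of Steps~1--2 --- which a priori depend on auxiliary choices (arcs joining components, minimal plat presentations of the matchings, Heegaard diagrams) --- are coherent enough to globalize to a single algebra map. Corollary~\ref{QICorollary} disposes of the Heegaard-diagram dependence up to quasi-isomorphism, but for the on-the-nose statement one must pin the comparison maps down, and it is precisely the naturality built into Proposition~\ref{OzsSzProp6-2} that makes this possible. (An alternative, more hands-on route would be to compute $\widehat{\CFD}(\Sigma(a_+))$ directly for each crossingless matching $a$ --- these branched double covers are built from solid tori --- and read off the morphism complexes and their composition explicitly; this is presumably how the $\mathfrak{h}_2$ computation in the appendix proceeds.)
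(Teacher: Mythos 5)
Your overall strategy matches the paper's: you identify $H_*\Mor^{\mathcal{A}_n}(\widehat{\CFD}(a_+),\widehat{\CFD}(b_+))\cong\widehat{\HF}(\Sigma(a_+^!b_+))$ via the pairing theorem, observe $a_+^!b_+=(a^!b)\sqcup\bigcirc$, appeal to Proposition~\ref{OzsSzProp6-2} to translate to reduced Khovanov homology, and realize $\overline{\circ}_2$ via cobordism maps. The combinatorial observation and the idempotent splitting are exactly right, and the framing in terms of $\Phi$ intertwining products is the correct shape of the argument.

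However, there is a genuine under-specification in your handling of the saddle that merges the two marked circles. You say it ``merely straightens the extra circle and contributes the identity on the $V$-factor'' and treat it alongside the other merges/splits via Proposition~\ref{OzsSzProp6-2}. But that proposition, as the paper proves and uses it, only covers saddles that do \emph{not} involve the marked component of a singly-based planar unlink; the cobordism you need starts from a \emph{doubly}-pointed disjoint union $a_+^!b_+\sqcup b_+^!c_+$ and merges the two based circles into one, which is not of that form. The paper isolates this as a separate connected-sum step $W_\#$ and invokes Zemke's graph cobordism functoriality ([Proposition 8.1]{Zemke2021}, together with [Proposition 5.2]{HendricksManolescuZemke2017}) to show that $\widehat{F}_{W_\#}$ agrees with the Ozsv\'{a}th--Szab\'{o} connected sum isomorphism, before the remaining saddles (which genuinely avoid the marked component) are handled by Proposition~\ref{OzsSzProp6-2}. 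You flag ``the main obstacle'' in your last paragraph but attribute it primarily to the gluing formula for $\circ_2$ and to coherence of choices; the actual missing ingredient is this connected-sum step and the Zemke-type naturality needed to control it. Without it, the claim that the two local isomorphisms are compatible with composition is not established.
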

\begin{proof}
	Note that we may regard $H_n$ as the algebra
	\begin{align}
		H_n=\bigoplus_{a,b\in\mathfrak{C}_n}\CKhred(a_+^!b_+),
	\end{align}
	where we place a basepoint on the bottom-most circle of $a_+^!b_+$ and regard $\CKhred(a_+^!b_+)$ as the quotient complex of $\CKh(a_+^!b_+)$ wherein the marked component is labeled $1$. The multiplication $m$ on $H_n$ is then given by
	\begin{align}
		m=\sum_{a,b,c\in\mathfrak{C}_n}\CKhred(C_{abc}\sqcup\id_{\bigcirc}),
	\end{align}
	where $C_{abc}:a^!b\sqcup b^!c\to a^!c$ is the minimal saddle cobordism.
	
	Note that the pair-of-pants cobordism $W:\Sigma(a_+^!b_+)\sqcup\Sigma(b_+^!c_+)\to\Sigma(a_+^!c_+)$ decomposes as $W=\Sigma(C_{abc}\sqcup\id_{\bigcirc})\circ W_\#$, where
	\begin{align}
		W_\#:\Sigma(a_+^!b_+)\sqcup\Sigma(b_+^!c_+)\to\Sigma((a^!b\sqcup b^!c)\sqcup\bigcirc)
	\end{align}
	is the connected sum cobordism given by taking the connected sum at the preimages of the basepoints on the bottom-most circles in $a_+^!b_+$ and $b_+^!c_+$. We may decompose $C_{abc}\sqcup\id_{\bigcirc}$ as a movie $P_1\stackrel{s_1}{\to}\cdots\stackrel{s_{k-1}}{\to}P_k$ of planar unlinks, where $P_1=a^!b\sqcup b^!c\sqcup\bigcirc$, $P_k=a_+^!c_+$ and $P_i\stackrel{s_i}{\to}P_{i+1}$ is a single saddle cobordism, so that $\Sigma(C_{abc}\sqcup\id_{\bigcirc})=\Sigma(s_{k-1})\circ\cdots\circ\Sigma(s_1)$. This then allows us to further decompose $W$ as
	\begin{align}
		W=\Sigma(s_{k-1})\circ\cdots\circ\Sigma(s_1)\circ W_\#
	\end{align}
	
	Regarding $P_i$ and $P_{i+1}$ as successive resolutions $P_i=\mathcal{D}_i(0)$ and $P_{i+1}=\mathcal{D}_i(1)$ of a link diagram $\mathcal{D}_i$ with a single crossing, there is a commutative square
	\begin{align}
		\begin{tikzcd}[ampersand replacement=\&]
			\CKhred(P_i)\arrow[r,"\CKhred(s_i)"]\arrow[d,"\psi_i"'] \& \CKhred (P_{i+1})\arrow[d,"\psi_{i+1}"]\\
			\widehat{\HF}(\Sigma(P_i))\arrow[r,"\widehat{F}_{\Sigma(s_i)}"] \& \widehat{\HF}(\Sigma(P_{i+1})),
		\end{tikzcd}
	\end{align}
	where $\psi_i=\psi_{\mathcal{D}_i(0)}:\CKhred(P_i)\to\widehat{\HF}(\Sigma(P_i))$ is the isomorphism constructed in the proof of \cite[Proposition 6.2]{OzsSzBranched} (see page \pageref{OzsSzProp6-2}). Note that, since the construction of each $\psi_{i}$ depends only on the diagram $P_i$, we have $\psi_{i+1}=\psi_{\mathcal{D}_{i+1}(0)}=\psi_{\mathcal{D}_i(1)}$. For $a,b\in\mathfrak{C}_n$, let $\psi_{ab}=\psi_{a_+^!b_+}$. We claim that the diagrams
	\begin{align}
		\begin{tikzcd}[ampersand replacement=\&,column sep=1.25cm]
			\CKhred(a_+^!b_+)\otimes\CKhred(b_+^!c_+)\arrow[d,"\psi_{ab}\otimes\psi_{bc}"']\arrow[r,"f_{abc}"] \& \CKhred(P_1)\arrow[d,"\psi_1"]\\
			\widehat{\HF}(\Sigma(a_+^!b_+))\otimes\widehat{\HF}(\Sigma(b_+^!c_+))\arrow[r,"\widehat{F}_{W_\#}"] \& \widehat{\HF}(\Sigma(P_1))
		\end{tikzcd}
	\end{align}
	and
	\begin{align}
		\begin{tikzcd}[ampersand replacement=\&,column sep=2cm]
			\CKhred(P_1)\arrow[r,"\CKhred(C_{abc}\sqcup\id_{\bigcirc})"]\arrow[d,"\psi_1"'] \& \CKhred(P_k)\arrow[d,"\psi_k"]\\
			\widehat{\HF}(\Sigma(P_1))\arrow[r,"\widehat{F}_{\Sigma(C_{abc}\sqcup\id_{\bigcirc})}"] \& \widehat{\HF}(\Sigma(P_k))
		\end{tikzcd}
	\end{align}
	commute, where $f_{abc}:\CKhred(a_+^!b_+)\otimes\CKhred(b_+^!c_+)\to\CKhred(a_+^!c_+)$ is the isomorphism given by
	\begin{align}
		(a^!b\sqcup\bigcirc_1,\bm{v})\otimes(b^!c\sqcup\bigcirc_1,\bm{w})\mapsto((a^!b\sqcup b^!c)\sqcup\bigcirc_1,\bm{v}\sqcup\bm{w})
	\end{align}
	for any labelings $\bm{v}$ and $\bm{w}$ of $a^!b$ and $b^!c$. In other words, $f_{abc}$ is the composite of the isomorphisms $\Lambda^*\widetilde{Z}(a_+^!b_+)\otimes\Lambda^*\widetilde{Z}(b_+^!c_+)\stackrel{\cong}{\longrightarrow}\Lambda^*(\widetilde{Z}(a_+^!b_+)\oplus\widetilde{Z}(b_+^!c_+))$ and $\Lambda^*(\widetilde{Z}(a_+^!b_+)\oplus\widetilde{Z}(b_+^!c_+))\stackrel{\cong}{\longrightarrow}\Lambda^*\widetilde{Z}((a^!b\sqcup b^!c)\sqcup\bigcirc)$. Here, $\widehat{F}_{W_\#}$ is the map associated to $W_\#$, regarded as a graph cobordism $(\Sigma(a_+^!b_+)\sqcup\Sigma(b_+^!c_+),\{w_1,w_2\})\to(\Sigma(P_1),w)$, as in \cite[Proposition 5.2]{HendricksManolescuZemke2017}. By \cite[Proposition 8.1]{Zemke2021}, this map computes the connected sum isomorphism of \cite[Proposition 6.1]{OzsvathSzabo2004Properties} given on generators at the chain level by the identification
	\begin{align}
		\mathbb{T}_\gamma\cap\mathbb{T}_\delta=(\mathbb{T}_{\alpha_1}\cap\mathbb{T}_{\beta_1})\times(\mathbb{T}_{\alpha_2}\cap\mathbb{T}_{\beta_2}),
	\end{align}
	where $(\Sigma,\bm{\gamma},\bm{\delta},z)=(\Sigma_1,\bm{\alpha}_1,\bm{\beta}_1,z_1)\#(\Sigma_2,\bm{\alpha}_2,\bm{\beta}_2,z_2)$ is the connected sum of the Heegaard diagrams $(\Sigma_1,\bm{\alpha}_1,\bm{\beta}_1,z_1)$ and $(\Sigma_2,\bm{\alpha}_2,\bm{\beta}_2,z_2)$ for $\Sigma(a_+^!b_+)$ and $\Sigma(b_+^!c_+)$, respectively, with the connected sum taken at the basepoints $z_1$ and $z_2$, and $z$ a basepoint in the connected sum region of $\Sigma$. More explicitly, $\widehat{F}_{W_\#}$ is given on basis elements by
	\begin{align}
		\widehat{F}_{W_\#}(\xi\cdot\Theta^{\mathrm{top}}_{ab}\otimes\xi'\cdot\Theta^{\mathrm{top}}_{bc})=\xi\otimes\xi'\cdot\Theta^{\mathrm{top}}_{ac},
	\end{align}
	where we identify $\xi\otimes\xi'$ with its image under the isomorphism
	\begin{align}
		\Lambda^*H_1(\Sigma(a_+^!b_+))\otimes\Lambda^*H_1(\Sigma(b_+^!c_+))\to\Lambda^*H_1(\Sigma(a_+^!c_+))
	\end{align}
	induced by the identification of $H_1(\Sigma(a_+^!b_+))\oplus H_1(\Sigma(b_+^!c_+))$ with $H_1(\Sigma(a_+^!c_+))$, which we outline as follows. Note that $P_1$ is obtained from the doubly-pointed diagram $a_+^!b_+\sqcup b_+^!c_+$ by merging the two marked components into one. If $\gamma_1^1,\gamma_2^1,\dots,\gamma_k^1$ are the arcs from the marked component of $a_+^!b_+$ to the remaining components and $\gamma_1^2,\gamma_2^2,\dots,\gamma_\ell^2$ are the arcs for $b_+^!c$, then there is a natural choice of bijection $\sigma$ between $\{\gamma_1^1,\gamma_2^1,\dots,\gamma_k^1\}\sqcup\{\gamma_1^2,\gamma_2^2,\dots,\gamma_\ell^2\}$ and the set of arcs for $P_1$ as illustrated in Figure \ref{Merger}.
	\begin{figure}
		\begin{center}
			\includegraphics[scale=1]{./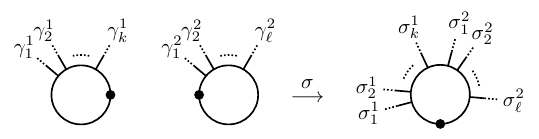}
		\end{center}
		\caption[The bijection between the arcs for the diagrams $a_+^!b_+\sqcup b_+^!c_+$ and $P_1$.]{The bijection $\sigma$ between the arcs for the diagrams $a_+^!b_+\sqcup b_+^!c_+$ and $P_1$. Here, $\sigma_i^j=\sigma(\gamma_i^j)$.}
		\label{Merger}
	\end{figure}
	We then have an explicit isomorphism
	\begin{align}
		H_1(\Sigma(a_+^!b_+))\oplus H_1(\Sigma(b_+^!c_+))\cong H_1(\Sigma(P_1))
	\end{align}
	given by $[\tilde{\gamma}_i^j]\mapsto[\tilde{\sigma}_i^j]$, where $\tilde{\sigma}_i^j$ is the preimage of $\sigma(\gamma_i^j)$ in $\Sigma(P_1)$.
	
	Now, since $\widehat{F}_{W_\#}$ agrees with the map of modules induced by the isomorphism
	\begin{align}
		\Lambda^*H_1(\Sigma(a_+^!b_+))\otimes\Lambda^*H_1(\Sigma(b_+^!c_+))\cong\Lambda^*H_1(\Sigma(P_1)),
	\end{align}
	this tells us that $\widehat{F}_{W_\#}\circ(\psi_{ab}\otimes\psi_{bc})=\psi_{ac}\circ f_{abc}$.
	
	The fact that the second diagram commutes follows immediately from functoriality of reduced Khovanov and Heegaard Floer homology and the fact that the diagram
	\begin{align}
		\begin{tikzcd}[ampersand replacement=\&,column sep=1.25cm]
			\CKhred(P_1)\arrow[r,"\CKhred(s_1)"]\arrow[d,"\psi_1"'] \& \CKhred (P_2)\arrow[d,"\psi_2"]\arrow[r,"\CKhred(s_2)"] \& \cdots\arrow[r,"\CKhred(s_{k-1})"] \& \CKhred(P_k)\arrow[d,"\psi_k"]\\
			\widehat{\HF}(\Sigma(P_1))\arrow[r,"\widehat{F}_{\Sigma(s_1)}"] \& \widehat{\HF}(\Sigma(P_2))\arrow[r,"\widehat{F}_{\Sigma(s_2)}"] \& \cdots\arrow[r,"\widehat{F}_{\Sigma(s_{k-1})}"] \& \widehat{\HF}(\Sigma(P_k))
		\end{tikzcd}
	\end{align}
	commutes, which in turn follows from the fact that each individual square in this diagram commutes.
	
	For the sake of brevity, define $\Mor^{\mathcal{A}_n}(a_+,b_+)=\Mor^{\mathcal{A}_n}(\widehat{\CFD}(a_+),\widehat{\CFD}(b_+))$. Since $\circ_{2}:\Mor^{\mathcal{A}_n}(a_+,b_+)\otimes\Mor^{\mathcal{A}_n}(b_+,c_+)\to\Mor^{\mathcal{A}_n}(a_+,c_+)$ induces the cobordism map $\widehat{F}_W=\widehat{F}_{\Sigma(s_{k-1})}\circ\cdots\circ\widehat{F}_{\Sigma(s_1)}\circ\widehat{F}_{W_\#}$ on homology, it then follows that there is an isomorphism $(H_*\mathfrak{h}_n,\overline{\circ}_2)\cong H_n$ of associative algebras since the square
	\begin{align}
		\begin{tikzcd}[ampersand replacement=\&,column sep=2cm]
			H_n\otimes H_n\arrow[r,"m"]\arrow[d,"\psi\otimes\psi"'] \& H_n\arrow[d,"\psi"]\\
			H_*\mathfrak{h}_n\otimes H_*\mathfrak{h}_n\arrow[r,"\overline{\circ}_2"] \& H_*\mathfrak{h}_n
		\end{tikzcd}
	\end{align}
	commutes, where $\psi=\sum\limits_{a,b\in\mathfrak{C}_n}\psi_{ab}:H_n\to H_*\mathfrak{h}_n$ is the linear isomorphism assembled from the $\psi_{ab}$.
\end{proof}
We will prove the following result in Section 6.
\begin{theorem}\label{Nonformal}
	The differential algebras $\mathfrak{h}_n$ are not formal for $n>1$. In other words, whenever $n>1$, there are nontrivial higher $A_\infty$-operations on $H_*\mathfrak{h}_n$ under the canonical quasi-isomorphism $q:H_*\mathfrak{h}_n\to\mathfrak{h}_n$ of \textup{\cite[Proposition 7]{KONTSEVICH2001}}. In fact, $H_*\mathfrak{h}_n$ is an unbounded $A_\infty$-algebra for all $n>1$.
\end{theorem}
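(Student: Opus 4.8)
The plan is to reduce the statement for all $n>1$ to the single case $n=2$ by propagating a non-vanishing higher $A_\infty$-operation on $H_*\mathfrak{h}_2$ up a tower of \emph{homologically injective embeddings} $\iota_n\colon\mathfrak{h}_n\hookrightarrow\mathfrak{h}_{n+1}$, and to settle the base case by an explicit computation (carried out in the appendix). The underlying principle: non-formality of $\mathfrak{h}_n$ is equivalent to the transferred $A_\infty$-structure on $H_*\mathfrak{h}_n\cong H_n$ admitting no retract with all higher operations trivial, so it suffices to exhibit, robustly, one nonzero transferred operation; and once we know $H_*\mathfrak{h}_2$ carries infinitely many nonzero transferred operations (regardless of retract), the same infinitude will be visible inside $H_*\mathfrak{h}_n$.

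First I would construct the embeddings. Adding a trivial arc at the bottom of a crossingless matching corresponds, on branched double covers, to a fixed bordered cobordism $F(\mathcal{Z}_n)\to F(\mathcal{Z}_{n+1})$; let $\mathcal{P}_n$ denote its type-$\mathit{DA}$ bimodule over $(\mathcal{A}_{n+1},\mathcal{A}_n)$. Comparing with the bordered Heegaard diagrams of Section 4 --- the local move implementing $\mathcal{P}_n\boxtimes(-)$ is exactly the insertion of a handle and a $\beta$-circle described there --- one checks that $\mathcal{P}_n\boxtimes\widehat{\CFD}(a_+)\simeq\widehat{\CFD}((a_+)_+)$ for all $a\in\mathfrak{C}_n$. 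Since $\mathcal{P}_n\boxtimes(-)$ is an $A_\infty$-functor by \cite[Lemma 2.3.13]{LOTBimodules2015} (recalled in Section 3), applying it to endomorphism complexes and projecting onto the idempotent truncation indexed by $\{(a_+)_+\mid a\in\mathfrak{C}_n\}\subseteq\mathfrak{B}_{n+1}$ yields an $A_\infty$-algebra map $\iota_n\colon\mathfrak{h}_n\to\mathfrak{h}_{n+1}$ (which we may take to be a map of dg-algebras). Rerunning the naturality argument from the proof of Theorem \ref{thm:AlgebrasAgree} --- matching branched-cover cobordism maps on $\widehat{\HF}$ with the isomorphisms $\psi$ --- identifies $(\iota_n)_*$ with the standard corner inclusion $H_n\hookrightarrow H_{n+1}$ of arc algebras, which is injective; hence $\iota_n$ is homologically injective. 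Moreover the reverse ``cap off the bottom arc'' cobordism supplies a bimodule $\mathcal{Q}_n$ with $\mathcal{Q}_n\boxtimes\mathcal{P}_n$ homotopy equivalent to the identity type-$\mathit{DA}$ bimodule (the composite cobordism is isotopic to a product), so the induced map $r_n$ is a homotopy retraction of $\iota_n$.

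Next, fix $n>1$ and let $\iota=\iota_{n-1}\circ\cdots\circ\iota_2\colon\mathfrak{h}_2\hookrightarrow\mathfrak{h}_n$; it is a homotopy-split, homologically injective (dg-)algebra map whose image lies in a corner $e\mathfrak{h}_n e$ cut out by a cycle idempotent $e$, and whose image on homology is the standard inclusion $H_2\hookrightarrow H_n$. Since $V\mapsto eVe$ preserves quasi-isomorphisms and trivial higher operations, formality of $\mathfrak{h}_n$ would imply formality of $e\mathfrak{h}_n e$, hence --- as $\mathfrak{h}_2$ is a homotopy retract of $e\mathfrak{h}_n e$ --- formality of $\mathfrak{h}_2$, so non-formality of $\mathfrak{h}_2$ propagates. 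For the stronger unboundedness claim I would instead, via the homological perturbation lemma \cite[Proposition 7]{KONTSEVICH2001}, choose perturbation data for $\mathfrak{h}_n$ that preserves $e$ and restricts along $\iota$ to perturbation data for $\mathfrak{h}_2$, so that the transferred structure on $H_*\mathfrak{h}_n$ restricts to a transferred structure on $H_*\mathfrak{h}_2$; then any $m_k$ that is nonzero on $H_*\mathfrak{h}_2$ for the restricted retract is nonzero on $H_*\mathfrak{h}_n$. I expect this last point --- arranging perturbation data that simultaneously respects the idempotent decomposition and the homotopy splitting, given that the $\iota_n$ arise from box tensor products rather than honest inclusions --- to be the main obstacle, and it is exactly where homological injectivity (rather than mere homological nontriviality) of the embeddings is used.

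Finally, the base case: explicitly compute the finite-dimensional dg-algebra $\mathfrak{h}_2$ over $\F$, take homology to recover $H_*\mathfrak{h}_2\cong H_2$, run homological perturbation, and verify that the transferred operation $m_3$ on $H_*\mathfrak{h}_2$ is nonzero for \emph{every} choice of retract (equivalently, that its class in Hochschild cohomology is nonzero), and more strongly that $m_k\ne0$ for infinitely many $k$. The retract-independence is precisely what the propagation step requires, since the perturbation data forced on the $\mathfrak{h}_2$-part of $H_*\mathfrak{h}_n$ is not under our control. This computation is lengthy but essentially routine; combined with the two preceding steps it proves that $H_*\mathfrak{h}_n$ has a nonzero $m_3$ and is unbounded for every $n>1$.
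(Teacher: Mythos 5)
Your high-level strategy matches the paper's: reduce to $n=2$ via embeddings $\mathfrak{h}_n\hookrightarrow\mathfrak{h}_{n+1}$ and settle the base case by explicit computation. Where you diverge is the construction of the embeddings. The paper builds $\Lambda_n$ by hand: it defines an explicit injective dg-algebra map $L_n:\mathcal{A}_n\hookrightarrow\mathcal{A}_{n+1}$ at the strands-algebra level, pairs it with an explicit bijection $\lambda_a:\widehat{\CFD}(a_+)\to\widehat{\CFD}(a_{++})$ of type-$D$ generators obtained by comparing bordered Heegaard diagrams, and verifies directly that $\Lambda_n:\mathfrak{h}_n\hookrightarrow\mathfrak{h}_{n+1}$ is a \emph{strict} dg-algebra homomorphism. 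It then proves that the idempotent corner decomposes as $\mathrm{im}(\Lambda_n)\oplus\mathrm{im}(\rho_{1,3}\Lambda_n)$ with block-diagonal differential; since $\rho_{1,3}^2=0$, the second summand is a square-zero two-sided ideal, so $\mathfrak{h}_n\cong\mathrm{im}(\Lambda_n)$ is a strict dg-algebra retract of that corner. That strict retract, and not homological injectivity per se, is what allows compatible choices of perturbation data and lets the Massey-admissible higher operations transport along $(\Lambda_n)_*$.

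Your box-tensor construction of $\iota_n$ has two genuine gaps. First, $\mathcal{P}_n\boxtimes-$ is only an $A_\infty$-functor (the paper itself notes, in defining $\CF_{\bm{\nu},\bm{\nu}'}(Y)$, that the right action obtained by box-tensoring is non-strict), so $\iota_n$ is a priori an $A_\infty$-algebra morphism with nontrivial higher terms coming from $\delta^1_{1+j}$ of $\mathcal{P}_n$ for $j\geq 2$; your parenthetical ``(which we may take to be a map of dg-algebras)'' is unjustified. Second, and more seriously, the homotopy retraction you describe does not exist in the form claimed: the composite tangle ``add a trivial arc, then cap it off'' is $\id_{2n}$ together with a disjoint closed unknot, and $\Sigma(\id_{2n}\sqcup\bigcirc)\cong\Sigma(\id_{2n})\#(S^2\times S^1)$, so $\mathcal{Q}_n\boxtimes\mathcal{P}_n$ is homotopy equivalent to $\F^2\otimes(\textup{identity bimodule})$, not to the identity. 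This doubling is exactly what the paper's $\mathrm{im}(\rho_{1,3}\Lambda_n)$ summand records (equivalently, the corner $eH_{n+1}e$ is $\F^2\otimes H_n$, not $H_n$), and it means $\mathfrak{h}_2$ is \emph{not} a homotopy retract of the corner in your sense. Your closing diagnosis --- that the real difficulty is arranging perturbation data compatible with the idempotent decomposition and the splitting, and that homological injectivity alone is not enough --- is exactly right; the paper's explicit strict embedding together with the square-zero complement is the device that resolves it, and is what you would need to supply in place of the box-tensor and retraction claims.
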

\begin{remark}
	It is an easy exercise in using the homological perturbation lemma to show that $\mathfrak{h}_1$ is formal.
\end{remark}
Compare with \cite[Theorem 1.1]{Abouzaid2016}, which tells us that the \emph{symplectic} arc algebras are formal in characteristic 0. Compare also with \cite[Theorem 7.3]{Klamt11} (see also \cite[Lemma 3.12]{Auroux2012}), which says that the quiver algebras of \cite{KhovanovSeidel2001} --- which are described in \cite{Klamt11} via Ext algebras of direct sums of Verma modules in the parabolic category $\mathcal{O}$ --- are formal. Both of these (families of) algebras are of the form $\bigoplus_{i,j}\HF(L_i,L_j)$, where $\{L_i\}$ is some collection of Lagrangian submanifolds in a symplectic manifold and $\HF(L_i,L_j)$ is the Lagrangian intersection Floer homology of $L_i$ and $L_j$. In the former case, the ambient symplectic manifolds are transverse slices $\mathscr{Y}_n$ to the adjoint quotient maps $\mathfrak{sl}_{2n}(\mathbb{C})\to\mathbb{C}^{2n-1}$ at a nilpotent matrix with two identical Jordan blocks and the $L_i$ are compact exact Lagrangian submanifolds which are diffeomorphic to $S^2\times\cdots\times S^2$ and determined by crossingless matchings. In the latter, the ambient manifold is the Milnor fiber $M$ of a complex $n$-dimensional $A_m$-singularity and the $L_i$ are Lagrangian $n$-spheres, save for one which is a Lagrangian $n$-ball with boundary on $\partial M$, determined by embedded curves in a surface with boundary. In \cite[Theorems 7.7]{Klamt11}, Klamt shows that the Ext algebras of certain related Verma modules are non-formal but bounded.

In view of the results of \cite{Auroux2010}, which interprets the right $A_\infty$-modules $\widehat{\CFA}(Y)$ of bordered Floer homology in terms of Lagrangian correspondences in the extended partially wrapped Fukaya category of symmetric products of a surface and Heegaard Floer complexes $\widehat{\CF}(-Y_1\cup_\partial Y_2)\simeq\Mor_{\mathcal{A}}(\widehat{\CFA}(Y_1),\widehat{\CFA}(Y_2))$ as Hom complexes in the same extended Fukaya category, Theorem \ref{Nonformal} can be thought of as providing an example of a an arc algebra-like structure defined in terms of symplectic data which is intrinsically different from previously known examples, all of which are formal or have bounded minimal models. Though we will not discuss it here, since it is beyond the scope of this article, it would be interesting to give a purely symplectic description of the algebras $H_*\mathfrak{h}_n$ in terms of explicit Lagrangian correspondences via the machinery of \cite{Auroux2010}. It would also be interesting to compare the results of the next section of this paper with those of \cite{Auroux2012,Auroux2015}, which concern similar spectral sequences relating Khovanov--Seidel braid invariants and bordered Floer bimodules (as well as Hochschild homologies thereof).
	\section{Branched Bimodules and the Spectral Sequence}
	Let $T$ be an $(m,n)$-tangle diagram, i.e. a tangle diagram with $2m$ left-endpoints and $2n$ right-endpoints. We are now ready to define the $A_\infty$-$(\mathfrak{h}_m,\mathfrak{h}_n)$-bimodule $\CF^\beta(T)$, the \emph{branched Floer bimodule}, associated to $T$. As a first definition, we take $\CF^\beta(T)=\CF_{\bm{\nu}_m,\bm{\nu}_n}(\Sigma(T^+))$, where $T^+$ is the result of placing a 2-stranded identity braid beneath $T$, i.e.
\begin{align}
	T^+=\,\,\raisebox{-0.775cm}{\includegraphics[scale=1]{./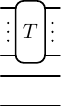}}
\end{align}
However, we will construct this object more explicitly using a choice of strongly bordered Heegaard diagrams for branched double covers of braid generators in $I\times S^2$ due to Lipshitz--Ozsv\'{a}th--Thurston \cite[Section 6.1]{LOTSpectral2} that is adapted to the construction of the Ozsv\'{a}th-Szab\'{o} spectral sequence. We focus first on the case of a single positive braid generator $s_i$ on $2n$ strands. Let $\hat{s}_i$ be its \emph{braidlike resolution}, i.e. the identity braid thought of as a resolution of $s_i$, and $\check{s}_i$ its \emph{anti-braidlike resolution}, i.e. the resolution of $s_i$ obtained by replacing the crossing with a cap followed by a cup --- see Figure \ref{fig:BraidlikeAntibraidlike}.
\begin{figure}
	\begin{align*}
		s_i=\,\raisebox{-1.675cm}{\includegraphics[scale=1]{./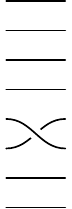}}\hspace{1cm}\hat{s}_i=\,\raisebox{-1.675cm}{\includegraphics[scale=1]{./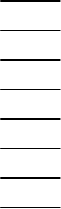}}\hspace{1cm}\check{s}_i=\,\raisebox{-1.675cm}{\includegraphics[scale=1]{./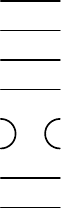}}
	\end{align*}
	\caption{A braid generator $s_i$ and its braidlike and anti-braidlike resolutions, $\hat{s}_i$ and $\check{s}_i$.}\label{fig:BraidlikeAntibraidlike}
\end{figure}
We simultaneously describe the constructions of $\mathcal{H}_{s_i}$, $\mathcal{H}_{\hat{s}_i}$, and $\mathcal{H}_{\check{s}_i}$: each begins with $4n-4$ horizontal segments, corresponding to the ends of the braid, placed between two vertical line segments --- analogously to the construction of the diagram for the plat closure. One then inserts a handle and one of $\beta^0$, $\beta^1$, or $\beta^\infty$ in the crossing region --- where the specific curve is given by the multidiagram
\begin{align}
	\raisebox{-0.75cm}{\includegraphics[scale=1]{./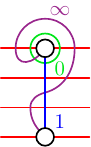}}
\end{align}
--- depending on whether one is constructing $\mathcal{H}_{\hat{s}_i}$, $\mathcal{H}_{\check{s}_i}$, or $\mathcal{H}_{s_i}$, respectively. The reason for this notation is that the strongly bordered 3-manifolds $\Sigma(\hat{s}_i)$, $\Sigma(\check{s}_i)$, and $\Sigma(s_i)$ are the $0$-, $1$-, and $\infty$-surgeries, respectively, of the knot $\kappa_i^+$ in $\Sigma(s_i)$ given by the preimage of the arc $\alpha_i^+$
\begin{align}
	\raisebox{-0.75cm}{\includegraphics[scale=1]{./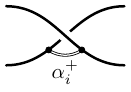}}
\end{align}
--- depicted here in slightly perturbed form --- given by a line segment joining the two arcs of the crossing at the double point. Similarly, if we had started with $s_i^{-1}$, the corresponding multidiagram is
\begin{align}
	\raisebox{-0.75cm}{\includegraphics[scale=1]{./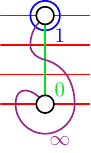}}
\end{align}
and the curves $\beta^0$, $\beta^1$, and $\beta^\infty$ correspond to $\check{s}_i^{-1}$, $\hat{s}_i^{-1}$, and $s_i^{-1}$, respectively, since $\Sigma(\check{s}_i^{-1})$, $\Sigma(\hat{s}_i^{-1})$, and $\Sigma(s_i^{-1})$ are the 0-, 1-, and $\infty$-surgeries of the knot $\kappa_i^-$ in $\Sigma(s_i^{-1})$ obtained analogously to $\kappa_i^+$.

Now, if $T$ is an elementary cap or cup tangle, we choose any strongly bordered Heegaard diagram for $\Sigma(T^+)$. If $T$ is an arbitrary $(m,n)$-tangle diagram, we may decompose $T^+$ into elementary tangles $T^+=T_1T_2\cdots T_k$ --- i.e. each $T_i$ is $s_i$, $s_i^{-1}$, a cap, or a cup --- and we obtain $\mathcal{H}_T$ by gluing the constituent diagrams $\mathcal{H}_{T_i}$, so, if $\widehat{\CFDA}(T^+):=\widehat{\CFDA}(\mathcal{H}_T)$, we have
\begin{align}
	\widehat{\CFDA}(T^+)\simeq\widehat{\CFDA}(\mathcal{H}_{T_1})\boxtimes\widehat{\CFDA}(\mathcal{H}_{T_2})\boxtimes\cdots\boxtimes\widehat{\CFDA}(\mathcal{H}_{T_k})
\end{align}
and we take the right-hand side of this homotopy equivalence as our preferred model for $\widehat{\CFDA}(T^+)$. We recall the following results of Lipshitz--Ozsv\'{a}th--Thurston.
\begin{proposition}[{\cite[Corollary 1.3]{LOTSpectral1}}]
	There are distinguished type-$\mathit{DA}$ bimodule morphisms $F^-:\widehat{\CFDA}(\hat{s}_i)\to\widehat{\CFDA}(\check{s}_i)$ and $F^+:\widehat{\CFDA}(\check{s}_i)\to\widehat{\CFDA}(\hat{s}_i)$ which are uniquely characterized up to homotopy by $\mathrm{Cone}(F^-)\simeq\widehat{\CFDA}(s_i^{-1})$ and $\mathrm{Cone}(F^+)\simeq\widehat{\CFDA}(s_i)$.
\end{proposition}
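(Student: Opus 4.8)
The statement is \cite[Corollary 1.3]{LOTSpectral1}, and the plan is to deduce it from the surgery exact triangle together with the pairing theorem for bordered Floer homology. The first step is to realise all three bimodules as Dehn fillings of a single bordered $3$-manifold. Let $E_i^+$ be the exterior of an open tubular neighbourhood of $\kappa_i^+$ in $\Sigma(s_i)$; it has two parametrised surface boundary components --- the two ends of the tangle --- together with one toroidal boundary component, so its bordered invariant $X_i^+:=\widehat{\mathit{CFDAA}}(E_i^+)$ is a type-$\mathit{DAA}$ trimodule: a type-$\mathit{DA}$ bimodule in the tangle directions carrying an additional right $A_\infty$-action of the torus algebra $\mathcal{A}(\mathbb{T}^2)$. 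Since $\hat s_i$, $\check s_i$ and $s_i$ are respectively the $0$-, $1$- and $\infty$-surgeries on $\kappa_i^+$, the pairing theorem gives homotopy equivalences
\[
	\widehat{\CFDA}(\hat s_i)\simeq X_i^+\boxtimes\widehat{\CFD}(\mathrm{ST}_0),\qquad \widehat{\CFDA}(\check s_i)\simeq X_i^+\boxtimes\widehat{\CFD}(\mathrm{ST}_1),\qquad \widehat{\CFDA}(s_i)\simeq X_i^+\boxtimes\widehat{\CFD}(\mathrm{ST}_\infty),
\]
where $\mathrm{ST}_\gamma$ denotes the solid torus glued along slope $\gamma$; one has the analogous statements with $E_i^-$, $X_i^-$ and $\kappa_i^-$ in place of the ``$+$'' data for $s_i^{-1}$, where now $\check s_i$ and $\hat s_i$ appear as the $0$- and $1$-fillings.

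Next I would invoke the bordered form of the surgery exact triangle. The slopes $0$, $1$, $\infty$ on $\mathbb{T}^2$ are pairwise of distance one, and the solid-torus type-$D$ structures $\widehat{\CFD}(\mathrm{ST}_0)$, $\widehat{\CFD}(\mathrm{ST}_1)$, $\widehat{\CFD}(\mathrm{ST}_\infty)$ fit into a mapping cone: in the cyclic order of the three slopes fixed by the orientation conventions there is a distinguished type-$D$ morphism between two of them whose mapping cone is homotopy equivalent to the third. Concretely this is the triangle (handleslide) map in the standard genus-one multidiagram carrying the curves $\beta^0$, $\beta^1$, $\beta^\infty$, and the cone relation is the usual holomorphic-triangle-and-rectangle count of the surgery exact triangle, executed in the bordered category exactly as in \cite{LOTSpectral1}; it is also the local input to the branched-cover spectral sequence of \cite{OzsSzBranched}. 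Applying the functor $X_i^{\pm}\boxtimes-$ --- which is an $A_\infty$-functor by \cite[Lemma 2.3.13]{LOTBimodules2015} and hence sends the total bimodule of a $\{0,1\}$-filtered bimodule, that is, a mapping cone, to a mapping cone up to homotopy --- then produces morphisms $F^-\colon\widehat{\CFDA}(\hat s_i)\to\widehat{\CFDA}(\check s_i)$ and $F^+\colon\widehat{\CFDA}(\check s_i)\to\widehat{\CFDA}(\hat s_i)$ with $\mathrm{Cone}(F^-)\simeq\widehat{\CFDA}(s_i^{-1})$ and $\mathrm{Cone}(F^+)\simeq\widehat{\CFDA}(s_i)$, once the $0$/$1$-resolution conventions for Khovanov homology are matched with the surgery conventions of \cite{LOTSpectral1}.

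For the uniqueness clause I would argue that $F^{\pm}$ is the unique nonzero homotopy class of morphism in its $\mathrm{spin}^c$-grading component. If such an $F$ were null-homotopic, its mapping cone would be $\widehat{\CFDA}(\hat s_i)[1]\oplus\widehat{\CFDA}(\check s_i)$, which pairs with any fixed plat closures to a Heegaard Floer group whose rank is the sum of the ranks for the branched double covers of $\hat s_i$ and $\check s_i$; since the surgery triad is non-degenerate these do not add up to the rank of the branched double cover realised by $\mathrm{Cone}(F^{\pm})$, so $F^{\pm}$ is nonzero on homology. A direct computation on the small local multidiagram then shows the space of homotopy classes of morphisms in the relevant component has dimension at most one over $\F$, whence any morphism with the same mapping cone is homotopic to $F^{\pm}$.

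The main obstacle I anticipate is bookkeeping rather than conceptual: one must pin down the $\F$-dimension of the pertinent morphism complex's homology --- which simultaneously makes $F^{\pm}$ ``distinguished'' and forces uniqueness --- and carefully align the Khovanov resolution and crossing-sign conventions used here (following \cite{Rozansky}) with the surgery-slope and cobordism-orientation conventions of \cite{LOTSpectral1}, so that the cones come out as $s_i^{-1}$ and $s_i$ in the stated order rather than reversed. As the statement is literally \cite[Corollary 1.3]{LOTSpectral1}, in the paper itself one may of course simply cite it.
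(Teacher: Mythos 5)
The paper does not supply a proof of this proposition: it is stated as a literal citation of \cite[Corollary 1.3]{LOTSpectral1}, exactly as you observe in your final paragraph, so there is no internal argument to compare your sketch against. Judged on its own terms, your reconstruction captures the main structure of the Lipshitz--Ozsv\'{a}th--Thurston argument --- the three bimodules are $\boxtimes$-products of a fixed invariant of the knot complement with solid-torus type-$D$ structures along the slopes $0,1,\infty$, those three solid tori satisfy a mapping-cone relation, and $\boxtimes$-ing with a type-$\mathit{DA}$ bimodule carries a cone to a cone. One presentational caveat: in \cite{LOTSpectral1} this is not actually packaged through a type-$\mathit{DAA}$ trimodule $X_i^\pm$ as you propose; rather, the argument is carried out directly on the genus-one Heegaard multidiagram carrying the curves $\beta^0,\beta^1,\beta^\infty$ (reproduced in Section 5 of this paper), with the cone map furnished by the holomorphic triangle count and the cone relation by the accompanying rectangle count. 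The two packagings are of course compatible, but your trimodule formulation silently assumes the existence and pairing properties of bordered invariants for three-boundary-component manifolds, which requires invoking the trimodule theory of \cite{LOTBimodules2015} and is heavier machinery than the original local multidiagram argument.

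The genuine soft spot is the uniqueness clause. Your argument is in two pieces: (a) a rank count via pairing with plat closures shows $F^\pm$ is not nullhomotopic, and (b) an asserted but unverified claim that ``a direct computation on the small local multidiagram'' shows the relevant component of the morphism complex has one-dimensional homology. Part (a) is fine as far as it goes, but it only rules out $F^\pm\simeq 0$; it does not rule out two independent nonzero homotopy classes with the same cone. Part (b) is exactly where the uniqueness actually lives, and you have left it as an unsubstantiated assertion. For the statement as given --- that $F^\pm$ is characterized up to homotopy by its cone --- you would need either to carry out that morphism-complex computation explicitly (grading by grading, not just a total rank bound), or to import the relevant structural lemma from \cite{LOTSpectral1} directly, which would amount to citing the result you set out to prove. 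As it stands, part (b) is a gap rather than a proof.
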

\begin{lemma}[{\cite[Lemma 2.7]{LOTSpectral1}}]
	Let $S$ and $T$ be finite partially ordered sets and suppose that $M$ and $N$ are $S$- and $T$-filtered type-$\mathit{DA}$ bimodules over $(\mathcal{A},\mathcal{B})$ and $(\mathcal{B},\mathcal{C})$, respectively. Then $M\boxtimes N$ is naturally an $(S\times T)$-filtered type-$\mathit{DA}$ bimodule over $(\mathcal{A},\mathcal{C})$. The analogous result holds when $M$ is a right $A_\infty$-module or $N$ is a left type-$D$ structure.
\end{lemma}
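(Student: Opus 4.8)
The plan is to realize $M\boxtimes N$ as the total bimodule of an explicit $(S\times T)$-filtered structure, exploiting the fact that the box tensor product of (total bimodules of) filtered objects automatically respects the product filtration. Recall that, given an $S$-filtered type-$\mathit{DA}$ bimodule $\{M^s,F_M^{s<t}\}$, its total bimodule $M_{\mathrm{tot}}=\bigoplus_{s\in S}M^s$ carries the structure map $\delta^1_{M_{\mathrm{tot}}}|_{M^s\otimes T(\mathcal{B}[1])}=\delta^1_s+\sum_{s<t}F_M^{s<t}$, and conversely that a type-$\mathit{DA}$ bimodule $(P,\delta^1_P)$ equipped with a finite filtration for which $\delta^1_P$ is filtration non-decreasing is precisely the same datum as a filtered type-$\mathit{DA}$ bimodule: writing $\delta^1_P|_{P^s}=\sum_{s\le t}\phi^1_{s\to t}$, one sets $\delta^1_s:=\phi^1_{s\to s}$ and $F^{s<t}:=\phi^1_{s\to t}$ for $s<t$, and decomposes the type-$\mathit{DA}$ structure relation for $\delta^1_P$ according to the source and target filtration levels; the piece with equal source and target says each $\delta^1_s$ is a type-$\mathit{DA}$ structure on $P^s$, while the piece from level $s$ to level $u>s$ is exactly $dF^{s<u}=\sum_{s<t<u}F^{t<u}\circ F^{s<t}$, with $d$ the morphism-space differential built from $\delta^1_s$ and $\delta^1_u$. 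So it suffices to take $M\boxtimes N:=M_{\mathrm{tot}}\boxtimes N_{\mathrm{tot}}$, which is a genuine type-$\mathit{DA}$ bimodule over $(\mathcal{A},\mathcal{C})$ by the box tensor construction recalled above, equip $\bigoplus_{(s,u)}M^s\boxtimes N^u$ with the filtration indexed by $S\times T$ with the product partial order ($(s,u)\le(t,v)$ iff $s\le t$ and $u\le v$), and check that $\delta^1_{M_{\mathrm{tot}}\boxtimes N_{\mathrm{tot}}}$ is non-decreasing for this filtration.

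The key step is that verification. By the defining diagram for $\delta^1_{M\boxtimes N}$, this structure map is assembled by first applying the iterate $\delta_{N_{\mathrm{tot}}}=\sum_i\delta^i_{N_{\mathrm{tot}}}$ to the $N_{\mathrm{tot}}$-tensor factor and then feeding its $\mathcal{B}$-outputs into $\delta^1_{M_{\mathrm{tot}}}$ on the $M_{\mathrm{tot}}$-tensor factor. By construction of $M_{\mathrm{tot}}$, every term of $\delta^1_{M_{\mathrm{tot}}}$ sends $M^s$ into $\bigoplus_{s\le t}\mathcal{A}[1]^{\otimes\ast}\otimes M^t$, so it is non-decreasing in the $S$-index and leaves the $T$-index (which lives on the other tensor factor) untouched; likewise every term of $\delta^1_{N_{\mathrm{tot}}}$, hence of each iterate $\delta^i_{N_{\mathrm{tot}}}$, is non-decreasing in the $T$-index. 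Composing, $\delta^1_{M\boxtimes N}$ carries $(M^s\boxtimes N^u)\otimes T(\mathcal{C}[1])$ into $\bigoplus_{(s,u)\le(t,v)}\mathcal{A}[1]^{\otimes\ast}\otimes(M^t\boxtimes N^v)$, which is exactly the assertion that it respects the product filtration. (Since $S$ and $T$ are finite the filtration is finite and the iterated sums are interpreted termwise, so no convergence hypothesis on $M$ or $N$ is needed.) Extracting the diagonal and strictly off-diagonal parts of $\delta^1_{M\boxtimes N}$ then yields the type-$\mathit{DA}$ structures $\delta^1_{(s,u)}$ on $M^s\boxtimes N^u$ and the morphisms $F^{(s,u)<(t,v)}$, and their defining relations are the filtration-graded components of the type-$\mathit{DA}$ structure relation for $M_{\mathrm{tot}}\boxtimes N_{\mathrm{tot}}$, as in the equivalence described above.

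The analogous statements are proved by the identical argument with the appropriate box tensor construction substituted: when $M$ is a filtered right $A_\infty$-module over $\mathcal{B}$ and $N$ a filtered type-$\mathit{DA}$ bimodule over $(\mathcal{B},\mathcal{C})$, one uses that $M_{\mathrm{tot}}\boxtimes N_{\mathrm{tot}}$ is a right $A_\infty$-module over $\mathcal{C}$ whose structure map is $\delta_{N_{\mathrm{tot}}}$ followed by $m_{M_{\mathrm{tot}}}$, each non-decreasing in its index; when $M$ is a filtered type-$\mathit{DA}$ bimodule over $(\mathcal{A},\mathcal{B})$ and $N$ a filtered left type-$\mathit{D}$ structure over $\mathcal{B}$, one uses the corresponding fact for $M_{\mathrm{tot}}\boxtimes N_{\mathrm{tot}}$ as a left type-$\mathit{D}$ structure over $\mathcal{A}$, with structure map $\delta_N$ followed by $\delta^1_{M_{\mathrm{tot}}}$. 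I expect the only real work to be bookkeeping rather than conceptual: carefully unwinding the graphical box tensor structure relation and matching its bi-graded (by source and target in $S\times T$) components against the defining relations of a filtered type-$\mathit{DA}$ bimodule, while keeping the two independent filtration indices and the non-strict composition of the two structure maps straight.
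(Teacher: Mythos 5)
The paper does not prove this lemma; it cites \cite{LOTSpectral1} and moves on, so there is nothing internal to the paper to compare against. Your argument is correct. The equivalence you set up at the start --- between an $S$-filtered type-$\mathit{DA}$ bimodule in the sense of the paper's definition and a type-$\mathit{DA}$ bimodule carrying an $S$-indexed direct-sum decomposition for which $\delta^1$ is level non-decreasing --- is exactly right, and the factorization of $\delta^1_{M_{\mathrm{tot}}\boxtimes N_{\mathrm{tot}}}$ through $\delta_{N_{\mathrm{tot}}}$ (level non-decreasing in the $T$-index, since each iterate $\delta^i_{N_{\mathrm{tot}}}$ is a composite of applications of $\delta^1_{N_{\mathrm{tot}}}$) followed by $\delta^1_{M_{\mathrm{tot}}}$ (level non-decreasing in the $S$-index) establishes the required product-filtration property; extracting the diagonal and strictly off-diagonal components then recovers the structure maps $\delta^1_{(s,u)}$ and the morphisms $F^{(s,u)<(t,v)}$ together with their defining relations. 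The only misstatement is the parenthetical claim that no convergence hypothesis on $M$ or $N$ is needed: the box tensor product $M_{\mathrm{tot}}\boxtimes N_{\mathrm{tot}}$ is only well-defined under the usual boundedness hypothesis, because $\delta_{N_{\mathrm{tot}}}=\sum_{i\geq 0}\delta^i_{N_{\mathrm{tot}}}$ may have infinitely many nonzero terms, and finiteness of $S$ and $T$ bounds the filtration levels but has no bearing on this sum. What is true, and presumably what you meant, is that the filtered structure imposes no \emph{additional} convergence requirement beyond the one already needed to form the box tensor product in the unfiltered setting. (Also, the target of each term of $\delta^1_{M_{\mathrm{tot}}}$ is $\mathcal{A}[1]\otimes M^t$, not $\mathcal{A}[1]^{\otimes\ast}\otimes M^t$, but that typo is harmless.)
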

Consequently, if $c$ is the number of positive or negative braid generators in the decomposition of $T$ into elementary tangles, $\widehat{\CFDA}(T)$ becomes a $\{0,1\}^c$-filtered type-$\mathit{DA}$ bimodule by regarding the box tensorands of the form $\widehat{\CFDA}(s_i^{\pm 1})$ as filtered type-$\mathit{DA}$ bimodules.

Now, let $L$ be a link diagram obtained as $L=a^!Tb$ for some tangle diagram $T$ and crossingless matchings $a$ and $b$.
\begin{theorem}[{\cite[Theorem 3]{LOTSpectral1} and \cite[Theorem 2]{LOTSpectral2}}]
	The above discussion endows $\widehat{\CF}(\Sigma(L))$ with a filtration by $\{0,1\}^c$ whose associated spectral sequence has $E_2$-page identified with the reduced Khovanov homology $\Khred(mL)$ and $E_\infty$-page $\widehat{\HF}(\Sigma(L))$. Moreover, this spectral sequence agrees with the one given by \cite[Theorem 1.1]{OzsSzBranched}.
\end{theorem}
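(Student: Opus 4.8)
The plan is to deduce the statement from its two quoted ingredients: the identification of the $E_2$-page of the cube filtration with reduced Khovanov homology, and the comparison of this filtration with the one of \cite{OzsSzBranched}. Since the poset $\{0,1\}^c$ is finite, the filtration on $\widehat{\CF}(\Sigma(L))$ is a finite filtration of a chain complex computing $\widehat{\HF}(\Sigma(L))$, so the associated spectral sequence converges, with $E_\infty$-page the associated graded of $\widehat{\HF}(\Sigma(L))$; no convergence input beyond finiteness is needed, and all of the content lies in the $E_2$-page and in the comparison.

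First I would compute the $E_1$-page. By \cite[Lemma 2.7]{LOTSpectral1}, box tensoring with the $\{0,1\}$-filtered bimodules $\widehat{\CFDA}(s_i^{\pm 1})\simeq\mathrm{Cone}(F^\pm)$ and then pairing with the type-$D$ structures $\widehat{\CFD}(a)$, $\widehat{\CFD}(b)$ commutes with passage to associated graded. Hence for a vertex $\bm{v}\in\{0,1\}^c$ the $\bm{v}$-graded piece is $\widehat{\CF}(\Sigma(L_{\bm{v}}))$, where $L_{\bm{v}}=a^!T_{\bm{v}}b$ is the complete resolution of $L=a^!Tb$ determined by $\bm{v}$ --- the braidlike resolution $\hat{s}_i$ at a positive crossing when $v_i=0$ and the anti-braidlike resolution $\check{s}_i$ when $v_i=1$, with the roles reversed at negative crossings. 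Each $L_{\bm{v}}$ is a planar unlink, so $\Sigma(L_{\bm{v}})\cong\#^{k_{\bm{v}}-1}(S^2\times S^1)$ for $k_{\bm{v}}$ the number of components of $L_{\bm{v}}$, and
\begin{align}
	E_1\cong\bigoplus_{\bm{v}\in\{0,1\}^c}\widehat{\HF}(\Sigma(L_{\bm{v}})).
\end{align}

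Next I would identify $(E_1,d_1)$ with the reduced Khovanov complex of $mL$. By Proposition \ref{OzsSzProp6-2} there are isomorphisms $\psi_{L_{\bm{v}}}\colon\CKhred(L_{\bm{v}})\stackrel{\cong}{\longrightarrow}\widehat{\HF}(\Sigma(L_{\bm{v}}))$ that are natural with respect to single merge and split saddle cobordisms. The $d_1$-differential is the sum of the edge maps of the cube, each of which --- after box tensoring and passing to associated graded homology --- is induced by one of the morphisms $F^\pm$; exactly as in the proof of Theorem \ref{thm:AlgebrasAgree}, one identifies these with the maps $\widehat{F}_{\Sigma(s)}$ on Heegaard Floer homology induced by the branched double covers of the corresponding resolution-changing saddle cobordisms $s$, using the cone relations $\mathrm{Cone}(F^-)\simeq\widehat{\CFDA}(s_i^{-1})$ and $\mathrm{Cone}(F^+)\simeq\widehat{\CFDA}(s_i)$ together with functoriality of $\widehat{\HF}$ and of $\Sigma$. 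Proposition \ref{OzsSzProp6-2} then carries $(E_1,d_1)$ isomorphically onto $(\CKhred(mL),d)$ with $d$ the Khovanov differential: the mirror appears because reversing a crossing interchanges its $0$- and $1$-resolutions --- equivalently the braidlike and anti-braidlike summands of the cones above --- so that the ascending cube filtration of $\Sigma(L)$ matches the Khovanov cube of $mL$ rather than of $L$. Hence $E_2=\Khred(mL)$.

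Finally, for the comparison with the spectral sequence of \cite[Theorem 1.1]{OzsSzBranched}, I would observe that both are spectral sequences of $\{0,1\}^c$-filtered complexes computing $\widehat{\HF}(\Sigma(L))$, each with associated graded $\bigoplus_{\bm{v}}\widehat{\CF}(\Sigma(L_{\bm{v}}))$ and $d_1$ the branched saddle cobordism maps, and that gluing the standard bordered Heegaard diagrams of \cite[Section 6.1]{LOTSpectral2} and simplifying by Heegaard moves produces a Heegaard diagram for $\Sigma(L)$ of the type used in \cite{OzsSzBranched}, under which the two cube filtrations correspond; since a filtered quasi-isomorphism that is the identity on $E_2$ and on $E_\infty$ determines the spectral sequence from the $E_2$-page onward, this yields the identification. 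I expect the main obstacle to be this last step together with the $d_1$-identification in the previous paragraph: one must check carefully that the box tensor products of the bordered mapping-cone morphisms $F^\pm$ genuinely induce the topological cobordism maps $\widehat{F}_{\Sigma(s)}$ on associated graded homology, and that the filtration produced by the bordered gluing is filtered quasi-isomorphic to the Heegaard-diagram filtration of \cite{OzsSzBranched}. These are the technical heart of \cite[Theorem 3]{LOTSpectral1} and \cite[Theorem 2]{LOTSpectral2}, which together establish the statement.
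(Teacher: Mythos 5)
The paper does not prove this theorem --- it is stated as a citation to \cite[Theorem 3]{LOTSpectral1} and \cite[Theorem 2]{LOTSpectral2}, with no proof supplied in the text you are working from. Your sketch is a faithful high-level reconstruction of the argument from those references: finiteness of the $\{0,1\}^c$-filtration gives convergence; \cite[Lemma 2.7]{LOTSpectral1} shows that box tensoring the filtered cones $\widehat{\CFDA}(s_i^{\pm 1})$ with $\widehat{\CFD}(a)$, $\widehat{\CFD}(b)$ commutes with passing to associated graded; the resolved pieces are branched double covers of planar unlinks; Proposition~\ref{OzsSzProp6-2} identifies $(E_1,d_1)$ with the reduced Khovanov complex of the mirror; and the comparison with the \cite{OzsSzBranched} spectral sequence goes through a Heegaard-moves identification of the two filtered complexes. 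You also correctly flag the two genuinely technical points --- that the bordered mapping-cone morphisms $F^{\pm}$ induce the cobordism maps $\widehat{F}_{\Sigma(s)}$ on associated graded homology, and that the bordered and Ozsv\'{a}th--Szab\'{o} filtrations match --- and correctly locate their resolution in the cited literature rather than claiming to establish them yourself. This is the appropriate level of detail for what is, in this paper, a black-box import.
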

\begin{corollary}\label{CFbetaFiltration}
	If $T$ is an $(m,n)$-tangle diagram, there is a filtration on $\CF^\beta(T)$ induced by the one on $\widehat{\CFDA}(T^+)$ such that $H_*(\gr_*\CF^\beta(T))\cong\CKh(mT)$ as vector spaces.
\end{corollary}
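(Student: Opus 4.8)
The plan is to carry the $\{0,1\}^c$-filtration on $\widehat{\CFDA}(T^+)$ through the box-tensor and morphism-complex operations used to build $\CF^\beta(T)$, to recognize the associated graded as a direct sum of branched-double-cover Floer complexes of crossingless tangle closures, and then to feed these into the reduced Ozsv\'{a}th--Szab\'{o} identification of Proposition~\ref{OzsSzProp6-2}. First I would rewrite each summand of $\CF^\beta(T)=\bigoplus_{a\in\mathfrak{C}_m,\,b\in\mathfrak{C}_n}\Mor^{\mathcal{A}_m}\!\big(\widehat{\CFD}(a_+),\,\widehat{\CFDA}(T^+)\boxtimes\widehat{\CFD}(b_+)\big)$ using the identification $\Mor^{\mathcal{A}_m}(N_1,N_2)\cong\overline{N}_1\boxtimes\mathcal{A}_m\boxtimes N_2$, so that $\CF^\beta(T)$ is presented as a box tensor product of $\widehat{\CFDA}(T^+)$ with fixed type-$D$ structures and the bimodule $\mathcal{A}_m$. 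By \cite[Lemma 2.7]{LOTSpectral1}, box tensoring a $\{0,1\}^c$-filtered type-$\mathit{DA}$ bimodule with an unfiltered type-$D$ structure or bimodule again yields a $\{0,1\}^c$-filtered object, compatibly with passing to associated gradeds; iterating this produces the asserted induced filtration on $\CF^\beta(T)$. Since the cross-resolution morphisms $F^\pm$ are filtration-strictly-decreasing, $\gr_*\widehat{\CFDA}(T^+)=\bigoplus_{\bm{v}}\widehat{\CFDA}\big((T^+)_{\bm{v}}\big)$, the sum running over the $2^c$ complete resolutions of $T^+$, and hence $\gr_*\CF^\beta(T)=\bigoplus_{a,b,\,\bm{v}}\Mor^{\mathcal{A}_m}\!\big(\widehat{\CFD}(a_+),\,\widehat{\CFDA}((T^+)_{\bm{v}})\boxtimes\widehat{\CFD}(b_+)\big)$ with differential block-diagonal in $\bm{v}$.

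Next I would compute the homology of a fixed graded piece. Because $T^+$ is $T$ with a trivial strand adjoined below, $(T^+)_{\bm{v}}=(T_{\bm{v}})^+$, and closing up with $a_+^!$ on the left and $b_+$ on the right closes the trivial strand into a disjoint circle: $a_+^!(T^+)_{\bm{v}}b_+=(a^!T_{\bm{v}}b)\sqcup\bigcirc$. Branched double covering commutes with gluing along the parametrized branch-set boundary, so the bordered Floer pairing theorem identifies $H_*\Mor^{\mathcal{A}_m}\!\big(\widehat{\CFD}(a_+),\widehat{\CFDA}((T^+)_{\bm{v}})\boxtimes\widehat{\CFD}(b_+)\big)$ with $\widehat{\HF}\big(\Sigma((a^!T_{\bm{v}}b)\sqcup\bigcirc)\big)$. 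As $a$, $b$ and $T_{\bm{v}}$ are crossingless, $a^!T_{\bm{v}}b$ is a planar unlink, so $(a^!T_{\bm{v}}b)\sqcup\bigcirc$ is a planar unlink with the extra circle as a distinguished based component, and Proposition~\ref{OzsSzProp6-2} identifies its branched-double-cover Floer homology with $\CKhred\big((a^!T_{\bm{v}}b)\sqcup\bigcirc\big)$; basing that circle, this is isomorphic as a vector space to $\CKh(a^!T_{\bm{v}}b)$. Summing over $\bm{v}$, $a$, $b$ gives $H_*(\gr_*\CF^\beta(T))\cong\bigoplus_{a\in\mathfrak{C}_m,\,b\in\mathfrak{C}_n}\bigoplus_{\bm{v}}\CKh(a^!T_{\bm{v}}b)$.

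Finally I would match this with $\CKh(mT)=q^{-n}\bigoplus_{a,b}\CKh(a^!(mT)b)$. Mirroring interchanges the $0$- and $1$-resolution at each crossing while fixing the crossingless matchings $a,b$, so the complete resolution $(a^!(mT)b)_{\bm{v}}$ is literally the planar diagram $a^!T_{\bar{\bm{v}}}b$; forgetting the cube differential, $\CKh(a^!(mT)b)=\bigoplus_{\bm{v}}\CKh\big((a^!(mT)b)_{\bm{v}}\big)=\bigoplus_{\bm{v}}\CKh(a^!T_{\bar{\bm{v}}}b)\cong\bigoplus_{\bm{v}}\CKh(a^!T_{\bm{v}}b)$ as vector spaces, and summing over $a,b$ (the global $q$-shift being immaterial) gives $H_*(\gr_*\CF^\beta(T))\cong\CKh(mT)$ as vector spaces. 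The step I expect to be the main obstacle is the first one together with the pairing-theorem input of the second: one must verify carefully that the $\{0,1\}^c$-filtration is transported through the morphism-complex and box-tensor constructions with exactly the claimed associated graded --- in particular that the filtration-preserving part of the differential on $\gr_*\CF^\beta(T)$ is precisely the sum of the internal differentials of the individual resolved pieces, with no residual cross-resolution terms --- and that the closed $3$-manifold assembled from $-\Sigma(a_+)$, $\Sigma((T^+)_{\bm{v}})$ and $\Sigma(b_+)$ is genuinely the branched double cover of $a_+^!(T^+)_{\bm{v}}b_+$ with boundary parametrizations matching those defining $\mathfrak{h}_m$ and $\mathfrak{h}_n$; the orientation-reversing self-diffeomorphism of $\#^k(S^2\times S^1)$ keeps the last point from causing trouble once the bookkeeping is arranged.
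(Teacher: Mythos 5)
Your proposal is correct and fills in exactly the argument the paper leaves implicit: Corollary~\ref{CFbetaFiltration} is stated without proof as an immediate consequence of the quoted Lipshitz--Ozsv\'{a}th--Thurston theorem, and what you do is unwind that theorem through the bordered presentation $\CF^\beta(T)\cong\bigoplus_{a,b}\overline{\widehat{\CFD}(a_+)}\boxtimes\mathcal{A}_m\boxtimes\widehat{\CFDA}(T^+)\boxtimes\widehat{\CFD}(b_+)$, propagate the $\{0,1\}^c$-filtration via \cite[Lemma 2.7]{LOTSpectral1}, compute each graded piece's homology by pairing/Proposition~\ref{OzsSzProp6-2}, and then absorb the extra based circle coming from the $+$ decoration to pass from $\CKhred$ to unreduced $\CKh$ --- which is precisely the route the paper's surrounding discussion indicates. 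Two small points worth tightening but not gaps: the associated graded is more honestly $\bigoplus_{\bm{v}}\widehat{\CFDA}(\mathcal{H}_{T_1^{\bm v_1}})\boxtimes\cdots\boxtimes\widehat{\CFDA}(\mathcal{H}_{T_k^{\bm v_k}})$ (box tensor of resolved elementary pieces), which is only homotopy equivalent to $\widehat{\CFDA}((T^+)_{\bm v})$ by the gluing theorem; and the final mirror-matching is indeed only a relabelling of the index cube, so invoking the bit-flip $\bar{\bm v}$ is adequate for the vector-space statement being proved.
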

More explicitly, we may regard $\CF^\beta(T)$ as the direct sum
\begin{align}
	\bigoplus_{a,b\in\mathfrak{B}_n}\overline{\widehat{\CFD}(a)}\boxtimes\mathcal{A}\boxtimes\widehat{\CFDA}(T_1^+)\boxtimes\cdots\boxtimes\widehat{\CFDA}(T_k^+)\boxtimes\widehat{\CFD}(b),
\end{align}
where $T=T_1\cdots T_k$ is a decomposition of $T$ into elementary tangles. $\CF^\beta(T)$ then inherits a filtration from the braid generator tensorands by regarding the $\widehat{\CFD}(c)$, for $c=a,b$, and those $\widehat{\CFDA}(T_i^+)$ for crossingless $T_i$ as trivially filtered (bi)modules.
\begin{lemma}[{\cite{CohenGuth}}]\label{PairingLemma}
	Let $(N_1,\delta_1^1)$ and $(N_2,\delta_2^1)$ be left type-$D$ structures over a dg-algebra $\mathcal{A}$ over $\Bbbk$, and let $M$ be a bounded right $A_\infty$-module over $\mathcal{A}$. Assume that $N_1$ and $N_2$ are homotopy equivalent to bounded type-$D$ structures. Then there is a homotopy commutative square
	\begin{align}
		\begin{tikzcd}[ampersand replacement=\&]
			M\boxtimes N_1\otimes\Mor^{\mathcal{A}}(N_1,N_2)\arrow[r,"\id\boxtimes\mathit{ev}^1"]\arrow[d,"\simeq"'] \& M\boxtimes N_2\\
			\Mor^{\mathcal{A}}(N_0,N_1)\otimes\Mor^{\mathcal{A}}(N_1,N_2)\arrow[r,"\circ_2"] \& \Mor^{\mathcal{A}}(N_0,N_2)\arrow[u,"\simeq"']
		\end{tikzcd},
	\end{align}
	where $N_0$ is any left type-$D$ structure such that $M\simeq\overline{N}_0\boxtimes\mathcal{A}$ as right $A_\infty$-modules and $\mathit{ev}^1:N_1\otimes\Mor^{\mathcal{A}}(N_1,N_2)\to\mathcal{A}\otimes N_2$ is the evaluation map.
\end{lemma}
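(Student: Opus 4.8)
The plan is to reduce the claim to two more elementary statements: that the composition map $\circ_2$ is, under the identification of morphism complexes with box tensor products recalled above, literally an evaluation map for a particular right $A_\infty$-module, and that evaluation maps are homotopy natural in the module variable.

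Write $P:=\overline{N}_0\boxtimes\mathcal{A}$ for the right $A_\infty$-module appearing in the hypothesis and recall the chain-level identifications $\Mor^{\mathcal{A}}(N_0,N_i)\cong\overline{N}_0\boxtimes\mathcal{A}\boxtimes N_i=P\boxtimes N_i$ for $i=1,2$. The hypotheses that $M$ is bounded and that $N_1,N_2$ are homotopy equivalent to bounded type-$D$ structures ensure that all box tensor products below are defined and that $(-)\boxtimes N_i$ carries homotopy equivalences, and homotopies, of right $A_\infty$-modules to homotopy equivalences, and homotopies. Fixing a homotopy equivalence $M\simeq P$ of right $A_\infty$-modules --- say with mutually homotopy-inverse morphisms $F\colon M\to P$ and $G\colon P\to M$ and homotopies $GF\simeq\id_M$, $FG\simeq\id_P$ --- and boxing with $N_1$ and $N_2$ produces the two vertical equivalences in the statement: $F\boxtimes\id_{N_1}\colon M\boxtimes N_1\xrightarrow{\,\simeq\,}P\boxtimes N_1=\Mor^{\mathcal{A}}(N_0,N_1)$ and $G\boxtimes\id_{N_2}\colon\Mor^{\mathcal{A}}(N_0,N_2)=P\boxtimes N_2\xrightarrow{\,\simeq\,}M\boxtimes N_2$.

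First I would check that the square commutes on the nose in the special case $M=P$: that under the identifications $\Mor^{\mathcal{A}}(N_0,N_i)=P\boxtimes N_i$ the map $\circ_2\colon\Mor^{\mathcal{A}}(N_0,N_1)\otimes\Mor^{\mathcal{A}}(N_1,N_2)\to\Mor^{\mathcal{A}}(N_0,N_2)$ coincides with $\id_P\boxtimes\mathit{ev}^1\colon P\boxtimes N_1\otimes\Mor^{\mathcal{A}}(N_1,N_2)\to P\boxtimes N_2$. This is a direct unwinding of the graphical definition of $\circ_2$ --- the iterated composition that alternately applies $\delta$ and the morphism being composed and then multiplies all the resulting algebra inputs with $\mu$ --- against the definition of the box-tensor differential and the fact that the right $A_\infty$-module structure on $P=\overline{N}_0\boxtimes\mathcal{A}$ is induced by multiplication of $\mathcal{A}$ on itself: composing type-$D$ morphisms $N_0\to N_1\to N_2$ is exactly letting the morphism $N_1\to N_2$ act by evaluation on the morphism $N_0\to N_1$, regarded as an element of $P\boxtimes N_1$, and absorbing its algebra output through $\mu_2$. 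Since $\mathcal{A}$ is a differential algebra there are no higher module operations on $P$ to track, and since we work over $\F_2$ there are no signs.

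Next I would establish the homotopy naturality of evaluation in the module variable: for any morphism $F\colon M\to P$ of right $A_\infty$-modules, the square with horizontal arrows $\id_M\boxtimes\mathit{ev}^1$ and $\id_P\boxtimes\mathit{ev}^1$ and vertical arrows $F\boxtimes\id_{N_1}$ and $F\boxtimes\id_{N_2}$ commutes up to a homotopy assembled from the components of $F$ and the iterated type-$D$ operations of $N_1$ and $N_2$; this is of the same flavour as the pairing and naturality lemmas of \cite{LOTBimodules2015}, and again uses the $A_\infty$-functoriality of $(-)\boxtimes N_i$ together with the boundedness hypotheses to keep the relevant sums finite. Combining the two steps, the composite along the bottom and sides of the square in the statement equals, by the first step,
\begin{align*}
(G\boxtimes\id_{N_2})\circ(\id_P\boxtimes\mathit{ev}^1)\circ\bigl((F\boxtimes\id_{N_1})\otimes\id\bigr)&\simeq(G\boxtimes\id_{N_2})\circ(F\boxtimes\id_{N_2})\circ(\id_M\boxtimes\mathit{ev}^1)\\
&=\bigl((GF)\boxtimes\id_{N_2}\bigr)\circ(\id_M\boxtimes\mathit{ev}^1)\\
&\simeq\id_M\boxtimes\mathit{ev}^1,
\end{align*}
where the first $\simeq$ is the naturality of the second step and the last uses $GF\simeq\id_M$ and that $(-)\boxtimes N_2$ preserves homotopies. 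This is precisely the homotopy commutativity of the square in the statement.

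The main obstacle is the second step. Unlike the first, which is a bookkeeping identity, it genuinely requires exhibiting a chain homotopy compatible with all higher $A_\infty$-operations and verifying the homotopy relation it must satisfy --- the boundedness of $M$ and of bounded models for $N_1,N_2$ is used precisely to ensure this homotopy is well-defined. A secondary point worth flagging is that the first step is a literal equality only because $\Mor^{\mathcal{A}}(N_0,N_i)$ and $\overline{N}_0\boxtimes\mathcal{A}\boxtimes N_i$ are matched up as the \emph{same} chain complex, via the isomorphism recalled just before the statement, rather than merely as quasi-isomorphic complexes.
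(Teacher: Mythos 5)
The paper does not actually prove this lemma --- it states only that ``the proof of the above lemma is straightforward and will appear in \cite{CohenGuth}'' --- so no comparison with the paper's own argument is possible. That said, your strategy is the expected one given the surrounding tools, and I believe it is correct. Step one, that $\circ_2$ becomes $\id_{\overline{N}_0\boxtimes\mathcal{A}}\boxtimes\mathit{ev}^1$ under the identification $\Mor^{\mathcal{A}}(N_0,N_i)\cong\overline{N}_0\boxtimes\mathcal{A}\boxtimes N_i$, is a direct unwinding: since $\mathcal{A}$ is a dg-algebra, only the $\delta^0$ contributions survive the $\mu_2$-truncation in the graphical definition of $\circ_2$, and these match the $m_2$ of the honest dg-module $\overline{N}_0\boxtimes\mathcal{A}$ --- your remark that this identification is a chain-level isomorphism, not merely a quasi-isomorphism, is the right thing to stress. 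The one place the write-up stops short of a proof is your second step, homotopy naturality of $\id\boxtimes\mathit{ev}^1$ in the module variable, which you flag yourself as ``the main obstacle'' and then describe only as ``of the same flavour'' as the naturality lemmas in \cite{LOTBimodules2015}. You should replace this with a precise citation and, if needed, a short derivation: the paper invokes \cite[Lemma 2.3.13(2)]{LOTBimodules2015} for exactly such homotopy-commutative squares in the proof of Theorem \ref{thm:InducedActions}, and that lemma (or its $A_\infty$-module analogue, obtained by regarding $M$ as a type-$\mathit{DA}$ bimodule over $(\F,\mathcal{A})$) supplies the required homotopy, with your boundedness hypotheses ensuring the homotopy's defining sums are finite. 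Until that is pinned down, the naturality step is an assertion rather than an argument, but the overall structure is sound and I see nothing that would fail once it is.
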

\begin{remark}
	It is always possible to choose such an $N_0$ by \cite[Lemma 2.15]{LOTMorphism} and \cite[Proposition 2.3.22]{LOTBimodules2015} --- for example $\overline{\mathrm{Bar}(\mathcal{A})}\boxtimes\overline{M}$.
\end{remark}
The proof of the above lemma is straightforward and will appear in \cite{CohenGuth}.
\begin{corollary}\label{cor:CircTensorGluing}
	Suppose that $Y_i$ are bordered 3-manifolds with boundaries parameterized by $F_i=F(\mathcal{Z}_i)$ for $i=1,2,3$ and that $Y$ and $Y'$ are strongly bordered 3-manifolds with $\partial_LY\cong F_1$, $\partial_RY\cong\partial_LY'\cong F_2$, and $\partial_RY'\cong F_3$. Then there is a homotopy commutative square
	\begin{align*}
		\begin{tikzcd}[ampersand replacement=\&,column sep=1.35cm]
			\Mor^{\mathcal{A}_1}(Y_1,Y\boxtimes Y_2)\otimes_\F\Mor^{\mathcal{A}_2}(Y_2,Y'\boxtimes Y_3)\arrow[r,"{\circ_2(-,\id\boxtimes-)}"]\arrow[d,"\simeq"'] \& \Mor^{\mathcal{A}_2}(Y_1,Y\boxtimes Y'\boxtimes Y_3)\arrow[d,"\simeq"]\\
			\widehat{\CF}(-Y_1\!\cup_{\partial_1}\!\! Y\!\cup_{\partial_2}\!\!Y_2)\otimes_\F\widehat{\CF}(-Y_2\!\cup_{\partial_2}\!\!Y'\!\cup_{\partial_3}\!\!Y_3)\arrow[r,"\widehat{F}_W"] \& \widehat{\CF}(-Y_1\!\cup_{\partial_1}\!\!Y\!\cup_{\partial_2}\!\!Y'\!\cup_{\partial_3}\!\!Y_3)
		\end{tikzcd},
	\end{align*}
	where, by abuse of notation, we drop $\widehat{\CFD}$ and $\widehat{\CFDA}$ for readability, and $W$ is the cobordism $(-Y_1\!\cup_{\partial_1}\!\! Y\!\cup_{\partial_2}\!\!Y_2)\sqcup(-Y_2\!\cup_{\partial_2}\!\!Y'\!\cup_{\partial_3}\!\!Y_3)\to-Y_1\!\cup_{\partial_1}\!\!Y\!\cup_{\partial_2}\!\!Y'\!\cup_{\partial_3}\!\!Y_3$  given by
	\begin{align*}
		W=(\triangle\times F_2)\cup_{e_1\times F_2}e_1\times(-Y\cup_{F_1}Y_1)\cup_{e_2\times F_2}e_2\times Y_2\cup_{e_3\times F_2}e_3\times(-Y'\cup_{F_3}Y_3).
	\end{align*}
	Here $\triangle$ is a triangle with edges $e_1$, $e_2$, and $e_3$, endowed with the clockwise cyclic ordering.
\end{corollary}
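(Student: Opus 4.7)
The plan is to derive Corollary \ref{cor:CircTensorGluing} from Lemma \ref{PairingLemma} together with the pairing theorem of bordered Floer homology, by reducing both the algebraic composition $\circ_2(-, \id \boxtimes -)$ and the geometric cobordism map $\widehat{F}_W$ to a common evaluation operation on box tensor products. The vertical homotopy equivalences come from the standard identification $\Mor^{\mathcal{A}}(N_1, N_2) \simeq \overline{N}_1 \boxtimes \mathcal{A} \boxtimes N_2$ combined with the Lipshitz--Ozsv\'{a}th--Thurston pairing theorem, which yields
\begin{align*}
\Mor^{\mathcal{A}_1}(\widehat{\CFD}(Y_1), \widehat{\CFDA}(Y) \boxtimes \widehat{\CFD}(Y_2)) \simeq \widehat{\CF}(-Y_1 \cup_{\partial_1} Y \cup_{\partial_2} Y_2)
\end{align*}
and analogously for the other two morphism spaces; the content of the corollary is then the identification of the top horizontal arrow with $\widehat{F}_W$ under these identifications.

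For the top arrow, I would first reinterpret the composition $\circ_2(f, \id_Y \boxtimes g)$ via Lemma \ref{PairingLemma}, taking $N_0 = \widehat{\CFD}(Y_1)$, $N_1 = \widehat{\CFDA}(Y) \boxtimes \widehat{\CFD}(Y_2)$, and $N_2 = \widehat{\CFDA}(Y) \boxtimes \widehat{\CFDA}(Y') \boxtimes \widehat{\CFD}(Y_3)$ (all as left type-$D$ structures over $\mathcal{A}_1$), and $M = \overline{N_0} \boxtimes \mathcal{A}_1$. The lemma then identifies $\circ_2$, up to homotopy, with $\id_M \boxtimes \mathit{ev}^1$. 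The fact that the right-hand factor can be restricted to come from $\Mor^{\mathcal{A}_2}(Y_2, Y' \boxtimes Y_3)$ via the inclusion $g \mapsto \id_{\widehat{\CFDA}(Y)} \boxtimes g$ is an immediate consequence of $\widehat{\CFDA}(Y) \boxtimes -$ being an $A_\infty$-functor, so this restriction commutes with the evaluation pairing up to a further homotopy that is absorbed into the square's homotopy.

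For the bottom arrow, one observes that, after translating back through the pairing theorem, the evaluation $\id_M \boxtimes \mathit{ev}^1$ is the chain-level pair-of-pants operation that glues two copies of the separating surface $F_2$ together. Geometrically this is precisely the Heegaard Floer map $\widehat{F}_W$ induced by the cobordism $W$ described in the statement: the triangle factor $\triangle$ implements the pair-of-pants, and the product structure $\triangle \times F_2$ identifies the operation with a triangle map on a cornered Heegaard multi-diagram. The standard extension of the pairing theorem to 4-dimensional cobordisms obtained by gluing along parametrized surfaces then yields the required identification.

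The main obstacle is this last geometric identification: matching the bordered-algebraic evaluation with the Heegaard Floer triangle map for $W$. The cleanest strategy I foresee is to decompose $W$ as an iterated composition of simpler cobordisms by neck-stretching the triangle $\triangle$ into two intervals, each realizing a standard gluing cobordism whose map is governed by the ordinary pairing theorem, and then to apply the TQFT composition law for cobordism maps. Alternatively, one may exhibit a cornered Heegaard multi-diagram for $\triangle \times F_2$ and directly match holomorphic triangle counts against the algebraic evaluation, in the spirit of the proof of the bordered pairing theorem. Once this identification is in place, the corollary follows from the previous two paragraphs together with functoriality of $\widehat{\CFDA}(Y) \boxtimes -$.
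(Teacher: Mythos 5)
Your overall skeleton matches the paper's: apply Lemma \ref{PairingLemma}, transport the composition through the $\Mor \simeq \overline{N}\boxtimes\mathcal{A}\boxtimes N$ and $\widehat{\CF}$ identifications, and then match the algebraic pairing to the Heegaard Floer cobordism map for $W$. But you set up the pairing lemma on the wrong side, and the central geometric step is left as a sketched obstacle rather than an argument.

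Concretely, you invoke Lemma \ref{PairingLemma} with $N_0, N_1, N_2$ all living as type-$D$ structures over $\mathcal{A}_1$, taking $N_1 = \widehat{\CFDA}(Y)\boxtimes\widehat{\CFD}(Y_2)$ and $N_2 = \widehat{\CFDA}(Y)\boxtimes\widehat{\CFDA}(Y')\boxtimes\widehat{\CFD}(Y_3)$. This converts $\circ_2$ over $\mathcal{A}_1$ into $\id\boxtimes\mathit{ev}^1$ over $\mathcal{A}_1$, but it does not reduce the problem to anything that matches the geometry of the gluing along $F_2$: the pair-of-pants $W$ is naturally a cobordism obtained by composing along the boundary labeled by $\mathcal{Z}_2$, so the composition one actually wants to compare against $\widehat{F}_W$ is $\circ_2$ of type-$D$ morphisms over $\mathcal{A}_2$, not $\mathcal{A}_1$. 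The paper instead applies the pairing lemma with $N_1 = \widehat{\CFD}(Y_2)$, $N_2 = \widehat{\CFDA}(Y')\boxtimes\widehat{\CFD}(Y_3)$ over $\mathcal{A}_2$, and $M = \Mor^{\mathcal{A}_1}(Y_1,Y)$ regarded as a right $A_\infty$-module over $\mathcal{A}_2$, with $N_0 = \widehat{\CFD}(-Y\cup_{\partial_1}Y_1)$. That choice feeds back through the vertical identifications to produce $\circ_2$ on $\Mor^{\mathcal{A}_2}$-spaces in the bottom row, which is exactly the operation that \cite[Theorem 1.1]{CohenComposition} identifies with $\widehat{F}_W$. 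The paper then shows, by an explicit graphical computation that you do not attempt, that the top arrow $\Gamma$ obtained from this diagram literally equals $\circ_2(-,\id\boxtimes-)$; in your setup the corresponding comparison between the restricted evaluation and $\circ_2(-,\id\boxtimes-)$ is asserted but not verified.

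The genuine gap is the identification of the composition map with $\widehat{F}_W$. You flag this as the main obstacle and propose two strategies (neck-stretching the triangle, or matching triangle counts against a cornered multi-diagram), but you do not carry either out. The paper closes this gap by citing \cite[Theorem 1.1]{CohenComposition}, which handles precisely this identification; without that reference or a substitute for it, your argument does not complete. Even granting the geometric input, the $\mathcal{A}_1$-side application of the pairing lemma would still require you to show that the restriction $g\mapsto\id_Y\boxtimes g$, composed with $\mathit{ev}^1$, reproduces $\widehat{F}_W$ — a claim that does not obviously follow from $\widehat{\CFDA}(Y)\boxtimes-$ being an $A_\infty$-functor alone, since that functoriality does not speak to the Heegaard Floer cobordism map.
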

\begin{proof}
	As in the statement above, we will abuse notation throughout the proof and denote (bi)modules by their corresponding bordered 3-manifolds whenever they appear in commutative diagrams in order to save space. By Lemma \ref{PairingLemma}, we have a homotopy commutative square
	\begin{align*}
		\begin{tikzcd}[ampersand replacement=\&,column sep=1.35cm]
			\Mor^{\mathcal{A}_1}(Y_1,Y)\boxtimes Y_2\otimes_\F\Mor^{\mathcal{A}_2}(Y_2,Y'\boxtimes Y_3)\arrow[r,"{\id\boxtimes\mathit{ev}^1}"]\arrow[d,"\simeq"'] \& \Mor^{\mathcal{A}_1}(Y_1,Y)\boxtimes Y'\boxtimes Y_3\\
			\Mor^{\mathcal{A}_2}(-Y\!\cup_{\partial_1}\!\! Y_1,Y_2)\otimes_\F\Mor^{\mathcal{A}_2}(Y_2,Y'\boxtimes Y_3)\arrow[r,"\circ_2"] \& \Mor^{\mathcal{A}_2}(-Y_1\!\cup_{\partial_1}\!\! Y,Y'\boxtimes Y_3)\arrow[u,"\simeq"']
		\end{tikzcd}
	\end{align*}
	and there are isomorphisms of complexes  $\Mor^{\mathcal{A}_1}(Y_1,Y)\boxtimes Y_2\cong\Mor^{\mathcal{A}_1}(Y_1,Y\boxtimes Y_2)$ and $\Mor^{\mathcal{A}_2}(Y_1,Y)\boxtimes Y'\boxtimes Y_3\cong\Mor^{\mathcal{A}_2}(Y_1,Y\boxtimes Y'\boxtimes Y_3)$, as well as a homotopy equivalence $\widehat{\CFDA}(Y')\boxtimes\widehat{\CFD}(Y_3)\simeq\widehat{\CFD}(Y'\!\cup_{\partial_3}\! Y_3)$, so we may extend this to a homotopy commutative diagram of the form
	\begin{align*}
		\begin{tikzcd}[ampersand replacement=\&,column sep=1.35cm]
			\Mor^{\mathcal{A}_1}(Y_1,Y\boxtimes Y_2)\otimes_\F\Mor^{\mathcal{A}_2}(Y_2,Y'\boxtimes Y_3)\arrow[r,"\Gamma"]\arrow[d,"\cong"'] \& \Mor^{\mathcal{A}_1}(Y_1,Y\boxtimes Y'\boxtimes Y_3)\\
			\Mor^{\mathcal{A}_1}(Y_1,Y)\boxtimes Y_2\otimes_\F\Mor^{\mathcal{A}_2}(Y_2,Y'\boxtimes Y_3)\arrow[r,"{\id\boxtimes\mathit{ev}^1}"]\arrow[d,"\simeq"'] \& \Mor^{\mathcal{A}_1}(Y_1,Y)\boxtimes Y'\boxtimes Y_3\arrow[u,"\cong"']\\
			\Mor^{\mathcal{A}_2}(-Y\!\cup_{\partial_1}\!\! Y_1,Y_2)\otimes_\F\Mor^{\mathcal{A}_2}(Y_2,Y'\boxtimes Y_3)\arrow[r,"\circ_2"]\arrow[d,"\simeq"'] \& \Mor^{\mathcal{A}_2}(-Y\!\cup_{\partial_1}\!\! Y_1,Y'\boxtimes Y_3)\arrow[u,"\simeq"']\\
			\Mor^{\mathcal{A}_2}(-Y\!\cup_{\partial_1}\!\! Y_1,Y_2)\otimes_\F\Mor^{\mathcal{A}_2}(Y_2,Y'\!\cup_{\partial_3}\! Y_3)\arrow[r,"\circ_2"] \& \Mor^{\mathcal{A}_2}(-Y\!\cup_{\partial_1}\!\! Y_1,Y'\!\cup_{\partial_3}\! Y_3)\arrow[u,"\simeq"']
		\end{tikzcd}
	\end{align*}
	and we claim that $\Gamma\simeq\circ_2(-,\id\boxtimes-)$.
	
	In graphical notation, we have
	\begin{align}
		\circ_2(-,\id\boxtimes-)=\begin{tikzcd}[ampersand replacement=\&,column sep=0.35cm]
			{} \& {}\arrow[d,dashed] \& {}\arrow[dl,bend left,rightsquigarrow] \& {}\arrow[dddl,bend left=15,rightsquigarrow] \\
			{} \& (-)\arrow[dr,dashed,bend left]\arrow[d,dashed]\arrow[dddl,bend right=15] \& {} \& {} \\
			{} \& \otimes\arrow[d,Rightarrow,densely dashed] \& \delta_2\arrow[d,dashed]\arrow[l,Rightarrow] \& {} \\
			{} \& \otimes\arrow[d,Rightarrow,densely dashed] \& (-)\arrow[d,dashed]\arrow[dr,dashed,bend left]\arrow[l] \& {} \\
			\mu_2\arrow[d] \& \delta_Y^1\arrow[d,dashed]\arrow[l] \& \delta_{Y'}\arrow[d,dashed]\arrow[l,Rightarrow] \& \delta_3\arrow[d,dashed]\arrow[l,Rightarrow] \\
			{} \& {} \& {} \& {} \\
		\end{tikzcd},
	\end{align}
	where squiggly arrows $\rightsquigarrow$ denote elements of morphism spaces in the sense that
	\begin{align}
		\circ_2(f,\id\boxtimes g)=\begin{tikzcd}[ampersand replacement=\&,column sep=0.35cm]
			{} \& {}\arrow[d,dashed] \& f\arrow[dl,bend left,rightsquigarrow] \& g\arrow[dddl,bend left=15,rightsquigarrow] \\
			{} \& (-)\arrow[dr,dashed,bend left]\arrow[d,dashed]\arrow[dddl,bend right=15] \& {} \& {} \\
			{} \& \otimes\arrow[d,Rightarrow,densely dashed] \& \delta_2\arrow[d,dashed]\arrow[l,Rightarrow] \& {} \\
			{} \& \otimes\arrow[d,Rightarrow,densely dashed] \& (-)\arrow[d,dashed]\arrow[dr,dashed,bend left]\arrow[l] \& {} \\
			\mu_2\arrow[d] \& \delta_Y^1\arrow[d,dashed]\arrow[l] \& \delta_{Y'}\arrow[d,dashed]\arrow[l,Rightarrow] \& \delta_3\arrow[d,dashed]\arrow[l,Rightarrow] \\
			{} \& {} \& {} \& {} \\
		\end{tikzcd}=\begin{tikzcd}[ampersand replacement=\&,column sep=0.35cm]
		{} \& {}\arrow[d,dashed] \& {} \& {} \\
		{} \& f\arrow[dr,dashed,bend left]\arrow[d,dashed]\arrow[dddl,bend right=15] \& {} \& {} \\
		{} \& \otimes\arrow[d,Rightarrow,densely dashed] \& \delta_2\arrow[d,dashed]\arrow[l,Rightarrow] \& {} \\
		{} \& \otimes\arrow[d,Rightarrow,densely dashed] \& g\arrow[d,dashed]\arrow[dr,dashed,bend left]\arrow[l] \& {} \\
		\mu_2\arrow[d] \& \delta_Y^1\arrow[d,dashed]\arrow[l] \& \delta_{Y'}\arrow[d,dashed]\arrow[l,Rightarrow] \& \delta_3\arrow[d,dashed]\arrow[l,Rightarrow] \\
		{} \& {} \& {} \& {} \\
	\end{tikzcd}
	\end{align}
	as a linear map $\widehat{\CFD}(Y_1)\to\mathcal{A}_1\otimes\widehat{\CFDA}(Y)\boxtimes\widehat{\CFDA}(Y')\boxtimes\widehat{\CFD}(Y_3)$, and a dashed doubled arrow is to be interpreted as a dashed arrow next to a doubled arrow, as in
	\begin{align}
		\begin{tikzcd}
			{}\arrow[d,Rightarrow,densely dashed]\\{}
		\end{tikzcd}=\begin{tikzcd}[ampersand replacement=\&,column sep=0cm]
			{}\arrow[d,densely dashed] \& {}\arrow[d,Rightarrow] \\ {} \& {}
		\end{tikzcd},
	\end{align}
	to represent an element of $\widehat{\CFDA}(Y)\otimes T(\mathcal{A}_2)$.
	
	On the other hand, the right $A_\infty$-module structure on $\Mor^{\mathcal{A}_1}(Y_1,Y)$ is given by
	\begin{align}
		m=\begin{tikzcd}[ampersand replacement=\&,column sep=0.35cm]
			{} \& {}\arrow[d,densely dashed] \& {}\arrow[ddl,bend left=15,rightsquigarrow] \& {}\arrow[dddll,Rightarrow,bend left=20]\\
			{} \& \delta_1\arrow[dl,bend right,Rightarrow]\arrow[d,densely dashed] \& {} \& {}\\
			\otimes\arrow[d,Rightarrow] \& (-)\arrow[d,densely dashed]\arrow[l] \& {} \& {}\\
			\mu\arrow[d] \& \delta_Y\arrow[d,densely dashed]\arrow[l,Rightarrow] \& {} \& {}\\
			{} \& {} \& {} \& {}\\
		\end{tikzcd},
	\end{align}
	i.e.
	\begin{align}
		m(f,-)=\begin{tikzcd}[ampersand replacement=\&,column sep=0.35cm]
			{} \& {}\arrow[d,densely dashed] \& f\arrow[ddl,bend left=15,rightsquigarrow] \& {}\arrow[dddll,Rightarrow,bend left=20]\\
			{} \& \delta_1\arrow[dl,bend right,Rightarrow]\arrow[d,densely dashed] \& {} \& {}\\
			\otimes\arrow[d,Rightarrow] \& (-)\arrow[d,densely dashed]\arrow[l] \& {} \& {}\\
			\mu\arrow[d] \& \delta_Y\arrow[d,densely dashed]\arrow[l,Rightarrow] \& {} \& {}\\
			{} \& {} \& {} \& {}\\
		\end{tikzcd}=\begin{tikzcd}[ampersand replacement=\&,column sep=0.35cm]
		{} \& {}\arrow[d,densely dashed] \& {} \& {}\arrow[dddll,Rightarrow,bend left=20]\\
		{} \& \delta_1\arrow[dl,bend right,Rightarrow]\arrow[d,densely dashed] \& {} \& {}\\
		\otimes\arrow[d,Rightarrow] \& f\arrow[d,densely dashed]\arrow[l] \& {} \& {}\\
		\mu\arrow[d] \& \delta_Y\arrow[d,densely dashed]\arrow[l,Rightarrow] \& {} \& {}\\
		{} \& {} \& {} \& {}\\
	\end{tikzcd}
	\end{align}
	as a linear map $\widehat{\CFD}(Y_1)\otimes T(\mathcal{A}_2)\to\mathcal{A}_1\otimes\widehat{\CFDA}(Y)$. Consequently, the map $\id\boxtimes\mathit{ev}^1$ is given graphically in the present application by
	\begin{align}
		\id\boxtimes\mathit{ev}^1=\begin{tikzcd}[ampersand replacement=\&,column sep=0.35cm]
			{} \& {}\arrow[d,densely dashed] \& {}\arrow[ddl,rightsquigarrow,bend left=15] \& {}\arrow[d,densely dashed] \& {}\arrow[ddl,rightsquigarrow,bend left=15] \\
			{} \& \delta_1\arrow[dl,bend right,Rightarrow]\arrow[d,densely dashed] \& {} \& \delta_2\arrow[dl,bend right,Rightarrow]\arrow[d,densely dashed] \& {} \\
			\otimes\arrow[d,Rightarrow] \& (-)\arrow[l]\arrow[d,densely dashed] \& \otimes\arrow[d,Rightarrow] \& (-)\arrow[l]\arrow[d,densely dashed]\arrow[dr,bend left,densely dashed] \& {} \\
			\mu\arrow[d] \& \delta_Y\arrow[l,Rightarrow]\arrow[d,densely dashed] \& \otimes\arrow[l,Rightarrow] \& \delta_{Y'}\arrow[l,Rightarrow]\arrow[d,densely dashed] \& \delta_3\arrow[l,Rightarrow]\arrow[d,densely dashed] \\
			{} \& {} \& {} \& {} \& {} \\
		\end{tikzcd}.
	\end{align}
	However, $\mathcal{A}_1$ is a differential algebra and $\mu$ necessarily has two inputs, one coming from the first morphism and one from $\delta_Y$, so this reduces to
	\begin{align}
		\id\boxtimes\mathit{ev}^1=\begin{tikzcd}[ampersand replacement=\&,column sep=0.35cm]
			{} \& {}\arrow[dd,densely dashed] \& {}\arrow[ddl,rightsquigarrow,bend left=15] \& {}\arrow[d,densely dashed] \& {}\arrow[ddl,rightsquigarrow,bend left=15] \\
			{} \& {} \& {} \& \delta_2\arrow[dl,bend right,Rightarrow]\arrow[d,densely dashed] \& {} \\
			{} \& (-)\arrow[dl,bend right]\arrow[d,densely dashed] \& \otimes\arrow[d,Rightarrow] \& (-)\arrow[l]\arrow[d,densely dashed]\arrow[dr,bend left,densely dashed] \& {} \\
			\mu_2\arrow[d] \& \delta_Y^1\arrow[l]\arrow[d,densely dashed] \& \otimes\arrow[l,Rightarrow] \& \delta_{Y'}\arrow[l,Rightarrow]\arrow[d,densely dashed] \& \delta_3\arrow[l,Rightarrow]\arrow[d,densely dashed] \\
			{} \& {} \& {} \& {} \& {} \\
		\end{tikzcd}
	\end{align}
	which we may rewrite as
	\begin{align}
		\id\boxtimes\mathit{ev}^1=\begin{tikzcd}[ampersand replacement=\&,column sep=0.35cm]
			{} \& {}\arrow[d,densely dashed] \& {}\arrow[dl,bend left,rightsquigarrow] \& {}\arrow[ddl,densely dashed,bend left=15] \& {}\arrow[dddll,bend left=20,rightsquigarrow] \\
			{} \& (-)\arrow[dddl,bend right=15]\arrow[d,densely dashed] \& {} \& {} \& {} \\
			{} \& \otimes\arrow[d,Rightarrow,densely dashed] \& \delta_2\arrow[l,Rightarrow]\arrow[d,densely dashed] \& {} \& {} \\
			{} \& \otimes\arrow[d,Rightarrow,densely dashed] \& (-)\arrow[l]\arrow[d,densely dashed]\arrow[dr,bend left,densely dashed] \& {} \& {} \\
			\mu_2\arrow[d] \& \delta_Y^1\arrow[l]\arrow[d,densely dashed] \& \delta_{Y'}\arrow[l,Rightarrow]\arrow[d,densely dashed] \& \delta_3\arrow[l,Rightarrow]\arrow[d,densely dashed] \& {} \\
			{} \& {} \& {} \& {} \& {} 
		\end{tikzcd}
	\end{align}
	so, since identifying $\Mor^{\mathcal{A}_1}(Y_1,Y)\boxtimes Y_2$ with $\Mor^{\mathcal{A}_1}(Y_1,Y\boxtimes Y_2)$ has the effect of changing this diagram so that the input from $Y_2$ --- the downward-left-pointing dashed arrow in the top-right of the diagram --- is changed into an output of the first morphism, $\Gamma$ is in fact equal to $\circ_2(-,\id\boxtimes-)$. The desired result now follows from \cite[Theorem 1.1]{CohenComposition}.
\end{proof}
\begin{corollary}\label{cor:ActionsAgree}
	Let $T$ be a planar $(m,n)$-tangle diagram, then there are commutative squares
	\begin{align}\label{line:LeftAction}
		\begin{tikzcd}[ampersand replacement=\&]
			H_m\otimes_{\F}\CKh(T)\arrow[r,"m_L"]\arrow[d,"\cong"'] \& \CKh(T)\arrow[d,"\cong"]\\
			H_*\mathfrak{h}_m\otimes_{\F}H_*\CF^\beta(T)\arrow[r,"\overline{\circ}_2"] \& H_*\CF^\beta(T)
		\end{tikzcd}
	\end{align}
	and
	\begin{align}\label{line:RightAction}
		\begin{tikzcd}[ampersand replacement=\&,column sep=1.75cm]
			\CKh(T)\otimes_{\F}H_n\arrow[r,"m_R"]\arrow[d,"\cong"'] \& \CKh(T)\arrow[d,"\cong"]\\
			H_*\CF^\beta(T)\otimes_{\F}H_*\mathfrak{h}_n\arrow[r,"\overline{\circ_2(-,\id\boxtimes-)}"] \& H_*\CF^\beta(T)
		\end{tikzcd},
	\end{align}
	where $m_L$ and $m_R$ are the left- and right-module structure maps on $\CKh(T)$, respectively.
\end{corollary}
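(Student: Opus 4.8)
The plan is to run the argument of Theorem~\ref{thm:AlgebrasAgree} with the tangle $T^+$ carried along in the middle. Since $T$ is planar, the $\{0,1\}^c$-filtration of Corollary~\ref{CFbetaFiltration} is trivial, and for $a\in\mathfrak{C}_m$, $b\in\mathfrak{C}_n$ the diagram $a_+^!T^+b_+$ is a planar unlink with a distinguished basepoint on its bottom-most circle; as in the proof of Theorem~\ref{thm:AlgebrasAgree} we regard $\CKh(T)=\bigoplus_{a\in\mathfrak{C}_m,\,b\in\mathfrak{C}_n}\CKhred(a_+^!T^+b_+)$, and the isomorphism $\psi_T\colon\CKh(T)\to H_*\CF^\beta(T)$ is $\sum_{a,b}\psi_{a_+^!T^+b_+}$ composed on either side with the pairing quasi-isomorphisms $\Mor^{\mathcal{A}_m}(\widehat{\CFD}(a_+),\widehat{\CFDA}(T^+)\boxtimes\widehat{\CFD}(b_+))\simeq\widehat{\CF}(\Sigma(a_+^!T^+b_+))$ of Lemma~\ref{PairingLemma} and with the isomorphisms $\psi_{a_+^!T^+b_+}$ of Proposition~\ref{OzsSzProp6-2}. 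The two squares say precisely that $\psi_T$ intertwines the $(H_m,H_n)$-bimodule structure on $\CKh(T)$ with the $(H_*\mathfrak{h}_m,H_*\mathfrak{h}_n)\cong(H_m,H_n)$-bimodule structure on $H_*\CF^\beta(T)$, so it suffices to match the two actions.

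For~(\ref{line:LeftAction}), fix $a,b\in\mathfrak{C}_m$ and $c\in\mathfrak{C}_n$. The relevant component of $m_L$ is induced by the minimal saddle cobordism $C\colon a^!b\sqcup b^!Tc\to a^!Tc$ together with the identity on the extra bottom circle, while the induced left action on $H_*\CF^\beta(T)$ is $\overline{\circ}_2$ because the left $A_\infty$-action is strict with $m_{1|1|0}=\circ_2$. By Corollary~\ref{cor:CircTensorGluing}, applied with the first cobordism trivial so that $\circ_2(-,\id\boxtimes-)$ reduces to $\circ_2$, the map $\overline{\circ}_2$ agrees on homology with the Heegaard Floer cobordism map $\widehat{F}_W$, where $W\colon\Sigma(a_+^!b_+)\sqcup\Sigma(b_+^!T^+c_+)\to\Sigma(a_+^!T^+c_+)$ is the gluing cobordism. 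Exactly as in the proof of Theorem~\ref{thm:AlgebrasAgree} we write $W=\Sigma(C\sqcup\id_\bigcirc)\circ W_\#$ as the branched double cover of a minimal saddle movie $P_1\to\cdots\to P_k$, with $P_1=(a^!b\sqcup b^!Tc)\sqcup\bigcirc$ and $P_k=a_+^!T^+c_+$, composed with the connected-sum cobordism $W_\#$ gluing the two marked circles. The compatibility square for each elementary merge or split $\Sigma(s_i)$ commutes by Proposition~\ref{OzsSzProp6-2}, while $\widehat{F}_{W_\#}$ realizes the connected-sum isomorphism and matches the Khovanov-side identification $\Lambda^*\widetilde{Z}(a_+^!b_+)\otimes\Lambda^*\widetilde{Z}(b_+^!T^+c_+)\cong\Lambda^*\widetilde{Z}\bigl((a^!b\sqcup b^!Tc)\sqcup\bigcirc\bigr)$ by the computation given there; stacking these commuting squares yields commutativity of~(\ref{line:LeftAction}).

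The square~(\ref{line:RightAction}) is proven the same way, now with $T^+$ occupying the middle tensorand of Corollary~\ref{cor:CircTensorGluing}. The induced right action on $H_*\CF^\beta(T)$ is $\overline{\circ_2(-,\id\boxtimes-)}$ since $m_{0|1|1}(f,\psi)=\circ_2(f,\id_{\widehat{\CFDA}(T^+)}\boxtimes\psi)$; the non-strictness of the right $A_\infty$-action is immaterial because the higher maps $m_{0|1|j}$, $j\geq 2$, play no role in the module structure on homology. Corollary~\ref{cor:CircTensorGluing} identifies $\overline{\circ_2(-,\id\boxtimes-)}$ on homology with $\widehat{F}_W$ for the gluing cobordism $W=\Sigma(C'\sqcup\id_\bigcirc)\circ W_\#$, where $C'\colon a^!Tb\sqcup b^!c\to a^!Tc$ is the minimal saddle cobordism inducing $m_R$, and the remaining steps are verbatim as above. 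The main obstacle is the same as the one already handled in Theorem~\ref{thm:AlgebrasAgree}: checking that the abstract gluing cobordism produced by Corollary~\ref{cor:CircTensorGluing} is diffeomorphic to $\Sigma(\text{minimal saddle movie})\circ W_\#$ and correctly accounting for the connected-sum factor $W_\#$ introduced by the bottom arc of the ``$+$''-stabilization; once this is in place, commutativity reduces to the naturality statement of Proposition~\ref{OzsSzProp6-2} and functoriality of reduced Khovanov homology and Heegaard Floer homology.
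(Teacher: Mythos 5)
Your proposal is correct and takes essentially the same approach as the paper: run the proof of Theorem~\ref{thm:AlgebrasAgree} with the tangle $T^+$ carried along, replacing the minimal saddle cobordism $C_{abc}$ by the left/right saddle cobordisms and invoking Corollary~\ref{cor:CircTensorGluing} to identify the action with the branched-double-cover cobordism map. The one cosmetic difference is that you route both the left and the right action through Corollary~\ref{cor:CircTensorGluing} (taking the first cobordism trivial for~(\ref{line:LeftAction})), whereas the paper handles the strict left action directly as in Theorem~\ref{thm:AlgebrasAgree} and only cites Corollary~\ref{cor:CircTensorGluing} for the non-strict right action; both are valid since Corollary~\ref{cor:CircTensorGluing} itself rests on the same composition-is-cobordism-map input.
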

\begin{remark}
	We omit mirrors here since, for a planar tangle diagram $T$, we have $mT=T$.
\end{remark}
\begin{proof}
	The proof is identical to the proof of Theorem \ref{thm:AlgebrasAgree}, with the sole exceptions that we replace the minimal saddle cobordism $C_{abc}:a^!b\sqcup b^!c\to a^!c$ with left and right saddle cobordisms $C_{abc}^L:a^!b\sqcup b^!Tc\to a^!Tc$ and $C_{abc}^R:a^!Tb\sqcup b^!c\to a^!Tc$ for Lines \ref{line:LeftAction} and \ref{line:RightAction}, respectively, and use Corollary \ref{cor:CircTensorGluing} to show that the bottom arrow of the diagram in line \ref{line:RightAction} agrees with the on given by the maps on Heegaard Floer homology induced by the branched double covers of the minimal saddle cobordisms.
\end{proof}
\begin{theorem}\label{thm:InducedActions}
	The $(\mathfrak{h}_m,\mathfrak{h}_n)$-bimodule actions on $\CF^\beta(T)$ induce the usual $(H_m,H_n)$-bimodule actions on $H_*(\gr_*\CF^\beta(T))\cong\CKh(mT)$. 
\end{theorem}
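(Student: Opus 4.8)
The plan is to decompose the associated graded bimodule $\gr_*\CF^\beta(T)$ along the cube of resolutions of $T$, thereby reducing to the crossingless case settled by Corollary~\ref{cor:ActionsAgree}. Fix a decomposition $T^+ = T_1\cdots T_k$ into elementary tangles, and let $c$ be the number of braid generators among the $T_i$. By construction the filtration on $\CF^\beta(T)$ is pulled back from the $\{0,1\}^c$-filtration on $\widehat{\CFDA}(T^+)\simeq\widehat{\CFDA}(\mathcal{H}_{T_1})\boxtimes\cdots\boxtimes\widehat{\CFDA}(\mathcal{H}_{T_k})$ obtained by writing each crossing tensorand $\widehat{\CFDA}(s_i^{\pm1})$ as $\Cone(F^\pm)$ of its braidlike and anti-braidlike resolutions, with all other tensorands --- including $\overline{\widehat{\CFD}(a)}$, $\mathcal{A}$, $\widehat{\CFD}(b)$, and the $\widehat{\CFDA}(\mathcal{H}_{T_i})$ with $T_i$ crossingless --- trivially filtered. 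Since passing to the associated graded of a box tensor product of filtered type-$\mathit{D}$/$\mathit{DA}$ structures commutes with $\boxtimes$ (by \cite[Lemma 2.7]{LOTSpectral1} and the definition of the tensor-product filtration), and the $F^\pm$ strictly drop filtration degree by one, we get $\gr_*\widehat{\CFDA}(T^+)\cong\bigoplus_{\bm v\in\{0,1\}^c}\widehat{\CFDA}(T^+_{\bm v})$, where $T_{\bm v}$ is the crossingless $(m,n)$-tangle diagram whose crossings have been replaced by the braidlike resolution where $\bm v$ is $0$ and the anti-braidlike resolution where $\bm v$ is $1$. Box-tensoring back in the trivially filtered factors and summing over $a\in\mathfrak{B}_m$, $b\in\mathfrak{B}_n$ then gives $\gr_*\CF^\beta(T)\cong\bigoplus_{\bm v}\CF^\beta(T_{\bm v})$.

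The main obstacle, I expect, is to promote this to an isomorphism of $A_\infty$-$(\mathfrak{h}_m,\mathfrak{h}_n)$-bimodules: one must check that the filtration on $\CF^\beta(T)$ is by $A_\infty$-sub-bimodules and that the induced structure maps $m_{i|1|j}$ on the associated graded are the block-diagonal sum of those of the $\CF^\beta(T_{\bm v})$, with no cross-terms between distinct resolutions. The left action is strict, hence visibly filtration preserving; for the $m_{0|1|j}$ one tracks the filtration grading through the composition maps $\circ_2$ and the homotopies $H_j = (\widehat{\CFDA}(T^+)\boxtimes-)_{1|j}$, using that the functor $\widehat{\CFDA}(T^+)\boxtimes-$ restricts on the associated graded to $\bigoplus_{\bm v}(\widehat{\CFDA}(T^+_{\bm v})\boxtimes-)$, so that the only filtration-decreasing contributions are the $F^\pm$-terms, which belong to the filtered-morphism data $F^{s<t}$ rather than to $\gr_*$.

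Granting this, $H_*(\gr_*\CF^\beta(T))\cong\bigoplus_{\bm v}H_*\CF^\beta(T_{\bm v})$ as bimodules over $(H_*\mathfrak{h}_m,H_*\mathfrak{h}_n)$, where --- as in the description of the $E^1$-page preceding Theorem~\ref{thm:GeneralSpectralSequence} --- the bimodule structure is the associative one given by $(m_{1|1|0})_*$ and $(m_{0|1|1})_*$, and $H_*\mathfrak{h}_m\cong H_m$, $H_*\mathfrak{h}_n\cong H_n$ by Theorem~\ref{thm:AlgebrasAgree}. Each $T_{\bm v}$ is a planar tangle diagram, so its filtration is trivial and $H_*(\gr_*\CF^\beta(T_{\bm v})) = H_*\CF^\beta(T_{\bm v})$; taking the isomorphism of Corollary~\ref{CFbetaFiltration} to restrict vertexwise to the isomorphism $H_*\CF^\beta(T_{\bm v})\cong\CKh(T_{\bm v}) = \CKh(mT_{\bm v})$ assembled from the maps $\psi_{ab}$ --- the same maps used vertexwise in the Ozsv\'{a}th--Szab\'{o} / Lipshitz--Ozsv\'{a}th--Thurston identification of the $E_2$-page --- Corollary~\ref{cor:ActionsAgree} says precisely that this isomorphism intertwines $(m_{1|1|0})_*$ and $(m_{0|1|1})_*$ with Khovanov's left and right actions $m_L$, $m_R$ on $\CKh(T_{\bm v})$. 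On the Khovanov side, $\CKh(mT) = \bigoplus_{\bm v}\CKh(T_{\bm v})$ as a bigraded vector space, and this is a direct sum decomposition of $(H_m,H_n)$-bimodules since the minimal-saddle module maps act vertex by vertex. Summing the commutative squares of Corollary~\ref{cor:ActionsAgree} over $\bm v$ therefore shows that the isomorphism $H_*(\gr_*\CF^\beta(T))\cong\CKh(mT)$ carries the $(\mathfrak{h}_m,\mathfrak{h}_n)$-induced action to the usual $(H_m,H_n)$-bimodule structure, which is the claim. (Compatibility of this isomorphism with the edge differential $m_{0|1|0}^{-1}$ on $H_*(\gr_*\CF^\beta(T))$ is already part of Corollary~\ref{CFbetaFiltration} and, being forced by the $A_\infty$-bimodule relations, is automatically $(H_m,H_n)$-linear, so it does not affect the bimodule-action statement.)
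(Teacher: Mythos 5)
Your proposal follows the same overall strategy as the paper: decompose $\gr_*\CF^\beta(T)$ vertexwise along the cube of resolutions as $\bigoplus_{\bm v}\CF^\beta(T_{\bm v})$, identify the bimodule actions vertexwise by Corollary~\ref{cor:ActionsAgree}, and identify the edge differential with the Khovanov differential via Corollary~\ref{CFbetaFiltration}. The one place your treatment diverges from the paper is the compatibility of the $\mathfrak{h}_m,\mathfrak{h}_n$-actions with the $d_1$ differential on the $E^1$-page. You dispose of this in a parenthetical by asserting that $m_{0|1|0}^{-1}$ is ``forced by the $A_\infty$-bimodule relations'' to be $(H_m,H_n)$-linear. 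That is true, and can be verified by expanding the $A_\infty$-bimodule relations with a single algebra input and extracting filtration degree $-1$ parts (using that $m_{1|1|0}$ and $m_{0|1|1}$ preserve the crossing-filtration, whereas $\mu_1$ does not interact with it), but you do not carry out this check. The paper instead proves the needed compatibility explicitly via \cite[Lemma~2.3.13(2)]{LOTBimodules2015}, which gives the homotopy-commutativity of the actions with the specific $d_1$ edge maps $\circ_2(-,F_i^\pm\boxtimes\id)$; this is both more self-contained (it does not rely on having already set up the general $E^1$-page description from Section~2) and is reused verbatim in the proof of the diagram-independence of $\CF_{\bm\nu,\bm\nu'}(Y)$ earlier in the paper. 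Neither route is wrong; yours pushes the work into the abstract filtered-$A_\infty$ framework, where it is a short calculation you should actually write down, while the paper's stays closer to the geometric model. You should also make explicit, as the paper implicitly does, that the vertexwise isomorphism of Corollary~\ref{cor:ActionsAgree} and the chain isomorphism of Corollary~\ref{CFbetaFiltration} are the \emph{same} isomorphism (both built from the Ozsv\'{a}th--Szab\'{o} maps $\psi$), since otherwise knowing each separately would not give an isomorphism of complexes of bimodules.
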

\begin{proof}
	By \cite[Lemma 2.3.13(2)]{LOTBimodules2015}, if, for $i=0,1$, $M_i$ are type-$\mathit{DA}$ bimodules over $(\mathcal{A},\mathcal{B})$ and $N_i$ are left type-$D$ structures over $\mathcal{B}$, then there is a homotopy commutative square
	\begin{align}
		\begin{tikzcd}[ampersand replacement=\&]
			\Mor_{\mathcal{B}}^{\mathcal{A}}(M_0,M_1)\otimes_{\Bbbk}\Mor^{\mathcal{B}}(N_0,N_1)\arrow[r,"A"]\arrow[d,"D"'] \& \overunderset{\displaystyle\Mor^{\mathcal{A}}(M_0\boxtimes N_0,M_0\boxtimes N_1)}{\displaystyle\Mor^{\mathcal{A}}(M_0\boxtimes N_1,M_1\boxtimes N_1)}{\displaystyle\otimes_{\Bbbk}}\arrow[d,"B"]\\
			\overunderset{\displaystyle\Mor^{\mathcal{A}}(M_0\boxtimes N_0,M_1\boxtimes N_0)}{\displaystyle\Mor^{\mathcal{A}}(M_1\boxtimes N_0,M_1\boxtimes N_1)}{\displaystyle\otimes_{\Bbbk}}\arrow[r,"C"] \& \Mor^{\mathcal{A}}(M_0\boxtimes N_0,M_1\boxtimes N_1)
		\end{tikzcd},
	\end{align}
	where $A:f\otimes g^1\mapsto(\id_{M_0}\boxtimes g^1)\otimes(f\boxtimes\id_{N_1})$, $B:k\otimes\ell\mapsto\ell\otimes k$, $C:k\otimes\ell\mapsto\ell\circ k$, and $D:f\otimes g^1\mapsto(f\boxtimes\id_{N_0})\otimes(\id_{M_1}\boxtimes g^1)$, and we read vertical tensor products from top to bottom --- i.e., in this context, $\overunderset{X}{Y}{\otimes}$ means $X\otimes Y$. Consequently the left- and right-actions of $\mathfrak{h}_m$ and $\mathfrak{h}_n$ on $\CF^\beta(T)$ commute up to homotopy with the maps $\circ_2(-,F_i^\pm\boxtimes\id)$ on the $E_0$-page of the spectral sequence associated to the filtration on $\CF^\beta(T)$ which induce the differential on the $E_1$-page. This $E^1$-page is identified as a complex of vector spaces with $\CKh(mT)$ by Corollary \ref{CFbetaFiltration}. Corollary \ref{cor:ActionsAgree} tells us that the $(\mathfrak{h}_m,\mathfrak{h}_n)$-bimodule actions induce the usual bimodule actions on each summand $H_*\CF^\beta(mT_{\bm{v}})\cong\CKh(mT_{\bm{v}})$ corresponding to some complete resolution $mT_{\bm{v}}$ of $mT$.
\end{proof}
We now have all of the ingredients necessary to prove the main result.
\begin{theorem}
	Let $T$ be a $(n,n)$-tangle diagram and let $mL$ be the link in $S^1\times S^2$ represented by the mirror tangle diagram $mT$. Then there is a spectral sequence with $E^2$-page isomorphic to the Rozansky-Willis homology $\RW(mL)$ and converging to $\AHH(\CF^\beta(T))$.
\end{theorem}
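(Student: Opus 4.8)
The plan is to assemble the theorem from the pieces established in Sections~2--5; at this point essentially all of the substantive work has been done, and what remains is a short assembly. Since $T$ is an $(n,n)$-tangle diagram, $\CF^\beta(T)$ is an $A_\infty$-bimodule over $(\mathfrak{h}_n,\mathfrak{h}_n)$, i.e. over the single $A_\infty$-algebra $\mathfrak{h}_n$, so $\AHH(\CF^\beta(T))$ is defined. First I would invoke Corollary~\ref{CFbetaFiltration}, which equips $\CF^\beta(T)$ with a filtration induced by the $\{0,1\}^c$-filtration on $\widehat{\CFDA}(T^+)$, where $c$ is the number of crossings of $T$. Refining the partial order on $\{0,1\}^c$ to a total order (for instance by the sum of coordinates) turns this into a finite ascending filtration of $\CF^\beta(T)$ by $A_\infty$-subbimodules, so the hypotheses of Theorem~\ref{thm:GeneralSpectralSequence} are met; that theorem then produces a spectral sequence with $E^2$-page $\HH(H_*(\gr_*\CF^\beta(T)))$ converging to $\AHH(\CF^\beta(T))$.

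Second I would identify the $E^2$-page. By Theorem~\ref{thm:AlgebrasAgree} there is an isomorphism $H_*\mathfrak{h}_n\cong H_n$ of associative algebras (the grading shift $[1]$ in the $A_\infty$-formalism is immaterial, as the Hochschild complex depends only on $\mu_2$), and by Corollary~\ref{CFbetaFiltration} together with Theorem~\ref{thm:InducedActions} the vector-space isomorphism $H_*(\gr_*\CF^\beta(T))\cong\CKh(mT)$ intertwines the $(H_*\mathfrak{h}_n,H_*\mathfrak{h}_n)$-bimodule structure induced by the $(\mathfrak{h}_n,\mathfrak{h}_n)$-actions with the standard $(H_n,H_n)$-bimodule structure on the complex $\CKh(mT)$. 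In particular this is an isomorphism of complexes of bimodules, hence induces an isomorphism on Hochschild homology, so $\HH(H_*(\gr_*\CF^\beta(T)))\cong\HH(\CKh(mT))$, the Hochschild homology of $H_n$ with coefficients in the complex of bimodules $\CKh(mT)$.

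Finally I would appeal to Rozansky's theorem, recalled in Section~2: since $mL\subset S^1\times S^2$ is the closure of the $(n,n)$-tangle diagram $mT$, one has $\RW(mL)=\HH_*(\CKh(mT))$. Combining this with the previous step identifies the $E^2$-page of the spectral sequence with $\RW(mL)$, and the spectral sequence converges to $\AHH(\CF^\beta(T))$, which is the assertion.

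I do not anticipate a genuine obstacle here: the content is packaged into Theorem~\ref{thm:GeneralSpectralSequence} (the spectral-sequence machinery of the algebraic lemma), Theorem~\ref{thm:AlgebrasAgree} (identifying $H_*\mathfrak{h}_n$ with $H_n$), Theorem~\ref{thm:InducedActions} (matching the bimodule actions), and Corollary~\ref{CFbetaFiltration} (the Lipshitz--Ozsv\'{a}th--Thurston/Ozsv\'{a}th--Szab\'{o} filtration). The two points that deserve the most care are checking that the $\{0,1\}^c$-filtration does give a finite filtration by honest $A_\infty$-subbimodules after linearization of the partial order, and that Theorem~\ref{thm:InducedActions} is used as an identification of \emph{complexes} of bimodules and not merely of bimodule structures on homology, so that the $D_1$-differentials genuinely agree and the $E^2$-page is $\HH(\CKh(mT))$ rather than only a vector space abstractly isomorphic to it.
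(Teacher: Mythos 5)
Your proof is correct and follows the same assembly the paper uses: the paper's own proof is simply the one-line citation of Theorems \ref{thm:AlgebrasAgree}, \ref{thm:InducedActions}, and \ref{thm:GeneralSpectralSequence}, and you have spelled out exactly that chain of reductions, with Corollary \ref{CFbetaFiltration} and Rozansky's definition of $\RW$ filling in the implicit steps. Your caveat about linearizing the $\{0,1\}^c$-filtration so as to match the hypotheses of Theorem \ref{thm:GeneralSpectralSequence} is a reasonable point of care that the paper leaves tacit.
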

\begin{proof}
	This is an immediate consequence of Theorems \ref{thm:AlgebrasAgree}, \ref{thm:InducedActions}, and \ref{thm:GeneralSpectralSequence}.
\end{proof}
\begin{lemma}\label{lemma:QuasiIsoComparisonLemma}
	Let $f:\mathcal{M}_0\to\mathcal{M}_1$ is a map of filtered $A_\infty$-bimodules with bounded filtrations such that the induced map $\gr f:\gr_*\mathcal{M}_0\to\gr_*\mathcal{M}_1$ is a quasi-isomorphism, then $f$ is a quasi-isomorphism.
\end{lemma}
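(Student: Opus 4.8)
The plan is to reduce the statement to the standard fact that a filtered chain map which is a quasi-isomorphism on associated graded pieces is itself a quasi-isomorphism, and then to run a short induction on the length of the filtration (equivalently, a spectral-sequence comparison in the style of Section 2).

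First I would recall that a morphism $f=\{f_{i|1|j}\}$ of $A_\infty$-bimodules is by definition a quasi-isomorphism exactly when its leading term $f_{0|1|0}$ induces an isomorphism on the homology of $(\mathcal{M}_0,m_{0|1|0})$ and $(\mathcal{M}_1,m_{0|1|0})$; likewise, ``$\gr f$ is a quasi-isomorphism'' means that $\gr f_{0|1|0}$ induces an isomorphism on the homology of the associated graded complexes $(\gr_*\mathcal{M}_i,\gr m_{0|1|0})$. The higher terms $f_{i|1|j}$ are irrelevant to this question, so the statement reduces to the following purely homological claim: if $\phi\colon C\to D$ is a filtration-preserving morphism of chain complexes, each carrying a finite (``bounded'') filtration by subcomplexes, and $\gr\phi$ is a quasi-isomorphism, then $\phi$ is a quasi-isomorphism.

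For the reduced claim I would induct on the length of the filtration, the base case being a trivial filtration, where $\phi=\gr\phi$. In general, writing $C^{N-1}\subseteq C$ for the penultimate stage, $\phi$ carries the short exact sequence of complexes $0\to C^{N-1}\to C\to C/C^{N-1}\to 0$ to the corresponding sequence for $D$. The subcomplex $C^{N-1}$ inherits a filtration of strictly smaller length whose associated graded is the lower part of $\gr_*C$, and $\gr(\phi|_{C^{N-1}})$ is the corresponding restriction of $\gr\phi$, so $\phi|_{C^{N-1}}$ is a quasi-isomorphism by the inductive hypothesis; the quotient $C/C^{N-1}$ is the top graded piece of $C$, on which $\phi$ induces a summand of $\gr\phi$, hence is also a quasi-isomorphism. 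Applying the five lemma to the morphism of long exact homology sequences then shows $\phi$ is a quasi-isomorphism, completing the induction. (Equivalently, one may phrase this via the spectral sequences of the filtered complexes already considered in Section 2: $\phi$ induces a map of spectral sequences which is $\gr\phi$ on the $E^0$-page, hence $H_*(\gr\phi)$ on the $E^1$-page; an isomorphism on $E^1$ propagates to an isomorphism on every later page and on $E^\infty$, and since the filtrations are finite the spectral sequences converge strongly and the induced filtration on $H_*(C)$ and $H_*(D)$ is finite, which forces $H_*(\phi)$ to be an isomorphism.)

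There is no genuine obstacle here; the only point requiring care is the bookkeeping in the induction — identifying the induced filtrations on $C^{N-1}$ and on $C/C^{N-1}$, and checking that $\gr\phi$ restricts and descends to the relevant associated-graded maps — which is entirely routine.
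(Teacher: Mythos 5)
Your proof is correct, and your main argument is a genuinely more elementary route than the paper's. The paper's proof simply observes that $f$ induces a map of spectral sequences, invokes McCleary's comparison theorem \cite[Theorem 3.5]{McCleary2000} to get an isomorphism on $E^r$ for all $r\geq 2$, and then uses boundedness of the filtrations to conclude convergence and hence that $\gr(f_*)$ is an isomorphism, forcing $f_*$ to be one. Your primary argument replaces the appeal to the comparison theorem by an induction on filtration length together with the five lemma applied to the long exact sequences of the pairs $(C,C^{N-1})$ and $(D,D^{N-1})$; your closing parenthetical remark is essentially the paper's proof. You also make explicit a reduction the paper leaves tacit: that quasi-isomorphism for $A_\infty$-bimodule morphisms is detected by the leading component $f_{0|1|0}$ of $f$, so the claim reduces to a statement about filtered chain complexes. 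Both routes are sound; yours is self-contained and avoids citing the comparison theorem, while the paper's is shorter given that the spectral-sequence machinery is already set up in Section 2.
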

\begin{proof}
	Such an $f$ induces a map of spectral sequences
	\begin{align}
		\begin{tikzcd}[ampersand replacement=\&]
			H_*(\gr_*\mathcal{M}_0)\arrow[r,Rightarrow]\arrow[d,"(\gr f)_*"'] \& \gr_*H_*(\mathcal{M}_0)\arrow[d,"\gr(f_*)"]\\
			 H_*(\gr_*\mathcal{M}_1)\arrow[r,Rightarrow] \& \gr_*H_*(\mathcal{M}_1)
		\end{tikzcd}
	\end{align}
	which is an isomorphism on the $E^r$-page for $r\geq 2$ by \cite[Theorem 3.5]{McCleary2000} since $(\gr f)_*$ is an isomorphism. Since the filtrations on the $\mathcal{M}_i$ are bounded, the spectral sequences converge, so the map $\gr(f_*)$ of $E^\infty$-pages is an isomorphism, and, therefore, $f$ is a quasi-isomorphism.
\end{proof}
\begin{theorem}\label{thm:dg-gluing}
	Given an $(\ell,m)$-tangle $T_1$ and an $(m,n)$-tangle $T_2$, there is a quasi-isomorphism
	\begin{align}
		\CF^\beta(T_1)\widetilde{\otimes}\CF^\beta(T_2)\simeq\CF^\beta(T_1T_2)
	\end{align}
	of $A_\infty$-bimodules.
\end{theorem}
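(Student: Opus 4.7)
The plan is to construct an explicit $A_\infty$-bimodule map
\begin{align*}
\Phi:\CF^\beta(T_1)\widetilde{\otimes}_{\mathfrak{h}_m}\CF^\beta(T_2)\to\CF^\beta(T_1T_2)
\end{align*}
realizing composition of morphisms against the identity bimodule in the middle, and then verify it is a quasi-isomorphism by filtering both sides and reducing to Khovanov's tangle-composition theorem on the associated graded via Lemma \ref{lemma:QuasiIsoComparisonLemma}.

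First I would observe that, since $T_1^+T_2^+=(T_1T_2)^+$, the bordered pairing theorem gives a homotopy equivalence $\widehat{\CFDA}((T_1T_2)^+)\simeq\widehat{\CFDA}(T_1^+)\boxtimes\widehat{\CFDA}(T_2^+)$, which we take as the preferred model for the target. On an element $f\otimes[\phi_1|\cdots|\phi_r]\otimes g\in\CF^\beta(T_1)\otimes\mathfrak{h}_m[1]^{\otimes r}\otimes\CF^\beta(T_2)$, the map $\Phi$ is defined by iterated composition
\begin{align*}
\Phi(f\otimes[\phi_1|\cdots|\phi_r]\otimes g)=\circ_{r+2}(f,\phi_1,\dots,\phi_r,\id_{\widehat{\CFDA}(T_1^+)}\boxtimes g),
\end{align*}
with higher components of the bimodule morphism given by similar formulas that absorb inputs from the left- and right-actions of $\mathfrak{h}_\ell$ and $\mathfrak{h}_n$. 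The bimodule morphism relations for $\Phi$ follow directly from the $A_\infty$-relations for the $\circ_k$ on the morphism spaces, together with the fact that boxing with $\id_{\widehat{\CFDA}(T_1^+)}$ is an $A_\infty$-functor. Concretely, Corollary \ref{cor:CircTensorGluing} realizes $\Phi_{0|1|0}$ at the level of underlying chain complexes as the cobordism map for the pair-of-pants gluing, ensuring it is a chain map.

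Next I would pass to filtrations. Both $\CF^\beta(T_1)$ and $\CF^\beta(T_2)$ carry the cube-of-resolutions filtrations constructed in Corollary \ref{CFbetaFiltration}, and $\CF^\beta(T_1T_2)$ carries the filtration coming from the cube of resolutions on $T_1T_2$, which is canonically the product of the cubes on $T_1$ and $T_2$. The tensor product $\CF^\beta(T_1)\widetilde{\otimes}_{\mathfrak{h}_m}\CF^\beta(T_2)$ inherits the product filtration, and $\Phi$ is filtered by construction (since it is built from box-tensor identifications and composition of morphisms, both of which preserve the filtration degrees recorded by crossing-resolution indices). By Theorem \ref{thm:InducedActions} the induced bimodule structures on the $E^1$-pages are, up to the isomorphisms of Theorem \ref{thm:AlgebrasAgree} and Corollary \ref{cor:ActionsAgree}, the standard $H_m$-module structures on $\CKh(mT_1)$ and $\CKh(mT_2)$. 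Because Khovanov's tangle bimodules are projective as one-sided modules over the arc algebras, the derived tensor product $\widetilde{\otimes}_{H_m}^{\mathbb{L}}$ agrees with $\otimes_{H_m}$, so the induced map of associated gradeds is identified with Khovanov's gluing quasi-isomorphism $\CKh(mT_1)\otimes_{H_m}\CKh(mT_2)\simeq\CKh(mT_1\cdot mT_2)=\CKh(m(T_1T_2))$ from \cite{KhovanovFunctorTangle2002}. Lemma \ref{lemma:QuasiIsoComparisonLemma} (applied after truncating the filtrations, which are bounded since the tangle diagrams have finitely many crossings) then upgrades this to the statement that $\Phi$ is a quasi-isomorphism.

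The main obstacle will be Step 3, namely identifying the $E^1$-page of the tensor product filtration on $\CF^\beta(T_1)\widetilde{\otimes}_{\mathfrak{h}_m}\CF^\beta(T_2)$ with $\CKh(mT_1)\otimes_{H_m}\CKh(mT_2)$ as a complex of bimodules. The associated graded of an $A_\infty$-tensor product is not literally the associated graded's tensor product, and one must show that the bar-complex contributions collapse up to quasi-isomorphism. This reduction relies on projectivity of Khovanov's bimodules, which ensures that higher $\Tor$ terms vanish; making this collapse compatible with the $A_\infty$-operations, and verifying that $\Phi$ on the $E^1$-page agrees with Khovanov's explicit gluing map, is the most delicate part of the argument and is where Corollary \ref{cor:CircTensorGluing} and the naturality arguments in the proof of Theorem \ref{thm:AlgebrasAgree} do the essential work.
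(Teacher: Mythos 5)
Your proposal takes essentially the same route as the paper: define a bimodule morphism on $\CF^\beta(T_1)\boxtimes\mathrm{Bar}(\mathfrak{h}_m)\boxtimes\CF^\beta(T_2)$ by composing in the morphism space (the paper's $\sigma_{i|j|k}$ reduces to $\sigma_{0|0|0}(f,g)=\circ_2(f,\id\boxtimes g)$ since $\mathcal{A}_m$ is a dg-algebra, matching your $\Phi$), equip both sides with the product of the cube-of-resolutions filtrations and the bar word-length filtration, and then apply Theorem~\ref{thm:InducedActions} together with Lemma~\ref{lemma:QuasiIsoComparisonLemma} after identifying the induced map on associated gradeds with Khovanov's gluing quasi-isomorphism. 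The paper states the collapse of the bar complex onto $\CKh(T_1)\otimes_{H_m}\CKh(T_2)$ via the existence of a retract, which is the same projectivity fact you invoke.
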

\begin{proof}
	Recall that
	\begin{align}
		\CF^\beta(T_1)\widetilde{\otimes}\CF^\beta(T_2)=\CF^\beta(T_1)\boxtimes\mathrm{Bar}(\mathfrak{h}_m)\boxtimes\CF^\beta(T_2)
	\end{align}
	and define a map $\sigma:\CF^\beta(T_1)\widetilde{\otimes}\CF^\beta(T_2)\to\CF^\beta(T_1T_2)$ of $A_\infty$-bimodules by
	\begin{align}
		\sigma_{i|j|k}(\bm{\varphi},f,\bm{\psi},g,\bm{\omega})=\circ_{4+i-\delta(j,0)-\delta(k,0)}(\bm{\varphi},f,H(\bm{\psi}),\id\boxtimes g,H(\bm{\omega})),
	\end{align}
	where, for $\ell=i,j,$ or $k$, as appropriate, $\pi=\varphi,\psi,\omega$, $\bm{\pi}=\pi_1\otimes\cdots\otimes\pi_\ell$, and $H=\sum_jH_j$. Note that $\sigma_{i|j|k}=0$ unless $i=j=k=0$, in which case we have $\sigma_{0|0|0}(f,g)=\circ_2(f,\id\boxtimes g)$, since $\mathcal{A}_n$ is a differential algebra, so all higher composition maps vanish. Let $\mathcal{L}_i^\bullet$ be the filtration on $\CF^\beta(T_i)$ induced by the filtration on $\widehat{\CFDA}(\Sigma(T_i^+))$ defined in \cite{LOTSpectral1}, $\mathcal{W}^\bullet$ the word-length filtration on $\mathrm{Bar}(\mathfrak{h}_m)$, and let $\mathcal{F}^\bullet=\mathcal{L}_1^\bullet\otimes\mathcal{W}^\bullet\otimes\mathcal{L}_2^\bullet$ be the tensor product filtration. Note that $\sigma$ is automatically filtered with respect to $\mathcal{F}^\bullet$. Note that the filtration-preserving part of the differential is $\partial_1^0\otimes\id\otimes\id+\id\otimes\partial_{\mathrm{Bar}}^0\otimes\id+\id\otimes\id\otimes\partial_2^0$, where $\partial_i^0$ is the filtration-preserving part of the differential on $\CF^\beta(T_i)$, and $\partial_{\mathrm{Bar}}^0$ is the part of the bar differential given by the differential on $\mathfrak{h}_m$, so it follows that $H_*(\gr_*\CF^\beta(T_1)\,\widetilde{\otimes}\,\CF^\beta(T_2))\cong\CKh(T_1)\,\widetilde{\otimes}\,\CKh(T_2)$ and, by Theorem \ref{thm:InducedActions}, the map
	\begin{align}
		(\gr\sigma)_*=\sum_{i,j,k}(\gr\sigma)_{*,i|j|k}
	\end{align}
	is given by the gluing isomorphism $\CKh(T_1)\otimes_{H_m}\!\CKh(T_2)\to\CKh(T_1T_2)$ if we have $i=j=k=0$ and by zero otherwise. Since there is a retract of $\CKh(T_1)\,\widetilde{\otimes}\,\CKh(T_2)$ onto $\CKh(T_1)\otimes_{H_m}\!\CKh(T_2)$, this tells us that $(\gr\sigma)_*$ is a quasi-isomorphism. Therefore, by Lemma \ref{lemma:QuasiIsoComparisonLemma}, $\sigma$ is a quasi-isomorphism.
\end{proof}
\begin{corollary}
	The Hochschild homology $\AHH(\CF^\beta(T))$ is an isotopy invariant of the annular closure of $T$.
\end{corollary}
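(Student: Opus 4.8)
The plan is to deduce the statement from three ingredients: that $\AHH$ sends quasi-isomorphisms of $A_\infty$-bimodules to isomorphisms; that $\CF^\beta(T)$ is a quasi-isomorphism invariant of the tangle $T$ rel endpoints; and that $\AHH$ enjoys a cyclic trace symmetry which, combined with the gluing formula of Theorem~\ref{thm:dg-gluing}, yields invariance under conjugation of tangles. The first ingredient is standard, so the real content is in the second and third.

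\emph{Tangle isotopy.} First I would observe that isotopic $(n,n)$-tangles $T\simeq T'$ in $I\times S^2$ give diffeomorphic strongly bordered $3$-manifolds $\Sigma(T^+)\cong\Sigma(T'^+)$ rel parametrized boundary, since taking branched double covers and the operation $(-)^+$ of stacking on a trivial $2$-braid are both functorial under tangle isotopy. Hence $\widehat{\CFDA}(\Sigma(T^+))$ and $\widehat{\CFDA}(\Sigma(T'^+))$ are homotopy equivalent type-$\mathit{DA}$ bimodules, and by the independence of $\CF_{\bm{\nu},\bm{\nu}'}(Y)$ from the choice of strongly bordered Heegaard diagram established in Section~3 we get $\CF^\beta(T)\simeq\CF^\beta(T')$ as $A_\infty$-bimodules over $(\mathfrak{h}_n,\mathfrak{h}_n)$, so $\AHH(\CF^\beta(T))\cong\AHH(\CF^\beta(T'))$. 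This covers all Reidemeister moves and planar isotopies performed in the interior of the annulus.

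\emph{Conjugation.} Next I would prove $\AHH(\CF^\beta(T_1T_2))\cong\AHH(\CF^\beta(T_2T_1))$ for an $(n,m)$-tangle $T_1$ and an $(m,n)$-tangle $T_2$, whose annular closures coincide. By Theorem~\ref{thm:dg-gluing}, $\CF^\beta(T_1T_2)\simeq\CF^\beta(T_1)\widetilde{\otimes}\CF^\beta(T_2)$ as $(\mathfrak{h}_n,\mathfrak{h}_n)$-bimodules and $\CF^\beta(T_2T_1)\simeq\CF^\beta(T_2)\widetilde{\otimes}\CF^\beta(T_1)$ as $(\mathfrak{h}_m,\mathfrak{h}_m)$-bimodules. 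The key input is the $A_\infty$ avatar of the trace property of Hochschild homology: for $A_\infty$-bimodules $M$ over $(\mathcal{A},\mathcal{B})$ and $N$ over $(\mathcal{B},\mathcal{A})$ there is a natural quasi-isomorphism
\begin{align}
	\AHH(\mathcal{A};\,M\widetilde{\otimes}_{\mathcal{B}}N)\simeq\AHH(\mathcal{B};\,N\widetilde{\otimes}_{\mathcal{A}}M),
\end{align}
obtained by resolving $\mathcal{A}$ and $\mathcal{B}$ by their two-sided bar complexes and cyclically permuting the tensor factors; over the semisimple characteristic-$2$ ground ring $\Bbbk$ this goes through verbatim. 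Applying it with $\mathcal{A}=\mathfrak{h}_n$, $\mathcal{B}=\mathfrak{h}_m$, $M=\CF^\beta(T_1)$, $N=\CF^\beta(T_2)$ gives the claim.

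\emph{Completing the argument and the main obstacle.} Finally I would invoke the set of moves relating $(n,n)$-tangle diagrams with isotopic annular closures: for closures in the solid torus these are generated by annular isotopy and conjugation, so the two steps above already finish the proof, and for closures in $S^1\times S^2$ one must additionally treat the stabilization and ``over the sphere'' moves appearing in Willis's invariance argument \cite{Willis} --- the former inserting a cap--cup pair at the boundary of the annulus, so its invariance follows from Theorem~\ref{thm:dg-gluing} applied to the elementary cap and cup tangles (with the homologically injective embeddings $\mathfrak{h}_n\hookrightarrow\mathfrak{h}_{n+1}$ of Section~6 accounting for the change in strand number), and the latter being local and reducible, via the first step, to Floer invariance. \textbf{The main obstacle} is the trace property: although ``well known'', I would need to set it up so that the quasi-isomorphism above is the geometrically correct one, i.e.\ compatible with the gluing maps of Theorem~\ref{thm:dg-gluing}, and verify that it requires no unitality or boundedness hypothesis that $\mathfrak{h}_n$ fails. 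If one prefers to bypass a general cyclicity statement, a viable alternative is to reprove Rozansky's and Willis's chain-level conjugation and stabilization invariance directly in the present bordered-Floer setting using Theorem~\ref{thm:dg-gluing} and the pairing results of Section~5, mirroring \cite{Rozansky,Willis}.
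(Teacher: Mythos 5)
Your central argument---deducing conjugation invariance from Theorem~\ref{thm:dg-gluing} together with the trace property of $A_\infty$-Hochschild homology, and deducing tangle-isotopy invariance of $\CF^\beta(T)$ from the bordered-Floer invariance of $\widehat{\CFDA}(\Sigma(T^+))$---is exactly the paper's argument; the paper's proof is a one-line citation of Theorem~\ref{thm:dg-gluing} and of \cite[Lemma~3.7]{Auroux2015} for the trace property, leaving the tangle-isotopy invariance and the identification of the move set implicit. Your caution about unitality or boundedness hypotheses on $\mathfrak{h}_n$ is unnecessary: $\mathfrak{h}_n$ is a strictly unital differential graded algebra (all $\circ_k$ with $k>2$ vanish), so the cited trace-property lemma applies without further verification.

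Where you overreach is the closing discussion of closures in $S^1\times S^2$. The corollary as stated concerns only the annular closure, i.e.\ the closure in the solid torus, and invariance under the lightbulb (``over the sphere'') move is precisely the open question the paper poses immediately after this corollary. The paper observes that an isomorphism of Rozansky--Willis homologies is an isomorphism of $E^2$-pages which need not, a priori, be induced by a filtered $A_\infty$-bimodule homomorphism $\CF^\beta(T_1)\to\CF^\beta(T_2)$, so there is no reason the $E^\infty$-pages $\AHH(\CF^\beta(T_i))$ should agree; your assertion that the lightbulb move is ``local and reducible, via the first step, to Floer invariance'' is exactly the claim the paper flags as potentially too optimistic, and it is not justified by the results at hand. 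This does no harm to the proof of the stated corollary, since the discussion is extraneous, but it should not be presented as an easy consequence of the preceding material.
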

\begin{proof}
	This follows immediately from Theorem \ref{thm:dg-gluing} and the trace property of Hochschild homology for $A_\infty$-bimodules (see \cite[Lemma 3.7]{Auroux2015}).
\end{proof}
\begin{question}
	Is $\AHH(\CF^\beta(T))$ an isotopy invariant of the link $L\subset S^1\times S^2$ represented by $T$?
\end{question}
If $T_1$ and $T_2$ differ by a lightbulb move, then $\AHH(\CF^\beta(T_1))$ and $\AHH(\CF^\beta(T_2))$ are the $E^\infty$-pages of spectral sequences with isomorphic $E^2$-pages since Rozansky-Willis homology is an invariant of links in $S^1\times S^2$. However, an isomorphism $\RW(mL_1)\cong\RW(mL_2)$ need not, a priori, be induced by a filtered $A_\infty$-bimodule homomorphism $\CF^\beta(T_1)\to\CF^\beta(T_2)$, so it may be too much to expect that the two $E^\infty$-pages are isomorphic.
\begin{question}
	Rozansky--Willis homology $\RW(L)$ can be approximated in any range of homological gradings by the Khovanov homology $\Kh(\mathbb{L}_k)$, where $\mathbb{L}_k$ is the planar closure of the tangle diagram $\bm{\Phi}^kT$, where $T$ is any tangle diagram whose closure in $S^1\times S^2$ is $L$. Is there a similar approximation of $\AHH(\CF^\beta(T))$ by Heegaard Floer homologies of the form $\widehat{\HF}(\Sigma(\mathbb{L}_k\sqcup\bigcirc))$, where $\bigcirc$ is an unknot disjoint from $\mathbb{L}_k$?
\end{question}
\begin{question}
	Can $\AHH(\CF^\beta(T))$ be related to other Floer-theoretic invariants, such as knot Floer homology?
\end{question}
We plan to revisit these questions in future work.
	\section{The Branched Arc Algebras Are Not Formal}
	\subsection{Formality for $A_\infty$-algebras}\label{sec:formality}
Homological perturbation theory allows one to transfer homotopy-algebraic structures on chain complexes along certain types of morphisms. In particular, it allows one to construct a canonical $A_\infty$-algebra structure on the homology of an $A_\infty$-algebra.
\begin{proposition}[Homological perturbation lemma for $A_\infty$-algebras, \cite{KONTSEVICH2001}]
	Let $\mathcal{A}=(A,\{m_i^A\})$ be an $A_\infty$-algebra and let
	\begin{align}
		\begin{tikzcd}[ampersand replacement=\&]
			\mathcal{A}\arrow[loop,out=145,in=215,looseness=8,"h"']\arrow[r,bend right,"p"'] \& H_*\mathcal{A}\arrow[l,bend right,"\iota"']
		\end{tikzcd}
	\end{align}
	be a retract of $\mathcal{A}$ onto its homology $H_*\mathcal{A}$, regarding $(A,m_1)$ as a chain complex. That is to say chain maps $p:\mathcal{A}\to H_*\mathcal{A}$ and $\iota:H_*\mathcal{A}\to\mathcal{A}$, regarding $H_*\mathcal{A}$ as a complex with trivial differential, and a chain homotopy $h:\mathcal{A}\to\mathcal{A}$ such that
	\begin{align}
		\iota p=\id+\partial h+h\partial
	\end{align}
	and
	\begin{align}
		p\iota=\id.
	\end{align}
	Then $H_*\mathcal{A}$ admits an $A_\infty$-algebra structure $\{m_i\}$ such that\textup{\begin{enumerate}
			\item $m_1=0$ and $m_2=(m_2^A)_*$ and
			\item there are $A_\infty$ quasi-isomorphisms $p':\mathcal{A}\to H_*\mathcal{A}$ and $\iota:H_*\mathcal{A}\to\mathcal{A}$ and an $A_\infty$-homotopy $h':\mathcal{A}\to \mathcal{A}$ which extend $p$, $\iota$, and $h$.
	\end{enumerate}}
	The structure maps $m_i:(H_*\mathcal{A})^{\otimes i}\to H_*\mathcal{A}[2-i]$ are given by
	\begin{align}
		m_i=\sum_{T\in\mathscr{P}_i}m_i^T,
	\end{align}
	where $\mathscr{P}_i$ is the set of planar rooted trees with $i$ leaves such that each internal vertex has degree at least 3, and $m_i^T$ is given by labeling the leaves of $T$ by $\iota$, interior edges by $h$, vertices by the $A_\infty$-operations $m_j^A$, and the root by $p$ and regarding this labeled tree as a composition of morphisms $(H_*\mathcal{A})^{\otimes i}\to H_*\mathcal{A}$. This $A_\infty$-algebra structure on $H_*\mathcal{A}$ is independent of the choice of $p$, $\iota$, and $h$ up to $A_\infty$-isomorphism.
\end{proposition}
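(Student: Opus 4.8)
The plan is to deduce the statement from the classical \emph{basic perturbation lemma} for chain complexes applied to the (reduced) tensor coalgebra, which is the most economical route and which produces the tree formula for the $m_i$ almost automatically. Recall that an $A_\infty$-structure on $A$ is the same data as a square-zero coderivation $\overline{D}^{\mathcal{A}}$ on $T(A[1])=\bigoplus_{n\geq1}A^{\otimes n}[n]$, and that $A_\infty$-morphisms and $A_\infty$-homotopies correspond to dg-coalgebra morphisms and coderivation homotopies between such. Write $\overline{D}^{\mathcal{A}}=d+\delta$, where $d$ is the part built from $\mu_1$ and $\delta$ the part built from the $\mu_j$ with $j\geq2$. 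Since $d$ preserves tensor length while every $\mu_j$ with $j\geq 2$ strictly decreases it, $\delta$ is a ``small'' perturbation of the tensor differential $d$, and this is exactly the finiteness one needs to run the perturbation lemma.

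First I would promote the given retract $(p,\iota,h)$ of $(A,\mu_1)$ onto $(H_*\mathcal{A},0)$ to a deformation retract of $(T(A[1]),d)$ onto $(T(H_*\mathcal{A}[1]),0)$ by the \emph{tensor trick}: after arranging the side conditions $h^2=0$, $ph=0$, $h\iota=0$ (always possible), set $P=\bigoplus_n p^{\otimes n}$, $I=\bigoplus_n\iota^{\otimes n}$, and $\Phi=\bigoplus_n\sum_{i+j=n-1}(\iota p)^{\otimes i}\otimes h\otimes\id^{\otimes j}$; one checks $P,I$ are dg-coalgebra maps and that $(P,I,\Phi)$ is a deformation retract with the analogous side conditions $P\Phi=0$, $\Phi I=0$, $\Phi^2=0$. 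Now apply the basic perturbation lemma with perturbation $\delta$: local nilpotency holds because $\delta\Phi$ strictly lowers tensor length, so $\sum_{k\geq0}(\delta\Phi)^k$ is a finite sum on each word-length summand. The lemma outputs a square-zero differential $D'=P\bigl(\textstyle\sum_{k\geq0}(\delta\Phi)^k\bigr)\delta\,I$ on $T(H_*\mathcal{A}[1])$ together with a perturbed retract $(P',I',\Phi')$; using that $P$ and $I$ are coalgebra maps and the side conditions for $\Phi$, one verifies $D'$ is again a coderivation, hence defines an $A_\infty$-structure $\{m_i\}$ on $H_*\mathcal{A}$. Reading off the component of $D'$ landing in $H_*\mathcal{A}$ from a tensor of length $i$ and expanding the geometric series reproduces precisely $m_i=\sum_{T\in\mathscr{P}_i}m_i^T$, the sum over planar trees with leaves labelled $\iota$, internal edges $h$, vertices $\mu_{(\bullet)}$, and root $p$. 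The lowest-order terms give $m_1=0$ (no tree) and $m_2=(\mu_2)_*$ (the single trivalent tree), while the linear parts of $P',I',\Phi'$ agree with $p,\iota,h$, so these furnish the promised $A_\infty$-quasi-isomorphisms $p',\iota$ and $A_\infty$-homotopy $h'$ extending $p,\iota,h$.

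For independence up to $A_\infty$-isomorphism, I would note that for two retracts giving structures $\{m_i^0\}$, $\{m_i^1\}$ on $H_*\mathcal{A}$, the composite $P_1'\circ I_0'$ is an $A_\infty$-morphism $(H_*\mathcal{A},\{m_i^0\})\to(H_*\mathcal{A},\{m_i^1\})$ whose linear part is $\id_{H_*\mathcal{A}}$; since an $A_\infty$-morphism with invertible linear part is an $A_\infty$-isomorphism, this proves the claim. The genuinely nontrivial points in this scheme are the verification that the tensor-trick datum is an honest deformation retract (a short diagram computation) and that the perturbed $D'$ is still a coderivation — the latter is the step I expect to require the most care. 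If one prefers to sidestep it, the alternative is to take the tree formula as the \emph{definition} of the $m_i$ and verify the $A_\infty$-relations directly by a telescoping argument (sign-free here, as $\Bbbk$ has characteristic $2$): applying the $A_\infty$-relation for $\mathcal{A}$ at each internal vertex, together with $\mu_1h+h\mu_1=\iota p-\id$, converts ``edge-contracted'' tree contributions into the remaining terms, so that everything cancels in pairs. That combinatorial bookkeeping is the main obstacle in the second approach.
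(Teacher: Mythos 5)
The paper does not actually prove this proposition; it simply states it with a citation to Kontsevich--Soibelman, so there is no paper proof to compare against. Your argument is the standard and correct one: reformulate the $A_\infty$-structure as a square-zero coderivation on the reduced tensor coalgebra, split it as $d+\delta$ with $d$ the length-preserving $\mu_1$-part and $\delta$ the strictly length-decreasing higher part, lift the given retract to a filtered contraction of cofree coalgebras via the tensor trick (after normalizing to the side conditions $h^2=0$, $ph=0$, $h\iota=0$), and run the basic perturbation lemma. Local nilpotency of $\delta\Phi$ in each word-length does make the geometric series a finite sum, the perturbed differential is again a coderivation precisely because $P$, $I$ are coalgebra maps and the side conditions hold, and unpacking $P\bigl(\sum_k(\delta\Phi)^k\bigr)\delta I$ recovers the sum-over-trees formula, with $m_1=0$ and $m_2=(\mu_2)_*$ as the lowest-order terms and $p'$, $\iota'$, $h'$ built from the perturbed retract data. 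You correctly identify the coderivation check as the place requiring care, and your alternative route --- take the tree sum as the definition and verify the $A_\infty$-relations by telescoping using $\mu_1h+h\mu_1=\iota p-\id$ at contracted edges --- is also a legitimate classical approach.

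One small inaccuracy in the uniqueness step: the linear part of $P_1'\circ I_0'$ is $p_1\iota_0$, which need not equal $\id_{H_*\mathcal{A}}$ for two independently chosen retracts; however, it is a quasi-isomorphism between complexes with zero differential and hence an isomorphism, so the criterion you invoke (an $A_\infty$-morphism with invertible linear part is an $A_\infty$-isomorphism) still applies and the conclusion is unaffected. This is a slip of phrasing, not a gap.
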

Note, in particular, that if $\mathcal{A}$ is a genuine differential algebra, then the only trees $T$ contributing to the $A_\infty$-operations on $H_*\mathcal{A}$ are those whose internal vertices are all trivalent, i.e. the binary trees. For instance, there are two trees contributing to $m_3$ and the trees contributing to $m_4$ are those shown in Figure \ref{Trees}.
\begin{figure}
	\begin{center}
		\includegraphics[scale=0.6]{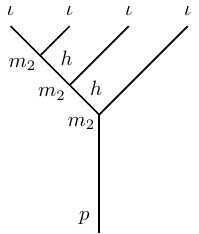}\hspace{0.25cm}\includegraphics[scale=0.6]{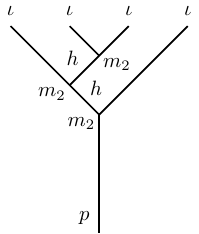}\hspace{0.25cm}\includegraphics[scale=0.6]{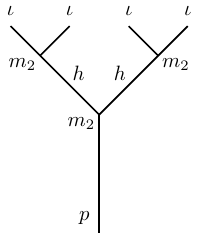}\hspace{0.25cm}\includegraphics[scale=0.6]{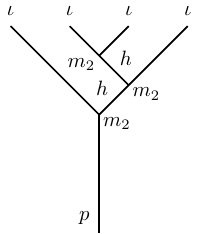}\hspace{0.25cm}\includegraphics[scale=0.6]{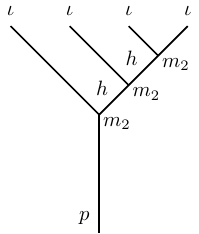}
	\end{center}
	\caption[Trees contributing to $m_4$.]{Trees contributing to the $m_4$ operation on the homology $H_*\mathcal{A}$ of a differential algebra $\mathcal{A}$.}
	\label{Trees}
\end{figure}
\begin{proposition}[\cite{KONTSEVICH2001}, Proposition 7\label{CanonicalAinfQuasiIso}]
	There is a canonical $A_\infty$ quasi-isomorphism $q:H_*\mathcal{A}\to \mathcal{A}$.
\end{proposition}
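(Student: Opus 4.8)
The plan is to obtain $q$ from the homological perturbation lemma of the preceding proposition: part (2) of that statement already produces an $A_\infty$-quasi-isomorphism $\iota\colon H_*\mathcal{A}\to\mathcal{A}$ extending the chosen section, so I would single it out, name it $q$, and record its explicit tree formula.

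First I would fix a retract $(p,\iota,h)$ of $\mathcal{A}$ onto $H_*\mathcal{A}$ — which exists since $\F$ is a field — and, after a standard adjustment of $h$, arrange the \emph{side conditions} $h^2=0$, $ph=0$, $h\iota=0$, which streamline the bookkeeping below (over $\F=\F_2$ there are no signs to track). I would then set $q_1=\iota$ and, for $n\geq 2$, define
\[
q_n=\sum_{T\in\mathscr{P}_n}q_n^T\colon(H_*\mathcal{A})^{\otimes n}\longrightarrow\mathcal{A}[1-n],
\]
where $q_n^T$ is built from the planar rooted tree $T$ exactly as the transferred operation $m_n$ is — leaves labeled by $\iota$, internal edges by $h$, an internal vertex of valence $j+1$ by $m_j^A$ — except that the root edge carries the identity of $\mathcal{A}$ in place of the projection $p$.

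The hard part will be verifying the $A_\infty$-morphism relations for $q$ relative to the transferred structure $\{m_i\}$ on $H_*\mathcal{A}$, namely, for each $n$,
\[
\sum_{r\geq 1}\;\sum_{i_1+\cdots+i_r=n} m_r^A\circ(q_{i_1}\otimes\cdots\otimes q_{i_r})=\sum_{i+j=n+1}\;\sum_{k}q_i\circ(\id^{\otimes k-1}\otimes m_j\otimes\id^{\otimes n-k-j+1}).
\]
Expanding both sides via the tree formulas for the $q_i$ and the $m_j$, every summand is indexed by a planar tree with $n$ leaves whose leaves are decorated by $\iota$, internal edges by $h$, and vertices by the $m_k^A$, with exactly one distinguished internal edge carrying no $h$ (created either by plugging a $q$ into an $m$, or an inner $q$ into $m_r^A$). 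Resolving that edge with the homotopy identity $m_1^A h+h m_1^A=\iota p-\id$ and using the $A_\infty$-relations $\overline{D}^{\mathcal{A}}\circ\overline{D}^{\mathcal{A}}=0$ for $\mathcal{A}$ together with the side conditions — which annihilate the leftover $h^2$, $h\iota$, and $p\iota$ configurations — the remaining terms cancel in pairs, exactly as in the standard proof of the homological perturbation lemma.

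Finally, $q_1=\iota$ induces an isomorphism on homology since $p\iota=\id$ and $\iota p=\id+\partial h+h\partial$, so $q$ is an $A_\infty$-quasi-isomorphism; and since, by part (2) of the previous proposition, both the transferred structure on $H_*\mathcal{A}$ and the extension of $\iota$ are independent of the choice of retract up to $A_\infty$-isomorphism and $A_\infty$-homotopy, the homotopy class of $q$ is canonical, as claimed. The only genuine obstacle is the combinatorial cancellation argument, which is intricate in general but, in characteristic $2$, reduces to a bijective matching of decorated trees.
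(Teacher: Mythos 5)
The outline is right, but the formula you write down for $q$ has a genuine error: you say that $q_n^T$ is obtained from $m_n^T$ by placing the \emph{identity} of $\mathcal{A}$ at the root edge in place of $p$, whereas the paper (following Kontsevich--Soibelman) places the \emph{homotopy} $h$ at the root. These are not the same, and only the latter works. Two quick ways to see it: first, degrees --- with $p$ replaced by $\id$, the composition $q_n^T$ has the same degree as $m_n^T$, namely that of a map $(H_*\mathcal{A})^{\otimes n}\to H_*\mathcal{A}[2-n]$, but an $A_\infty$-morphism component must land in $\mathcal{A}[1-n]$; replacing $p$ by $h$ supplies exactly the needed shift. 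Second, the relation already fails at $n=2$: the $A_\infty$-morphism condition there reduces, using $m_1=0$ on $H_*\mathcal{A}$, to
\begin{align}
\mu_1\circ q_2 \;=\; \bigl(\iota p-\id\bigr)\circ\mu_2\circ(\iota\otimes\iota),
\end{align}
and with $q_2=\mu_2\circ(\iota\otimes\iota)$ the left side vanishes (because $\mu_1\iota=0$ and $\mu_1$ is a derivation for $\mu_2$) while the right side does not in general; taking $q_2=h\circ\mu_2\circ(\iota\otimes\iota)$ and using $\mu_1 h+h\mu_1=\iota p-\id$ makes the identity hold. Once the root label is corrected to $h$, your remaining steps --- normalizing to the side conditions $h^2=0$, $ph=0$, $h\iota=0$, the decorated-tree cancellation to verify the morphism relations, and invoking part (2) of the perturbation lemma for canonicity --- are exactly the standard argument that the paper's sketch is gesturing at and would complete the proof.
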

\begin{proof}[Sketch]
	The map $q_1:H_*\mathcal{A}\to\mathcal{A}$ is defined to be the chain map $\iota$ while the higher $q_i$ are defined by
	\begin{align}
		q_i=\sum_{T\in\mathscr{P}_i}q_i^T,
	\end{align}
	where $q_i^T$ is defined precisely as is $m_i^T$ except that, instead of $p$, we label the root of each tree $T$ by the homotopy $h$. One may then verify that $q=\{q_i\}$ is such a map.
\end{proof}
\begin{definition}
	An $A_\infty$-algebra $(\mathcal{A},m)$ is called \emph{formal} if there is an $A_\infty$-algebra structure $\{\mu_i\}$ on $H_*\mathcal{A}$ with $\mu_i=0$ for $i=1$, $i>2$, and $\mu_2=\overline{m}_2$, together with an $A_\infty$ quasi-isomorphism $i:(H_*\mathcal{A},\mu)\to(\mathcal{A},m)$ such that $i_1$ induces the identity on homology. In other words, if $\mathcal{A}$ is formal, then the higher operations on $\mathcal{A}$ are trivial up to a (canonical) quasi-isomorphism.
\end{definition}
It is easy to show that $\mathfrak{h}_1$ is formal, but we will show that this is not the case for $\mathfrak{h}_n$ with $n>1$. We first need a couple of technical propositions.
\begin{proposition}
	Let $\mathcal{A}_n$ be the weight-0 algebra for the genus $n$ linear pointed matched circle. There are injective differential algebra homomorphisms $L_n:\mathcal{A}_n\hookrightarrow\mathcal{A}_{n+1}$.
\end{proposition}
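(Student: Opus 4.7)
The plan is to construct $L_n$ explicitly by promoting strand diagrams in $\mathcal{A}_n$ to strand diagrams in $\mathcal{A}_{n+1}$ via an order-preserving injection of matched circles, augmented by a single horizontal strand at one of the new matched pairs. First I would exhibit an order-preserving injection $\iota:\{a_1,\ldots,a_{4n}\}\hookrightarrow\{a_1,\ldots,a_{4n+4}\}$ of the point sets of $\mathcal{Z}_n$ and $\mathcal{Z}_{n+1}$ that sends each matched pair of $\mathcal{Z}_n$ to a matched pair of $\mathcal{Z}_{n+1}$. The naive choices (the identity on indices, or a uniform shift) all fail because of the interleaved offset pattern $\{a_{2i},a_{2i+3}\}$, so I would settle on the map defined by $\iota(a_i)=a_i$ for $i\leq 4n-1$ and $\iota(a_{4n})=a_{4n+1}$. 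A direct inspection then shows that the bottom pair $\{a_1,a_3\}$ and every middle pair $\{a_{2i},a_{2i+3}\}$ of $\mathcal{Z}_n$ with $i\leq 2n-2$ are fixed, while the top pair $\{a_{4n-2},a_{4n}\}$ of $\mathcal{Z}_n$ is sent to $\{a_{4n-2},a_{4n+1}\}$, which is the middle pair of index $i=2n-1$ in $\mathcal{Z}_{n+1}$. The two matched pairs of $\mathcal{Z}_{n+1}$ lying outside the image of $\iota$ are $\{a_{4n},a_{4n+3}\}$ and the top pair $\{a_{4n+2},a_{4n+4}\}$.

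Next I would define $L_n$ on a basis strand diagram $(S,T,\phi)$ of $\mathcal{A}_n=\mathcal{A}(-\mathcal{Z}_n,0)$ by pushing forward along $\iota$ and adjoining a horizontal strand at the point $a_{4n+2}$:
\begin{align*}
L_n(S,T,\phi)=\bigl(\iota(S)\cup\{a_{4n+2}\},\;\iota(T)\cup\{a_{4n+2}\},\;\iota\phi\iota^{-1}\cup\{a_{4n+2}\mapsto a_{4n+2}\}\bigr).
\end{align*}
Since exactly one horizontal strand is adjoined, an $n$-strand diagram in the weight-$0$ part of $\mathcal{A}_n$ becomes an $(n+1)$-strand diagram, which is the correct weight-$0$ condition for $\mathcal{A}_{n+1}$; the extra strand occupies one endpoint of the previously empty top pair $\{a_{4n+2},a_{4n+4}\}$, so the idempotent compatibility is also preserved.

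To finish I would then verify three routine properties. Injectivity is immediate since $\iota$ is injective and the augmentation is deterministic on basis elements. Multiplicativity holds because the strand-algebra product is a stacking of order-preserving bijections and $\iota$ is itself order-preserving, while the two added horizontal strands at $a_{4n+2}$ compose trivially to the new horizontal strand in the product. Compatibility with the differential holds because every endpoint of a strand in $\iota\phi\iota^{-1}$ lies in $\{a_1,\ldots,a_{4n-1},a_{4n+1}\}$, whereas the added horizontal strand sits strictly above this range at $a_{4n+2}$; consequently the new strand never crosses any of the $\iota$-transported strands, so resolving a crossing before or after applying $L_n$ produces the same element.

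The main obstacle is really just identifying the correct embedding $\iota$. Because the offset pattern $\{a_{2i},a_{2i+3}\}$ built into the linear pointed matched circle makes the matching rigid, one has to recognize that the top pair of $\mathcal{Z}_n$ must be recycled as a middle pair of $\mathcal{Z}_{n+1}$ rather than as its new top pair. Once this is seen, the rest of the verification is a short bookkeeping exercise in the strand-algebra conventions of \cite{LOTBHFH}, and it is visibly compatible with the geometric fact that $\mathcal{Z}_{n+1}$ can be obtained from $\mathcal{Z}_n$ by adding a genus-$1$ piece at the top.
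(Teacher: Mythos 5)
Your choice of embedding $\iota$ (fixing indices $1$ through $4n-1$ and sending $a_{4n}\mapsto a_{4n+1}$, leaving the top two pairs $\{a_{4n},a_{4n+3}\}$ and $\{a_{4n+2},a_{4n+4}\}$ of $\mathcal{Z}_{n+1}$ outside the image) is a valid alternative to the paper's $\iota_n$, which instead sends $1\mapsto 4$ and $i\mapsto i+4$ for $i\geq 2$, leaving the \emph{bottom} two pairs free. Both are order-preserving and send matched pairs to matched pairs, and your discussion of why the identity and a uniform shift fail is correct. (Minor point: you claim the top pair of $\mathcal{Z}_n$ ``must'' be recycled as a middle pair of $\mathcal{Z}_{n+1}$, but the paper's $\iota_n$ does send the top pair to the top pair, so this is a feature of your particular $\iota$, not a necessity.) The paper's choice is adapted to the Proposition that follows, where the extra arc in the Heegaard diagram sits at the \emph{bottom} of the picture so that the new self-arrow is labelled $\rho_{1,3}$; your top-pair convention would require a compatible change there, but for the present Proposition in isolation either works.

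There is, however, a genuine gap in your definition of $L_n$ itself. A basis element $a(S,T,\phi)$ of $\mathcal{A}(-\mathcal{Z}_{n+1},0)\subset\mathcal{A}(4n+4,n+1)$ is a \emph{sum} of strand diagrams, one for each way of choosing a representative from each occupied matched pair (this is already visible for the idempotent of the torus algebra, which is the sum of the horizontal strand at $1$ and the horizontal strand at $3$). You write $L_n$ at the level of a single strand diagram and adjoin a horizontal strand at the single point $a_{4n+2}$; as written this produces an element of $\mathcal{A}(4n+4,n+1)$ that is \emph{not} invariant under the matching at $\{a_{4n+2},a_{4n+4}\}$, hence not in the subalgebra $\mathcal{A}(-\mathcal{Z}_{n+1},0)$ at all. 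You must sum over both points of the new pair, i.e.\ replace $\cup\{a_{4n+2}\}$ everywhere by a sum over $h\in\{a_{4n+2},a_{4n+4}\}$ of $\cup\{h\}$. This is exactly what the paper's inner sum $\sum_{h=1,3}$ does for the bottom pair $\{a_1,a_3\}$. With that correction the remainder of your argument---order-preservation gives multiplicativity, and since both $a_{4n+2}$ and $a_{4n+4}$ lie strictly above every point in the image of $\iota$ the new strand never crosses a transported strand, giving compatibility with the differential---goes through essentially as you sketch.
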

\begin{proof}
	Consider the injective map $\iota_n:[4n]\hookrightarrow[4n+4]$ given by
	\begin{align}
		\iota_n:i\mapsto\begin{cases}
			4\,\,\,\,\textup{if $i=1$}\\
			i+4\,\,\,\,\,\textup{else.}
		\end{cases}
	\end{align}
	Given any partial permutation $(S,T,\sigma)\in\mathcal{A}(n+1,-1)$ and $h\in[4n+4]\smallsetminus(S\cup T)$, define $(S,T,\sigma)_h\in\mathcal{A}(n+1,0)$ by
	\begin{align}
		(S,T,\sigma)_h=(S\cup\{h\},T\cup\{h\},\sigma_h)
	\end{align}
	where $\sigma_h$ is the extension of $\sigma$ to $S\cup\{h\}$ such that $\sigma_h(h)=h$. Suppose that $a\in\mathcal{A}_n$ is a basis element which decomposes into partial permutations as
	\begin{align}
		a=\sum_{j=1}^m(S_j,T_j,\sigma_j)
	\end{align}
	and define
	\begin{align*}
		L_n(a)=\sum_{j=1}^m\,\,\sum_{h=1,3}(\iota_n(S),\iota_n(T),\iota_n\circ\sigma_j\circ\iota_n^{-1})_h,
	\end{align*}
	where $\iota_n^{-1}:\iota_n([4n])\to[4n]$ is the inverse of the bijection $\iota_n:[4n]\to\iota_n([4n])$, extending linearly to obtain a map $L_n:\mathcal{A}_n\to\mathcal{A}_{n+1}$. Since the height of each inserted strand is at most 3, it follows immediately that $L_n$ is injective and that $L_n(\partial a)=\partial L_n(a)$ and $L_n(ab)=L_n(a)L_n(b)$ for all algebra elements $a,b\in\mathcal{A}_n$.
\end{proof}
\begin{proposition}
	Given a crossingless matching $a\in\mathfrak{C}_n$, there is an $\F$-vector space isomorphism $\lambda_a:\widehat{\CFD}(a_+)\to\widehat{\CFD}(a_{++})$ such that $\bm{x}_i\stackrel{A_{ij}}{\longrightarrow}\bm{x}_j$ is an arrow in the graph $\Gamma_{\widehat{\CFD}(a_+)}$ if and only if $\lambda_a(\bm{x}_i)\stackrel{L_n(A_{ij})+\delta_{ij}\rho_{1,3}}{\longrightarrow}\lambda_a(\bm{x}_j)$ is an arrow in the graph $\Gamma_{\widehat{\CFD}(a_{++})}$.
\end{proposition}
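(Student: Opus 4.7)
The plan is to compare the bordered Heegaard diagrams $\mathcal{H}_{a_+}$ and $\mathcal{H}_{a_{++}}$ produced by the algorithm of Subsection \ref{Algebras} and to read the claim off of a local computation at the bottom of the diagram. Concretely, since $a_{++}$ is obtained from $a_+$ by adjoining one more trivial arc at the bottom (i.e.\ a cap stacked below the bottom cap of $a_+$), the diagram $\mathcal{H}_{a_{++}}$ may be chosen to agree with $\mathcal{H}_{a_+}$ outside of a small neighborhood of the boundary Reeb interval $[1,4]$, while introducing one additional handle, one additional $\alpha$-arc pair, and one additional $\beta$-circle responsible for the new bottom arc. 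The shift $\iota_n:[4n]\hookrightarrow[4n+4]$ by $i\mapsto i+4$ (except $1\mapsto 4$) is precisely the effect of inserting this new piece of diagram on the indexing of the Reeb chords, which is why the injection $L_n$ is the right algebraic model.

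The first step is to pin down a bijection $\lambda_a$ of generators. The new bottom piece is a standard local configuration in which the added $\beta$-circle meets the new $\alpha$-arcs in a unique way compatible with the idempotent condition on type-$D$ generators. Consequently, every generator $\bm{x}_i$ of $\widehat{\CFD}(a_+)$ extends in exactly one way to a generator of $\widehat{\CFD}(a_{++})$, and this assignment is the desired $\F$-linear isomorphism $\lambda_a$.

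Next, I would classify the holomorphic curves contributing to $\delta^1$ on $\widehat{\CFD}(a_{++})$ by splitting them according to whether their image is disjoint from or meets the added bottom region. Curves disjoint from the new region are in bijection with holomorphic curves in $\mathcal{H}_{a_+}$; the only effect on their Reeb chord asymptotics is the re-indexing by $\iota_n$, which yields the term $L_n(A_{ij})$ in the new differential. Curves whose image meets the new bottom region are constrained by the local picture: the unique family of bigons supported there is a family of Reeb-chord bigons with asymptotic $\rho_{1,3}$, covering the small region between the new $\alpha$-arcs and the new $\beta$-circle. This family contributes a single $\rho_{1,3}$-labeled self-arrow to every generator and no other new terms, yielding the $\delta_{ij}\rho_{1,3}$ summand. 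Combining these two disjoint contributions (which cannot occur simultaneously in a single disk, as the supports are disjoint and no Reeb-chord concatenation is possible by the height bound $3<5$) gives exactly the edge set asserted in the proposition.

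The main technical obstacle is the exhaustive holomorphic-curve count in the new bottom region: one must verify that there are no further curves with mixed asymptotics (for example, involving $\rho_{1,3}$ together with chords from the top region), and that no new provincial differentials arise. This is handled by a standard local analysis in which one observes that the added handle and $\beta$-circle form a genus-one region whose $\alpha$-$\beta$ intersection pattern admits, up to the combinatorial $\partial^2=0$ constraint, exactly one non-trivial embedded bigon with Reeb-chord asymptotic $\rho_{1,3}$, and no embedded domains combining chords at heights $\leq 3$ with chords at heights $\geq 5$.
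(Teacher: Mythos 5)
Your overall strategy matches the paper's: compare the Heegaard diagrams $\mathcal{H}_{a_+}$ and $\mathcal{H}_{a_{++}}$ produced by the algorithm of Section \ref{Algebras}, establish a bijection of generators, split the domains into those seen already in $\mathcal{H}_{a_+}$ (which become $L_n$-re-indexed) and the one new annular domain (contributing the extra $\rho_{1,3}$ self-arrow), and match the bookkeeping with $L_n(\rho_{1,3})=\rho_{4,7}$. This is exactly the decomposition the paper uses.

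There are, however, two places where your argument is incomplete or incorrect in a way that matters. First, when you assert that each generator of $\widehat{\CFD}(a_+)$ extends uniquely to $\widehat{\CFD}(a_{++})$, you only observe that the new bottom $\beta$-circle has a unique intersection point $x$. But that alone does not give uniqueness: the paper's argument also needs the observation that the second-lowest $\beta$-circle is then forced to contribute its specific intersection point $x^\star$, because the other candidate on that circle lies on the same $\alpha$-arc as $x$. Without this, the uniqueness of $\lambda_a$ is not established; and the fact that $\lambda_a(\bm{x})$ occupies the $\alpha$-arc $\{2,5\}$ (but not $\{1,3\}$ or $\{4,7\}$) is precisely what verifies $I_D(\lambda_a(\bm{x}))=L_n(I_D(\bm{x}))$.

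Second, your justification that no index-$1$ domain mixes the new bottom region with the rest of the diagram — ``no embedded domains combining chords at heights $\leq 3$ with chords at heights $\geq 5$'' — is not correct as stated. In $\mathcal{Z}_{n+1}$ the new $\alpha$-arc pair $\{2,5\}$ already straddles heights $2$ and $5$, so there is no clean height separation; and there is in general no prohibition on a domain having Reeb-chord asymptotics at low and high heights simultaneously. The actual mechanism, as in the paper, is that the regions enclosed by the bottom two $\beta$-circles are basepoint-adjacent in both $\mathcal{H}_{a_+}$ and $\mathcal{H}_{a_{++}}$; since any domain contributing to $\delta^1$ has multiplicity zero at the basepoint, it must avoid these regions, and this forces the support to lie either entirely in the common subdiagram $\mathcal{H}_a^\circ$ or to be one of the two annular domains $x\to x$ (asymptotic to $\rho_{1,3}$) and $x^\star\to x^\star$ (asymptotic to $\rho_{4,7}$). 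Relatedly, the new domain is annular rather than a ``family of bigons''; this is only a terminological slip but it points to the wrong local picture. Patching these two points — the $x^\star$ argument for generators and the basepoint-adjacency argument for domains — brings your sketch in line with the paper's proof.
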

\begin{proof}
	The diagrams $\mathcal{H}_{a_+}$ and $\mathcal{H}_{a_{++}}$ are of the form shown in Figure \ref{UpDiagrams}
	\begin{figure}
		\begin{center}
			\includegraphics[scale=1]{./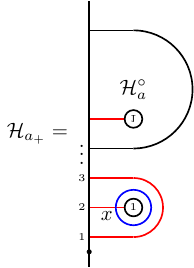}\hspace{1cm}\includegraphics[scale=1]{./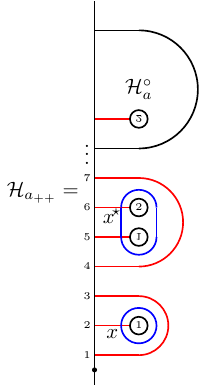}
		\end{center}
		\caption[The diagrams $\mathcal{H}_{a_+}$ and $\mathcal{H}_{a_{++}}$.]{The diagrams $\mathcal{H}_{a_+}$ and $\mathcal{H}_{a_{++}}$.}
		\label{UpDiagrams}
	\end{figure}
	and we claim that there is a bijection between the sets of generators for $\mathcal{H}_{a_+}$ and $\mathcal{H}_{a_{++}}$. To see this, note that if $\bm{x}$ is a generator for $\mathcal{H}_{a_{++}}$, then $x\in\bm{x}$ since $x$ is the only intersection point on the bottom-most $\beta$-circle. Since there is exactly one intersection point in $\bm{x}$ lying on the next lowest $\beta$-circle and this point cannot lie on the same $\alpha$-arc as $x$, we also have $x^\star\in\bm{x}$. Therefore, we have a decomposition $\bm{x}=\bm{x}^\circ\cup\{x,x^\star\}$, where $\bm{x}^\circ$ is a collection of intersection points in $\mathcal{H}_a^\circ$. The desired bijection is then given by $\bm{x}^\circ\cup\{x\}\mapsto\bm{x}^\circ\cup\{x,x^\star\}$ and $\lambda_a$ is given by extending this bijection linearly. Note that the labels of the ends of the $\alpha$-arcs in $\mathcal{H}_a^\circ\subset\mathcal{H}_{a_{++}}$ are obtained from the labels of the $\alpha$-arcs in $\mathcal{H}_a^\circ\subset\mathcal{H}_{a_+}$ by applying $\iota_n$ and $\lambda_a(\bm{x})$ necessarily occupies the $\alpha$-arc labeled 2 and 5 but not the arcs labeled 1 and 3 or 4 and 7 so $I_D(\lambda_a(\bm{x}))=L_n(I_D(\bm{x}))$ for all $\bm{x}\in\widehat{\CFD}(a_+)$. By construction, the regions
	\begin{figure}
		\begin{center}
			\raisebox{0.25cm}{\includegraphics{./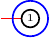}}\hspace{1cm}\includegraphics{./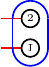}
		\end{center}
		\caption[Regions adjacent to the basepoint in $\mathcal{H}_{a_+}$ and $\mathcal{H}_{a_{++}}$.]{Regions adjacent to the basepoint in $\mathcal{H}_{a_+}$ (both) and $\mathcal{H}_{a_{++}}$ (right).}
		\label{BasepointRegions}
	\end{figure}
	inside the $\beta$-circles shown in Figure \ref{BasepointRegions} are adjacent to the basepoint in both $\mathcal{H}_{a_+}$ and $\mathcal{H}_{a_{++}}$ so the only domains contributing to the structure maps $\delta^1_{a_+}$ and $\delta^1_{a_{++}}$ are those supported in $\mathcal{H}_a^\circ$ and the annular domain $x\to x$ asymptotic to $\rho_{1,3}$ in both diagrams plus the annular domain $x^\star\to x^\star$ asymptotic to $\rho_{4,7}$ in $\mathcal{H}_{a_{++}}$ (see Figure \ref{fig:a-plus-plus-domains}). Now, there is a bijection between the sets of domains for index 1 holomorphic disks supported in $\mathcal{H}_a^\circ\subset\mathcal{H}_{a_+}$ and domains for index 1 holomorphic disks supported in $\mathcal{H}_a^\circ\subset\mathcal{H}_{a_{++}}$.
	This tells us that if $i\neq j$, then $\bm{x}_i\stackrel{A_{ij}}{\longrightarrow}\bm{x}_j$ is an arrow in $\Gamma_{\widehat{\CFD}(a_+)}$ if and only if $\lambda_a(\bm{x}_i)\stackrel{L_n(A_{ij})}{\longrightarrow}\lambda_a(\bm{x}_j)$ is an arrow in $\Gamma_{\widehat{\CFD}(a_{++})}$.
	\begin{figure}
		\begin{center}
			\includegraphics[scale=1]{./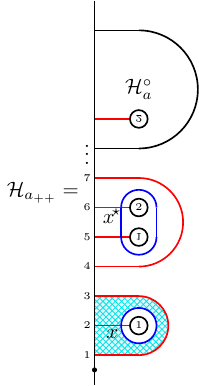}\hspace{1cm}\includegraphics[scale=1]{./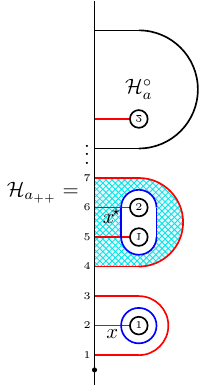}
		\end{center}
		\caption[The domains asymptotic to $\rho_{1,3}$ and $\rho_{4,7}$ in $\mathcal{H}_{a_{++}}$.]{The domains asymptotic to $\rho_{1,3}$ (left) and $\rho_{4,7}$ (right) in $\mathcal{H}_{a_{++}}$.}
		\label{fig:a-plus-plus-domains}
	\end{figure}
	This bijection, taken together with the existence of the annular domains $x\to x$ and $x^\star\to x^\star$, tells us that $\bm{x}_i\stackrel{A_i+\rho_{1,3}}{\longrightarrow}\bm{x}_i$ is an arrow in $\Gamma_{\widehat{\CFD}(a_+)}$ if and only if $\lambda_a(\bm{x}_i)\stackrel{L_n(A_i)+\rho_{1,3}+\rho_{4,7}}{\longrightarrow}\lambda_a(\bm{x}_j)$ is an arrow in $\Gamma_{\widehat{\CFD}(a_{++})}$. Since $\rho_{4,7}=L_n(\rho_{1,3})$, this proves the desired result.
\end{proof}
\begin{lemma}\label{Injective}
	Suppose that $\phi:C\to D$ is an injective chain map such that if $z\in\mathrm{im}(\phi)\cap\mathrm{im}(\partial_D)$, then $z=\partial_D y$ for some $y\in\mathrm{im}(\phi)$. Then the induced map $\phi_*:H_*(C)\to H_*(D)$ is injective.
\end{lemma}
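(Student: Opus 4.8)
The plan is to prove injectivity of $\phi_*$ by a direct diagram chase, exploiting the hypothesis to lift a bounding element back into the image of $\phi$ and then using injectivity of $\phi$ at the chain level.

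\medskip

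First I would take a homology class $[c]\in H_*(C)$ lying in the kernel of $\phi_*$. By definition this means $\partial_C c=0$ and $\phi(c)=\partial_D d$ for some $d\in D$. The key observation is that $\phi(c)$ then lies in $\im(\phi)\cap\im(\partial_D)$, so the hypothesis applies: there exists $y\in\im(\phi)$ with $\phi(c)=\partial_D y$. Write $y=\phi(c')$ for some $c'\in C$.

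\medskip

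Next I would use that $\phi$ is a chain map: $\phi(c)=\partial_D\phi(c')=\phi(\partial_C c')$. Since $\phi$ is injective, this forces $c=\partial_C c'$, so $c$ is a boundary in $C$ and $[c]=0$ in $H_*(C)$. Hence $\ker\phi_*=0$, which is the claim.

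\medskip

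There is essentially no obstacle here --- the statement is a formal consequence of injectivity together with the lifting hypothesis, and the proof is a three-line chase. The only point worth stating carefully is the verification that $\phi(c)$ genuinely lands in $\im(\phi)\cap\im(\partial_D)$ (it is a boundary because $\phi_*[c]=0$, and it is in $\im(\phi)$ tautologically), so that the hypothesis is legitimately invoked; everything else is automatic.
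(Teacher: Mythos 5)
Your proof is correct and is essentially identical to the paper's: take a cycle representing a class in $\ker\phi_*$, use the hypothesis to lift the bounding element into $\im(\phi)$, pass it through the chain map, and invoke injectivity of $\phi$ at the chain level to conclude the cycle was already a boundary in $C$.
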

\begin{proof}
	Suppose that $[x]\in\ker(\phi_*)$, then $0=\phi_*([x])=[\phi(x)]$ so $\phi(x)\in\mathrm{im}(\partial_D)$ and, hence, $\phi(x)=\partial_D w$ for some $w\in\mathrm{im}(\phi)$. Therefore, we have $\phi(x)=\partial_D(\phi(u))=\phi(\partial_C u)$ for some $u\in C$. Since $\phi$ is injective, we then have that $x=\partial_C u$ so $[x]=0$. Therefore, $\phi_*$ is injective.
\end{proof}
\begin{corollary}
	There is a homologically injective embedding $\Lambda_n:\mathfrak{h}_n\hookrightarrow\mathfrak{h}_{n+1}$ of differential algebras. Moreover, there is a direct sum decomposition
	\begin{align}
		\End^{\mathcal{A}_{n+1}}\left(\,\bigoplus_{a\in\mathfrak{C}_n}\widehat{\CFD}(a_{++})\right)=\mathrm{im}(\Lambda_n)\oplus\mathrm{im}(\rho_{1,3}\Lambda_n)
	\end{align}
	of vector spaces with respect to which the restriction of the differential on $\mathfrak{h}_{n+1}$ is block-diagonal. As a consequence, the map $(\Lambda_n)_*:H_*\mathfrak{h}_n\to H_*\mathfrak{h}_{n+1}$ is injective.
\end{corollary}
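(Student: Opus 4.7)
The plan is to use the vector space isomorphisms $\lambda_a \colon \widehat{\CFD}(a_+) \to \widehat{\CFD}(a_{++})$ of the preceding proposition together with the algebra map $L_n \colon \mathcal{A}_n \to \mathcal{A}_{n+1}$ to transport morphisms. For $f \in \mathfrak{h}_n$ decomposed as $f = \sum_{a,b \in \mathfrak{C}_n} f_{ab}$ with coefficients $f_{ab}(\bm{x}_i) = \sum_j A_{ij}^{ab} \otimes \bm{y}_j$, I would define
\begin{align*}
\Lambda_n(f)(\lambda_a(\bm{x}_i)) = \sum_j L_n(A_{ij}^{ab}) \otimes \lambda_b(\bm{y}_j).
\end{align*}
Injectivity is immediate from the injectivity of $L_n$ and the bijectivity of each $\lambda_a$. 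Multiplicativity reduces to a coefficient calculation: because $\mathcal{A}_{n+1}$ is a differential algebra with no higher operations, $\circ_2$ amounts to ordinary stacking of coefficients, and the multiplicativity of $L_n$ then gives $\Lambda_n(fg) = \Lambda_n(f)\Lambda_n(g)$ directly.

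The chain map property is the first substantive check. Expanding $\partial \Lambda_n(f)$ using the structure map of $\widehat{\CFD}(a_{++})$ — which by the previous proposition differs from that of $\widehat{\CFD}(a_+)$ by application of $L_n$ to each arrow label together with a $\rho_{1,3}$ self-loop at every generator — one obtains
\begin{align*}
\partial \Lambda_n(f) = \Lambda_n(\partial f) + \sum_{a,b,i,j} \bigl(\rho_{1,3}\, L_n(A_{ij}^{ab}) + L_n(A_{ij}^{ab})\, \rho_{1,3}\bigr) \otimes \lambda_b(\bm{y}_j).
\end{align*}
The error term vanishes because the strand from $1$ to $3$ representing $\rho_{1,3}$ and the strands making up any $L_n(a)$, which live at heights $\geq 4$ together with a horizontal strand at either height $1$ or $3$, occupy disjoint horizontal positions. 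A direct idempotent check shows that $\rho_{1,3} L_n(a)$ keeps only the $h = 3$ summand, $L_n(a) \rho_{1,3}$ only the $h = 1$ summand, and both equal the same strands algebra element realized by placing the strand from $1$ to $3$ alongside $a$ shifted to heights $\geq 4$, so the commutator vanishes in characteristic two.

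For the direct sum decomposition, I would analyze the coefficient bisubmodules $I_D(\lambda_a(\bm{x}_i)) \cdot \mathcal{A}_{n+1} \cdot I_D(\lambda_b(\bm{y}_j))$ in which a morphism coefficient must live. Because the matching of $\mathcal{Z}_{n+1}$ pairs $\{a_1, a_3\}$ with every remaining pair lying entirely at heights $\geq 4$, any partial permutation with the given left and right idempotents either carries a horizontal strand at the appropriate element of $\{1, 3\}$ and thus sits in the image of $L_n$ on the corresponding bisubmodule of $\mathcal{A}_n$, or else carries the single nontrivial Reeb strand from $1$ to $3$ and thus sits in $\rho_{1,3}$ times that image. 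These two families are visibly linearly independent and together exhaust the bisubmodule, so summing over all $a, b$ and all pairs $(i, j)$ yields the claimed decomposition. Block-diagonality of the differential then follows from the chain map computation above together with $\rho_{1,3}^2 = 0$, which gives $\partial(\rho_{1,3} \Lambda_n(g)) = \rho_{1,3} \Lambda_n(\partial g) \in \im(\rho_{1,3} \Lambda_n)$.

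Homological injectivity follows from Lemma \ref{Injective}. If $\Lambda_n(f) = \partial g$ for some $g \in \mathfrak{h}_{n+1}$, then the natural block decomposition of $\mathfrak{h}_{n+1}$ by source and target indices — preserved by the differential — forces $g$ to lie in the subalgebra $\End^{\mathcal{A}_{n+1}}\bigl(\bigoplus_{a \in \mathfrak{C}_n} \widehat{\CFD}(a_{++})\bigr)$. Writing $g = \Lambda_n(h) + \rho_{1,3} \Lambda_n(h')$ via the decomposition and using block-diagonality of $\partial$ forces $\partial(\rho_{1,3} \Lambda_n(h')) = 0$ and $\Lambda_n(f) = \Lambda_n(\partial h)$, whence $f = \partial h$ by injectivity of $\Lambda_n$. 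The principal obstacle I anticipate is the careful idempotent bookkeeping needed to verify the direct sum decomposition — in particular, ruling out any algebra elements beyond the two expected families in the relevant coefficient bisubmodules.
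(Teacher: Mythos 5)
Your proposal is correct and follows essentially the same approach as the paper: transporting morphisms via $\lambda_a$ and $L_n$, verifying the chain map property through the vanishing commutator $[\rho_{1,3},L_n(\rho)]$, establishing the splitting via idempotent constraints, and deducing homological injectivity from block-diagonality together with Lemma \ref{Injective}. One small slip in the justification of the splitting: the pair $\{a_2,a_5\}$ of $\mathcal{Z}_{n+1}$ contains $a_2<4$, so not every pair besides $\{a_1,a_3\}$ lies at heights $\geq 4$ as you assert; the correct observation is that the idempotents $L_n(I_D(\bm{x}))$ simply never occupy the arc $\{2,5\}$ (since $\iota_n$ maps into $\{4,6,7,\dots,4n+4\}$ and $h\in\{1,3\}$), which yields the same dichotomy.
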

\begin{proof}
	We claim that the injective linear map $\Lambda_n:\mathfrak{h}_n\hookrightarrow\mathfrak{h}_{n+1}$ given on a basic morphism $f:\bm{x}\mapsto\rho\bm{y}$ by the morphism $\Lambda_nf:\lambda_a(\bm{x})\mapsto L_n(\rho)\lambda_a(\bm{y})$ is a differential algebra homomorphism. Note that if $g:\bm{y}\mapsto\sigma\bm{z}$ is another basic morphism with $\Lambda_n g:\lambda_b(\bm{y})\mapsto L_n(\sigma)\lambda_c(\bm{z})$, then, by construction, we have $\Lambda_n(f\circ_{2}g):\lambda_a(\bm{x})\mapsto L_n(\rho\sigma)\lambda_c(\bm{z})$ and $L_n(\rho\sigma)\lambda_c(\bm{z})=L_n(\rho)L_n(\sigma)\lambda_c(\bm{z})$ since $L_n$ is an algebra homomorphism so $\Lambda_n(f\circ_{2}g)=\Lambda_nf\circ_{2}\Lambda_ng$. Therefore, $\Lambda_n$ is an algebra homomorphism. Now consider the part
	\begin{center}
		\includegraphics[scale=1]{./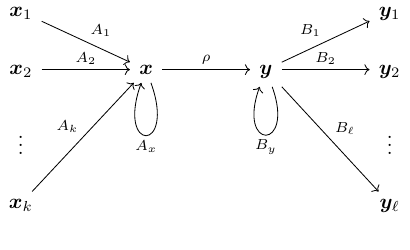}
	\end{center}
	of the graph $\Gamma_{f}$ (see Definition \ref{MorphismGraph}) contributing to $\partial f$ and the corresponding part
	\begin{center}
		\includegraphics[scale=1]{./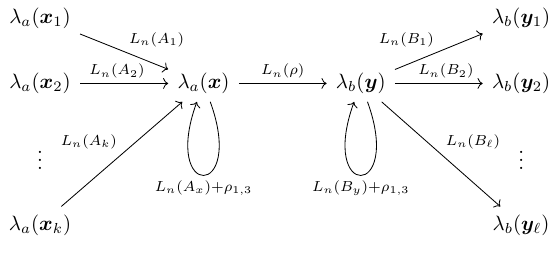}
	\end{center}
	of the graph $\Gamma_{\Lambda_nf}$. We compute
	\begin{align}
		\begin{split}
			\partial(\Lambda_nf)=&[\lambda_a(\bm{x})\mapsto (L_n(A_x)+\rho_{1,3})L_n(\rho)\lambda_b(\bm{y})]\\&+\sum_{i=1}^k[\lambda_a(\bm{x}_i)\mapsto L_n(A_i)L_n(\rho)\lambda_b(\bm{y})]\\&+[\lambda_a(\bm{x})\mapsto L_n(\rho)(L_n(A_y)+\rho_{1,3})\lambda_b(\bm{y})]\\&+\sum_{j=1}^\ell[\lambda_a(\bm{x})\mapsto L_n(\rho)L_n(B_j)\lambda_b(\bm{y}_j)]\\=&[\lambda_a(\bm{x})\mapsto L_n(A_x\rho)\lambda_b(\bm{y})]+\sum_{i=1}^k[\lambda_a(\bm{x}_i)\mapsto L_n(A_i\rho)\lambda_b(\bm{y})]\\&+[\lambda_a(\bm{x})\mapsto L_n(\rho A_y)\lambda_b(\bm{y})]+\sum_{j=1}^\ell[\lambda_a(\bm{x})\mapsto L_n(\rho B_j)\lambda_b(\bm{y}_j)]\\&+[\lambda_a(\bm{x})\mapsto \rho_{1,3}L_n(\rho)\lambda_b(\bm{y})]+[\lambda_a(\bm{x})\mapsto L_n(\rho)\rho_{1,3}\lambda_b(\bm{y})],
		\end{split}
	\end{align}
	where the second equality follows from the fact that $L_n$ is an algebra homomorphism. This then gives us
	\begin{align}
		\begin{split}
			\partial(\Lambda_nf)=&\Lambda_n[\bm{x}\mapsto A_x\rho\bm{y}]+\sum_{i=1}^k\Lambda_n[\bm{x}_i\mapsto A_i\rho\bm{y}]\\&+\Lambda_n[\bm{x}\mapsto\rho A_y\bm{y}]+\sum_{j=1}^\ell\Lambda_n[\bm{x}\mapsto \rho B_j\bm{y}_j]\\&+[\lambda_a(\bm{x})\mapsto \rho_{1,3}L_n(\rho)\lambda_b(\bm{y})]+[\lambda_a(\bm{x})\mapsto L_n(\rho)\rho_{1,3}\lambda_b(\bm{y})]\\=&\Lambda_n(\partial f)+[\lambda_a(\bm{x})\mapsto [\rho_{1,3},L_n(\rho)]\lambda_b(\bm{y})],
		\end{split}
	\end{align}
	where $[\rho_{1,3},L_n(\rho)]=\rho_{1,3}L_n(\rho)+L_n(\rho)\rho_{1,3}$ is the commutator of $\rho_{1,3}$ and $L_n(\rho)$. However, by construction of $L_n$, we have that $[\rho_{1,3},L_n(\rho)]=0$ for all $\rho\in\mathcal{A}_n$ so $\partial(\Lambda_nf)=\Lambda_n(\partial f)$ and $\Lambda_n$ is an injective differential algebra homomorphism. Now note that, again by construction of $L_n$, no element of $\mathrm{im}(\Lambda_n)$ is of the form $[\lambda_a(\bm{x})\mapsto\rho\lambda_b(\bm{y})]$, where $\rho\in\rho_{1,3}\mathcal{A}_{n+1}$ so $\mathrm{im}(\Lambda_n)\cap\mathrm{im}(\rho_{1,3}\Lambda_n)=\{0\}$. Moreover, since $I_D(\lambda_a(\bm{x}))=L_n(I_D(\bm{x}))$ for any $\bm{x}\in\widehat{\CFD}(a_+)$ and any generator of $\widehat{\CFD}(a_{++})$ is of the form $\lambda_a(\bm{x})$, the only algebra elements acting nontrivially on the module $\bigoplus_{a\in\mathfrak{C}_n}\widehat{\CFD}(a_{++})$ are those in $L_n(\mathcal{A}_n)\oplus\rho_{1,3}L_n(\mathcal{A}_n)\subset\mathcal{A}_{n+1}$.
	
	Therefore, if $f=[\lambda_a(\bm{x})\mapsto\rho\lambda_b(\bm{y})]\in\End^{\mathcal{A}_{n+1}}\left(\,\bigoplus_{a\in\mathfrak{C}_n}\widehat{\CFD}(a_{++})\right)$ is a basic morphism, then either $\rho\in L_n(\mathcal{A}_n)$ or $\rho\in\rho_{1,3}L_n(\mathcal{A}_n)$. Since the basic morphisms form a basis for $\End^{\mathcal{A}_{n+1}}\left(\,\bigoplus_{a\in\mathfrak{C}_n}\widehat{\CFD}(a_{++})\right)$, this shows that
	\begin{align}
		\End^{\mathcal{A}_{n+1}}\left(\,\bigoplus_{a\in\mathfrak{C}_n}\widehat{\CFD}(a_{++})\right)=\mathrm{im}(\Lambda_n)\oplus\mathrm{im}(\rho_{1,3}\Lambda_n).
	\end{align}
	Since we have shown that $\Lambda_n$ is a chain map, to show that the restriction of the differential is block diagonal with respect to this decomposition, it remains to show that $\mathrm{im}(\rho_{1,3}\Lambda_n)$ is closed under the differential. However, the computation showing that $\Lambda_n$ is a chain map can be readily adapted, mutatis mutandis, to show that $\partial(\rho_{1,3}\Lambda_nf)=\rho_{1,3}\Lambda_n(\partial f)$. Lastly, note that the morphism spaces $\Mor^{\mathcal{A}_{n+1}}(c_{+},d_{+})$ are closed under the differential for all $c,d\in\mathfrak{C}_{n+1}$ so $g\in\mathfrak{h}_{n+1}$ is an element of $\mathrm{im}(\Lambda_n)\cap\mathrm{im}(\partial)$ if and only if $g=\partial f$ for some $f\in\mathrm{im}(\Lambda_n)$. Therefore, by Lemma \ref{Injective}, the map $(\Lambda_n)_*$ is injective.
\end{proof}
\begin{theorem}
	The differential algebras $\mathfrak{h}_n$ are not formal for $n>1$.
\end{theorem}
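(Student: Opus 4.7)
The plan is to proceed by induction on $n \geq 2$. The base case $n=2$ is established by the explicit computation in the appendix, which demonstrates that $H_*\mathfrak{h}_2$ supports a nontrivial $m_3$-operation---indeed, that $H_*\mathfrak{h}_2$ is unbounded as an $A_\infty$-algebra---independent of the choice of homological-perturbation retract, and hence that $\mathfrak{h}_2$ is not $A_\infty$-quasi-isomorphic to the strictly associative algebra $H_2\cong H_*\mathfrak{h}_2$.

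For the inductive step, I assume that $\mathfrak{h}_n$ is non-formal and deduce the same for $\mathfrak{h}_{n+1}$ using the homologically injective dg-algebra embedding $\Lambda_n\colon\mathfrak{h}_n\hookrightarrow\mathfrak{h}_{n+1}$, together with the direct-sum decomposition
\[
\End^{\mathcal{A}_{n+1}}\!\Big(\bigoplus_{a\in\mathfrak{C}_n}\widehat{\CFD}(a_{++})\Big)=\mathrm{im}(\Lambda_n)\oplus\mathrm{im}(\rho_{1,3}\Lambda_n)
\]
with block-diagonal differential just established, which exhibits $\mathrm{im}(\Lambda_n)$ as a sub-dg-algebra of $\mathfrak{h}_{n+1}$ admitting a complementary subcomplex. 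The key maneuver is to choose retracts of $\mathfrak{h}_n$ and $\mathfrak{h}_{n+1}$ onto their respective homologies that are compatible with $\Lambda_n$: first select a retract $(p_n,\iota_n,h_n)$ of $\mathfrak{h}_n$, transport it across the isomorphism $\Lambda_n\colon\mathfrak{h}_n\stackrel{\cong}{\to}\mathrm{im}(\Lambda_n)$ to obtain a retract of $\mathrm{im}(\Lambda_n)$ onto $(\Lambda_n)_*(H_*\mathfrak{h}_n)$, and then extend arbitrarily across the complementary summand (and across the part of $\mathfrak{h}_{n+1}$ coming from matchings not of the form $a_{++}$) to obtain a retract $(p_{n+1},\iota_{n+1},h_{n+1})$ of $\mathfrak{h}_{n+1}$. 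By the tree-formula for the homological perturbation lemma, compatibility of these retracts ensures that the transferred $A_\infty$-structures $\{m^{(n)}_k\}$ and $\{m^{(n+1)}_k\}$ are strictly intertwined by $(\Lambda_n)_*$, so that $(\Lambda_n)_*$ is a strict $A_\infty$-homomorphism.

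The main obstacle is to translate this strictness and the injectivity of $(\Lambda_n)_*$ into an obstruction to formality of $H_*\mathfrak{h}_{n+1}$, since a sub-$A_\infty$-algebra of a formal $A_\infty$-algebra need not itself be formal in general. To do this I will pass to Massey products, which are invariants of $A_\infty$-isomorphism type whose nonvanishing obstructs formality: a nontrivial higher Massey product $\langle a_1,\dots,a_k\rangle$ witnessing non-formality of $H_*\mathfrak{h}_n$ produces, by naturality of Massey products under the dg-algebra map $\Lambda_n$, a representative of $\langle(\Lambda_n)_*(a_1),\dots,(\Lambda_n)_*(a_k)\rangle$ in $H_*\mathfrak{h}_{n+1}$. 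Using the direct-summand decomposition $H_*\mathfrak{h}_{n+1}\cong(\Lambda_n)_*(H_*\mathfrak{h}_n)\oplus H_*(\mathrm{complement})$ induced by the block-diagonal chain-level splitting, together with the fact that $(\Lambda_n)_*$ is a strict algebra map with image a subalgebra, I will show that this representative cannot be cancelled by the indeterminacy of the target Massey product---any such cancellation would project onto a cancellation within $(\Lambda_n)_*(H_*\mathfrak{h}_n)$, contradicting nontriviality of the original Massey product by injectivity of $(\Lambda_n)_*$. Consequently $H_*\mathfrak{h}_{n+1}$ carries a nontrivial Massey product, so $\mathfrak{h}_{n+1}$ is non-formal, completing the induction.
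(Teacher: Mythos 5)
You take essentially the same approach as the paper: base case via the explicit appendix computation for $\mathfrak{h}_2$, and an inductive step via the homologically injective dg-algebra embedding $\Lambda_n$ together with the direct-summand decomposition established in the preceding corollary. Your compatible-retract construction is exactly the right idea and does work for the reason you indicate: since $\mathrm{im}(\Lambda_n)$ is simultaneously a sub-dg-algebra and a direct summand of $\mathfrak{h}_{n+1}$ as a chain complex, one may transport a retract of $\mathfrak{h}_n$ across $\Lambda_n$ and extend over the complementary subcomplex, and then every tree in the transferred $m_k$ stays inside $\mathrm{im}(\Lambda_n)$ when the inputs lie in $(\Lambda_n)_*(H_*\mathfrak{h}_n)$. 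This is the content the paper's two-sentence proof leaves implicit. The cleanest way to close the argument from here is the one the paper's appendix actually relies on: by \cite[Lemma 2.1.22]{LOTBimodules2015}, the transferred $m_3$ on a Massey admissible triple is independent of the retract; since $(\Lambda_n)_*$ is an injective algebra map the pushed-forward triple is again Massey admissible, your compatible retract exhibits $m_3\neq 0$, and retract-independence then yields $m_3\neq 0$ for \emph{every} retract of $\mathfrak{h}_{n+1}$, contradicting formality.

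Your alternative route via naturality of classical Massey products and control of indeterminacy is also viable, but as written it has a gap: the assertion that any cancellation ``would project onto a cancellation within $(\Lambda_n)_*(H_*\mathfrak{h}_n)$'' tacitly requires the projection $H_*\mathfrak{h}_{n+1}\to(\Lambda_n)_*(H_*\mathfrak{h}_n)$ to be multiplicative on the relevant pieces, and the chain-level direct-sum decomposition alone does not give that. It does hold here, for two reasons you should make explicit: (a) by the idempotent structure, the indeterminacy terms landing in the correct corner and degree are supported entirely in the corners indexed by matchings of the form $a_{++}$, where the decomposition $\mathrm{im}(\Lambda_n)\oplus\mathrm{im}(\rho_{1,3}\Lambda_n)$ applies; and (b) within those corners, $\mathrm{im}(\rho_{1,3}\Lambda_n)$ is a square-zero two-sided ideal (because $\rho_{1,3}$ commutes with $L_n(\mathcal{A}_n)$ and $\rho_{1,3}^2=0$), so the quotient projection is a map of algebras. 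Without (a) and (b), the projection step is not justified, so either supply them or switch to the retract-independence argument above.
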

\begin{proof}
	We will show in Appendix \ref{Homology}, by a lengthy but straightforward computation, that $\mathfrak{h}_2$ is non-formal with nontrivial $m_3$ operation. Since $\mathfrak{h}_2$ embeds homologically injectively in $\mathfrak{h}_n$ for all $n>1$, this proves that $\mathfrak{h}_n$ is non-formal with nontrivial $m_3$ for all $n>1$.
\end{proof}
	\appendix
	\section{The $A_\infty$-Algebra $H_*\mathfrak{h}_2$}
	\subsection{The algebra $\mathcal{A}_2$}
Before we proceed, we will need a complete description of the algebra $\mathcal{A}_2$. Consider the genus 2 linear pointed matched circle
\begin{align}
	\mathcal{Z}_2=\,\,\,\,\raisebox{-2.15cm}{\includegraphics[scale=1]{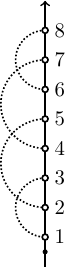}}.
\end{align}
The matching $M:[8]\to[4]$ determining $\mathcal{Z}$ is given by $M(1)=M(3)=1$, $M(2)=M(5)=2$, $M(4)=M(7)=3$, and $M(6)=M(8)=4$.
The algebra $\mathcal{A}_2$ contains six orthogonal idempotents $\iota_0=I(\{1,2\})$, $\iota_1=I(\{1,3\})$, $\iota_2=I(\{1,4\})$, $\iota_3=I(\{2,3\})$, $\iota_4=I(\{2,4\})$, and $\iota_5=I(\{3,4\})$, which are depicted below.

\begin{align*}
	\iota_0=\raisebox{-0.875cm}{\includegraphics[scale=0.5]{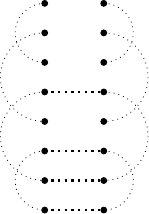}}\,=\,\raisebox{-0.875cm}{\includegraphics[scale=0.5]{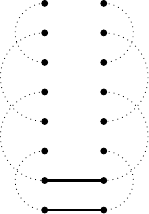}}+\raisebox{-0.875cm}{\includegraphics[scale=0.5]{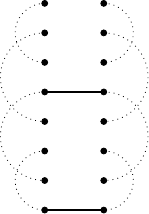}}+\raisebox{-0.875cm}{\includegraphics[scale=0.5]{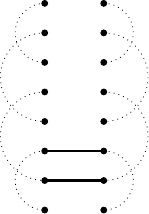}}+\raisebox{-0.875cm}{\includegraphics[scale=0.5]{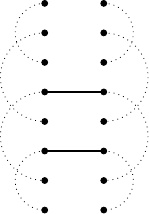}}
\end{align*}
\begin{align*}
	\iota_1=\raisebox{-0.875cm}{\includegraphics[scale=0.5]{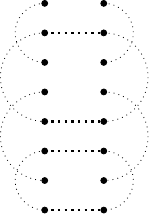}}\,=\,\raisebox{-0.875cm}{\includegraphics[scale=0.5]{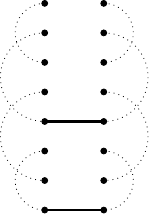}}+\raisebox{-0.875cm}{\includegraphics[scale=0.5]{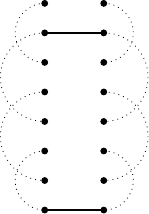}}+\raisebox{-0.875cm}{\includegraphics[scale=0.5]{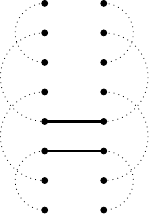}}+\raisebox{-0.875cm}{\includegraphics[scale=0.5]{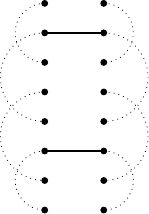}}
\end{align*}
\begin{align*}
	\iota_2=\raisebox{-0.875cm}{\includegraphics[scale=0.5]{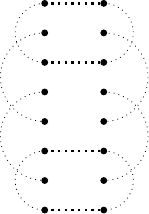}}\,=\,\raisebox{-0.875cm}{\includegraphics[scale=0.5]{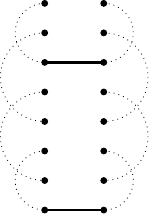}}+\raisebox{-0.875cm}{\includegraphics[scale=0.5]{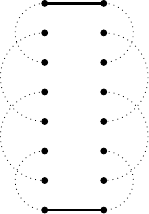}}+\raisebox{-0.875cm}{\includegraphics[scale=0.5]{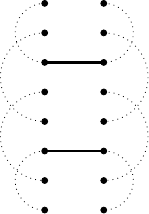}}+\raisebox{-0.875cm}{\includegraphics[scale=0.5]{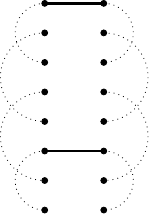}}
\end{align*}
\begin{align*}
	\iota_3=\raisebox{-0.875cm}{\includegraphics[scale=0.5]{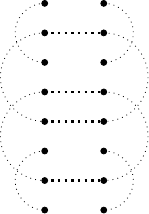}}\,=\,\raisebox{-0.875cm}{\includegraphics[scale=0.5]{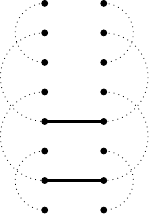}}+\raisebox{-0.875cm}{\includegraphics[scale=0.5]{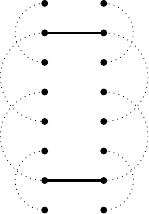}}+\raisebox{-0.875cm}{\includegraphics[scale=0.5]{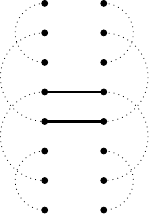}}+\raisebox{-0.875cm}{\includegraphics[scale=0.5]{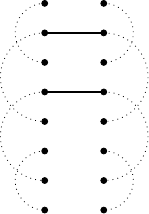}}
\end{align*}
\begin{align*}
	\iota_4=\raisebox{-0.875cm}{\includegraphics[scale=0.5]{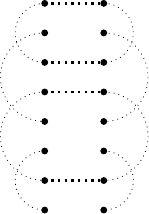}}\,=\,\raisebox{-0.875cm}{\includegraphics[scale=0.5]{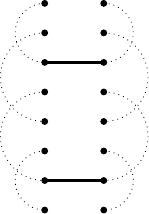}}+\raisebox{-0.875cm}{\includegraphics[scale=0.5]{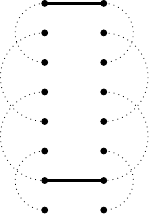}}+\raisebox{-0.875cm}{\includegraphics[scale=0.5]{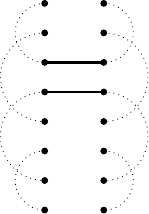}}+\raisebox{-0.875cm}{\includegraphics[scale=0.5]{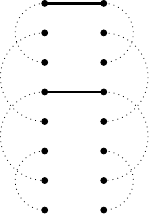}}
\end{align*}
\begin{align*}
	\iota_5=\raisebox{-0.875cm}{\includegraphics[scale=0.5]{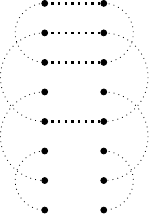}}\,=\,\raisebox{-0.875cm}{\includegraphics[scale=0.5]{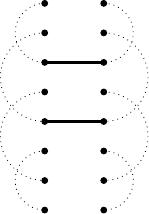}}+\raisebox{-0.875cm}{\includegraphics[scale=0.5]{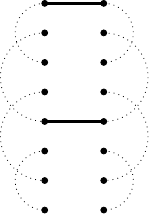}}+\raisebox{-0.875cm}{\includegraphics[scale=0.5]{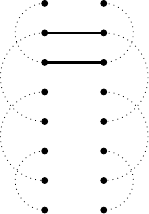}}+\raisebox{-0.875cm}{\includegraphics[scale=0.5]{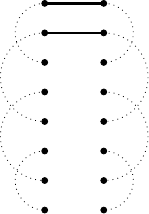}}
\end{align*}
For a string $0\leq a_1<a_2<\cdots<a_k\leq 5$, define an idempotent $\iota_{a_1a_2\cdots a_k}$ by
\begin{align}
	\iota_{a_1a_2\cdots a_k}=\sum_{i=1}^k\iota_{a_i}
\end{align}
and, for $0\leq i<j\leq 7$, let $\reeb{i,j}$ be the strands algebra element determined by the Reeb chord in $\mathcal{Z}$ from $i$ to $j$. In this notation, $\mathcal{A}_2$ has 28 single Reeb chord generators.
$\mathcal{A}_2$ also has 179 double Reeb chord generators $\reebII{i,j}{k,\ell}=\iota_a\reebII{i,j}{k,\ell}\iota_b$, for $i<k$, corresponding to the sets of Reeb chords $\{[i,j],[k,\ell]\}$. However, many of these are redundant as they are products of single chord generators. For the sake of completeness, we list all of these generators, their idempotents, and their differentials below in Figures \ref{fig:SingleReeb}, \ref{fig:DoubleReeb1}, \ref{fig:DoubleReeb2}, \ref{fig:DoubleReeb3}, and \ref{fig:DoubleReeb4}.
\begin{figure}
	\begin{center}
		\includegraphics{./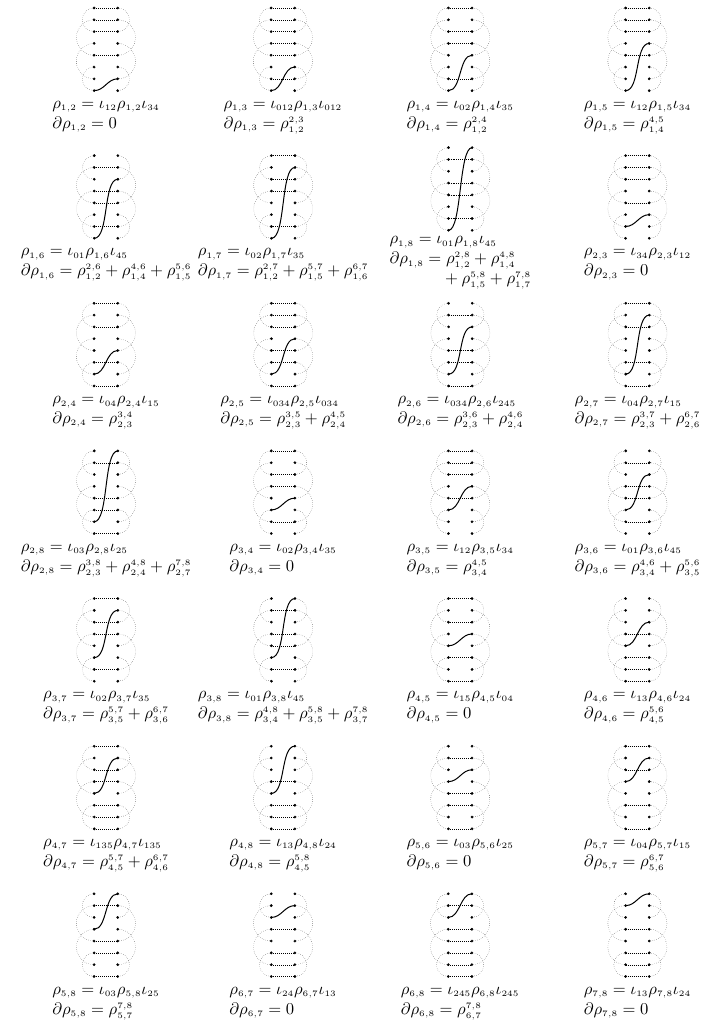}
	\end{center}
	\caption[Single Reeb chord generators of $\mathcal{A}_2$.]{Single Reeb chord generators of $\mathcal{A}_2$. Dotted horizontal strands indicate that we sum over all ways of inserting a single horizontal strand at each corresponding height.}
	\label{fig:SingleReeb}
\end{figure}
\begin{figure}
	\begin{center}
		\includegraphics{./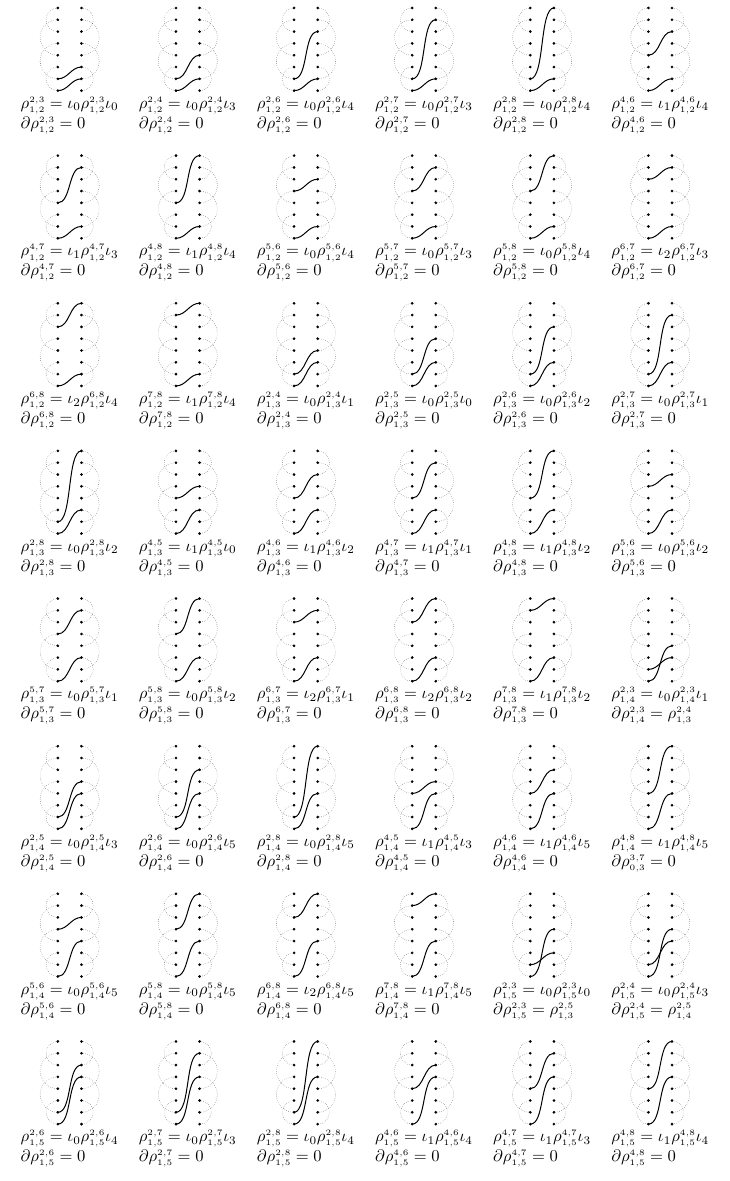}
	\end{center}
	\caption{Double Reeb chord generators of $\mathcal{A}_2$ (Part I).}
	\label{fig:DoubleReeb1}
\end{figure}
\begin{figure}
	\begin{center}
		\includegraphics{./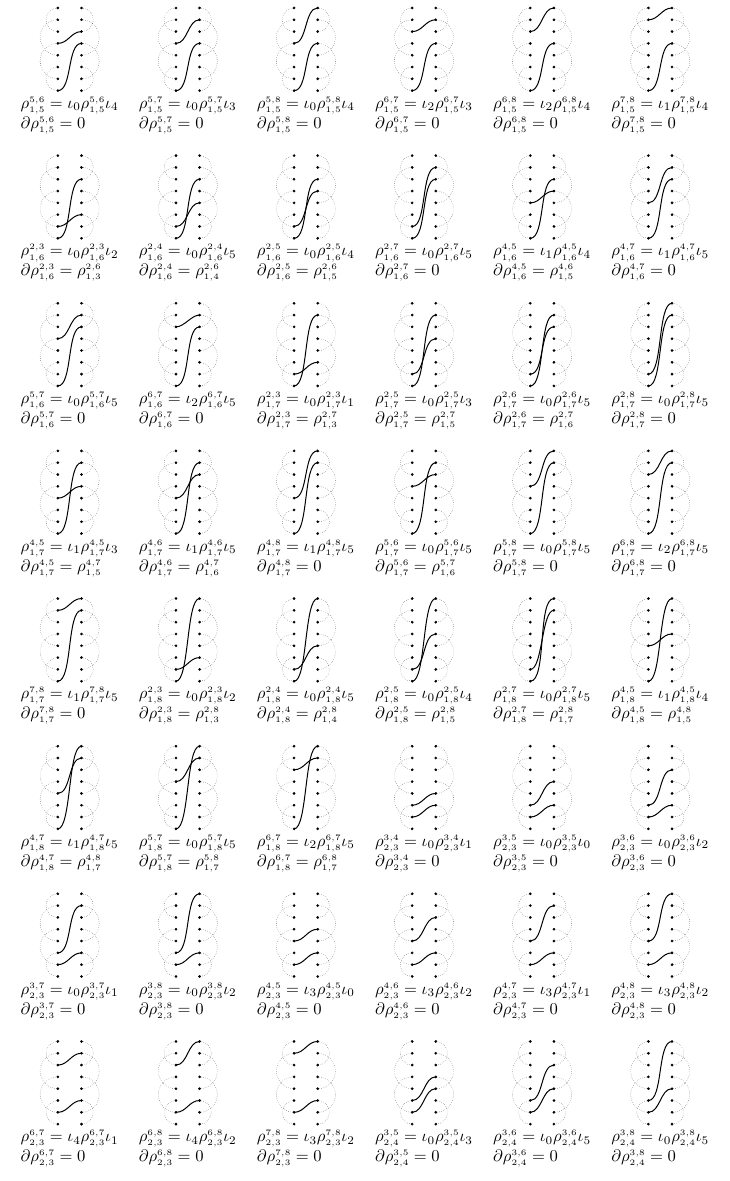}
	\end{center}
	\caption{Double Reeb chord generators of $\mathcal{A}_2$ (Part II).}
	\label{fig:DoubleReeb2}
\end{figure}
\begin{figure}
	\begin{center}
		\includegraphics{./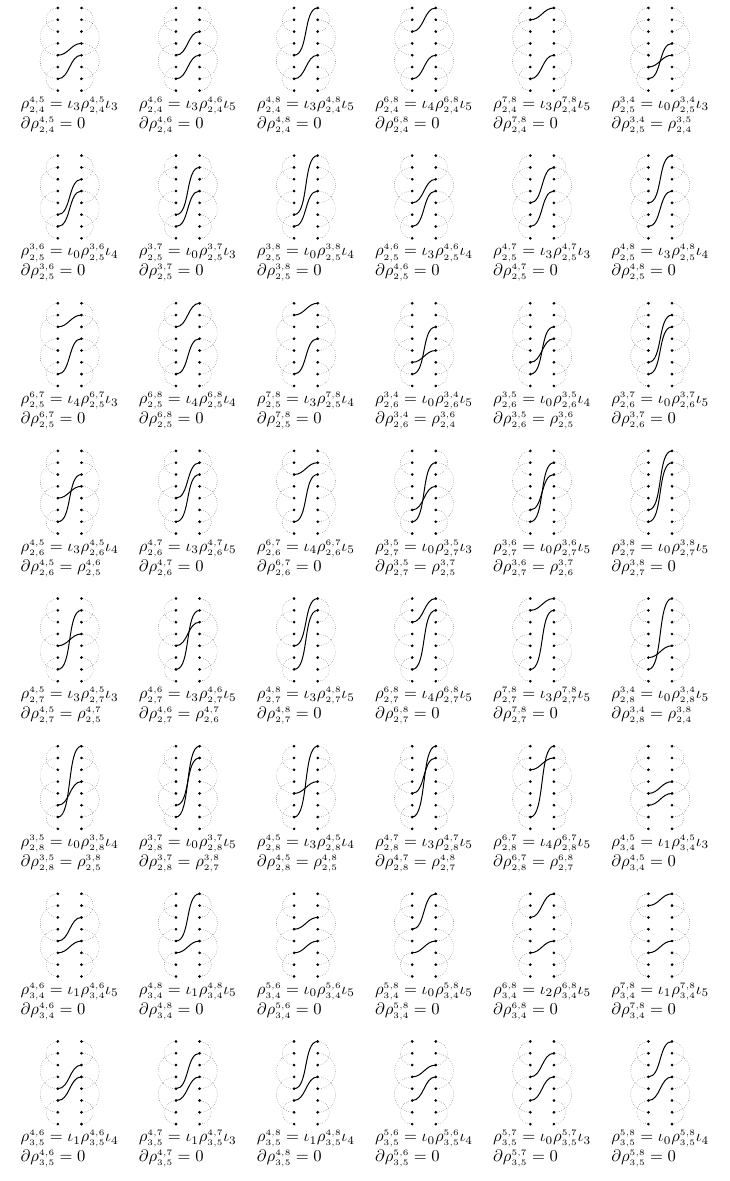}
	\end{center}
	\caption{Double Reeb chord generators of $\mathcal{A}_2$ (Part III).}
	\label{fig:DoubleReeb3}
\end{figure}
\begin{figure}
	\begin{center}
		\includegraphics{./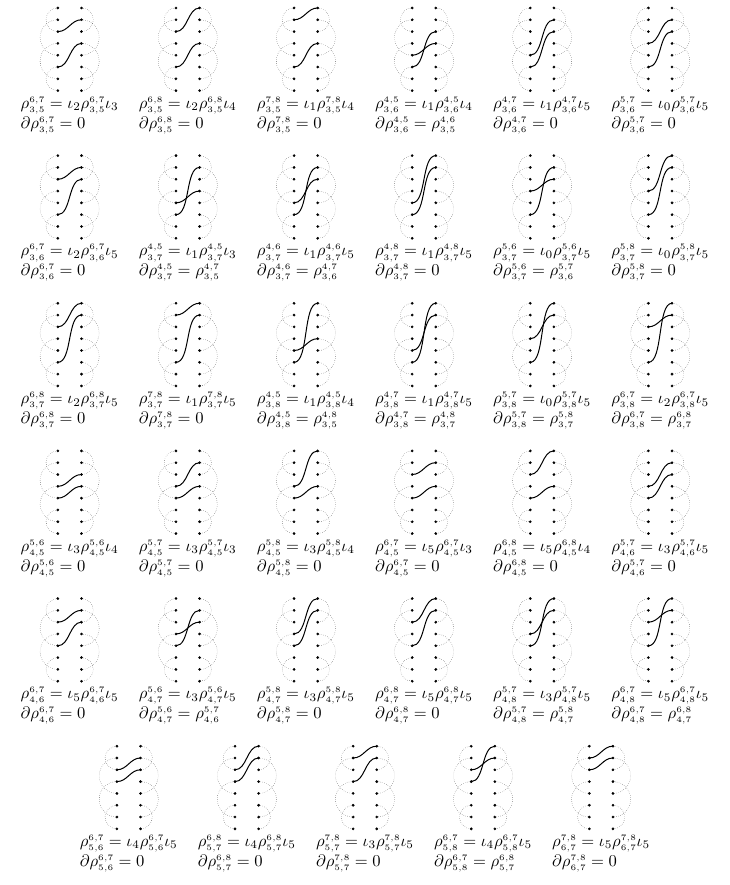}
	\end{center}
	\caption{Double Reeb chord generators of $\mathcal{A}_2$ (Part IV).}
	\label{fig:DoubleReeb4}
\end{figure}
\newpage
\section{The branched arc algebra $\mathfrak{h}_2$}\label{Homology}
The branched arc algebra $\mathfrak{h}_2$ is the endomorphism algebra
\begin{align}
	\mathfrak{h}_2=\End^{\mathcal{A}_2}\left(\,\bigoplus_{a\in\mathfrak{B}_2}\widehat{\CFD}(a)\right).
\end{align}
The set $\mathfrak{B}_2$ of crossingless matchings on six points of the form $c_+$ consists of the two planar diagrams
\begin{align}
	\begin{split}
		&\raisebox{-1.5cm}{\includegraphics[scale=0.6]{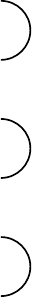}}\,\,\,\,\hspace{0.5cm}\textup{and}\hspace{0.75cm}\raisebox{-1.5cm}{\includegraphics[scale=0.6]{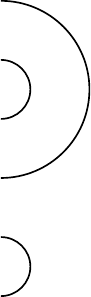}}
	\end{split}
\end{align}
which we denote by $a_1$ and $a_2$, respectively. As $a_1$ is the one-ended plat closure of the six stranded identity braid, the first part of the algorithm given on page \pageref{fig:Plat} furnishes us with the bordered Heegaard diagram $\mathcal{H}_1$ for $\Sigma(a_1)$ shown below.
\begin{align}
	\mathcal{H}_1=\,\,\,\,\raisebox{-2.25cm}{\includegraphics[scale=1]{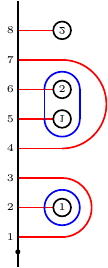}}
\end{align}
Isotope $a_2$ to obtain its minimal plat closure-form as follows.
\begin{align}
	\begin{split}
		\raisebox{-1.5cm}{\includegraphics[scale=0.6]{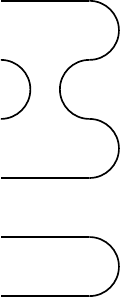}}
	\end{split}
\end{align}
Inserting a new handle and $\beta$-curve into $\mathcal{H}_1$ for the cap-cup pair in this diagram, then simplifying using the destabilization procedure detailed on page \pageref{CapCupHandle}, gives us the following Heegaard diagram
\begin{align}
	\raisebox{-2.25cm}{\includegraphics[scale=1]{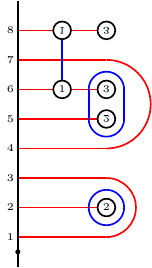}}\hspace{0.25cm}\rightsquigarrow\hspace{0.25cm}\raisebox{-2.25cm}{\includegraphics[scale=1]{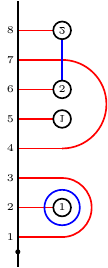}}\,\,\,\,=:\mathcal{H}_2
\end{align}
for $\Sigma(a_i)$. We now compute $\widehat{\CFD}(a_i)$ for $i=1,3$.
\subsubsection*{$\widehat{\CFD}(a_1)$:}
It is not hard to see that
\begin{align}
	\mathcal{H}_1=\hspace{0.25cm}\raisebox{-2.25cm}{\includegraphics[scale=1]{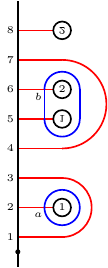}}
\end{align}
has a single generator $\bm{t}=\{a,b\}$ with $\iota_1\bm{t}=\bm{t}$ and supports the following index 1 domains from $\bm{t}$ to itself:

\begin{align}
	\begin{split}
		&\raisebox{-2.25cm}{\includegraphics[scale=1]{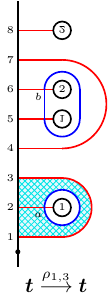}}\hspace{1cm}\raisebox{-2.25cm}{\includegraphics[scale=1]{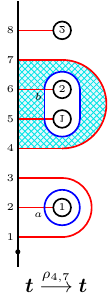}}
	\end{split}
\end{align}
giving us
\begin{align}
	\widehat{\CFD}(a_1)=\begin{tikzcd}[ampersand replacement=\&]
		\bm{t}\arrow[loop right,looseness=8,out=35,in=-35,"\reeb{1,3}+\reeb{4,7}"]
	\end{tikzcd}
\end{align}
which is to say that $\widehat{\CFD}(\mathcal{H}_1)=\F\langle\bm{t}\rangle$ with $\delta^1(\bm{t})=(\reeb{1,3}+\reeb{4,7})\otimes\bm{t}$. This coincides with the computation in \S 5.2 of \cite{LOTSpectral1}.
\subsubsection*{$\widehat{\CFD}(a_2)$:}
By inspection, the diagram
\begin{align}
	\mathcal{H}_2=\hspace{0.25cm}\raisebox{-2.25cm}{\includegraphics[scale=1]{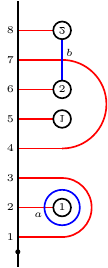}}
\end{align}
has a single generator $\bm{w}=\{a,b\}$ with $\iota_2\bm{w}=\bm{w}$ and supports the domains

\begin{align}
	\begin{split}
		&\raisebox{-2.25cm}{\includegraphics[scale=1]{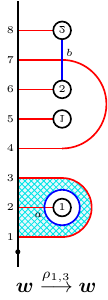}}\hspace{1cm}\raisebox{-2.25cm}{\includegraphics[scale=1]{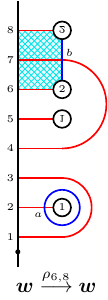}}
	\end{split}
\end{align}
giving us
\begin{align}
	\widehat{\CFD}(a_2)=\begin{tikzcd}[ampersand replacement=\&]
		\bm{w}\arrow[loop right,looseness=8,out=35,in=-35,"\reeb{1,3}+\reeb{6,8}"]
	\end{tikzcd}
\end{align}
i.e. $\widehat{\CFD}(\mathcal{H}_2)=\F\langle\bm{v}\rangle$ with $\delta^1(\bm{w})=(\reeb{1,3}+\reeb{6,8})\otimes\bm{w}$. Strictly speaking, the structure coefficients for these type-$D$ structures should be of the form $\rho\iota_i$ but if $\bm{\xi}$ is some generator with $\iota_i\bm{\xi}=\bm{\xi}$, then we have that $\rho\otimes\bm{\xi}=\rho\otimes(\iota_i\bm{\xi})=(\rho\iota_i)\otimes\bm{\xi}$ so this distinction is essentially cosmetic.

\subsection{The morphism spaces $\Mor(i,j)$}
Given $i,j\in\{1,2\}$, let
\begin{align}
	\Mor(i,j)=\Mor^{\mathcal{A}_2}\left(\widehat{\CFD}(a_i),\widehat{\CFD}(a_j)\right)
\end{align}
be the space of $\mathcal{A}_2$-module homomorphisms $f:\widehat{\CFD}(\mathcal{H}_i)\to\widehat{\CFD}(\mathcal{H}_j)$. Then
\begin{align}
	\mathfrak{h}_2=\Mor(1,1)\oplus\Mor(1,2)\oplus\Mor(2,1)\oplus\Mor(2,2).
\end{align}
We compute each summand separately.
\subsubsection{$\Mor^{\mathcal{A}_2}(1,1)$}
Since
\begin{align}
	\widehat{\CFD}(a_1)=\begin{tikzcd}[ampersand replacement=\&]
		\bm{t}\arrow[loop right,looseness=8,out=35,in=-35,"\reeb{1,3}+\reeb{4,7}"]
	\end{tikzcd}
\end{align}
and $\bm{t}=\iota_1\bm{t}$, a basic $\mathcal{A}_2$-module homomorphism $f:\widehat{\CFD}(a_1)\to\widehat{\CFD}(a_1)$ is determined by $f(\bm{t})=\rho\bm{t}$ where $\rho\in\mathcal{A}_2$ satisfies $\rho=\iota_1\rho\iota_1$. One may verify that the possible values of $\rho$ are $\iota_1$, $\iota_1\reeb{1,3}\iota_1$, $\iota_1\reeb{4,7}\iota_1$, and $\reebII{1,3}{4,7}$. Therefore, we have
\begin{align}
	\Mor(1,1)=\F\langle\fn{f}{1,1}{1},\fn{f}{1,1}{2},\fn{f}{1,1}{3},\fn{f}{1,1}{4}\rangle,
\end{align}
where
\begin{align}
	\begin{split}
		&\fn{f}{1,1}{1}(\bm{t})=\bm{t}\\
		&\fn{f}{1,1}{2}(\bm{t})=\iota_1\reeb{1,3}\iota_1\bm{t}\\
		&\fn{f}{1,1}{3}(\bm{t})=\iota_1\reeb{4,7}\iota_1\bm{t}\\
		&\fn{f}{1,1}{4}(\bm{t})=\reebII{1,3}{4,7}\bm{t}
	\end{split}
\end{align}
and $\dim_\F H_*\Mor(1,1)=\dim_\F\widehat{\HF}(\#^2S^2\times S^1)=4$ so $\partial f=0$ for every generator $f\in\Mor(1,1)$ and $H_*\Mor(1,1)=\F\langle[\fn{f}{1,1}{1}],[\fn{f}{1,1}{2}],[\fn{f}{1,1}{3}],[\fn{f}{1,1}{4}]\rangle$.
\subsubsection{$\Mor(1,2)$}
Here, we have
\begin{align}
	\widehat{\CFD}(a_2)=\begin{tikzcd}[ampersand replacement=\&]
		\bm{w}\arrow[loop right,looseness=8,out=35,in=-35,"\reeb{1,3}+\reeb{6,8}"]
	\end{tikzcd}
\end{align}
with $\bm{w}=\iota_2\bm{w}$ so a basic $\mathcal{A}_2$-module homomorphism $f:\widehat{\CFD}(a_1)\to\widehat{\CFD}(a_2)$ is determined by $f(\bm{t})=\rho\bm{w}$ where $\rho=\iota_1\rho\iota_2$. We then have that
\begin{align}
	\Mor(1,2)=\F\langle\fn{f}{1,2}{1},\fn{f}{1,2}{2},\fn{f}{1,2}{3},\fn{f}{1,2}{4},\fn{f}{1,2}{5},\fn{f}{1,2}{6}\rangle
\end{align}
where
\begin{align}
	\begin{alignedat}{2}
		&\fn{f}{1,2}{1}(\bm{t})=\iota_1\reeb{4,6}\iota_2\bm{w}\quad\quad\quad && \fn{f}{1,2}{4}(\bm{t})=\reebII{1,3}{4,6}\bm{w}\\
		&\fn{f}{1,2}{2}(\bm{t})=\iota_1\reeb{4,8}\iota_2\bm{w}\quad\quad && \fn{f}{1,2}{5}(\bm{t})=\reebII{1,3}{4,8}\bm{w}\\
		&\fn{f}{1,2}{3}(\bm{t})=\iota_1\reeb{7,8}\iota_2\bm{w}\quad\quad && \fn{f}{1,2}{6}(\bm{t})=\reebII{1,3}{7,8}\bm{w}\\
	\end{alignedat}
\end{align}
and one may verify that
\begin{align}
	\begin{alignedat}{2}
		&\partial\fn{f}{1,2}{1}=\fn{f}{1,2}{2} \quad\quad\quad && \partial\fn{f}{1,2}{4}=\fn{f}{1,2}{5} \\
		&\partial\fn{f}{1,2}{2}=0 \quad\quad && \partial\fn{f}{1,2}{5}=0 \\
		&\partial\fn{f}{1,2}{3}=\fn{f}{1,2}{2} \quad\quad && \partial\fn{f}{1,2}{6}=\fn{f}{1,2}{5} \\
	\end{alignedat}
\end{align}
so, as a chain complex, $\Mor(1,2)$ is given graphically by
\begin{align}
	\begin{tikzcd}[ampersand replacement=\&]
		\fn{f}{1,2}{1}\arrow[r] \& \fn{f}{1,2}{2} \& \fn{f}{1,2}{3}\arrow[l]\\
		\fn{f}{1,2}{4}\arrow[r] \& \fn{f}{1,2}{5} \& \fn{f}{1,2}{6}\arrow[l]
	\end{tikzcd},
\end{align}
where an arrow $\fn{f}{1,2}{i}\to\fn{f}{1,2}{j}$ means that $\fn{f}{1,2}{j}$ has coefficient 1 in $\partial\fn{f}{1,2}{i}$. This complex has 2-dimensional homology with basis consisting of the classes $[\fn{f}{1,2}{1}+\fn{f}{1,2}{3}]$ and $[\fn{f}{1,2}{4}+\fn{f}{1,2}{6}]$.

\subsubsection{$\Mor(2,1)$}
Since
\begin{align}
	\widehat{\CFD}(a_3)=\begin{tikzcd}[ampersand replacement=\&]
		\bm{w}\arrow[loop right,looseness=8,out=35,in=-35,"\reeb{1,3}+\reeb{6,8}"]
	\end{tikzcd}
\end{align}
with $\iota_2\bm{w}=\bm{w}$ and
\begin{align}
	\widehat{\CFD}(a_1)=\begin{tikzcd}[ampersand replacement=\&]
		\bm{t}\arrow[loop right,looseness=8,out=35,in=-35,"\reeb{1,3}+\reeb{4,7}"]
	\end{tikzcd}
\end{align}
with $\iota_1\bm{t}=\bm{t}$, a basic morphism $f\in\Mor(2,1)$ is determined by $f(\bm{w})=\rho\bm{t}$, where $\rho=\iota_2\rho\iota_1$ so
\begin{align}
	\Mor(2,1)=\F\langle\fn{f}{2,1}{1},\fn{f}{2,1}{2}\rangle
\end{align}
where
\begin{align}
	\fn{f}{2,1}{1}(\bm{w})=\iota_2\reeb{6,7}\iota_1\bm{t}\quad\quad\quad \fn{f}{2,1}{2}(\bm{w})=\reebII{1,3}{6,7}\bm{t}
\end{align}
and $\dim_\F H_*\Mor(2,1)=\dim_\F\widehat{\HF}(S^2\times S^1)=2$ so it follows that $\partial\fn{f}{2,1}{1}=\partial\fn{f}{2,1}{2}=0$ and $H_*\Mor(2,1)=\F\langle[\fn{f}{2,1}{1}],[\fn{f}{2,1}{2}]\rangle$.

\subsubsection{$\Mor(2,2)$}
Lastly, since
\begin{align}
	\widehat{\CFD}(\mathcal{H}_3)=\begin{tikzcd}[ampersand replacement=\&]
		\bm{w}\arrow[loop right,looseness=8,out=35,in=-35,"\reeb{1,3}+\reeb{6,8}"]
	\end{tikzcd}
\end{align}
with $\iota_2\bm{w}=\bm{w}$, a basic morphism $f\in\Mor(2,2)$ is given by $f(\bm{w})=\rho\bm{w}$, where $\rho=\iota_2\rho\iota_2$. Therefore,
\begin{align}
	\Mor(2,2)=\F\langle\fn{f}{2,2}{1},\fn{f}{2,2}{2},\fn{f}{2,2}{3},\fn{f}{2,2}{4}\rangle,
\end{align}
where
\begin{align}
	\begin{alignedat}{2}
		& \fn{f}{2,2}{1}(\bm{w})=\bm{w} \quad\quad\quad && \fn{f}{2,2}{3}(\bm{w})=\iota_2\reeb{6,8}\iota_2\bm{w}\\
		& \fn{f}{2,2}{2}(\bm{w})=\iota_2\reeb{1,3}\iota_2\bm{w} \quad\quad\quad && \fn{f}{2,2}{4}(\bm{w})=\reebII{1,3}{6,8}\bm{w}
	\end{alignedat}
\end{align}
and $\dim_\F H_*\Mor(2,2)=\dim_\F\widehat{\HF}(\#^2S^2\times S^1)=4$ so the differential on $\Mor(2,2)$ vanishes and $H_*\Mor(2,2)=\F\langle[\fn{f}{2,2}{1}],[\fn{f}{2,2}{2}],[\fn{f}{2,2}{3}],[\fn{f}{2,2}{4}]\rangle$.
\subsubsection{Summary}
To recap, we have
\begin{align}
	\mathfrak{h}_2=\Mor(1,1)\oplus\Mor(1,2)\oplus\Mor(2,1)\oplus\Mor(2,2).
\end{align}
Here,
\begin{align}
	\Mor(1,1)=\F\langle\fn{f}{1,1}{1},\fn{f}{1,1}{2},\fn{f}{1,1}{3},\fn{f}{1,1}{4}\rangle,
\end{align}
is given by
\begin{align}
	\begin{split}
		&\fn{f}{1,1}{1}(\bm{t})=\bm{t}\\
		&\fn{f}{1,1}{2}(\bm{t})=\iota_1\reeb{1,3}\iota_1\bm{t}\\
		&\fn{f}{1,1}{3}(\bm{t})=\iota_1\reeb{4,7}\iota_1\bm{t}\\
		&\fn{f}{1,1}{4}(\bm{t})=\reebII{1,3}{4,7}\bm{t}
	\end{split}
\end{align}
with vanishing differential, so $H_*\Mor(1,1)=\F\langle[\fn{f}{1,1}{1}],[\fn{f}{1,1}{2}],[\fn{f}{1,1}{3}],[\fn{f}{1,1}{4}]\rangle$. Next,
\begin{align}
	\Mor(1,2)=\F\langle\fn{f}{1,2}{1},\fn{f}{1,2}{2},\fn{f}{1,2}{3},\fn{f}{1,2}{4},\fn{f}{1,2}{5},\fn{f}{1,2}{6}\rangle
\end{align}
where
\begin{align}
	\begin{alignedat}{2}
		&\fn{f}{1,2}{1}(\bm{t})=\iota_1\reeb{4,6}\iota_2\bm{w}\quad\quad\quad && \fn{f}{1,2}{4}(\bm{t})=\reebII{1,3}{4,6}\bm{w}\\
		&\fn{f}{1,2}{2}(\bm{t})=\iota_1\reeb{4,8}\iota_2\bm{w}\quad\quad && \fn{f}{1,2}{5}(\bm{t})=\reebII{1,3}{4,8}\bm{w}\\
		&\fn{f}{1,2}{3}(\bm{t})=\iota_1\reeb{7,8}\iota_2\bm{w}\quad\quad && \fn{f}{1,2}{6}(\bm{t})=\reebII{1,3}{7,8}\bm{w}\\
	\end{alignedat}
\end{align}
and
\begin{align}
	\begin{alignedat}{2}
		&\partial\fn{f}{1,2}{1}=\fn{f}{1,2}{2} \quad\quad\quad && \partial\fn{f}{1,2}{4}=\fn{f}{1,2}{5} \\
		&\partial\fn{f}{1,2}{2}=0 \quad\quad && \partial\fn{f}{1,2}{5}=0 \\
		&\partial\fn{f}{1,2}{3}=\fn{f}{1,2}{2} \quad\quad && \partial\fn{f}{1,2}{6}=\fn{f}{1,2}{5} \\
	\end{alignedat}
\end{align}
so this complex has 2-dimensional homology with basis consisting of the classes $[\fn{f}{1,2}{1}+\fn{f}{1,2}{3}]$ and $[\fn{f}{1,2}{4}+\fn{f}{1,2}{6}]$. Next, we have
\begin{align}
	\Mor(2,1)=\F\langle\fn{f}{2,1}{1},\fn{f}{2,1}{2}\rangle
\end{align}
where
\begin{align}
	\fn{f}{2,1}{1}(\bm{w})=\iota_2\reeb{6,7}\iota_1\bm{t}\quad\quad\quad \fn{f}{2,1}{2}(\bm{w})=\reebII{1,3}{6,7}\bm{t}
\end{align}
with vanishing differential, so $H_*\Mor(2,1)=\F\langle[\fn{f}{2,1}{1}],[\fn{f}{2,1}{2}]\rangle$. Lastly, we have
\begin{align}
	\Mor(2,2)=\F\langle\fn{f}{2,2}{1},\fn{f}{2,2}{2},\fn{f}{2,2}{3},\fn{f}{2,2}{4}\rangle,
\end{align}
where
\begin{align}
	\begin{alignedat}{2}
		& \fn{f}{2,2}{1}(\bm{w})=\bm{w} \quad\quad\quad && \fn{f}{2,2}{3}(\bm{w})=\iota_2\reeb{6,8}\iota_2\bm{w}\\
		& \fn{f}{2,2}{2}(\bm{w})=\iota_2\reeb{1,3}\iota_2\bm{w} \quad\quad\quad && \fn{f}{2,2}{4}(\bm{w})=\reebII{1,3}{6,8}\bm{w},
	\end{alignedat}
\end{align}
also with vanishing differential, so $H_*\Mor(2,2)=\F\langle[\fn{f}{2,2}{1}],[\fn{f}{2,2}{2}],[\fn{f}{2,2}{3}],[\fn{f}{2,2}{4}]\rangle$.
\subsection{$\mathfrak{h}_2$ and its homology}
We now describe $(\mathfrak{h}_2,\circ_{2})$ and its homology algebra $(H_*\mathfrak{h}_2,m_2=\overline{\circ}_2)$ explicitly, as associative algebras, and give a partial description of $H_*\mathfrak{h}_2$ as an $A_\infty$-algebra. One may verify using the above computations that $\mathfrak{h}_2$ has multiplication table with respect to the basis of basic morphisms as in Figure \ref{fig:MultiplicationChain}.
\begin{sidewaysfigure}[h]
	\begin{center}
		\begin{align*}
			\begin{array}{|c||c|c|c|c|c|c|c|c|c|c|c|c|c|c|c|c|}
				\hline \circ_{2}(\textup{row},\textup{col})\vphantom{\frac{A}{A^B}} & \fn{f}{1,1}{1} & \fn{f}{1,1}{2} & \fn{f}{1,1}{3} & \fn{f}{1,1}{4} & \fn{f}{1,2}{1} & \fn{f}{1,2}{2} & \fn{f}{1,2}{3} & \fn{f}{1,2}{4} & \fn{f}{1,2}{5} & \fn{f}{1,2}{6} & \fn{f}{2,1}{1} & \fn{f}{2,1}{2} & \fn{f}{2,2}{1} & \fn{f}{2,2}{2} & \fn{f}{2,2}{3} & \fn{f}{2,2}{4} \\
				\hhline{|=#=|=|=|=|=|=|=|=|=|=|=|=|=|=|=|=|} \fn{f}{1,1}{1}\vphantom{\frac{A}{A^B}} & \fn{f}{1,1}{1} & \fn{f}{1,1}{2} & \fn{f}{1,1}{3} & \fn{f}{1,1}{4} & \fn{f}{1,2}{1} & \fn{f}{1,2}{2} & \fn{f}{1,2}{3} & \fn{f}{1,2}{4} & \fn{f}{1,2}{5} & \fn{f}{1,2}{6} &  &  &  &  &  &  \\
				\hline \fn{f}{1,1}{2}\vphantom{\frac{A}{A^B}} & \fn{f}{1,1}{2} &  & \fn{f}{1,1}{4} &  & \fn{f}{1,2}{4} & \fn{f}{1,2}{5} & \fn{f}{1,2}{6} &  &  &  &  &  &  &  &  &  \\
				\hline \fn{f}{1,1}{3}\vphantom{\frac{A}{A^B}} & \fn{f}{1,1}{3} & \fn{f}{1,1}{4} &  &  &  &  & \fn{f}{1,2}{2} &  &  & \fn{f}{1,2}{5} &  &  &  &  &  &  \\
				\hline \fn{f}{1,1}{4}\vphantom{\frac{A}{A^B}} & \fn{f}{1,1}{4} &  &  &  &  &  & \fn{f}{1,2}{5} &  &  &  &  &  &  &  &  &  \\
				\hline \fn{f}{1,2}{1}\vphantom{\frac{A}{A^B}} &  &  &  &  &  &  &  &  &  &  & \fn{f}{1,1}{3} & \fn{f}{1,1}{4} & \fn{f}{1,2}{1} & \fn{f}{1,2}{4} & \fn{f}{1,2}{2} & \fn{f}{1,2}{5} \\
				\hline \fn{f}{1,2}{2}\vphantom{\frac{A}{A^B}} &  &  &  &  &  &  &  &  &  &  &  &  & \fn{f}{1,2}{2} & \fn{f}{1,2}{5} &  &  \\
				\hline \fn{f}{1,2}{3}\vphantom{\frac{A}{A^B}} &  &  &  &  &  &  &  &  &  &  &  &  & \fn{f}{1,2}{3} & \fn{f}{1,2}{6} &  &  \\
				\hline \fn{f}{1,2}{4}\vphantom{\frac{A}{A^B}} &  &  &  &  &  &  &  &  &  &  & \fn{f}{1,1}{4} &  & \fn{f}{1,2}{4} &  & \fn{f}{1,2}{5} &  \\
				\hline 
				\fn{f}{1,2}{5}\vphantom{\frac{A}{A^B}} &  &  &  &  &  &  &  &  &  &  &  &  & \fn{f}{1,2}{5} &  &  &  \\
				\hline 
				\fn{f}{1,2}{6}\vphantom{\frac{A}{A^B}} &  &  &  &  &  &  &  &  &  &  &  &  & \fn{f}{1,2}{6} &  &  &  \\
				\hline 
				\fn{f}{2,1}{1}\vphantom{\frac{A}{A^B}} & \fn{f}{2,1}{1} & \fn{f}{2,1}{2} &  &  &  &  & \fn{f}{2,2}{3} &  &  & \fn{f}{2,2}{4} &  &  &  &  &  &  \\
				\hline 
				\fn{f}{2,1}{2}\vphantom{\frac{A}{A^B}} & \fn{f}{2,1}{2} &  &  &  &  &  & \fn{f}{2,2}{4} &  &  &  &  &  &  &  &  &  \\
				\hline 
				\fn{f}{2,2}{1}\vphantom{\frac{A}{A^B}} &  &  &  &  &  &  &  &  &  &  & \fn{f}{2,1}{1} & \fn{f}{2,1}{2} & \fn{f}{2,2}{1} & \fn{f}{2,2}{2} & \fn{f}{2,2}{3} & \fn{f}{2,2}{4} \\
				\hline 
				\fn{f}{2,2}{2}\vphantom{\frac{A}{A^B}} &  &  &  &  &  &  &  &  &  &  & \fn{f}{2,1}{2} &  & \fn{f}{2,2}{2} &  & \fn{f}{2,2}{4} &  \\
				\hline 
				\fn{f}{2,2}{3}\vphantom{\frac{A}{A^B}} &  &  &  &  &  &  &  &  &  &  &  &  & \fn{f}{2,2}{3} & \fn{f}{2,2}{4} &  &  \\
				\hline 
				\fn{f}{2,2}{4}\vphantom{\frac{A}{A^B}} &  &  &  &  &  &  &  &  &  &  &  &  & \fn{f}{2,2}{4} &  &  &  \\
				\hline 
			\end{array}
		\end{align*}
	\end{center}
	\caption[A multiplication table for $\mathfrak{h}_2$ given by the basis of basic morphisms.]{A multiplication table for $\mathfrak{h}_2$ given by the basis of basic morphisms. Entries containing 0 are left blank for readability.}
	\label{fig:MultiplicationChain}
\end{sidewaysfigure}
The algebra $H_*\mathfrak{h}_2$ has a basis given by the homology classes $[\fn{f}{1,1}{1}]$, $[\fn{f}{1,1}{2}]$, $[\fn{f}{1,1}{3}]$, $[\fn{f}{1,1}{4}]$, $[\fn{f}{1,2}{1}+\fn{f}{1,2}{3}]$, $[\fn{f}{1,2}{4}+\fn{f}{1,2}{6}]$, $[\fn{f}{2,1}{1}]$, $[\fn{f}{2,1}{2}]$, $[\fn{f}{2,2}{1}]$, $[\fn{f}{2,2}{2}]$, $[\fn{f}{2,2}{3}]$, and $[\fn{f}{2,2}{4}]$.
We define maps $p:\mathfrak{h}_2\to H_*\mathfrak{h}_2$ and $\iota:H_*\mathfrak{h}_2\to\mathfrak{h}_2$ by
\begin{align}
	\begin{alignedat}{2}
		&\fn{f}{1,1}{1}\mapsto[\fn{f}{1,1}{1}] \quad\quad\quad && \fn{f}{1,2}{5}\mapsto 0\\
		&\fn{f}{1,1}{2}\mapsto[\fn{f}{1,1}{2}] \quad\quad\quad && \fn{f}{1,2}{6}\mapsto 0\\
		&\fn{f}{1,1}{3}\mapsto[\fn{f}{1,1}{3}] \quad\quad\quad && \fn{f}{2,1}{1}\mapsto[\fn{f}{2,1}{1}]\\
		&\fn{f}{1,1}{4}\mapsto[\fn{f}{1,1}{4}] \quad\quad\quad && \fn{f}{2,1}{2}\mapsto[\fn{f}{2,1}{2}]\\
		&\fn{f}{1,2}{1}\mapsto [\fn{f}{1,2}{1}+\fn{f}{1,2}{3}] \quad\quad\quad && \fn{f}{2,2}{1}\mapsto[\fn{f}{2,2}{1}]\\
		&\fn{f}{1,2}{2}\mapsto 0 \quad\quad\quad && \fn{f}{2,2}{2}\mapsto[\fn{f}{2,2}{2}]\\
		&\fn{f}{1,2}{3}\mapsto 0 \quad\quad\quad && \fn{f}{2,2}{3}\mapsto[\fn{f}{2,2}{3}]\\
		&\fn{f}{1,2}{4}\mapsto [\fn{f}{1,2}{4}+\fn{f}{1,2}{6}] \quad\quad\quad && \fn{f}{2,2}{4}\mapsto[\fn{f}{2,2}{4}]\\
	\end{alignedat}
\end{align}
and
\begin{align}
	\begin{alignedat}{2}
		&[\fn{f}{1,1}{1}]\mapsto\fn{f}{1,1}{1} \quad\quad\quad && [\fn{f}{2,1}{1}]\mapsto\fn{f}{2,1}{1}\\
		&[\fn{f}{1,1}{2}]\mapsto\fn{f}{1,1}{2} \quad\quad\quad && [\fn{f}{2,1}{2}]\mapsto\fn{f}{2,1}{2}\\
		&[\fn{f}{1,1}{3}]\mapsto\fn{f}{1,1}{3} \quad\quad\quad && [\fn{f}{2,2}{1}]\mapsto\fn{f}{2,2}{1}\\
		&[\fn{f}{1,1}{4}]\mapsto\fn{f}{1,1}{4} \quad\quad\quad && [\fn{f}{2,2}{2}]\mapsto\fn{f}{2,2}{2}\\
		&[\fn{f}{1,2}{1}+\fn{f}{1,2}{3}]\mapsto\fn{f}{1,2}{1}+\fn{f}{1,2}{3} \quad\quad\quad && [\fn{f}{2,2}{3}]\mapsto\fn{f}{2,2}{3}\\
		&[\fn{f}{1,2}{4}+\fn{f}{1,2}{6}]\mapsto\fn{f}{1,2}{4}+\fn{f}{1,2}{6} \quad\quad\quad && [\fn{f}{2,2}{4}]\mapsto\fn{f}{2,2}{4},\\
	\end{alignedat}
\end{align}
respectively, so that $\iota p=\id$ on $\Mor(1,2)^\perp$ by construction. Now define $h:\mathfrak{h}_2\to\mathfrak{h}_2$ by $h(\fn{f}{1,2}{2})=\fn{f}{1,2}{3}$ and $h(\fn{f}{1,2}{5})=\fn{f}{1,2}{6}$ and by zero on all other generators. One may then check that $p\iota=\id$ and $\iota p=\id+\partial h+h\partial$ so that $p$, $\iota$, and $h$ define a retract as in the statement of the homological perturbation lemma. Using this retract, the homology algebra $H_*\mathfrak{h}_2$ has multiplication table given in terms of the above basis of homology classes as in Figure \ref{fig:MultiplicationHomology} --- compare this with the table for multiplication in $H_2$ given in Figure \ref{fig:MultiplicationArc2}.

Now we have $m_2([\fn{f}{2,1}{1}],[\fn{f}{1,1}{3}])=0$ and $m_2([\fn{f}{1,1}{3}],[\fn{f}{1,2}{1}+\fn{f}{1,2}{3}])=0$, so the sequence $([\fn{f}{2,1}{1}],[\fn{f}{1,1}{3}],[\fn{f}{1,2}{1}+\fn{f}{1,2}{3}])$ in $H_*\mathfrak{h}_2$ is Massey admissible in the sense of \cite[Definition 2.1.21]{LOTBimodules2015}, and one may verify directly that
\begin{align}
	m_3([\fn{f}{2,1}{1}],[\fn{f}{1,1}{3}],[\fn{f}{1,2}{1}+\fn{f}{1,2}{3}])=[\fn{f}{2,2}{3}].
\end{align}
Letting a hollow dot on a planar circle represent a label of 1 and a filled dot represent a label of $x$, as in \cite{CohenSplitting}, this $m_3$ may be realized graphically in terms of elements of $H_2$ as
\begin{align}
	m_3\scalebox{2}{$($}\!\bai,\aaix+\aaxi,\abi\!\scalebox{2}{$)$}=\bbix+\bbxi.
\end{align}
Letting $\alpha_1=[\fn{f}{2,1}{1}]$, $\alpha_2=[\fn{f}{1,1}{3}]$, and $\alpha_3=[\fn{f}{1,2}{1}+\fn{f}{1,2}{3}]$, we now compute the cycles $\xi_{i,j}=q_{j-i}(\alpha_{i+1},\dots,\alpha_j)$, where the $q_k$ are defined as in Proposition \ref{CanonicalAinfQuasiIso}, which contribute to the cycle
\begin{align}
	\sum_{0<k<3}\xi_{0,k}\xi_{k,3}
\end{align}
representing $m_3(\alpha_1,\alpha_2,\alpha_3)$. These are
\begin{align}
	\begin{split}
		&\xi_{0,1}=\fn{f}{2,1}{1}\\
		&\xi_{0,2}=0\\
		&\xi_{1,3}=\fn{f}{1,2}{3}\\
		&\xi_{2,3}=\fn{f}{1,2}{1}+\fn{f}{1,2}{3}
	\end{split}
\end{align}
so we get
\begin{align}
	\sum_{0<k<3}\xi_{0,k}\xi_{k,3}=\circ_2(\fn{f}{2,1}{1},\fn{f}{1,2}{3})=\fn{f}{2,2}{3},
\end{align}
as expected. This representing cycle is independent of the choices of the $\xi_{i,j}$ by \cite[Lemma 2.1.22]{LOTBimodules2015}, so $m_3$ is nontrivial, independent of our choice of retract, and this shows that $\mathfrak{h}_2$ is not formal. This finishes the proof of Theorem \ref{Nonformal}.

One may further interpret this $m_3$ operation as follows: if we regard $H_2$ as an endomorphism algebra
\begin{align}
	\End_{\mathrm{BN}}\left(\amatch\oplus\bmatch\right)
\end{align}
in Bar-Natan's dotted cobordism category \cite{Bar-Natan2005}, then the arc algebra element $\bai$ is an undotted saddle cobordism ${\color{red}S}:\bmatch\to\amatch$ and $\abi$ is an undotted saddle cobordism ${\color{blue}S}:\amatch\to\bmatch$, so $\aaix+\aaxi={\color{blue}S}{\color{red}S}$ and $\bbix+\bbxi={\color{red}S}{\color{blue}S}$. In these terms, we have $m_3({\color{red}S},{\color{blue}S}{\color{red}S},{\color{blue}S})={\color{red}S}{\color{blue}S}$, which one can view as measuring the fact that ${\color{red}S}{\color{blue}S}{\color{red}S}{\color{blue}S}=0$ in two different ways, corresponding to ${\color{blue}S}{\color{red}S}{\color{blue}S}={\color{red}S}{\color{blue}S}{\color{red}S}=0$, in the sense that ${\color{red}S}{\color{blue}S}$ annihilates ${\color{red}S}$ and ${\color{blue}S}$ on the left and right, respectively. Indeed, all of the nontrivial $m_3$ operations on $H_*\mathfrak{h}_2$ --- which we list below, along with their interpretations in terms of elements of $H_2$ --- can be interpreted as measuring some version of this fact, or the analogous fact when the cobordism ${\color{red}S}{\color{blue}S}{\color{red}S}{\color{blue}S}$ has a dot.
\begin{center}
	\begin{align*}
		\begin{array}{|c|l|}
			\hline H_*\mathfrak{h}_2 &  \multicolumn{1}{c|}{H_2} \\
			\hline m_3([\fn{f}{2,1}{1}],[\fn{f}{1,1}{3}],[\fn{f}{1,2}{1}+\fn{f}{1,2}{3}])=[\fn{f}{2,2}{3}] & m_3\scalebox{2}{$($}\!\bai,\aaix+\aaxi,\abi\!\scalebox{2}{$)$}=\bbix+\bbxi \\
			\hline m_3([\fn{f}{2,1}{2}],[\fn{f}{1,1}{3}],[\fn{f}{1,2}{1}+\fn{f}{1,2}{3}])=[\fn{f}{2,2}{4}] & m_3\scalebox{2}{$($}\!\bax,\aaix+\aaxi,\abi\!\scalebox{2}{$)$}=\bbxx \\
			\hline m_3([\fn{f}{2,1}{1}],[\fn{f}{1,1}{3}],[\fn{f}{1,2}{4}+\fn{f}{1,2}{6}])=[\fn{f}{2,2}{4}] & m_3\scalebox{2}{$($}\!\bai,\aaix+\aaxi,\abx\!\scalebox{2}{$)$}=\bbxx \\
			\hline m_3([\fn{f}{2,1}{1}],[\fn{f}{1,1}{4}],[\fn{f}{1,2}{1}+\fn{f}{1,2}{3}])=[\fn{f}{2,2}{4}] & m_3\scalebox{2}{$($}\!\bai,\aaxx,\abi\!\scalebox{2}{$)$}=\bbxx \\
			\hline m_3([\fn{f}{2,1}{1}],[\fn{f}{1,2}{1}+\fn{f}{1,2}{3}],[\fn{f}{2,2}{3}])=[\fn{f}{2,2}{3}] & m_3\scalebox{2}{$($}\!\bai,\abi,\bbix+\bbxi\!\scalebox{2}{$)$}=\bbix+\bbxi \\
			\hline m_3([\fn{f}{2,1}{2}],[\fn{f}{1,2}{1}+\fn{f}{1,2}{3}],[\fn{f}{2,2}{3}])=[\fn{f}{2,2}{4}] & m_3\scalebox{2}{$($}\!\bax,\abi,\bbix+\bbxi\!\scalebox{2}{$)$}=\bbxx \\
			\hline m_3([\fn{f}{2,1}{1}],[\fn{f}{1,2}{4}+\fn{f}{1,2}{6}],[\fn{f}{2,2}{3}])=[\fn{f}{2,2}{4}] & m_3\scalebox{2}{$($}\!\bai,\abx,\bbix+\bbxi\!\scalebox{2}{$)$}=\bbxx \\
			\hline m_3([\fn{f}{2,1}{1}],[\fn{f}{1,2}{1}+\fn{f}{1,2}{3}],[\fn{f}{2,2}{4}])=[\fn{f}{2,2}{4}] & m_3\scalebox{2}{$($}\!\bai,\abi,\bbxx\!\scalebox{2}{$)$}=\bbxx \\
			\hline
		\end{array}
	\end{align*}
\end{center}
The first and fifth lines correspond to $m_3({\color{red}S},{\color{blue}S}{\color{red}S},{\color{blue}S})={\color{red}S}{\color{blue}S}$ and $m_3({\color{red}S},{\color{blue}S},{\color{red}S}{\color{blue}S})={\color{red}S}{\color{blue}S}$, respectively, and the remaining lines can be obtained from one of these by placing a dot on one argument and on the output.
\begin{proposition}
	For all $n>1$, $H_*\mathfrak{h}_n$ is unbounded as an $A_\infty$-algebra.
\end{proposition}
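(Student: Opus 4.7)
The plan is to prove unboundedness by induction on $n$, with the base case $n = 2$ handled by an explicit extension of the $m_3$ computation of the appendix, and the inductive step leveraging the homologically injective embedding $\Lambda_n \colon \mathfrak{h}_n \hookrightarrow \mathfrak{h}_{n+1}$ together with its block-diagonal structure.

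For the base case, I will exhibit, for each $k \geq 1$, the length-$(k+2)$ sequence
\begin{align*}
	(\alpha_1, \underbrace{\alpha_2, \ldots, \alpha_2}_k, \alpha_3) := \bigl([\fn{f}{2,1}{1}],\, \underbrace{[\fn{f}{1,1}{3}], \ldots, [\fn{f}{1,1}{3}]}_k,\, [\fn{f}{1,2}{1}+\fn{f}{1,2}{3}]\bigr)
\end{align*}
in $H_*\mathfrak{h}_2$, and show that $m_{k+2}$ of this sequence equals $[\fn{f}{2,2}{3}] \neq 0$ for the retract $(p, \iota, h)$ of the appendix. Massey admissibility is immediate from the chain-level multiplication table: $\fn{f}{2,1}{1} \cdot \fn{f}{1,1}{3} = 0$, $\fn{f}{1,1}{3} \cdot \fn{f}{1,1}{3} = 0$, and $\fn{f}{1,1}{3} \cdot (\fn{f}{1,2}{1}+\fn{f}{1,2}{3}) = \fn{f}{1,2}{2} = \partial\fn{f}{1,2}{1}$. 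Evaluating $m_{k+2}$ through the tree formula, the innermost pairing gives $\mu_2(\fn{f}{1,1}{3}, \fn{f}{1,2}{1}+\fn{f}{1,2}{3}) = \fn{f}{1,2}{2}$, which $h$ sends to $\fn{f}{1,2}{3}$; the recursion $\mu_2(\fn{f}{1,1}{3}, \fn{f}{1,2}{3}) = \fn{f}{1,2}{2} \stackrel{h}{\mapsto} \fn{f}{1,2}{3}$ then iterates $k-1$ times, and the root contribution $\mu_2(\fn{f}{2,1}{1}, \fn{f}{1,2}{3}) = \fn{f}{2,2}{3}$ projects under $p$ to $[\fn{f}{2,2}{3}]$.

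The main obstacle in the base case is arguing that every binary tree other than the right-leaning one contributes zero. The crucial observation is that every binary tree on $k+2$ leaves has at least one internal vertex both of whose children are leaves, and for our sequence the only such leaf-leaf pair whose product is nonzero is $(\alpha_2, \alpha_3)$ at positions $(k+1, k+2)$; any tree containing a different leaf-leaf pair picks up a zero sub-product and hence vanishes. The unique binary tree whose sole leaf-leaf pair is the rightmost one is precisely the right-leaning tree, reducing the computation to the single contribution above. A secondary check is that the homotopy $h$ is supported only on $\fn{f}{1,2}{2}$ and $\fn{f}{1,2}{5}$, which confines the iterated intermediate values to the orbit $\{\fn{f}{1,2}{2}, \fn{f}{1,2}{3}\}$ and rules out other nontrivial propagation patterns.

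For the inductive step, recall from the corollary preceding Theorem \ref{Nonformal} that $\End^{\mathcal{A}_{n+1}}(\bigoplus_{a\in\mathfrak{C}_n}\widehat{\CFD}(a_{++}))$ decomposes as $\mathrm{im}(\Lambda_n) \oplus \mathrm{im}(\rho_{1,3}\Lambda_n)$ with block-diagonal differential; moreover $\mathrm{im}(\Lambda_n)$ is a subalgebra, $\mathrm{im}(\rho_{1,3}\Lambda_n)$ is a square-zero two-sided ideal (using the commutation $[\rho_{1,3}, L_n(\rho)] = 0$), and this sub-dg-algebra is closed inside $\mathfrak{h}_{n+1}$. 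Choose a retract on $\mathfrak{h}_{n+1}$ whose restriction to $\mathrm{im}(\Lambda_n)$ is the pushforward of the retract on $\mathfrak{h}_n$ used in the base case, and whose homotopy respects the direct-sum decomposition. Under such a compatible retract, every binary tree computation in the homological-perturbation formula for $H_*\mathfrak{h}_{n+1}$ with inputs from $\Lambda_{n,*}(H_*\mathfrak{h}_n)$ stays entirely inside $\mathrm{im}(\Lambda_n)$ at every intermediate step, so the restriction of the $A_\infty$-operations to the image of $(\Lambda_n)_*$ agrees with the $A_\infty$-operations on $H_*\mathfrak{h}_n$. Since $(\Lambda_n)_*$ is injective, the nontrivial $m_{k+2}$ operations produced in the base case transport to nontrivial $m_{k+2}$ operations on $H_*\mathfrak{h}_{n+1}$, completing the induction and proving unboundedness for all $n > 1$.
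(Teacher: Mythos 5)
Your proof is correct and takes essentially the same route as the paper: the base case is the right‐comb computation showing $m_{k+2}([\fn{f}{2,1}{1}],[\fn{f}{1,1}{3}],\ldots,[\fn{f}{1,1}{3}],[\fn{f}{1,2}{1}+\fn{f}{1,2}{3}])=[\fn{f}{2,2}{3}]$ for the given retract, and the extension to $n>2$ rests on the homologically injective embedding $\Lambda_n$. Your observation that every binary tree other than the right comb contains a cherry at a position other than $(k+1,k+2)$, and that such cherries evaluate to $\mu_2(\fn{f}{2,1}{1},\fn{f}{1,1}{3})=0$ or $\mu_2(\fn{f}{1,1}{3},\fn{f}{1,1}{3})=0$, is exactly the reason the paper's $q_{1+k}$ sweep works. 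Where you improve on the paper's exposition is the inductive step: the paper merely asserts that homological injectivity of $\mathfrak{h}_2\hookrightarrow\mathfrak{h}_n$ implies the conclusion, while you spell out the missing ingredient --- that $\mathrm{im}(\Lambda_n)$ is simultaneously a dg-subalgebra of $\mathfrak{h}_{n+1}$ and a chain-complex direct summand (via the block-diagonal decomposition $\mathrm{im}(\Lambda_n)\oplus\mathrm{im}(\rho_{1,3}\Lambda_n)$ together with the $\Mor$-space splitting of $\mathfrak{h}_{n+1}$), so that a retract on $\mathfrak{h}_{n+1}$ can be chosen to restrict to the pushforward retract on $\mathrm{im}(\Lambda_n)$, making the tree-formula computation transport verbatim. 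Two small points worth tightening: (i) in the inductive step, the phrase ``the retract on $\mathfrak{h}_n$ used in the base case'' should read ``the retract constructed in the previous step,'' since for $n>2$ the retract is itself built inductively by your argument; (ii) you establish the nontrivial $m_{k+2}$ only for your specific compatible retract, so to conclude unboundedness in a retract-independent sense you should note (as the paper does) that the transported sequence remains Massey admissible, so by \cite[Lemma 2.1.22]{LOTBimodules2015} the value $[\fn{f}{2,2}{3}]$ is independent of the retract on $\mathfrak{h}_{n+1}$.
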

\begin{proof}
	We claim that $m_{2+k}([\fn{f}{2,1}{1}],[\fn{f}{1,1}{3}],\stackrel{k}{\dots},[\fn{f}{1,1}{3}],[\fn{f}{1,2}{1}+\fn{f}{1,2}{3}])=[\fn{f}{2,2}{3}]$ for $k>0$ in $H_*\mathfrak{h}_2$. To see this, note that $m_2(\fn{f}{2,1}{1},\fn{f}{1,1}{3})=m_2(\fn{f}{1,1}{3},\fn{f}{1,1}{3})=0$ so we have
	\begin{align}
		\begin{split}
			&m_{2+k}([\fn{f}{2,1}{1}],[\fn{f}{1,1}{3}],\stackrel{k}{\dots},[\fn{f}{1,1}{3}],[\fn{f}{1,2}{1}+\fn{f}{1,2}{3}])\\&=m_2([\fn{f}{2,1}{1}],q_{1+k}([\fn{f}{1,1}{3}],\stackrel{k}{\dots},[\fn{f}{1,1}{3}],[\fn{f}{1,2}{1}+\fn{f}{1,2}{3}]))
		\end{split}
	\end{align}
	because all other trees of operations contributing to $m_{2+k}$ necessarily involve a multiplication of the form $m_2(\fn{f}{2,1}{1},\fn{f}{1,1}{3})$ or $m_2(\fn{f}{1,1}{3},\fn{f}{1,1}{3})$. Since $m_2(\fn{f}{1,1}{3},\fn{f}{1,1}{3})=0$, one may further show by induction that
	\begin{align}
		q_{1+k}([\fn{f}{1,1}{3}],\stackrel{k}{\dots},[\fn{f}{1,1}{3}],[\fn{f}{1,2}{1}+\fn{f}{1,2}{3}])=\fn{f}{1,2}{3}
	\end{align}
	so $m_{2+k}([\fn{f}{2,1}{1}],[\fn{f}{1,1}{3}],\stackrel{k}{\dots},[\fn{f}{1,1}{3}],[\fn{f}{1,2}{1}+\fn{f}{1,2}{3}])=pm_2(\fn{f}{2,1}{1},\fn{f}{1,2}{3})=p(\fn{f}{2,2}{3})=[\fn{f}{2,2}{3}]$, as desired. One may check that the sequence $([\fn{f}{2,1}{1}],[\fn{f}{1,1}{3}],\stackrel{k}{\dots},[\fn{f}{1,1}{3}],[\fn{f}{1,2}{1}+\fn{f}{1,2}{3}])$ is Massey admissible and that $m_{2+k}([\fn{f}{2,1}{1}],[\fn{f}{1,1}{3}],\stackrel{k}{\dots},[\fn{f}{1,1}{3}],[\fn{f}{1,2}{1}+\fn{f}{1,2}{3}])$ is indeed represented by the cycle $\fn{f}{2,2}{3}$ so $m_{2+k}$ is nontrivial for all $k>0$. Since $\mathfrak{h}_2\hookrightarrow\mathfrak{h}_n$ homologically injectively for all $n>2$, this then implies that $H_*\mathfrak{h}_n$ is unbounded for all $n>1$.
\end{proof}
This completes the proof of Theorem \ref{Nonformal}.
\begin{sidewaysfigure}
	\begin{center}
		\begin{align*}
			\begin{array}{|c||c|c|c|c|c|c|c|c|c|c|c|c|}
				\hline m_2([\textup{row}],[\textup{col}])\vphantom{\frac{A}{A^B}} & [\fn{f}{1,1}{1}] & [\fn{f}{1,1}{2}] & [\fn{f}{1,1}{3}] & [\fn{f}{1,1}{4}] & \![\fn{f}{1,2}{1}+\fn{f}{1,2}{3}]\! & \![\fn{f}{1,2}{4}+\fn{f}{1,2}{6}]\! & [\fn{f}{2,1}{1}] & [\fn{f}{2,1}{2}] & [\fn{f}{2,2}{1}] & [\fn{f}{2,2}{2}] & [\fn{f}{2,2}{3}] & [\fn{f}{2,2}{4}]\\
				\hhline{|=#=|=|=|=|=|=|=|=|=|=|=|=|} [\fn{f}{1,1}{1}]\vphantom{\frac{A}{A^B}} & [\fn{f}{1,1}{1}] & [\fn{f}{1,1}{2}] & [\fn{f}{1,1}{3}] & [\fn{f}{1,1}{4}] & \![\fn{f}{1,2}{1}+\fn{f}{1,2}{3}]\! & \![\fn{f}{1,2}{4}+\fn{f}{1,2}{6}]\! &  &  &  &  &  &  \\
				\hline [\fn{f}{1,1}{2}]\vphantom{\frac{A}{A^B}} & [\fn{f}{1,1}{2}] &  & [\fn{f}{1,1}{4}] &  & \![\fn{f}{1,2}{4}+\fn{f}{1,2}{6}]\! &  &  &  &  &  &  &  \\
				\hline 
				[\fn{f}{1,1}{3}]\vphantom{\frac{A}{A^B}} & [\fn{f}{1,1}{3}] & [\fn{f}{1,1}{4}] &  &  &  &  &  &  &  &  &  &  \\
				\hline 
				[\fn{f}{1,1}{4}]\vphantom{\frac{A}{A^B}} & [\fn{f}{1,1}{4}] &  &  &  &  &  &  &  &  &  &  &  \\
				\hline 
				\![\fn{f}{1,2}{1}+\fn{f}{1,2}{3}]\!\vphantom{\frac{A}{A^B}} &  &  &  &  &  &  & [\fn{f}{1,1}{3}] & [\fn{f}{1,1}{4}] & \![\fn{f}{1,2}{1}+\fn{f}{1,2}{3}]\! & \![\fn{f}{1,2}{4}+\fn{f}{1,2}{6}]\! &  &  \\
				\hline 
				\![\fn{f}{1,2}{4}+\fn{f}{1,2}{6}]\!\vphantom{\frac{A}{A^B}} &  &  &  &  &  &  & [\fn{f}{1,1}{4}] &  & \![\fn{f}{1,2}{4}+\fn{f}{1,2}{6}]\! &  &  &  \\
				\hline 
				[\fn{f}{2,1}{1}]\vphantom{\frac{A}{A^B}} & [\fn{f}{2,1}{1}] & [\fn{f}{2,1}{2}] &  &  & [\fn{f}{2,2}{3}] & [\fn{f}{2,2}{4}] &  &  &  &  &  &  \\
				\hline 
				[\fn{f}{2,1}{2}]\vphantom{\frac{A}{A^B}} & [\fn{f}{2,1}{2}] &  &  &  & [\fn{f}{2,2}{4}] &  &  &  &  &  &  &  \\
				\hline 
				[\fn{f}{2,2}{1}]\vphantom{\frac{A}{A^B}} &  &  &  &  &  &  & [\fn{f}{2,1}{1}] & [\fn{f}{2,1}{2}] & [\fn{f}{2,2}{1}] & [\fn{f}{2,2}{2}] & [\fn{f}{2,2}{3}] & [\fn{f}{2,2}{4}] \\
				\hline 
				[\fn{f}{2,2}{2}]\vphantom{\frac{A}{A^B}} &  &  &  &  &  &  & [\fn{f}{2,1}{2}] &  & [\fn{f}{2,2}{2}] &  & [\fn{f}{2,2}{4}] &  \\
				\hline 
				[\fn{f}{2,2}{3}]\vphantom{\frac{A}{A^B}} &  &  &  &  &  &  &  &  & [\fn{f}{2,2}{3}] & [\fn{f}{2,2}{4}] &  &  \\
				\hline 
				[\fn{f}{2,2}{4}]\vphantom{\frac{A}{A^B}} &  &  &  &  &  &  &  &  & [\fn{f}{2,2}{4}] &  &  &  \\
				\hline 
			\end{array}
		\end{align*}
	\end{center}
	\caption[A multiplication table for $H_*\mathfrak{h}_2$ given by a retract.]{The multiplication table for $H_*\mathfrak{h}_2$ given by the retract defined above.}
	\label{fig:MultiplicationHomology}
\end{sidewaysfigure}
\begin{sidewaysfigure}
	\begin{center}
		\begin{align*}
			\begin{array}{|c||c|c|c|c|c|c|c|c|c|c|c|c|}
				\hline \textup{row}\cdot\textup{col}\vphantom{\raisebox{-0.1cm}{\scalemath{2.4}{\bigcirc}}} & \aaii & \aaxi & \!\!\aaix+\aaxi\!\! & \aaxx & \abi & \abx & \bai & \bax & \bbii & \bbxi & \!\!\bbix+\bbxi\!\! & \bbxx\\
				\hhline{|=#=|=|=|=|=|=|=|=|=|=|=|=|} \aaii\vphantom{\raisebox{-0.1cm}{\scalemath{2.4}{\bigcirc}}} & \aaii & \aaxi & \!\!\aaix+\aaxi\!\! & \aaxx & \abi & \abx &  &  &  &  &  &  \\
				\hline
				\aaxi\vphantom{\raisebox{-0.1cm}{\scalemath{2.4}{\bigcirc}}} & \aaxi &  & \aaxx &  & \abx &  &  &  &  &  &  &  \\
				\hline 
				\aaix+\aaxi\vphantom{\raisebox{-0.1cm}{\scalemath{2.4}{\bigcirc}}} & \aaix & \aaxx &  &  &  &  &  &  &  &  &  &  \\
				\hline 
				\aaxx\vphantom{\raisebox{-0.1cm}{\scalemath{2.4}{\bigcirc}}} & \aaxx &  &  &  &  &  &  &  &  &  &  &  \\
				\hline 
				\abi\vphantom{\raisebox{-0.1cm}{\scalemath{2.4}{\bigcirc}}} &  &  &  &  &  &  & \!\!\aaix+\aaxi\!\! & \aaxx & \abi & \abx &  &  \\
				\hline
				\abx\vphantom{\raisebox{-0.1cm}{\scalemath{2.4}{\bigcirc}}} &  &  &  &  &  &  & \aaxx &  & \abx &  &  &  \\
				\hline
				\bai\vphantom{\raisebox{-0.1cm}{\scalemath{2.4}{\bigcirc}}} & \bai & \bax &  &  & \!\!\bbxi+\bbix\!\! & \bbxx &  &  &  &  &  &  \\
				\hline
				\bax\vphantom{\raisebox{-0.1cm}{\scalemath{2.4}{\bigcirc}}} & \bax &  &  &  & \bbxx &  &  &  &  &  &  &  \\
				\hline
				\bbii\vphantom{\raisebox{-0.1cm}{\scalemath{2.4}{\bigcirc}}} &  &  &  &  &  &  & \bai & \bax & \bbii & \bbxi & \!\!\bbix+\bbxi\!\! & \bbxx \\
				\hline
				\bbxi\vphantom{\raisebox{-0.1cm}{\scalemath{2.4}{\bigcirc}}} &  &  &  &  &  &  & \bax &  & \bbxi &  & \bbxx &  \\
				\hline
				\!\!\bbix+\bbxi\vphantom{\raisebox{-0.1cm}{\scalemath{2.4}{\bigcirc}}}\!\! &  &  &  &  &  &  &  &  & \bbix+\bbxi & \bbxx &  &  \\
				\hline
				\bbxx\vphantom{\raisebox{-0.1cm}{\scalemath{2.4}{\bigcirc}}} &  &  &  &  &  &  &  &  & \bbxx &  &  &  \\
				\hline        
			\end{array}
		\end{align*}
	\end{center}
	\caption[A multiplication table for $H_2$.]{A multiplication table for $H_2$. Note that we are using a nonstandard basis for $H_2$.}
	\label{fig:MultiplicationArc2}
\end{sidewaysfigure}
	\bibliographystyle{alpha}
	\bibliography{bib}
\end{document}